\documentclass[11pt,oneside]{amsart}
\usepackage[T1]{fontenc}
\usepackage[english]{babel}
\usepackage{fouriernc} 
\usepackage{mathtools}
\usepackage{stmaryrd} 

\usepackage[margin=1in]{geometry} 
\usepackage{xcolor} 
\usepackage{tikz} 
\usetikzlibrary {arrows.meta,bending, positioning}
\usetikzlibrary{math} 
\usepackage{hyperref} 

\usepackage{amsmath,amsthm,amssymb}
\usepackage[all,cmtip]{xy}
\usepackage{caption}
\usepackage{subcaption}

\DeclareMathOperator{\Hom}{Hom}
\DeclareMathOperator{\ind}{ind}

\DeclareMathOperator{\res}{res}

\DeclareMathOperator{\Ho}{Ho}
\DeclareMathOperator{\THH}{THH}

\DeclareMathOperator{\HH}{HH}
\DeclareMathOperator{\Perf}{Perf}

\DeclareMathOperator{\End}{End}

\DeclareMathOperator{\tr}{tr}

\DeclareMathOperator{\TC}{TC}
\DeclareMathOperator{\id}{id}

\DeclareMathOperator{\Aut}{Aut}
\DeclareMathOperator{\coker}{coker}

\usepackage[capitalise]{cleveref}  
\newcommand{\clevertheorem}[3]{
  \newtheorem{#1}[thm]{#2}
  \crefname{#1}{#2}{#3}
}

\numberwithin{equation}{section} 
\numberwithin{figure}{section}

\theoremstyle{plain} 
\newtheorem{thm}{Theorem}[section]
\crefname{thm}{Theorem}{Theorems}
\newtheorem*{thm*}{Theorem}
\clevertheorem{prop}{Proposition}{Propositions}
\newtheorem*{prop*}{Proposition}
\clevertheorem{lemma}{Lemma}{Lemmas}
\clevertheorem{cor}{Corollary}{Corollaries}
\clevertheorem{conj}{Conjecture}{Conjectures}
\clevertheorem{claim}{Claim}{Claim}
\clevertheorem{observation}{Observation}{Observation}

\theoremstyle{definition}  
\clevertheorem{definition}{Definition}{Definitions}
\clevertheorem{notn}{Notation}{Notations}

\theoremstyle{remark}  
\clevertheorem{remark}{Remark}{Remarks}
\clevertheorem{example}{Example}{Examples}

\makeatletter\let\c@equation\c@thm\makeatother
 
\makeatletter\let\c@figure\c@thm\makeatother

\crefname{figure}{Figure}{Figures}
\crefname{equation}{Display}{Displays}  
\crefname{eq}{Display}{Displays}
\crefname{eqn}{Display}{Displays}

\hypersetup{
 pdfauthor={Zhonghui Sun},
}

\usepackage{nicematrix}
\usepackage{quiver}
\usepackage{graphicx,
tikz,tikz-cd} 
\usepackage{braket}

\newcommand{\shspace}{\hspace{.5mm}} 

\newcommand{\sh}[1]{{\ensuremath{\shspace\hspace{1mm}\makebox[-1mm]{$\langle$}\makebox[0mm]{$\langle$}\hspace{1mm}{#1}\makebox[1mm]{$\rangle$}\makebox[0mm]{$\rangle$}\shspace}}}

\begin{document}

\title{Twisted Bicategorical Shadows and Traces}
\author{Zhonghui Sun}
\address{Michigan State University}
\email{sunzhon1@msu.edu}

\begin{abstract}
Bicategorical shadows provide a categorical framework that encompasses Hochschild homology and topological Hochschild homology (THH), encodes their Morita invariance, and extends the notion of trace from symmetric monoidal categories to bicategories. Certain equivariant variants, such as \(C_n\)-twisted THH, do not fit into the ordinary shadow framework. We introduce bicategories with \(G\)-twisting data and show that every ordinary shadow on such a bicategory induces, for each \(g\in G\), a \(g\)-twisted shadow on the associated bicategory of \(G\)-twists. These \(g\)-twisted shadows are invariant under \(G\)-Morita equivalence; examples include \(C_n\)-twisted THH and twisted Hochschild homology of \(C_n\)-Green functors. We further define a \(g\)-twisted bicategorical trace that recovers the \(g\)-twisted Hattori--Stallings trace. For \(C_n\)-Green functors, the orbitwise \(g\)-twisted Hattori--Stallings traces assemble into a morphism of \(C_n\)-Mackey functors that, at the orbit \(C_n/C_n\), recovers the degree-zero twisted Dennis trace.
\end{abstract}

\maketitle

\tableofcontents
\pagenumbering{arabic}
\setcounter{page}{1}

\section{Introduction}
Algebraic \(K\)-theory encodes subtle arithmetic and geometric information about rings and ring spectra, but it is notoriously difficult to compute directly. Trace methods address this difficulty by relating $K$-theory, via trace maps, to more computable Hochschild-type invariants, most notably topological Hochschild homology \(\THH\) and topological cyclic homology \(\TC\) \cite{BHM, NikolausScholze}. Since \(\THH\) carries a natural \(S^1\)-action, and \(\TC\) is constructed from this action, developments in trace methods are closely tied to equivariant stable homotopy theory.

The formalism of bicategorical shadows naturally explains many structural features of Hochschild homology and its trace maps. Introduced by Ponto and Shulman \cite{PONSH}, shadows give a conceptual framework for cyclic symmetry, Morita invariance, and bicategorical traces, and identify classical Hattori--Stallings traces as instances of the bicategorical trace. The same perspective also governs the topological setting: \(\THH\) may be viewed as a shadow on the homotopy bicategory of bimodule spectra \cite{CP}.

Recent work has produced equivariant refinements of \(\THH\).
For a \(C_n\)-ring spectrum \(R\), the twisted theory
\(\THH_{C_n}(R)\) is constructed from the \(C_n\)-twisted cyclic bar
construction. Its realization carries a natural \(S^1\)-action and,
with this equivariant structure, is identified with the relative norm
\(
N_{C_n}^{S^1}R
\)
\cite{ABGHLM}. When \(C_n=e\), this construction recovers ordinary
\(\THH\) as \(N_e^{S^1}R\). This norm description builds on the
multiplicative norm formalism of Hill--Hopkins--Ravenel \cite{HHR}.

Twisted topological Hochschild homology is related to  
algebraic \(K\)-theory through a twisted analogue of the Dennis trace
\cite[Section~6.1]{AGHKK}. After identifying the source of this trace with the
fixed points of equivariant algebraic \(K\)-theory \cite{Merling2017,MM}, one obtains a map
\(
K_{C_n}(R)^{C_n}
\to
\THH_{C_n}(R)
\)
\cite[Section~6.2]{AGHKK}; see also
\cite{CGK25}.

On the algebraic side, the corresponding invariant for Green
functors is twisted Hochschild homology, a
Mackey-functor-valued algebraic analogue of
\(\THH_{C_n}\) \cite{BGHL}.

However, these twisted Hochschild-type theories do not fit into the
ordinary shadow formalism. As observed in
\cite[Remarks~4.1.6 and~4.2.6]{AGHKK}, they admit analogues of the
usual cyclicity isomorphisms, but cyclic rotation also transports the
twisting by the group action from one side of the composite to the
other. Consequently, these cyclicity isomorphisms do not in general
satisfy the coherence axioms required of an ordinary shadow.

Twisted shadows and traces for an endobicategory
\((\mathcal C,\Sigma)\) were introduced by Beliakova, Putyra, and
Wehrli \cite{BPW}. The framework developed here gives a
group-indexed, internal realization of this point of view. On the
bicategory \(\mathcal B^{G\text{-tw}}\) constructed below, the resulting
\(g\)-twisted shadow axioms agree with the BPW axioms for
\(\Sigma=\operatorname{Id}_{\mathcal B^{G\text{-tw}}}\). The
group-indexed coherence additionally gives rise to conjugation
transport, centralizer actions, and periodicity, and it underlies the
Mackey-functor compatibility established for Green functors below. In
this sense, the construction provides the type of equivariant shadow
framework anticipated in
\cite[Remarks~4.1.6 and~4.2.6]{AGHKK}.

To construct this group-indexed realization, we introduce bicategories
with \(G\)-twisting data
(Definition~\ref{def:G_twisting_data}).
For each object \(A\), the twisting data consist of a strong monoidal
functor
\[
\Theta_A\colon G\longrightarrow \mathrm{Pic}_{\mathcal B}(A)
\]
into the Picard groupoid of invertible endomorphism \(1\)-cells of
\(A\). Given a \(1\)-cell \(M\colon A\to B\) and an element \(g\in G\),
these data determine left and right twists
\[
{}^gM:=\Theta_A(g^{-1})\odot M,
\qquad
M^g:=M\odot\Theta_B(g).
\]

From these data, we construct the bicategory of \(G\)-twists
\(\mathcal B^{G\text{-tw}}\)
(Definition~\ref{def:bicat_of_G_twists}). A \(1\)-cell from \(A\) to
\(B\) in \(\mathcal B^{G\text{-tw}}\) consists of a \(1\)-cell
\(M\colon A\to B\) in \(\mathcal B\), together with coherent twisting
constraints
\(
\tau_{g,M}\colon
{}^{g^{-1}}M
\xrightarrow{\cong}
M^g.
\)
These constraints express how \(M\) intertwines the twists at its
source and target and allow the twist to be transported across \(M\).
In bicategories of bimodules, they are equivalent to semilinear
\(G\)-actions (Corollary~\ref{cor:equiv-mod-G}).

On this bicategory, we define, for each \(g\in G\), a \(g\)-twisted shadow (Definition~\ref{def:g-twisted-shadow}). If the underlying bicategory \(\mathcal B\) with \(G\)-twisting data is equipped with an ordinary shadow \(\sh{-}\), then the \(g\)-twisted shadow is obtained by applying the ordinary shadow to the left-twisted \(1\)-cell:
\(
{}^g\sh{M}:=\sh{{}^gM}.
\)

We prove that these \(g\)-twisted shadows satisfy twisted analogues of the usual shadow cyclicity axioms. We also show that the \(g\)- and \(h\)-twisted shadows are naturally isomorphic whenever \(g\) and \(h\) are conjugate, and that they are invariant under \(G\)-Morita equivalence (Definition~\ref{def:G_morita_equivalence}). When \(g\) has finite order \(n\), we show that the twofold twisted
cyclicity operator has order dividing \(n\); equivalently, the
\(2n\)-fold iterated twisted cyclicity map is the identity.

Our first main result is that twisted Hochschild-type invariants arise from this formalism. In particular, for \(C_n\)-ring spectra, \(\THH_{C_n}\) arises as a twisted shadow.

\begin{thm}[Theorem~\ref{thm:twisted-thh-shadow}]
For the chosen generator \(g=e^{2\pi i/n}\), the assignment sending
\((R,M)\) to \(\THH_{C_n}(R;M)\)
defines a \(g\)-twisted shadow on
\(\Ho\mathbf{Bimod}/\mathbf{Sp}^{C_n}\),
the homotopy bicategory of \(C_n\)-ring spectra and bimodules
(see Example~\ref{ex:HoBimodSpCn}).
\end{thm}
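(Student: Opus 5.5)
The plan is to deduce the statement from the general machinery of the paper rather than verify the twisted shadow axioms by hand. First I would exhibit \(\Ho\mathbf{Bimod}/\mathbf{Sp}\) (or its \(C_n\)-equivariant version with underlying spectra) as a bicategory with \(C_n\)-twisting data in the sense of Definition~\ref{def:G_twisting_data}. For a ring spectrum \(R\), I would take \(\Theta_R(g)\) to be \(R\) regarded as an \((R,R)\)-bimodule, with one action restricted along the action map \(g\colon R\to R\). The monoidal constraints \(\Theta_R(g)\odot\Theta_R(h)\simeq\Theta_R(gh)\) come from the equivalence \(R\wedge_R R\simeq R\), twisted appropriately, and their coherence holds because the action of \(C_n\) on \(R\) is strict. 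Next I would identify \(\Ho\mathbf{Bimod}/\mathbf{Sp}^{C_n}\) with the bicategory of \(C_n\)-twists \(\mathcal B^{G\text{-tw}}\) of Definition~\ref{def:bicat_of_G_twists}, at least on the relevant objects. This is the spectral analogue of Corollary~\ref{cor:equiv-mod-G}: a \(C_n\)-equivariant bimodule structure on \(M\) gives isomorphisms \(\tau_{g,M}\colon {}^{g^{-1}}M\simeq M^g\) by \(m\mapsto g\cdot m\), and the cocycle condition for \(\tau\) is the associativity of the action.

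Second, I would invoke the ordinary shadow \(\sh{-}=\THH\) on the homotopy bicategory of bimodule spectra \cite{CP}. I would then apply the main construction, namely that an ordinary shadow on a bicategory with twisting data induces a \(g\)-twisted shadow \({}^g\sh{M}:=\sh{{}^gM}\) on \(\mathcal B^{G\text{-tw}}\). This gives a \(g\)-twisted shadow \((R,M)\mapsto \THH(R;{}^gM)\), where \({}^gM\) is \(M\) with its left action twisted by \(g^{-1}\) (or by \(g\), depending on conventions).

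Third, and this is where the real content lies, I would identify \(\THH(R;{}^gM)\), for \(g=e^{2\pi i/n}\), with \(\THH_{C_n}(R;M)\), naturally in \((R,M)\) and compatibly with the cyclicity maps. Following \cite{ABGHLM,AGHKK}, the \(C_n\)-twisted cyclic bar construction has \(q\)-simplices \(M\wedge R^{\wedge q}\). All faces are the usual ones except the last, which multiplies the final \(R\)-factor into \(M\) after acting by \(g\). This is exactly the ordinary cyclic bar construction \(B^{\mathrm{cy}}(R;{}^gM)\), so the two agree levelwise, and derived issues are handled by cofibrant replacement as in \cite{CP}. I would then check that under this identification the twisted cyclicity isomorphisms produced by the general theorem coincide with the maps of \cite[Remark~4.1.6]{AGHKK}. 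The general theorem builds these from the ordinary \(\THH\) cyclicity composed with \(\tau\).

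I expect this last compatibility, and the bookkeeping of the conventions \(g\) versus \(g^{-1}\) and left versus right twists, to be the main obstacle. I would handle it by writing both composites explicitly on simplicial level for \(\THH(R;M\wedge_R{}^gN)\to\THH(R';{}^gN\wedge_R M)\) and comparing them.
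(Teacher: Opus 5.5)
Your proposal is correct and follows the paper's argument. The paper likewise uses the levelwise isomorphism $B^{\mathrm{cyc},C_n}_\bullet(R;M)\cong B^{\mathrm{cyc}}_\bullet(R;{}^gM)$ of Remark~\ref{rem:twisted-bar-equivalence} to identify $\THH_{C_n}(R;-)$ with the induced $g$-twisted shadow $\sh{{}^g(-)}$ of the ordinary $\THH$ shadow, with the canonical twisting constraints of Example~\ref{ex:HoBimodSpCn}; to get $\Ho\mathbf{Sp}^{C_n}$-valued rather than merely underlying-spectrum-valued structure, take the Hochschild shadow on $\Ho\mathbf{Mod}(\mathbf{Sp}^{C_n})$ rather than the non-equivariant one of \cite{CP}. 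Your final comparison with the cyclicity maps of \cite[Remark~4.1.6]{AGHKK} is unnecessary, since the twisted cyclicity on $\THH_{C_n}$ is simply transported along that isomorphism; moreover, those maps (as this paper describes them) move the twist across the composite instead of transporting it via twisting constraints, so you should not expect them to coincide with ${}^g\theta$, which additionally uses the transport $\sh{g_N\odot\id_M}$.
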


More generally, let \(\mathcal V\) be a symmetric monoidal, simplicial model category such that the homotopy bicategory \(\Ho\mathbf{Mod}(\mathcal V)\) of monoids and bimodules is equipped with \(G\)-twisting data (see Remark~\ref{rem:HoModV}). For every \(g\in G\), the \(g\)-twisted Hochschild construction defines a \(g\)-twisted shadow on the associated bicategory of \(G\)-twists. In particular, this recovers the twisted Hochschild homology of \(C_n\)-Green functors.

\begin{thm}[Theorem~\ref{thm:twisted-HH-shadow} and
Corollary~\ref{cor:twisted-HH-Green}]
Let \(\mathcal V\) be a symmetric monoidal, simplicial model category,
and suppose that \(\Ho\mathbf{Mod}(\mathcal V)\) is equipped with
\(G\)-twisting data. For every \(g\in G\), the assignment
\(
(A,M)\mapsto \HH^g_{\mathcal V}(A;M)
\)
defines a \(g\)-twisted \(\Ho(\mathcal V)\)-valued shadow on 
\(
\Ho\mathbf{Mod}(\mathcal V)^{G\text{-}\mathrm{tw}}.
\)

In particular, take \(G=C_n\) and
\(\mathcal V=s\mathbf{Mack}_{C_n}\), and let
\(g=e^{2\pi i/n}\) be the chosen generator. For every \(i\geq 0\), the
composite
\(
H_i\circ\HH^g_{s\mathbf{Mack}_{C_n}}
\)
defines a \(g\)-twisted \(\mathbf{Mack}_{C_n}\)-valued shadow, denoted
\(\underline{\HH}^{\,C_n}_i\).
\end{thm}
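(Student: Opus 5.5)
The plan is to reduce to the general result stated earlier in the introduction: an ordinary shadow \(\sh{-}\) on a bicategory \(\mathcal B\) with \(G\)-twisting data induces, for each \(g\in G\), a \(g\)-twisted shadow \({}^g\sh{M}:=\sh{{}^gM}\) on \(\mathcal B^{G\text{-tw}}\). So I would first check that \(\Ho\mathbf{Mod}(\mathcal V)\) carries an ordinary \(\Ho(\mathcal V)\)-valued shadow. This shadow is the derived cyclic bar construction: \(\sh{M}=\HH_{\mathcal V}(A;M)\), the realization of \([q]\mapsto M\otimes A^{\otimes q}\), computed after cofibrant replacement of \(A\) and \(M\). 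This is the standard Ponto--Shulman example; the simplicial model structure is used to make realization homotopical and to build the cyclicity isomorphisms \(\sh{M\odot N}\cong\sh{N\odot M}\) as zigzags of two-sided bar constructions. I will take this as known, citing Remark~\ref{rem:HoModV}.

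Next I would identify the \(g\)-twisted Hochschild construction \(\HH^g_{\mathcal V}(A;M)\) with \(\sh{{}^gM}=\sh{\Theta_A(g^{-1})\odot M}\). Concretely, the twisting data give an invertible bimodule \(\Theta_A(g^{-1})\), which is \(A\) with one action twisted by \(g\). For such a twisted-by-an-automorphism bimodule, \(\Theta_A(g^{-1})\odot M\) is \(M\) with its left action precomposed by \(g\). The cyclic bar construction on this bimodule is, levelwise, exactly the \(g\)-twisted cyclic bar construction, with the last face map inserting the action of \(g\). This gives a natural isomorphism in \(\Ho(\mathcal V)\) that is compatible with \(2\)-cells. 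The twisted cyclicity isomorphisms for \(\HH^g\) are then defined by transport along this identification, and the twisted shadow axioms follow from the general theorem. The main obstacle is checking that the transported cyclicity maps agree with the "natural" twisted cyclicity maps of \(\HH^g\), meaning those that move \(g\) through \(M\) via \(\tau_{g,M}\). I would handle this by writing both as maps of simplicial objects on cofibrant models and comparing them levelwise.

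For the second statement I would take \(G=C_n\) and \(\mathcal V=s\mathbf{Mack}_{C_n}\), with the box product and the usual simplicial model structure. In \(\Ho\mathbf{Mod}\), the monoids are simplicial Green functors, and the \(C_n\)-action on a Green functor by conjugation gives the twisting data \(\Theta\), with \(\Theta_A(g)\) the twisted bimodule \(A^g\). With \(g=e^{2\pi i/n}\), the construction \(\HH^g\) agrees with the BGHL twisted cyclic nerve of \cite{BGHL}. Finally, \(H_i\colon\Ho(s\mathbf{Mack}_{C_n})\to\mathbf{Mack}_{C_n}\) is a functor. Postcomposing a twisted shadow with any functor preserves the twisted shadow axioms, since these are only naturality and commutative diagrams of isomorphisms. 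Hence \(\underline{\HH}^{C_n}_i\) is a \(g\)-twisted \(\mathbf{Mack}_{C_n}\)-valued shadow.
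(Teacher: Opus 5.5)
Your proposal follows the paper's argument. The paper likewise takes the ordinary Hochschild shadow on \(\Ho\mathbf{Mod}(\mathcal V)\) (Remark~\ref{rem:HH-construction}), notes that \(\HH^g_{\mathcal V}(A;M)\) is this shadow evaluated on \({}^gM=\Theta_A(g^{-1})\odot M\), applies Proposition~\ref{prop:g_twisted_axioms}, and then postcomposes with \(H_i\). In the paper, however, this identification holds by definition (Definition~\ref{def:twisted-HH-construction}) for arbitrary twisting data, so two of your steps are unnecessary: the levelwise comparison with the action-twisted cyclic bar construction (which in any case presupposes that \(\Theta_A(g^{-1})\) is a base-change bimodule, and gives only an isomorphism in the homotopy category) and the check against ``natural'' cyclicity maps.
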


On bimodule Mackey functors regarded as constant simplicial objects,
\(\underline{\HH}^{\,C_n}_i\) agrees with the twisted Hochschild
homology of Green functors in \cite{BGHL,AGHKK}.

We then develop the corresponding trace theory. In the ordinary shadow formalism, the classical Hattori--Stallings trace is recovered as the bicategorical trace of an endomorphism of a dualizable \(1\)-cell in the bicategory of rings and bimodules. The degree-zero Dennis trace is obtained from this construction by applying the Hattori--Stallings trace to identity endomorphisms of finitely generated projective modules.

Our twisted shadow formalism gives an analogous construction: each
\(g\)-twisted shadow gives rise to a corresponding \(g\)-twisted
bicategorical trace \(\operatorname{tr}^g\). The next theorem shows that, for \(G\)-rings, this trace recovers the
\(g\)-twisted Hattori--Stallings trace.
\begin{thm}[Theorem~\ref{thm:twHS-recovery-general}]
Fix \(g\in G\). Let \(R\) and \(S\) be \(G\)-rings, let
\((P,\gamma)\colon R\to S\) be a left dualizable \(1\)-cell in the
bicategory of \(G\)-twists, and let \(f\colon P\Rightarrow P\) be a
\(2\)-cell. Then, for every \(s\in S\),
\(
\operatorname{tr}^g(f)([s])
=
\operatorname{tr}_{\mathrm{HS},\gamma}^g(f\circ\rho_s)
=
\operatorname{tr}_{\mathrm{HS},\gamma}^g(\rho_s\circ f),
\)
where \(\rho_s\colon P\to P\) denotes right multiplication by \(s\).
\end{thm}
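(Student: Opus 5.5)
We will compute both sides explicitly in the bicategory of \(G\)-rings and bimodules and compare them element by element.

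\textbf{Setting up the duality data.} Since \(P\) is left dualizable, it has a dual \(P^\ast=\Hom_S(P,S)\), an \(S\)-\(R\)-bimodule. The unit \(\eta\colon R\to P\odot P^\ast\) sends \(1\) to a finite sum \(\sum_i p_i\otimes \phi_i\), where \(\{p_i,\phi_i\}\) is a dual basis for \(P\) as a finitely generated projective right \(S\)-module. The counit \(\varepsilon\colon P^\ast\odot P\to S\) is evaluation. In the bicategory of \(G\)-twists, the dual carries the induced twisting constraint \(\gamma^\ast\), and \(\eta,\varepsilon\) are \(2\)-cells there. We use the paper's lemma that duals in \(\mathcal B^{G\text{-tw}}\) are computed from underlying duals with the transported constraint.

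\textbf{Unwinding the trace.} By definition, \(\operatorname{tr}^g(f)\) is the composite
\[
{}^g\sh{S}\to {}^g\sh{P^\ast\odot P}\xrightarrow{\;\theta\;}{}^g\sh{P\odot P^\ast}\to {}^g\sh{R}
\]
or its mirror, depending on the paper's convention for the direction of \(\operatorname{tr}^g\). Here the maps are built from \(\eta\), \(\varepsilon\), \(f\), and the \(g\)-twisted cyclicity isomorphism \(\theta\).

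For \(G\)-rings, the twisted shadow \({}^g\sh{S}=\sh{{}^gS}\) is the twisted zeroth Hochschild homology \(S/\langle ab-b\,g(a)\rangle\), up to the paper's convention for \(g\) versus \(g^{-1}\). I will identify \(\theta\) explicitly. It is the ordinary cyclicity of \(\HH_0\) composed with the twisting constraint \(\gamma\) (respectively \(\gamma^\ast\)), which moves the twist \(\Theta(g^{-1})\) across \(P\). Concretely, on \(\phi\otimes p\) it should send to \(p\cdot{}\) the twisted tensor with \(\gamma_g\) applied to \(p\).

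Chasing \([s]\) through the composite then gives
\[
\Big[\sum_i \phi_i\big(\gamma_g^{\pm1}(f(p_i)s)\big)\Big],
\]
possibly with the twist applied to \(\phi_i\) instead. This is precisely the definition of \(\operatorname{tr}^g_{\mathrm{HS},\gamma}(f\circ\rho_s)\): the twisted Hattori--Stallings trace is the class of \(\sum_i\phi_i(\gamma_g(hp_i))\) for an endomorphism \(h\).

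\textbf{The second equality.} The equality \(\operatorname{tr}^g_{\mathrm{HS},\gamma}(f\circ\rho_s)=\operatorname{tr}^g_{\mathrm{HS},\gamma}(\rho_s\circ f)\) should follow from the twisted trace property. Either invoke the earlier result that \(\operatorname{tr}^g_{\mathrm{HS},\gamma}(uv)=\operatorname{tr}^g_{\mathrm{HS},\gamma}(vu)\) when one of the maps is suitably twisted, or verify it directly. For the direct check, \(f\) is \(S\)-linear, so \(f(p_is)=f(p_i)s\). Thus \(f\circ\rho_s=\rho_s\circ f\) as functions when \(\rho_s\) means \(p\mapsto ps\), and the equality is immediate. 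If instead the convention makes \(\rho_s\) not \(S\)-linear, I would use independence of the dual basis together with the relation \([ab]=[b\,g(a)]\) to move \(s\).

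\textbf{Main obstacle.} The hard step is the explicit identification of the twisted cyclicity isomorphism \(\theta\) on elements, including the exact placement of \(g\) versus \(g^{-1}\) and of \(\gamma\) versus \(\gamma^{-1}\). That identification has to match the paper's normalization of \(\operatorname{tr}^g_{\mathrm{HS},\gamma}\). I would first compute it in the free rank-one case \(P=S\), using \(\Theta_S(g)={}_1S_g\) and \(\gamma\) given by the semilinear action, and then extend by additivity to summands of free modules.
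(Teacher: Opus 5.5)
There is a genuine gap: you set up the wrong duality. In the paper's conventions a $1$-cell $R\to S$ is an $(R,S)$-bimodule. \emph{Left} dualizability of $P$ means that $P$ is finitely generated projective as a left $R$-module. Its left dual is ${}^*P=\Hom_R(P,R)$, with coevaluation $\bar\eta\colon S\to{}^*P\otimes_R P$ and evaluation $\bar\epsilon\colon P\otimes_S{}^*P\to R$. These are exactly the maps needed for a trace $\HH_0^g(S)\to\HH_0^g(R)$.

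What you wrote down, $P^*=\Hom_S(P,S)$ with $\eta\colon R\to P\otimes_S P^*$ and $\varepsilon\colon P^*\otimes_R P\to S$, is the \emph{right} dual. It is not available under the hypothesis, since left dualizability gives no finiteness of $P$ over $S$. With it, the composite ${}^g\sh{S}\to{}^g\sh{P^*\odot P}\to{}^g\sh{P\odot P^*}\to{}^g\sh{R}$ cannot even be formed, because both of your structure maps point the other way. Correspondingly, your final expression $\sum_i\phi_i(\gamma_g(f(p_i)s))$ is an element of $S$, not of $R$. So it cannot be $\operatorname{tr}^g_{\mathrm{HS},\gamma}(f\circ\rho_s)$, which is defined through $\pi^{(g)}\colon{}^*P\otimes_R{}^gP\to\HH_0^g(R)$, $\xi\otimes p\mapsto[\xi(p)]$, with $\xi\in\Hom_R(P,R)$.

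Once the correct dual is used, your element-chasing plan goes through and is essentially the paper's three-region diagram.
\begin{itemize}
\item Take a dual basis $\{(\xi_i,p_i)\}$ of $P$ over $R$, so $p=\sum_i\xi_i(p)p_i$. Then $\bar\eta(s)=\sum_i\xi_i\otimes p_is=\delta_{P,P}^{-1}(\rho_s)$, and hence $(\id\otimes f)\bar\eta(s)=\delta^{-1}_{P,P}(f\circ\rho_s)$.
\item The step you left open is settled by the definition of ${}^g\theta$ in the proof of Proposition~\ref{prop:g_twisted_axioms}. The map occurring in the trace is ${}^g\theta_{{}^*P,P}$, which moves the twist across $P$ (not ${}^*P$) via the transport map $g_P=(\tau_{g^{-1}})^{-1}$.
\item Under the identifications of Lemma~\ref{lem:base-change-twists}, $\iota$ is the identity on underlying elements. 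Also $g_P$ has underlying map the semilinear automorphism $\phi_g$, which is also the underlying map of $\gamma_g\colon P\to{}^gP$. Hence ${}^g\theta[\xi\otimes p]=[\gamma_g(p)\otimes\xi]$, with no inverse and no $\gamma^*$.
\item Applying $\bar\epsilon$ then gives $[s]\mapsto\bigl[\sum_i\xi_i(\gamma_g(f(p_is)))\bigr]=\operatorname{tr}^g_{\mathrm{HS}}(\gamma_g\circ f\circ\rho_s)$.
\end{itemize}
This computation works for every finitely generated projective $P$ at once, so your proposed reduction to rank-one free modules is unnecessary. It would also be awkward, since a summand of a free left $R$-module need not be a summand in $\mathcal B^{G\text{-tw}}(R,S)$. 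Your argument for the second equality, $f\circ\rho_s=\rho_s\circ f$ by right $S$-linearity of $f$, is the paper's and is fine.
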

For \(C_n\)-Green functors, the same recovery result holds at the
level of morphisms of \(C_n\)-Mackey functors
(Theorem~\ref{thm:tw-eq-bicat-vs-HS}).

This comparison with Hattori--Stallings traces also has a
\(K\)-theoretic consequence: on classes
arising from \(G\)-twisted \(1\)-cells, it gives a categorical realization of
the degree-zero twisted Dennis trace of
\cite[Section~6.1]{AGHKK}. In the homotopical setting, let \(R\) be a
cofibrant monoid equipped with a \(G\)-action. On classes represented
by perfect modules whose maps to their \(g\)-twists arise from a
chosen \(G\)-twist structure, we show that the degree-zero
\(g\)-twisted Dennis trace agrees with the \(g\)-twisted
bicategorical trace
(Proposition~\ref{prop:algebraic_dennis_trace_recovery}). This
comparison extends to generalized Dennis traces with coefficients in
the twisted bimodule \({}^gM\)
(Corollary~\ref{cor:generalized-twisted-trace-comparison}).

We next use the \(g\)-twisted Hattori--Stallings trace to construct a
Mackey-functor refinement of the degree-zero twisted Dennis trace for
Green functors.

\begin{thm}[Theorem~\ref{thm:twisted-equivariant-Dennis}]
Let \(\underline R\) be a \(C_n\)-Green functor and let \(g\in C_n\).
The orbitwise \(g\)-twisted Hattori--Stallings traces descend to
Grothendieck groups and assemble into a morphism of \(C_n\)-Mackey
functors
\(\operatorname{tr}_{\mathrm{Dennis}}^g\colon
\underline K_0^g(\underline R)
\to
\underline{\HH}_0^g(\underline R).
\)
\end{thm}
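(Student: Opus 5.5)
We construct the trace one orbit at a time, check that it is additive, and then check that it commutes with restriction, transfer and the Weyl action. For each subgroup \(H\le C_n\) we evaluate at the orbit \(C_n/H\). Here \(\underline R(C_n/H)\) is a ring with an action of \(W_H=C_n/H\), and it receives the twisting data induced from \(g\). The group \(\underline K_0^g(\underline R)(C_n/H)\) is, as defined earlier, the Grothendieck group of finitely generated projective \(\underline R(C_n/H)\)-modules \(P\) equipped with a \(g\)-twist structure \(\gamma\), that is, a semilinear isomorphism \(P\to{}^gP\) compatible with the twisting constraints. On such a pair we set
\[
\operatorname{tr}^g_{\mathrm{Dennis}}[P,\gamma]:=\operatorname{tr}^g_{\mathrm{HS},\gamma}(\mathrm{id}_P)\in \underline{\HH}^g_0(\underline R)(C_n/H).
\]
By Theorem~\ref{thm:tw-eq-bicat-vs-HS}, this equals the \(g\)-twisted bicategorical trace of the identity \(2\)-cell, evaluated at \([1]\).

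The first step is to show that this is well defined on Grothendieck groups. We need three facts. The value depends only on the isomorphism class of \((P,\gamma)\); this follows from conjugation invariance of \(\operatorname{tr}^g\), which is the twisted cyclicity of the shadow applied to \(\phi\circ\phi^{-1}\). The value is additive on direct sums \((P\oplus Q,\gamma\oplus\gamma')\); we compute this with dual bases, and the off-diagonal terms vanish. Since projectives split, short exact sequences reduce to direct sums. Concretely, choose a dual basis \(\{p_i,\varphi_i\}\). Then \(\operatorname{tr}^g_{\mathrm{HS},\gamma}(\mathrm{id})\) is the class of \(\sum_i \varphi_i(\gamma p_i)\) in the twisted coinvariants \(\underline R/\langle ab-b\,{}^ga\rangle\). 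Additivity is immediate from this formula.

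The second step is naturality with respect to the Mackey structure. For \(K\le H\) there are two checks.

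\emph{Restriction.} On \(\underline K_0^g\), restriction is base change along \(\res^H_K\colon\underline R(C_n/H)\to\underline R(C_n/K)\). Base change sends a dual basis to a dual basis and sends \(\gamma\) to \(\res(\gamma)\). The formula above is therefore compatible with \(\res\) on \(\underline{\HH}^g_0\), because that map is induced by \(\res\) on \(\underline R\).

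\emph{Conjugation and the Weyl action.} These follow from the conjugation transport isomorphisms proved for \(g\)-twisted shadows, using that \(C_n\) is abelian.

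\emph{Transfer.} On \(\underline K_0^g\), transfer is restriction of scalars along \(\res^H_K\), using that \(\underline R(C_n/K)\) is finitely generated projective over \(\underline R(C_n/H)\) via Frobenius reciprocity for Green functors. Equivalently, it is composition with a dualizable \(1\)-cell. The plan is to express transfer in \(\underline K_0^g\) as the composite of \(1\)-cells \(P\odot\underline R(C_n/K)_{\,H}\). We then invoke the functoriality of the twisted bicategorical trace under composition of \(1\)-cells, \(\operatorname{tr}^g(f\odot f')=\operatorname{tr}^g(f')\circ\operatorname{tr}^g(f)\). This reduces the check to showing that the trace of the identity of the restriction bimodule induces \(\operatorname{tr}_K^H\) on \(\underline{\HH}^g_0\).

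We expect this last identification to be the main obstacle. The difficulty is that the dual basis of \(\underline R(C_n/K)\) over \(\underline R(C_n/H)\) is not explicit for a general Green functor. The first thing to try is to choose the dual basis \(\{c\cdot 1\}_{c\in H/K}\) after inducing up, which is justified by the double coset formula. Then \(\sum_c \varphi_c(\gamma\,c\cdot x)\) should collapse to \(\operatorname{tr}^H_K(x)\) via Frobenius reciprocity \(\operatorname{tr}(a\,\res b)=\operatorname{tr}(a)b\). If this does not work, the fallback is to verify the transfer compatibility on dual bases by direct computation in the twisted coinvariant quotient.
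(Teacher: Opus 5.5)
Your proposal has a genuine gap: it uses the wrong model for the source. In the paper, $\underline K_0^g(\underline R)(C_n/H)=K_0\bigl(\operatorname{Perf}_{(\underline R_H,g)}\bigr)$. Its objects are finitely generated projective modules over the restricted $H$-Green functor $\underline R_H=\operatorname{Res}^{C_n}_H\underline R$ in $\mathbf{Mack}_H$ (retracts of $\underline R_H\square\underline A_X$), each with an isomorphism $\gamma_g\colon\underline P\to{}^g\underline P$. Restriction, transfer and conjugation are induced by $\operatorname{Res}^H_K$, $\operatorname{Ind}^H_K$ and $c_a^*$ on Mackey-functor modules. These objects are not projective modules over the ring $\underline R(C_n/H)$, and $\operatorname{Res}^H_K$ is not base change along $\res^H_K$: its value at $K/K$ is $\underline P(H/K)$. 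Consequently $(\tr^{g,H}_{\mathrm{HS}})_{H/H}(\gamma_g)$ is not a levelwise dual-basis sum $\sum_i\varphi_i(\gamma p_i)$. For instance, for $\Phi_{\operatorname{Ind}}\circ\operatorname{Ind}^H_K(f^K_x)\colon\operatorname{Ind}^H_K\underline R_K\to{}^g\operatorname{Ind}^H_K\underline R_K$ its value is $\tr^H_K\bigl(\iota^{H,g}_K[x]\bigr)$ (Lemma~\ref{lem:twisted-HS-trace-ind-free}). Also, the target $\underline{\HH}_0^g(\underline R)(C_n/H)$ is a coequalizer of box products, so it also kills transfers of twisted commutators from proper subgroups. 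Finally, Theorem~\ref{thm:tw-eq-bicat-vs-HS} identifies $\tr^g_{\mathrm{HS},\gamma}(\id)$ with a bicategorical trace only when $\gamma_g$ is the $g$-component of a full $C_n$-twist structure (Corollary~\ref{corollary:twDennis-equals-twTrace-levelwise}). A general object of $\operatorname{Perf}_{(\underline R_H,g)}$ carries only $\gamma_g$, so multiplicativity of $\tr^g$ is not available for your transfer step as stated.

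This breaks the transfer step. Frobenius reciprocity $\tr(a\,\res b)=\tr(a)\,b$ does not make $\underline R(C_n/K)$ finitely generated projective over $\underline R(C_n/H)$, and in general it is not. Take $\underline R=\underline A$, $n=p$, $H=C_p$, $K=e$. Then $A(C_p)\cong\mathbb Z[t]/(t^2-pt)$ with $t=[C_p/e]$, and via $\res^{C_p}_e$ we get $\underline A(C_p/e)=\mathbb Z\cong A(C_p)/(t-p)$. Its annihilator $(t-p)$ is a proper nonzero ideal. But a finitely generated projective module over a commutative ring has annihilator generated by an idempotent, and $A(C_p)$ has only the idempotents $0,1$. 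So this module is not projective: restriction of scalars does not define a transfer on your $K_0$, and the proposed dual basis $\{c\cdot 1\}$ does not exist at the ring level.

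Projectivity survives only at the Mackey level: by the projection formula, $\operatorname{Ind}^H_K(\underline R_K\square\underline A_Y)\cong\underline R_H\square\underline A_{H\times_KY}$ is finite free, even though its value at $H/H$ is not projective over the ring. The paper uses exactly this.
\begin{itemize}
\item For $(\underline Q,\gamma_g)$ it chooses a splitting $\underline Q\xrightarrow{i}\underline F\xrightarrow{p}\underline Q$ with $\underline F=\underline R_K\square\underline A_Y$.
\item It uses equivariant twisted cyclicity (Proposition~\ref{prop:eq-algebraic-sliding}) to replace $\gamma_g$ by ${}^gi\circ\gamma_g\circ p$ before and after induction.
\item It computes the finite free case using the self-duality spans of $Y$ transported along $H\times_K-$ (Lemma~\ref{lem:twisted-HS-trace-ind-free}).
\item Restriction and conjugation follow because $\operatorname{Res}^H_K$ and $c_a^*$ are strong symmetric monoidal and compatible with the twist through $\Phi_{\operatorname{Res}}$ and $\Phi_{c_a}$.
\end{itemize}
A smaller point: a short exact sequence in $\operatorname{Perf}_{(\underline R_H,g)}$ splits only as modules, so $\gamma_g$ becomes triangular rather than diagonal. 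The off-diagonal term must be killed by twisted cyclicity; direct-sum additivity alone does not do it.
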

Its component at \(C_n/C_n\) recovers the degree-zero twisted
Dennis trace of \cite[Section~6.1]{AGHKK}.

The construction extends to arbitrary, not necessarily invertible,
parametrized endomorphisms
\(f\colon\underline P\to{}^g\underline P\), yielding a reduced
morphism of \(C_n\)-Mackey functors
(Proposition~\ref{prop:reduced-twisted-Dennis-Mackey}).
For \(C_n\)-Tambara functors, this reduced trace recovers the \(m=1\)
Teichm\"uller maps of \cite[Theorem~6.13]{BGHL}.

Finally, we establish twisted analogues of the properties of bicategorical traces, including tightening, sliding, additivity, and multiplicativity (Propositions~\ref{prop:tw-tightening}, \ref{prop:twisted_sliding_general}, \ref{prop:tw-additivity}, and \ref{prop:tw-multiplicativity}).

 \subsection*{Organization}
In Section 2, we review the necessary background on equivariant algebra, bicategories with shadows, bicategorical traces, and twisted topological Hochschild homology.  Section~3 introduces bicategories of \(G\)-twists, identifies their
bimodule incarnation with semilinear equivariant bimodules, and records
further examples. Section 4 develops the theory of \(g\)-twisted shadows, establishes their basic properties, and identifies \(\THH_{C_n}\) within this framework. Section~5 develops the corresponding bicategorical trace theory, relates it to twisted Hattori--Stallings traces, provides a categorical realization of the degree-zero twisted Dennis trace on classes
arising from \(G\)-twisted \(1\)-cells, and constructs its Mackey-functor-valued   refinement. Section~6 establishes the properties of the twisted bicategorical trace.

\subsection*{Acknowledgments}
The author has been supported by the Michigan State University Graduate School and by NSF grants DMS-2052042, DMS-2104233, and DMS-2404932. The author thanks the Department of Mathematics at Michigan State University for providing a supportive research environment. The author is especially grateful to Teena Gerhardt for her guidance, as well as to Kate Ponto and Cary Malkiewich for helpful conversations. The author is also grateful to David Chan and Maximilien Péroux for their careful reading of earlier drafts of this paper. The author also thanks the Isaac Newton Institute for Mathematical Sciences, Cambridge, for support and hospitality during the programme “Equivariant homotopy theory in context,” where part of this work was undertaken.

\section{Background}
In this section, we review the background needed in the remainder of the paper. Section 2.1 recalls the framework of equivariant algebra, focusing on the Burnside category, Mackey functors, and Green functors. Section 2.2 reviews shadowed bicategories, and Section 2.3 recalls twisted topological Hochschild homology, which provides the primary topological motivation for our bicategorical framework.

\subsection{Equivariant algebra}
We begin by briefly recalling the Burnside category, Mackey functors, and the monoidal structure on \(\mathbf{Mack}_G\). For   details and standard definitions, we refer the reader to \cite{green1971, dress1971notes}.

\begin{definition}
For a finite group $G$, the \emph{Burnside category} $\mathcal{B}_G$ has finite $G$-sets as objects, and its morphisms are the Grothendieck groups of spans of finite $G$-sets. A \emph{\(G\)-Mackey functor} is an additive functor $\underline M\colon\mathcal B_G\to\mathbf{Ab}$. 
Equivalently, a Mackey functor is determined by its values on orbits $G/H$, equipped with restriction maps $\res_K^H\colon M(G/H)\to M(G/K)$, transfer maps $\tr_K^H\colon M(G/K)\to M(G/H)$ for $K\le H$, and conjugation maps $c_g^*\colon M(G/H)\to M(G/gHg^{-1})$ for $g\in G$ satisfying standard relations, including the Mackey double-coset formula.
A morphism of Mackey functors is a natural
transformation, equivalently a family of homomorphisms compatible
with restriction, transfer, and conjugation.
\end{definition}
 Mackey functors naturally interact with subgroup inclusions via change-of-groups operations:
 
\begin{definition}\label{def:mackey-change-of-groups}
For \(H\leq G\), restriction \(\res_H^G\colon\mathcal B_G\to\mathcal B_H\) and induction \(\ind_H^G(Y) := G\times_HY\) of finite sets induce change-of-groups functors \(\operatorname{Res}_H^G \underline{M} := \underline{M}\circ\ind_H^G\) and \(\operatorname{Ind}_H^G \underline{N} := \underline{N}\circ\res_H^G\). These satisfy \((\operatorname{Res}_H^G \underline{M})(Y) = \underline{M}(G\times_HY)\) and \((\operatorname{Ind}_H^G \underline{N})(X) = \underline{N}(\res_H^G X)\), yielding the canonical identification \((\operatorname{Res}_H^G \underline{M})(H/K) \cong \underline{M}(G/K)\) for \(K\leq H\). Furthermore, \(\operatorname{Res}_H^G\) and \(\operatorname{Ind}_H^G\) form an ambidextrous adjunction \cite{thevenaz1995structure}.
\end{definition}

\begin{example}
    The \emph{Burnside Mackey functor} \(\underline{A}\) assigns to each orbit \(G/H\) the Burnside ring \(\underline{A}(G/H) = A(H)\). For \(K \le H\), the restriction map \(\mathrm{res}_K^H \colon A(H) \to A(K)\) is given by viewing an \(H\)-set as a \(K\)-set, and the transfer map \(\mathrm{tr}_K^H \colon A(K) \to A(H)\) sends a \(K\)-set \(X\) to the induced \(H\)-set \(H \times_K X\). This functor is naturally isomorphic to the representable functor \(\underline{A}_{G/G} := \mathcal{B}_G(G/G,-)\). More generally, for any finite \(G\)-set \(X\), we have the representable Mackey functor \(\underline{A}_X := \mathcal{B}_G(X,-)\), satisfying \(\mathcal{B}_G(X,Y) \cong \underline{A}(X\times Y)\). 
\end{example}

The category \(\mathbf{Mack}_G\) is a closed symmetric monoidal category under the box product \(\square\) (Day convolution), with monoidal unit \(\underline{A}\) and internal Hom \(\underline{\Hom}\) \cite[§2]{LEWMAN}. Evaluating the box product of a Mackey functor \(\underline{M}\) with a representable yields \( \underline{M}\square \underline{A}_X \cong \underline{M}_X \), where \(\underline{M}_X(Y) := \underline{M}(X\times Y)\).

The box product and the change-of-groups functors satisfy the following projection formula.

\begin{lemma}[Projection formula]\label{lem:projection-formula-Mackey}
Let $K \leq H$. For $\underline{M} \in \mathbf{Mack}_H$ and $\underline{N} \in \mathbf{Mack}_K$, there is a natural isomorphism
\(
\operatorname{Ind}_K^H \bigl( (\operatorname{Res}_K^H \underline{M}) \square \underline{N} \bigr) \cong \underline{M} \square \operatorname{Ind}_K^H \underline{N}.
\)
\end{lemma}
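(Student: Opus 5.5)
The plan is a Yoneda argument: both sides are cocontinuous in each variable and compatible with change of groups, so it suffices to compare them on representable Mackey functors and then extend by colimits. Recall from Definition~\ref{def:mackey-change-of-groups} that \(\operatorname{Ind}_K^H\underline N=\underline N\circ\res_K^H\) and that \(\operatorname{Res}_K^H\) and \(\operatorname{Ind}_K^H\) are ambidextrously adjoint. Hence \(\operatorname{Ind}_K^H\) is both a left and a right adjoint and preserves all colimits. The box product preserves colimits in each variable, being a Day convolution in a closed symmetric monoidal category. So both sides of the claimed isomorphism are functors \(\mathbf{Mack}_H\times\mathbf{Mack}_K\to\mathbf{Mack}_H\) that commute with colimits separately in \(\underline M\) and in \(\underline N\).

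Every Mackey functor is a colimit (a coequalizer of direct sums) of representables \(\underline A_X\). It therefore suffices to produce the isomorphism naturally in representables \(\underline M=\underline A_X\) for \(X\) a finite \(H\)-set, with \(\underline N\) arbitrary. Naturality in \(X\) is needed so that the isomorphism extends along the colimit presentation; we get it because every identification below is induced by maps of finite sets compatible with spans.

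For the representable case, use \(\underline M\square\underline A_X\cong\underline M_X\). The right side evaluates at an \(H\)-set \(Y\) to
\[(\underline A_X\square\operatorname{Ind}_K^H\underline N)(Y)\cong(\operatorname{Ind}_K^H\underline N)(X\times Y)=\underline N(\res_K^H X\times\res_K^H Y).\]
For the left side, first compute \(\operatorname{Res}_K^H\underline A_X\cong\underline A_{\res_K^H X}\). This follows from \(\mathcal B_H(X,H\times_K Z)\cong\mathcal B_K(\res_K^H X,Z)\), which is the adjunction \(\res\dashv\ind\) on Burnside categories (ambidextrous, since spans are self-dual). Then
\[\bigl(\operatorname{Ind}_K^H(\underline A_{\res X}\square\underline N)\bigr)(Y)=\underline N_{\res X}(\res Y)=\underline N(\res X\times\res Y).\]
The two sides agree, and both identifications are natural in \(Y\) with respect to spans and in \(X\), because \(\res_K^H\) is a product-preserving functor \(\mathcal B_H\to\mathcal B_K\).

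The main obstacle is the bookkeeping at the end. We must check that the composite isomorphism is natural in \(\underline M\) with respect to all maps between representables, that is, transfers and restrictions along spans, and not just maps of \(G\)-sets. This is needed for the extension to colimits to be well defined, and it reduces to \(\res_K^H\) being a strong monoidal functor of Burnside categories for the cartesian product.

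An alternative I would keep in reserve is to pass to right adjoints via the internal Hom. The statement is then equivalent to \(\operatorname{Res}_K^H\underline{\Hom}(\underline M,\underline P)\cong\underline{\Hom}(\operatorname{Res}_K^H\underline M,\operatorname{Res}_K^H\underline P)\). This can be verified directly on representables in the same way, and then \(\operatorname{Res}\) is strong closed.
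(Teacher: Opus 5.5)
The paper states this lemma in its background section without proof, as a standard fact about Mackey functors, so there is no argument in the paper to match. Your proposal supplies one, and it is correct.

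The density step is sound. For fixed \(\underline N\), both sides are additive, cocontinuous functors of \(\underline M\), so each is the left Kan extension of its restriction along \(X\mapsto\underline A_X\). Hence an isomorphism on representables that is natural for all spans extends uniquely, and it is visibly natural in \(\underline N\). On representables your computation is also right. Once the Burnside-category adjunction gives \(\operatorname{Res}_K^H\underline A_X\cong\underline A_{\res_K^H X}\), both sides are \(Y\mapsto\underline N(\res_K^H X\times\res_K^H Y)\) with the same functoriality. Naturality in \(X\) then comes from naturality of that adjunction in its first variable together with \(\res_K^H(X\times Y)=\res_K^H X\times\res_K^H Y\) on spans.

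Two points deserve a sentence each when you write this up:
\begin{enumerate}
\item \(X\mapsto\underline A_X\) is contravariant on \(\mathcal B_H\), while \(X\mapsto\underline M_X\) is covariant. So \(\underline A_X\square\underline M\cong\underline M_X\) is natural in \(X\) only via the span-reversal self-duality of the Burnside category, which \(\res_K^H\) respects.
\item You should justify that \(\res_K^H\dashv\ind_K^H\) is genuinely an adjunction of Burnside categories. Both functors preserve pullbacks, and the reversed unit and counit spans of the set-level adjunction \(\ind_K^H\dashv\res_K^H\) satisfy the triangle identities.
\end{enumerate}

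Compared with the more common proof, your route is more economical. That proof reduces in both variables to representables \(\underline A_X,\underline A_Z\) and then checks the Frobenius isomorphism of \(H\)-sets \(H\times_K(\res_K^H X\times Z)\cong X\times(H\times_K Z)\), \([h,(x,z)]\mapsto(hx,[h,z])\), against spans. You reduce only in \(\underline M\) and absorb the Frobenius content into the Burnside adjunction, so the final comparison is an equality.

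Your internal-Hom alternative is also valid. For a strong monoidal functor with a left adjoint, the projection formula holds exactly when the functor is strong closed.
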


Monoid objects in $(\mathbf{Mack}_G,\square,\underline A)$ are called Green functors.

 \begin{definition}
A (commutative) \emph{Green functor} is a (commutative) monoid
in the closed symmetric monoidal category $(\mathbf{Mack}_G,\square,\underline{A})$.
That is, a Green functor is a Mackey functor \(\underline M\) equipped with multiplication and unit morphisms of Mackey functors
\(
\mu:\underline{M}\square\underline{M}\to \underline{M},  \eta:\underline{A}\to \underline{M},
\)
satisfying the usual associativity and unitality axioms.
\end{definition}

\begin{example}\label{ex:endomorphism-green}
For any Mackey functor $\underline{M}$, its internal endomorphism object
\(
\underline{\End}(\underline{M}) := \underline{\Hom}(\underline{M},\underline{M})
\)
is a Green functor. Its multiplication is obtained from Bouc's associative pairing \cite[Def.~6.1.1, Prop.~6.1.2]{bouc1997}
\[
\hat\circ:\ \underline{\Hom}(\underline Y,\underline Z)\square 
\underline{\Hom}(\underline X,\underline Y)\longrightarrow
\underline{\Hom}(\underline X,\underline Z)
\] by taking $\underline X = \underline Y = \underline Z = \underline M$.

Evaluation at $G/G$ gives  canonical identifications
\(
\underline{\Hom}(\underline U,\underline V)(G/G)\ \cong\ 
\mathbf{Mack}_G(\underline U,\underline V).
\)
Under these identifications, $(\hat\circ)_{G/G}$ agrees with ordinary composition:
for $f\in \mathbf{Mack}_G(\underline Y,\underline Z)$ and 
$g\in \mathbf{Mack}_G(\underline X,\underline Y)$,
\[
(\hat\circ)_{G/G}(f,g)=f\circ g\in \mathbf{Mack}_G(\underline X,\underline Z).
\]

More generally, for any finite $G$-set $T$, applying the Yoneda lemma with the representable functor $\underline{A}_T$ and the $(\square, \underline{\Hom})$-adjunction yields
\[
\underline{\Hom}(\underline U,\underline V)(T)
\ \cong\ \mathbf{Mack}_G(\underline A_T, \underline{\Hom}(\underline U,\underline V))
\ \cong\ \mathbf{Mack}_G(\underline A_T\square \underline U,\ \underline V).
\]

Via these isomorphisms, Bouc's pairing $\hat{\circ}$ induces a componentwise pairing
\[
(\hat{\circ})_T: \underline{\Hom}(\underline{Y}, \underline{Z})(T) \times \underline{\Hom}(\underline{X}, \underline{Y})(T) \longrightarrow \underline{\Hom}(\underline{X}, \underline{Z})(T).
\]
Specifically, an element $a \in \underline{\Hom}(\underline{Y}, \underline{Z})(T)$ corresponds to a map $a : \underline{A}_T \square \underline{Y} \to \underline{Z}$, and $b \in \underline{\Hom}(\underline{X}, \underline{Y})(T)$ corresponds to $b : \underline{A}_T \square \underline{X} \to \underline{Y}$. The element $(\hat{\circ})_T(a,b)$ is defined as the unique element whose adjunct map $\underline{A}_T \square \underline{X} \to \underline{Z}$ is given by the composition
\[
\underline{A}_T \square \underline{X} \xrightarrow{\Delta_T \square \text{id}} (\underline{A}_T \square \underline{A}_T) \square \underline{X} \cong \underline{A}_T \square (\underline{A}_T \square \underline{X}) \xrightarrow{\text{id} \square b} \underline{A}_T \square \underline{Y} \xrightarrow{a} \underline{Z},
\]
where the comultiplication $\Delta_T : \underline{A}_T \to \underline{A}_T \square \underline{A}_T$ corresponds to the span $T \times T \xleftarrow{\Delta} T \xrightarrow{\text{id}} T$ in the Burnside category.
\end{example}

We next recall the category of modules over a Green functor.

\begin{definition}
Let $\underline{R}$ be a Green functor. A left \emph{$\underline{R}$-module} is a Mackey functor $\underline{M}$ equipped with an associative and unital action map $\lambda\colon\underline{R}\square \underline{M}\to \underline{M}$. A morphism of left $\underline{R}$-modules is a morphism of Mackey functors compatible with this action. 
\end{definition}

\begin{remark}
The category ${}_{\underline{R}}\mathbf{Mod}$ of left $\underline{R}$-modules is an abelian category, where exactness is checked levelwise at each orbit $G/H$. Orbitwise, $\underline{M}(G/H)$ inherits the structure of an $\underline{R}(G/H)$-module, and the transfer maps satisfy the projection formula $\tr_K^H(\res_K^H(r)m) = r\tr_K^H(m)$ for $r\in\underline{R}(G/H)$ and $m\in\underline{M}(G/K)$ \cite[Def.~2.6]{ventura2005homological}.
\end{remark}

\begin{definition} \label{def:relative-box-equalizer}
For a right $\underline{R}$-module $\underline{M}$ and a left $\underline{R}$-module $\underline{N}$, the \emph{relative box product} $\underline{M} \square_{\underline{R}} \underline{N}$ is defined by the coequalizer:
\[
\begin{tikzcd}
\underline{M}\square \underline{R}\square \underline{N}\arrow[r, shift right = .75ex,"\id\square\lambda"'] 
\arrow[r, shift left = .75ex,"\rho\square \id"]  &\underline{M}\square \underline{N}\ar[r,"b"]&\underline{M}\square_{\underline{R}} \underline{N}.
\end{tikzcd}
\]
Similarly, for left $\underline{R}$-modules $\underline{M}$ and $\underline{N}$, the \emph{internal Hom} $\underline{\Hom}_{\underline{R}}(\underline{M},\underline{N})$ is given by the equalizer: 
\[
\begin{tikzcd}
\underline{\Hom}_{\underline{R}}(\underline{M},\underline{N}) \ar[r] & \underline{\Hom}(\underline{M},\underline{N}) \ar[rr, shift left=.6ex, "f \mapsto \lambda_{\underline{N}}\circ(\id_{\underline{R}}\square f)"] \ar[rr, shift right=.6ex, "f \mapsto f\circ\lambda_{\underline{M}}"'] && \underline{\Hom}(\underline{R}\square\underline{M},\underline{N}).
\end{tikzcd}
\]
\end{definition}

If $\underline{R}$ is commutative, these operations endow ${}_{\underline{R}}\mathbf{Mod}$ with a closed symmetric monoidal structure \cite{lewis1981theory}. Analogous to Example~\ref{ex:endomorphism-green}, the relative internal endomorphism object $\underline{\End}_{\underline{R}}(\underline{M}) := \underline{\Hom}_{\underline{R}}(\underline{M},\underline{M})$ naturally forms a Green functor under Bouc's pairing $\hat{\circ}$.

We will use the following finiteness conditions for $\underline R$-modules.

\begin{definition}\label{def:finiteness-R-modules}Following the notation of \cite[Definition 3.10]{CCM}, let
      \( \underline{R} \) be a Green functor. For a finite \( G \)-set \( X \), denote  
    \(\underline{R}_X := \underline{R} \square \underline{A}_X,
    \)
    which is a left \( \underline{R} \)-module.
    
    \begin{enumerate}
        \item An \( \underline{R} \)-module is \textit{finite free} if it is isomorphic to \( \underline{R}_X \) for some \( X \).
        
        \item An $\underline R$-module $\underline M$ is
\emph{finitely generated} if there is a surjection
\(
\underline R_X\twoheadrightarrow \underline M
\)
for some finite $G$-set $X$.
        
        \item An \( \underline{R} \)-module is (finitely generated) \textit{projective} if it is a direct summand of a finite free $\underline R$-module.
    \end{enumerate}
\end{definition}

\begin{remark}
By definition, for any Green functor $\underline{R}$ and finite $G$-set $X$, the object $\underline{R}_X$ is finite free (hence finitely generated projective) as a left $\underline{R}$-module. Note that, equivalently, an $\underline{R}$-module $\underline{P}$ is projective if and only if it is finitely generated and the functor $\Hom_{\underline{R}}(\underline{P}, -)$ is exact.

In the case of the Burnside ring Green functor $\underline{R}=\underline{A}$, the free modules are simply $\underline{A}_X$. The decomposition of the $G$-set $X \cong \bigsqcup_i G/H_i$ corresponds to the decomposition of $\underline{A}_X$ into transitive summands, $\underline{A}_X \cong \bigoplus_i \underline{A}_{G/H_i}$.
\end{remark}

 \subsection{Shadowed bicategories and traces}
We briefly recall duality and the symmetric monoidal trace. In a symmetric monoidal category $(\mathcal{V}, \otimes, I)$, an object $M$ is \emph{dualizable} if there exists a dual object $M^\ast$ equipped with coevaluation $\eta\colon I \to M \otimes M^{\ast}$ and evaluation $\epsilon\colon M^{\ast}\otimes M \to I$ satisfying the standard triangle identities \cite{DoldPuppe}. For an endomorphism $f\colon M \to M$, its \emph{symmetric monoidal trace} is defined as the composite $\epsilon \circ \mathfrak{s}_{M,M^\ast} \circ (f \otimes \mathrm{id}) \circ \eta$, where $\mathfrak{s}_{M,M^\ast}$ is the symmetry isomorphism. This construction is independent of the choice of dual \cite[Def.~1.2.4]{PSSY}.

By a result of Bouc, the dualizable objects of $(\mathbf{Mack}_G, \square, \underline{A})$ are precisely the finitely generated projective Mackey functors \cite[Lemma~2.2]{boucproj}. We will use representable Mackey functors as our basic examples.

\begin{lemma}\label{lemma:selfdual}
For any finite $G$-set $X$, the representable Mackey functor $\underline{A}_X$ is self-dual in \((\mathbf{Mack}_G, \square, \underline{A})\).
\end{lemma}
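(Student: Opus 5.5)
We take $\underline{A}_X$ itself as the dual and build the evaluation and coevaluation from the diagonal of $X$ in the Burnside category. The basic input is the identification, recalled above, of the box product with a representable, $\underline{M}\square\underline{A}_X\cong\underline{M}_X$. Applying it with $\underline{M}=\underline{A}_Y$ gives
\[
\underline{A}_X\square\underline{A}_Y\cong\underline{A}_{X\times Y},
\]
and this isomorphism is natural and compatible with the associativity and symmetry constraints (Day convolution of representables). In particular $\underline{A}_X\square\underline{A}_X\cong\underline{A}_{X\times X}$, and the unit is $\underline{A}=\underline{A}_{G/G}=\underline{A}_{*}$.

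By Yoneda, maps of Mackey functors $\underline{A}_S\to\underline{A}_T$ correspond to elements of $\mathcal{B}_G(T,S)\cong\underline{A}(T\times S)$, that is, to Burnside classes of spans between $T$ and $S$. We use this to define the structure maps.
\begin{itemize}
\item The coevaluation $\eta\colon\underline{A}_*\to\underline{A}_{X\times X}$ is the map given by the span $X\times X\xleftarrow{\Delta}X\to *$.
\item The evaluation $\epsilon\colon\underline{A}_{X\times X}\to\underline{A}_*$ is the map given by the span $*\leftarrow X\xrightarrow{\Delta}X\times X$.
\end{itemize}
In other words, both structure maps come from the single $G$-set $X$ together with its diagonal, read in the two directions.

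Next we check the triangle identities. Under the identifications above, the composite
\[
\underline{A}_X\to\underline{A}_X\square\underline{A}_X\square\underline{A}_X\to\underline{A}_X
\]
is the Yoneda image of a composite of spans in $\mathcal{B}_G$. One span is $\mathrm{id}_X\times\Delta$, viewed as a span $X\to X\times X\times X$. The other is $\Delta\times\mathrm{id}_X$, viewed in the reverse direction, as a span $X\times X\times X\to X$. Composition in $\mathcal{B}_G$ is computed by pullback. The pullback of $X\xrightarrow{(x,x,x)\text{-type maps}}X^3$ along the two partial diagonals is $\{(x,y): (x,y,y)=(z,z,w)\}\cong X$, with both legs the identity. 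So the composite is the identity span, which gives one triangle identity. The second triangle identity follows symmetrically.

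The main obstacle is bookkeeping rather than content: making sure that the monoidal identification $\underline{A}_X\square\underline{A}_Y\cong\underline{A}_{X\times Y}$ carries $\mathrm{id}\square\eta$ to the span $\mathrm{id}\times\Delta$. Equivalently, we must check that the external product of spans corresponds to the box product of the induced maps, and that the contravariance of Yoneda is handled consistently. We would verify this on representables using the universal property of Day convolution, namely
\[
\mathbf{Mack}_G(\underline{A}_X\square\underline{A}_Y,\underline{M})\cong\underline{M}(X\times Y).
\]

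An alternative route uses Bouc's result quoted above: it shows $\underline{A}_X$ is dualizable because it is finitely generated projective. The dual is then $\underline{\Hom}(\underline{A}_X,\underline{A})\cong\underline{A}(X\times-)=\underline{A}_X$. This identification follows from Yoneda together with the self-duality $\mathcal{B}_G(X,Y)\cong\mathcal{B}_G(Y,X)$ given by reversing spans, and the explicit maps above then exhibit it.
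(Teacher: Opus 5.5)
Your proposal is correct and follows essentially the same route as the paper: self-duality of $X$ in $\mathcal{B}_G$ via the diagonal spans, transported along the strong symmetric monoidal functor $X\mapsto\underline{A}_X$ from $\mathcal{B}_G^{op}$, with evaluation and coevaluation interchanged. Your explicit pullback computation just unpacks the paper's assertion that $X$ is self-dual in $\mathcal{B}_G$. The ``bookkeeping'' you flag is exactly the strong monoidality of $X\mapsto\underline{A}_X$ that the paper invokes.
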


\begin{proof}
In the Burnside category \(\mathcal B_G\), every object \(X\) is self-dual via the evaluation and coevaluation spans \(X\times X \xleftarrow{\Delta} X \to G/G\) and \(G/G \leftarrow X \xrightarrow{\Delta} X\times X\). The assignment \(X\mapsto \underline{A}_X=\mathcal B_G(X,-)\) defines a strong symmetric monoidal functor \(\mathcal B_G^{op}\to \mathbf{Mack}_G\), since \(\underline{A}_X\square \underline{A}_Y\cong \underline{A}_{X\times Y}\). Since \(X\) is self-dual in \(\mathcal B_G\), it is also self-dual in \(\mathcal B_G^{op}\), with evaluation and coevaluation interchanged. Applying this functor gives a dual pair \((\underline{A}_X,\underline{A}_X)\) in \(\mathbf{Mack}_G\). Therefore \(\underline{A}_X\) is self-dual.
\end{proof}

The symmetric monoidal trace relies on duality and on the symmetry of the tensor product. Since a monoidal category may be regarded as a bicategory with one object, bicategorical traces provide a natural generalization in which \(1\)-cells and their composition replace objects and tensor products. A general bicategory, however, has no symmetry isomorphism for its composition. A \emph{shadow} supplies the corresponding cyclic structure, enabling traces of dualizable \(1\)-cells.

We first recall the definition of a bicategory.

\begin{definition} \label{def:bicategory_convention}
A \emph{bicategory} \( \mathcal{B} \) consists of:
\begin{itemize}
    \item a collection of objects, also called 0-cells, denoted
    \( A, B, C, \ldots \);
    \item for each pair of objects \( (A,B) \), a category
    \( \mathcal{B}(A,B) \), whose objects are called 1-cells
    \( A \to B \) and whose morphisms are called 2-cells;
    \item composition functors
    \(
    \odot \colon \mathcal{B}(A,B) \times \mathcal{B}(B,C)
    \to \mathcal{B}(A,C);
    \)
    \item unit 1-cells \( U_A \in \mathcal{B}(A,A) \).
\end{itemize}
    
These data are equipped with natural isomorphisms: the \emph{associator} \(\mathfrak a\)
and the left and right \emph{unitors} \(\mathfrak l,\mathfrak r\),
which satisfy the standard pentagon and triangle coherence axioms
(see \cite[Definition~4.1.1]{PSSY}).
\end{definition}

With this convention, if \(M\colon A\to B\) and \(N\colon B\to C\), then
\(
M\odot N\colon A\to C.
\)
Thus horizontal composition is written from left to right: one first applies \(M\) and then \(N\).

\begin{example}\label{ex:bicat_examples}
Examples of bicategories include the following:
\begin{itemize}
  \item Any monoidal category $(\mathcal{V}, \otimes, I)$ can be viewed as a bicategory with a single object $\ast$.
  The 1-cells are the objects of $\mathcal{V}$, the 2-cells are the morphisms of $\mathcal{V}$, composition is given by $\otimes$, and the unit 1-cell is $I$.
\item \cite[\S7.5]{MAL} The bicategory $\mathcal{R}/\mathcal{T}op$, whose objects are spaces and whose 1-cells $A \to B$ are retractive spaces $A \times B \xrightarrow{i} X \xrightarrow{p} A \times B$ with $p \circ i = \mathrm{id}$. Composition of $X \in \mathcal{R}(A \times B)$ and $Y \in \mathcal{R}(B \times C)$ is defined via the external smash product $\bar{\wedge}$ along with pullback and pushforward: $X \odot Y \coloneqq (r_B)_! \,\Delta_B^* (X \,\bar{\wedge}\, Y)$, where $\Delta_B$ is the diagonal and $r_B \colon B \to *$ is the terminal map. The unit $U_A$ is $(\Delta_A)_!(r_A)^*S^0$.
\end{itemize}
\end{example}

The primary examples in this paper arise from bimodules.

\begin{example}\label{ex:ModV}
Let $(\mathcal V,\otimes,I)$ be a bicomplete closed symmetric monoidal category. For monoids $A,B$ in $\mathcal V$, write ${}_A\mathbf{Mod}_B$ for the category of $(A,B)$-bimodules in $\mathcal V$. We write $\mathbf{Mod}(\mathcal V)$ for the bicategory whose objects are monoids in $\mathcal V$, whose 1-cells $A\to B$ are objects of ${}_A\mathbf{Mod}_B$, and whose 2-cells are morphisms in ${}_A\mathbf{Mod}_B$. Composition is given by the relative tensor product in $\mathcal{V}$.
\begin{enumerate}
    \item If $\mathcal{V} = \mathbf{Ab}$, this is the bicategory $\mathbf{Bimod}/\mathcal{R}\mathrm{ing}$ of rings and bimodules.
    \item If $\mathcal{V} = \mathbf{Ch}(\mathbf{Ab})$ with unit $I = \mathbb{Z}_\bullet$ (the complex concentrated in degree 0), this is the bicategory of differential graded (DG) rings and DG bimodules. The bicategory $\mathbf{Ch}/\mathcal{R}\mathrm{ing}$ is the full sub-bicategory spanned by ordinary rings, with chain complexes of bimodules as 1-cells.
    \item If $\mathcal{V} = \mathbf{Mack}_{G}$ equipped with the box product $\square$, this is the bicategory $\mathbf{Bimod}/\mathbf{Green}^{G}$ of $G$-Green functors and bimodule Mackey functors.
    \item If \(\mathcal V=(\mathcal{T}\text{op}_*,\wedge,S^0)\), then
\(\mathbf{Mod}(\mathcal V)\) is the bicategory of based topological monoids
and bimodules in based spaces. The bicategory \(\mathcal{T}\text{op}_*/\mathcal{G}\text{p}\) is the full
sub-bicategory spanned by the monoids \(G_+=G\sqcup\{*\}\) arising from
topological groups \(G\), with multiplication induced by the group multiplication
on \(G\). A \((G_+,H_+)\)-bimodule is equivalently a based space with commuting
left \(G\)- and right \(H\)-actions fixing the basepoint.
\end{enumerate}
\end{example}

 \begin{remark}\label{rem:HoModV}
Many bicategories admit homotopical versions, obtained by localizing the categories of $1$-cells at weak equivalences and then passing to homotopy classes of $1$-cell maps for the $2$-cells. 

Let $\mathcal V$ be a symmetric monoidal simplicial model category satisfying \cite[Assumption~3.1.1]{AGHKK}. Following \cite[Notation~3.1.3]{AGHKK}, for a simplicial object $X_\bullet$ in $\mathcal V$, we write
\(
|X_\bullet| \coloneqq \left( \operatorname{hocolim}_{\Delta^{\mathrm{op}}}X_\bullet \right)^f,
\)
where $(-)^f$ denotes functorial fibrant replacement.

We write $\Ho\mathbf{Mod}(\mathcal V)$ for the bicategory $\mathcal R_{\mathcal V}$ of \cite[Definition~3.1.2]{AGHKK}. 
Its objects are monoids with cofibrant underlying objects, its hom-categories are the homotopy categories $\Ho\,{}_A\mathbf{Mod}_B$, and composition is given by  $M\odot N = \vert{}B_\bullet(M;B;N)\vert{}$.
\begin{itemize}
    \item For $\mathcal V=\mathbf{Sp}$ (orthogonal spectra), $\Ho\mathbf{Mod}(\mathbf{Sp})$ is the bicategory $\Ho\mathbf{Bimod}/\mathcal{R}_{\mathbf{Sp}}$. Its objects are ring spectra whose underlying spectra are cofibrant, its 1-cells are obtained by localizing at stable equivalences \cite{MMSS}, and composition models the derived smash product \cite{EKMM97}.

    \item For \(\mathcal V=\mathbf{sAb}\),
\(\Ho\mathbf{Mod}(\mathbf{sAb})\) is the homotopy bicategory of
simplicial rings and simplicial bimodules. If
\(M\colon A\to B\) and \(N\colon B\to C\), their horizontal composite is
\(
M\odot N
=
\left|B_\bullet(M;B;N)\right|,
\)
which represents the derived relative tensor product
\(M\otimes_B^{\mathbf L}N\).

For ordinary rings \(A\) and \(B\), regarded as constant simplicial
rings, the Dold--Kan correspondence identifies the hom-category of
simplicial \((A,B)\)-bimodules with the homotopy category of
nonnegatively graded complexes of \((A,B)\)-bimodules. Under this
identification, horizontal composition corresponds to the derived
relative tensor product.

    \item For \(\mathcal V=s\mathbf{Mack}_G\), the category of
    simplicial \(G\)-Mackey functors with the box product, the required
    model-categorical assumptions hold by
    \cite[Theorem~4.3 and Proposition~4.4]{BGHL}. The resulting
    bicategory
    \(
    \Ho\mathbf{Mod}(s\mathbf{Mack}_G)
    \)
    has simplicial \(G\)-Green functors as \(0\)-cells and homotopy
    categories of simplicial bimodule Mackey functors as its
    hom-categories.
\end{itemize}
\end{remark}

Traces in symmetric monoidal categories generalize to bicategories once a \emph{shadow} is specified, encoding the cyclic invariance property in a broader context \cite{PON1}. 

\begin{definition}[{\cite[Definition~4.4.1]{PON1}}]\label{def:classical_shadow}
Let $\mathcal{B}$ be a bicategory and $\mathcal{T}$ a fixed category.  
A \emph{$\mathcal{T}$-shadow functor} on $\mathcal{B}$ consists of:
\begin{itemize}
  \item For each object $R \in \mathcal{B}$, a functor
  \(
    \sh{-}_R \colon \mathcal{B}(R,R) \to \mathcal{T},
  \)
  \item together with a natural isomorphism (\emph{cyclic invariance})
  \(
    \theta_{M,N} \colon \sh{M \odot N} \;\cong\; \sh{N \odot M}
  \)
  for all 1-cells $M \colon R \to S$ and $N \colon S \to R$,
\end{itemize}
 that satisfies the hexagon and triangle coherence diagrams \[\begin{tikzcd}
\sh{ (M\odot N)\odot P }  \ar[r,"\theta"] \ar[d,"\sh{ \mathfrak{a} } "']  & \sh{ P\odot (M\odot N) } \ar[r, "\sh{\mathfrak{a}}" ] & \sh{ (P\odot M)\odot N } \\
  \sh{ M\odot (N\odot P) }  \ar[r,"\theta"]  &  \sh{(N\odot P)\odot M }\ar[r, "\sh{ \mathfrak{a} }"]& \sh{ N\odot (P\odot M) }\ar[u, "\theta"']
  \end{tikzcd}\]
  
\[ \begin{tikzcd}
  \sh{ M\odot  U_R}  \arrow[r,"\theta"] \arrow[rd,"\sh{\mathfrak{r}}"']&
  \sh{ U_R\odot M} \arrow[r,"\theta"]\arrow[d,"\sh{\mathfrak{l} }"] &\sh{ M\odot  U_R} \arrow[ld,"\sh{\mathfrak{r} }"]\\
  &   \sh{ M} &
    \end{tikzcd}\]
\end{definition}

\begin{definition}
A  \emph{$\mathcal{T}$-shadowed bicategory} is a pair $(\mathcal B,\sh{-})$ where $\mathcal B$ is a bicategory 
and $\sh{-}$ is a $\mathcal T$-shadow functor on $\mathcal B$.
\end{definition}

\begin{example}[{\cite[§4-§6]{PONSH}}]
Examples of shadowed bicategories include:
\begin{itemize}
  \item In $\mathbf{Bimod}/\mathcal{R}\mathrm{ing}$, an
$\mathbf{Ab}$-shadow of an $R$-bimodule $M$ is given by
\[
\sh{M}
=
\mathrm{coeq}\!\bigl(
R \otimes M
\mathrel{
  \begin{matrix}
    \xrightarrow{\mathmakebox[\widthof{$\scriptstyle \rho\circ \mathfrak{s}$}][c]{\scriptstyle \lambda}} \\[-1.5ex]
    \xrightarrow[{\mathmakebox[\widthof{$\scriptstyle \rho\circ \mathfrak{s}$}][c]{\scriptstyle \rho\circ \mathfrak{s}}}]{} 
  \end{matrix}
}
M
\bigr)
\cong
\HH_0(R;M),
\]
where $\lambda: R\otimes M\to M$ is the left $R$-action,
$\rho: M\otimes R\to M$ is the right $R$-action, and
$\mathfrak{s}:R\otimes M\xrightarrow{\cong} M\otimes R$
is the symmetry.  Thus $\sh{M}$ is the zeroth
Hochschild homology $\HH_0(R;M)$.
    
  \item In $\mathbf{Ch}/\mathcal{R}\mathrm{ing}$, a 
$\mathbf{Ch}_{\mathbb{Z}}$-shadow of an $A$-bimodule complex $M_\bullet$ is defined
in the same way as for $\mathbf{Bimod}/\mathcal{R}\mathrm{ing}$, but now taken in the category
$\mathbf{Ch}_{\mathbb{Z}}$ of chain complexes of abelian groups.
Permuting tensor factors with the Koszul sign convention gives the cyclic isomorphism $\theta$.

  \item In $\mathcal{T}\mathrm{op}_*/\mathcal{G}p$, a 
$\mathcal{T}\mathrm{op}_*$-shadow of a $G$-$G$-space $M$ is given by
\[
\sh{M}
=
\mathrm{coeq}\!\bigl(
G_+ \wedge M
\mathrel{
  \begin{matrix}
    \xrightarrow{\mathmakebox[\widthof{$\scriptstyle \rho\circ \mathfrak{s}$}][c]{\scriptstyle \lambda}} \\[-1.5ex]
    \xrightarrow[{\mathmakebox[\widthof{$\scriptstyle \rho\circ \mathfrak{s}$}][c]{\scriptstyle \rho\circ \mathfrak{s}}}]{} 
  \end{matrix}
}
M
\bigr),
\]
where $\lambda: G_+\wedge M\to M$ is the left $G$-action,
$\rho: M\wedge G_+\to M$ is the right $G$-action, and
$\mathfrak{s}: G_+\wedge M\xrightarrow{\cong} M\wedge G_+$
is the symmetry.
\end{itemize}
\end{example}

Important examples of shadows arise on homotopical variants of bicategories. We briefly recall the Hochschild construction, which unifies ordinary and topological Hochschild homology, and explain how it fits into the framework of shadows.

 \begin{definition}\label{def:cyclic_bar_general}
Let $\mathcal V$ be a symmetric monoidal simplicial model category satisfying the assumptions of Remark~\ref{rem:HoModV}. Let $A$ be a monoid in $\mathcal V$, and let $M$ be an $A$-bimodule. The \emph{cyclic bar construction} is the simplicial object $B_\bullet^{\mathrm{cyc}}(A;M)$ in $\mathcal V$ given in degree $q$ by $B_q^{\mathrm{cyc}}(A;M) = M\otimes A^{\otimes q}$.

For $q\geq 1$, the two face maps $d_0, d_q \colon B_q^{\mathrm{cyc}}(A;M) \to B_{q-1}^{\mathrm{cyc}}(A;M)$ involving the bimodule actions are
\[
d_0 = \rho\otimes\id_A^{\otimes(q-1)},\qquad
d_q = (\lambda\otimes\id_A^{\otimes(q-1)})\circ\tau_q,
\]
where $\rho$ and $\lambda$ are the right and left actions, and $\tau_q$ cyclically permutes the last $A$-factor to the front. For $0<i<q$, the remaining face maps and the degeneracy maps are respectively given by
\(
d_i = \id_M\otimes\id_A^{\otimes(i-1)}\otimes\mu\otimes\id_A^{\otimes(q-i-1)}
\)
and
\(
s_i = \id_M\otimes\id_A^{\otimes i}\otimes\eta\otimes\id_A^{\otimes(q-i)}
\)
($0\le i\le q$), where $\mu$ and $\eta$ denote the multiplication and unit of $A$.

The \emph{Hochschild construction} of $A$ with coefficients in $M$ \cite[Definition~3.1.4]{AGHKK} is defined as
\[
\HH_{\mathcal V}(A;M) \coloneqq \left|B_\bullet^{\mathrm{cyc}}(A;M)\right|.
\]
\end{definition}

\begin{remark}\label{rem:HH-construction}
By \cite[Proposition~3.1.5]{AGHKK}, the Hochschild construction equips $\Ho\mathbf{Mod}(\mathcal V)$ with a $\Ho\mathcal V$-valued shadow. This unified framework subsumes the classical examples:
\begin{itemize}
\item For \(\mathcal V=\mathbf{sAb}\), the Dold--Kan correspondence
identifies \(\Ho(\mathbf{sAb})\) with
\(\Ho\bigl(\mathbf{Ch}_{\mathbb Z}^{\geq 0}\bigr)\).
Under the chain-complex model described in
Remark~\ref{rem:HoModV}, the value of the Hochschild construction on
a nonnegatively graded complex \(M_\bullet\) of \(A\)-bimodules is
represented by the total complex associated to the simplicial chain
complex
\(
B_\bullet^{\mathrm{cyc}}(A;M_\bullet).
\)
When \(M_\bullet=M[0]\) is concentrated in degree zero, this total
complex is the ordinary Hochschild chain complex, and its homology is
\(\HH_*(A;M)\).
    \item For $\mathcal V=\mathbf{Sp}$ (with smash products replacing tensor products), it recovers topological Hochschild homology $\THH(A;M)$ as a $\Ho\mathbf{Sp}$-valued shadow \cite{CP}.
\end{itemize}
When \(M=A\), the simplicial object
\(B_\bullet^{\mathrm{cyc}}(A)\) extends to a cyclic object
\cite{loday2013cyclic}. In the spectral case, its realization is
\(\THH(A)\) and carries the usual \(S^1\)-action \cite{BHM}.
\end{remark}

Bicategorical traces require dualizable 1-cells, whose definition we recall next.

\begin{definition}[{\cite[Def.~16.4.1]{may2004parametrized}}]\label{def:dualizability}
Let \(\mathcal{B}\) be a bicategory. A 1-cell \(M \colon A \to B\) is \emph{right dualizable} if there exists a 1-cell \(M^* \colon B \to A\) and 2-cells \(\eta \colon U_A \Rightarrow M \odot M^*\) (coevaluation) and \(\epsilon \colon M^* \odot M \Rightarrow U_B\) (evaluation) satisfying the standard triangle identities. The pair \((M,M^*)\) is called a \emph{dual pair}. Dually, \(M\) is \emph{left dualizable} if there exists a \emph{left dual} \({}^*M \colon B \to A\) and 2-cells \(\bar{\eta} \colon U_B \Rightarrow {}^*M \odot M\) and \(\bar{\epsilon} \colon M \odot {}^*M \Rightarrow U_A\) satisfying the analogous triangle identities.
\end{definition}

Dual pairs also encode Morita equivalence. In classical algebra, Morita equivalent rings (e.g., a ring \(R\) and its matrix ring \(\operatorname{Mat}_n(R)\) have equivalent categories of modules. Bicategorically, this phenomenon is expressed by invertible dual pairs.

\begin{definition}\label{def:morita_equivalence}
A pair of 1-cells \(M\colon R\to S\) and \(N\colon S\to R\) in \(\mathcal{B}\) is a \emph{Morita equivalence} between \(R\) and \(S\) if both \((M,N)\) and \((N,M)\) are dual pairs, and the evaluation map of each dual pair is an isomorphism whose inverse is the coevaluation map of the other. An assignment on the \(0\)-cells of \(\mathcal{B}\) is a \emph{Morita invariant} if it takes Morita equivalent \(0\)-cells to isomorphic values.
\end{definition}

\begin{prop}[{\cite[Prop.~4.6, Prop.~4.8]{CP}}]
Let \((\mathcal{B},\sh{-})\) be a shadowed bicategory. If \(R\) and \(S\) are Morita equivalent objects of \(\mathcal{B}\), then \(\sh{U_R}\cong \sh{U_S}\). Equivalently, the shadow of the unit \(1\)-cell is a Morita invariant.
\end{prop}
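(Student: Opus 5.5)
The plan is to use the two dual pairs in the Morita equivalence, together with the cyclic invariance isomorphism \(\theta\) of Definition~\ref{def:classical_shadow}, to build an explicit isomorphism \(\sh{U_R}\cong\sh{U_S}\). Let \(M\colon R\to S\) and \(N\colon S\to R\) be as in Definition~\ref{def:morita_equivalence}. For the dual pair \((M,N)\) we have \(\eta\colon U_R\Rightarrow M\odot N\) and \(\epsilon\colon N\odot M\Rightarrow U_S\). For the dual pair \((N,M)\) we have \(\eta'\colon U_S\Rightarrow N\odot M\) and \(\epsilon'\colon M\odot N\Rightarrow U_R\). By hypothesis, \(\epsilon\) and \(\epsilon'\) are isomorphisms with inverses \(\eta'\) and \(\eta\), respectively.

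Define \(\Phi\colon \sh{U_R}\to\sh{U_S}\) as the composite
\[
\sh{U_R}\xrightarrow{\sh{\eta}}\sh{M\odot N}\xrightarrow{\theta_{M,N}}\sh{N\odot M}\xrightarrow{\sh{\epsilon}}\sh{U_S}.
\]
Define \(\Psi\colon\sh{U_S}\to\sh{U_R}\) symmetrically as \(\sh{\epsilon'}\circ\theta_{N,M}\circ\sh{\eta'}\). Each shadow \(\sh{-}_R\) is a functor, so it sends the isomorphisms \(\eta\) and \(\epsilon\) to isomorphisms. The map \(\theta_{M,N}\) is an isomorphism by definition. Hence \(\Phi\) is already a composite of isomorphisms, which proves \(\sh{U_R}\cong\sh{U_S}\). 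The second sentence of the statement is a restatement of the first: the assignment \(R\mapsto \sh{U_R}\) sends Morita equivalent \(0\)-cells to isomorphic values.

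Strictly, nothing more is needed, and the coherence axioms are not used. For completeness one can also check that \(\Psi\) is inverse to \(\Phi\), which gives a canonical inverse.
\begin{itemize}
\item First, since \(\eta'=\epsilon^{-1}\), we get \(\sh{\eta'}\circ\sh{\epsilon}=\id\).
\item Next, \(\theta_{N,M}\circ\theta_{M,N}\) need not be the identity in general. However, naturality of \(\theta\) lets us move \(\sh{\eta}\) and \(\sh{\epsilon'}\) past it, and \(\sh{\epsilon'}\circ\sh{\eta}=\id\) since \(\epsilon'=\eta^{-1}\).
\end{itemize}
This bookkeeping with naturality is the only mildly delicate point, and it is unnecessary for the stated isomorphism. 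The main point is simply that the triangle identities are replaced here by the hypothesis that the evaluations are invertible.
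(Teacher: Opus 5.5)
Your argument is correct and is essentially the paper's approach. The paper does not reprove this cited result, but its proof of the twisted analogue, Proposition~\ref{prop:morita-invariance}, uses exactly your composite: the shadow of one inverted evaluation/coevaluation, then the cyclicity isomorphism $\theta$, then the shadow of the other inverted map. As you say, no coherence axioms are needed for the mere existence of the isomorphism.

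One correction concerns your optional inverse check. Naturality of $\theta$ only applies to $2$-cells of the form $f\odot k$. So it does not let you slide $\sh{\eta}$, where $\eta\colon U_R\Rightarrow M\odot N$, past $\theta_{N,M}\circ\theta_{M,N}$.

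In fact, since $\sh{\eta}$ and $\sh{\epsilon'}$ are mutually inverse, $\Psi\circ\Phi=\id$ is equivalent to $\theta_{N,M}\circ\theta_{M,N}=\id_{\sh{M\odot N}}$. That is exactly the statement you noted can fail in general, and naturality alone cannot give it. For example, rescale the symmetry in a one-object bicategory by a scalar $\lambda$. Then $\theta$ stays natural, the coherence axioms break, and $\Psi\circ\Phi=\lambda^2$.

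The identity does hold for a genuine shadow, but the proof uses the axioms. Identify $\Phi=\tr(\id_M)$ and $\Psi=\tr(\id_N)$. Multiplicativity of traces, which rests on the hexagon axiom, gives $\Psi\circ\Phi=\tr(\id_N)\circ\tr(\id_M)=\tr(\id_{M\odot N})$. This equals $\tr(\id_{U_R})=\id_{\sh{U_R}}$ because $M\odot N\cong U_R$ and the triangle axiom holds.
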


Dual pairs also allow one to define traces of suitable \(2\)-cells in a shadowed bicategory.

\begin{definition}[{\cite[Def.~4.5.1]{PON1}}]\label{def:bicat-trace}
Let $(\mathcal B,\sh{-})$ be a shadowed bicategory, and let $(M,N)$ be a dual pair
with $M\colon A\to B$ and $N\colon B\to A$.
For 1-cells $Q\colon A\to A$ and $P\colon B\to B$, the \emph{bicategorical trace}
of a 2-cell \(f \colon Q \odot M \Rightarrow M \odot P,\) denoted \( \tr_{\mathcal B}(f) \), is the composite
\[
\begin{aligned}
\sh{Q}
&\cong \sh{Q \odot U_A}
\xrightarrow{\sh{\id_Q \odot \eta}}
\sh{Q \odot M \odot N}
\xrightarrow{\sh{f \odot \id_N}}
\sh{M \odot P \odot N}
\\
&\xrightarrow{\theta}
\sh{N \odot M \odot P}
\xrightarrow{\sh{\epsilon \odot \id_P}}
\sh{U_B \odot P}
\cong \sh{P}.
\end{aligned}
\]

Dually, for a 2-cell \( g\colon N \odot Q \Rightarrow P \odot N \), the bicategorical trace 
\( \operatorname{tr}_{\mathcal B}(g) \) is the composite
\[
\begin{aligned}
\sh{Q}
&\cong \sh{U_A \odot Q}
\xrightarrow{\sh{\eta \odot \id_Q}}
\sh{M \odot N \odot Q}
\xrightarrow{\sh{\id_M \odot g}}
\sh{M \odot P \odot N}
\\
&\xrightarrow{\theta}
\sh{P \odot N \odot M}
\xrightarrow{\sh{\id_P \odot \epsilon}}
\sh{P \odot U_B}
\cong \sh{P}.
\end{aligned}
\]
Here $\eta$ and $\epsilon$ are the coevaluation and evaluation maps for the dual pair $(M,N)$, and $\theta$ denotes the cyclic symmetry of the shadow.
\end{definition}

\begin{remark}Taking $Q = U_A$ and $P = U_B$ yields the definition of the trace of an endomorphism
\( f\colon M \Rightarrow M \) in the bicategory~$\mathcal B$.
Moreover, if \(\mathcal B\) is the one-object bicategory associated to a
symmetric monoidal category \((\mathcal V,\otimes,I)\), and the shadow
\(\sh{-}\) is the identity functor with cyclicity isomorphism induced by the
symmetry of \(\mathcal V\), then this construction reduces to the  
symmetric monoidal trace.

Analogous formulas define the trace when \(M\) is left dualizable. In that case,
one uses a left dual \({}^*M\) together with the corresponding coevaluation and
evaluation maps \(\bar{\eta}\) and \(\bar{\epsilon}\) to define the trace of a
2-cell of the form \( M \odot Q \Rightarrow P \odot M \).
\end{remark}

Following Ponto and Shulman \cite{PONSH}, we recall how the classical
Hattori--Stallings trace is recovered as a bicategorical trace in
\(\mathbf{Bimod}/\mathcal R\mathrm{ing}\), illustrating the algebraic
content of the shadow formalism.

\begin{definition}[{\cite{HA}}]\label{def:HS-ordinary}
Let $R$ be a ring and $P$ a finitely generated projective right $R$-module with dual 
$P^*=\Hom_R(P,R)$.  
Using the isomorphism $\delta: P \otimes_R P^* \cong \End_R(P)$ and the map 
$\pi: P \otimes_R P^* \to R/[R,R]$, $\pi(p \otimes \xi) \coloneqq [\xi(p)]$, the 
\emph{Hattori--Stallings trace} of $f \in \End_R(P)$ is
\(
\operatorname{tr}_{\mathrm{HS}}(f) := \pi(\delta^{-1}(f)).
\)
\end{definition}

\begin{example}[{\cite[Ex.~5.3, Ex.~6.1]{PONSH}}]\label{ex:HS-as-bicat-trace}
In the bicategory $\mathbf{Bimod}/\mathcal{R}\mathrm{ing}$, 
a finitely generated projective right $R$-module $M$,  viewed as a $(\mathbb{Z}, R)$-bimodule, is right dualizable  with dual 
$M^*=\Hom_R(M,R)$.  
For an endomorphism $f:M\Rightarrow M$, the bicategorical trace
\[
\operatorname{tr}_{\mathcal B}(f): \sh{\mathbb{Z}}=\mathbb{Z}\;\longrightarrow\;\sh{R}=\HH_0(R;R)
\]
recovers the Hattori--Stallings trace: $\operatorname{tr}_{\mathcal B}(f)(1)=\operatorname{tr}_{\mathrm{HS}}(f)\ \in\ \HH_0(R;R).$

More generally, let $M$ be an $(S,R)$-bimodule which is finitely generated projective as a right 
$R$-module. For any $(S,R)$-bimodule endomorphism $f:M\to M$, the bicategorical trace
\[
\operatorname{tr}_{\mathcal B}(f):\ \sh{S}=\HH_0(S;S)\longrightarrow \sh{R}=\HH_0(R;R)
\]
is
\(
\operatorname{tr}_{\mathcal B}(f)([s])=\operatorname{tr}_{\mathrm{HS}}(\lambda_s\circ f)=\operatorname{tr}_{\mathrm{HS}}(f\circ\lambda_s),
\)
where $s\in S$ and $\lambda_s:M\to M$ is left multiplication by $s$.
In particular, for the Euler characteristic ($f=\id_M$),
\(
\operatorname{tr}_{\mathcal B}(\id_M)([s])=\operatorname{tr}_{\mathrm{HS}}(\lambda_s)\in \HH_0(R;R).
\)
Equivalently, this means that
the following diagram in $\mathbf{Ab}$ commutes:
\begin{equation}\label{diag:classical-bicat-vs-HS}
\begin{tikzcd}
S \arrow[r, "{\,s\mapsto f\circ\lambda_s\,}"] \arrow[d, "q"'] 
  & \End_R(M) \arrow[d, "\operatorname{tr}_{\mathrm{HS}}"] \\
\HH_0(S;S) \arrow[r, "\operatorname{tr}_{\mathcal B}(f)"'] 
  & \HH_0(R;R)
\end{tikzcd}
\end{equation}
where \(q\colon S\to \HH_0(S;S)\cong S/[S,S]\) is the canonical quotient map.
\end{example}

The Hattori--Stallings trace is defined analogously for finitely generated projective left $R$-modules, which is the convention we use below.
\begin{remark}Let $\End(A)$ denote the exact category whose objects are pairs $(P, f)$, where $P$ is a finitely generated projective left $A$-module and $f \colon P \to P$ is a left $A$-module endomorphism. 
There is a natural homomorphism
\[
K_0(\End(A)) \;\longrightarrow\; \HH_0(A)\cong A/[A,A], \qquad [(P,f)] \mapsto \operatorname{tr}_{\mathrm{HS}}(f),
\]
given by the Hattori--Stallings trace \cite{HA}. 
Restricting along $K_0(A)\to K_0(\End(A))$, $[P]\mapsto[(P,\id_P)]$, recovers the Dennis trace in degree 0 \cite[V.11.1]{weibel:homological}:
\[
\operatorname{tr}_{\mathrm{Dennis}}: K_0(A)\to \HH_0(A), \quad [P]\mapsto \operatorname{tr}_{\mathrm{HS}}(\id_P).
\]

Properties of bicategorical traces, such as cyclicity \cite[Prop.~7.2 and Cor.~7.3]{PONSH} and additivity \cite[Thm.~1.4]{PSL}, recover the familiar identities for the Hattori--Stallings trace \cite{HA}:
\[
\operatorname{tr}_{\mathrm{HS}}(f\oplus g)
=
\operatorname{tr}_{\mathrm{HS}}(f)+\operatorname{tr}_{\mathrm{HS}}(g),
\qquad
\operatorname{tr}_{\mathrm{HS}}(gf)
=
\operatorname{tr}_{\mathrm{HS}}(fg),
\]
where $f\colon P\to Q$ and $g\colon Q\to P$ are morphisms between finitely generated projective modules.
\end{remark}

\subsection{Twisted topological Hochschild homology}
We recall the definition of twisted topological Hochschild homology. Let $C_n$ denote the cyclic group of order $n$. To define this construction, we first introduce the notion of a twisted bimodule associated to an automorphism.

\begin{definition}\label{def:twisted-bimodule-spectra}
Let $R$ be a ring spectrum and let $g\in \Aut(R)$. Let $M$ be an $R$-bimodule
with left action $\lambda \colon R \wedge M \to M$ and right action
$\rho \colon M \wedge R \to M$.

The \emph{left $g$-twisted bimodule} ${}^gM$ is the $R$-bimodule with the same
underlying spectrum $M$ and the same right action $\rho$, but with left action
obtained by precomposing $\lambda$ with $g \wedge \id_M$, namely
\[
{}^g\lambda \coloneqq \lambda \circ (g \wedge \id_M).
\]
\end{definition}

\begin{remark}\label{rem:right-twisted-bimodule}
Dually, one may also define a right-twisted bimodule $M^g$ by keeping the left
action fixed and precomposing the right action map with $\id_M \wedge g$.
The simultaneous appearance of left and right twists will be formalized
bicategorically in Section~3.
\end{remark}

Now suppose that \(R\) is a \(C_n\)-ring spectrum. Identifying $C_n$ with the subgroup $\langle e^{2\pi i/n}\rangle \subset S^1$, we let $g=e^{2\pi i/n}$ denote the chosen generator.
 The action of \(g\)
on \(R\) determines a ring-spectrum automorphism, which we also
denote by \(g\).

\begin{definition}[{\cite[Def.~8.1]{ABGHLM},
                      \cite[\S4.2]{AGHKK}}]
Let $R$ be a $C_n$-ring spectrum and $M$ a \(C_n\)-equivariant $R$-bimodule. The \emph{$C_n$-twisted cyclic bar construction} is the
simplicial \(C_n\)-spectrum $B^{\mathrm{cyc},C_n}_\bullet(R; M)$ with
\(
B^{\mathrm{cyc},C_n}_q(R; M) \coloneqq M \wedge R^{\wedge q}.
\)
The degeneracy maps $s_i$ are induced by the unit $\eta \colon S \to R$ in the standard way. For $q \ge 1$, the face maps \(d_i\colon
B_q^{\mathrm{cyc},C_n}(R;M)
\to
B_{q-1}^{\mathrm{cyc},C_n}(R;M)\) are induced by the bimodule actions and multiplication $\mu$ of $R$, except that the last face map uses the twisted
bimodule structure ${}^gM$  associated with the specific generator $g$:
\begin{itemize}
    \item $d_0 = \rho \wedge \id_{R^{\wedge (q-1)}}$,
    \item $d_i = \id_{M} \wedge \id_{R^{\wedge (i-1)}} \wedge \mu \wedge \id_{R^{\wedge (q-1-i)}} \quad (0 < i < q)$,
    \item $d_q = ({}^g\lambda \wedge \id_{R^{\wedge (q-1)}}) \circ \tau_q$,
\end{itemize}
where $\tau_q \colon M \wedge R^{\wedge q} \xrightarrow{\cong} R \wedge M \wedge R^{\wedge (q-1)}$ is the cyclic
permutation moving the last $R$-factor to the front.
When $M=R$, we write $B^{\mathrm{cyc},C_n}_\bullet(R)$.
\end{definition}

\begin{remark}\label{rem:twisted-bar-equivalence}
The twisted cyclic bar construction may equivalently be viewed as the
ordinary cyclic bar construction with twisted coefficients. Directly
from the definitions, there is an isomorphism of simplicial
\(C_n\)-spectra
\(
B^{\mathrm{cyc},C_n}_\bullet(R;M)
\cong
B^{\mathrm{cyc}}_\bullet(R;{}^gM).
\)
\end{remark}

\begin{definition}[{\cite[Def.~4.2.2]{AGHKK}}]
\label{def:twisted-THH-coeff}
The \emph{\(C_n\)-twisted topological Hochschild homology} of \(R\)
with coefficients in \(M\) is the \(C_n\)-spectrum
\begin{equation}\label{eq:twisted-thh-cyc-realization}
\THH_{C_n}(R;M)
\coloneqq
\left|B^{\mathrm{cyc},C_n}_\bullet(R;M)\right|
\cong
\left|B^{\mathrm{cyc}}_\bullet(R;{}^gM)\right|.
\end{equation}
When \(M=R\), we write simply \(\THH_{C_n}(R)\).
The displayed isomorphism of \(C_n\)-spectra is induced by the
isomorphism of simplicial \(C_n\)-spectra from
Remark~\ref{rem:twisted-bar-equivalence}.
\end{definition}

\begin{remark}
When \(M=R\), the twisted simplicial object carries a
\(\Lambda_n^{op}\)-structure, and its realization therefore carries
an \(S^1\)-action extending the original \(C_n\)-action. After
incorporating the standard change-of-universe functors, this
\(S^1\)-spectrum is the relative norm
\(
N_{C_n}^{S^1}R
\)
of \cite[Def.~8.2]{ABGHLM}. Thus, suppressing change-of-universe
notation, one writes
\(
\THH_{C_n}(R)\simeq N_{C_n}^{S^1}R.
\)
\end{remark}

Twisted topological Hochschild homology has a natural algebraic
analogue in the context of Green functors. As above, let
\(g=e^{2\pi i/n}\) denote the chosen generator of
\(C_n\subset S^1\).

\begin{definition}[{\cite[Defs.~2.20 and~2.25]{BGHL},
                         \cite[Defs.~4.3.1--4.3.3]{AGHKK}}]
\label{def:twisted-HH-Green}
Let \(\underline R\) be a \(C_n\)-Green functor and let
\(\underline M\) be an \(\underline R\)-bimodule. The
\emph{\(C_n\)-twisted Hochschild homology of \(\underline R\) with
coefficients in \(\underline M\)} is
\[
\underline{\HH}^{C_n}_*
(\underline R;\underline M)
\coloneqq
H_*\!\left(
B^{\mathrm{cyc}}_\bullet
(\underline R;{}^g\underline M)
\right).
\]
Here \(B^{\mathrm{cyc}}_\bullet
(\underline R;{}^g\underline M)\) is the simplicial \(C_n\)-Mackey
functor whose \(q\)-simplices are
\(
B^{\mathrm{cyc}}_q
(\underline R;{}^g\underline M)
=
{}^g\underline M
\,\square\,
\underline R^{\square q},
\)
where \(\square\) denotes the box product. The face and degeneracy
maps are the usual Hochschild maps. In particular, the last face map
uses the twisted left action on \({}^g\underline M\), defined by
precomposing the original left action with the automorphism induced
by \(g\). When \(\underline M=\underline R\), we write simply
\(\underline{\HH}^{C_n}_*(\underline R)\).
\end{definition}

These twisted Hochschild-type theories satisfy analogues of cyclic
invariance. However, as observed in
\cite[Remarks~4.1.6 and~4.2.6]{AGHKK}, the resulting cyclicity maps do
not in general satisfy the coherence axioms required of an ordinary
shadow. Thus they do not define ordinary shadows. In the topological
case, however, the \(C_n\)-fixed points of
\(\THH_{C_n}(R;-)\) form an ordinary shadow by
\cite[Proposition~4.2.5]{AGHKK}.

This failure of ordinary shadow coherence is precisely what motivates
the equivariantly twisted refinement introduced in the next section.

\section{Bicategories of $G$-twists}
In this section, we formalize equivariant twisting data at the bicategorical level. In Section 2.3, twisting appears concretely by precomposing the left or right action maps with a chosen automorphism. Bicategorically, this operation is recast as pre- and postcomposition by invertible 1-cells, leading to the notion of $G$-twisting data on the 0-cells of a bicategory. The associated bicategory of \(G\)-twists then consists of certain
\(1\)-cells equipped with twisting constraints that intertwine the
twists at their sources and targets. These constraints play the role of equivariance data in the bicategorical setting. In the motivating bicategories $\mathbf{Mod}(\mathcal{V})$, this recovers the familiar notion of semilinear $G$-action on bimodules. 

By the coherence theorem for bicategories \cite{maclane1985coherence}, we may systematically suppress the associators $\mathfrak{a}$ and unitors $\mathfrak{l}, \mathfrak{r}$ in subsequent computations and diagrams. We treat the composition of 1-cells as associative and unital unless explicit interaction with the $G$-twisting data is required. All horizontal composites in this paper are written using the convention of Definition~\ref{def:bicategory_convention}. Thus, if $M\colon A\to B$ and $N\colon B\to C$, then $M\odot N: A\to C$ means first $M$, then $N$.

\subsection{Invertible 1-cells and twisted 1-cells}
This subsection describes how to twist a 1-cell in an arbitrary bicategory and records the compatibilities of this operation with units and composition.
\begin{definition}\label{def:invertible_1_cell}
Let \(\mathcal B\) be a bicategory and let \(A\) be an object.
An \emph{invertible \(1\)-cell} \(u\colon A\to A\) is a \(1\)-cell
equipped with a chosen \(1\)-cell \(u^{-1}\colon A\to A\) and
specified invertible \(2\)-cells
\(
u\odot u^{-1}\xRightarrow{\cong} U_A,
u^{-1}\odot u\xRightarrow{\cong} U_A.
\)
The \emph{Picard groupoid} \(\mathrm{Pic}_{\mathcal B}(A)\) is the groupoid
whose objects are invertible endomorphism \(1\)-cells \(A\to A\) and
whose morphisms are invertible \(2\)-cells between them. Horizontal
composition equips \(\mathrm{Pic}_{\mathcal B}(A)\) with a monoidal
structure whose unit is \(U_A\).
\end{definition}

\begin{definition}\label{def:twisted_1_cells}
Let $\mathcal B$ be a bicategory. Let $\alpha\in \mathrm{Pic}_{\mathcal B}(A)$ and $\beta\in \mathrm{Pic}_{\mathcal B}(B)$ be invertible endomorphism 1-cells. Given a 1-cell $M\colon A\to B$, we define the \emph{left-twisted 1-cell} of $M$ by $\alpha$ to be ${}^\alpha M \coloneqq \alpha^{-1}\odot M$ and the \emph{right-twisted 1-cell} of $M$ by $\beta$ to be $M^\beta \coloneqq M\odot \beta$.
\end{definition}

\begin{remark}\label{rem:base_change_twist}
The twisted 1-cells \({}^\alpha M\) and \(M^\beta\) are obtained by
pre- and postcomposition with invertible 1-cells. For
\(\alpha,\alpha'\in\mathrm{Pic}_{\mathcal B}(A)\), the inverse in the
definition of the left twist is chosen so that
\(
{}^{\alpha\odot\alpha'}M
\cong
{}^{\alpha'}({}^\alpha M).
\)

In a bicategory equipped with base-change 1-cells, an automorphism
\(g\colon A\to A\) determines an invertible base-change 1-cell.
Thus, for automorphisms \(g\colon A\to A\) and \(h\colon B\to B\),
Definition~\ref{def:twisted_1_cells} gives
\(
{}^gM=(A\xrightarrow{g^{-1}}A)\odot M\) and \(
M^h=M\odot(B\xrightarrow{h}B),
\)
where the displayed arrows denote the corresponding base-change
1-cells.
\end{remark}

We record three elementary isomorphisms showing how twists pass through units and composites.

\begin{lemma}\label{lem:twist_properties}
\label{lem:twist_unit}
\label{lem:twist_swap}
\label{lem:twist_cancel}
Let $\mathcal{B}$ be a bicategory. The following canonical isomorphisms hold for twisted 1-cells:
\begin{enumerate}
    \item For any object $A$ and $g\in \mathrm{Pic}_{\mathcal B}(A)$, ${}^{g^{-1}}U_A \xRightarrow{\cong} U_A^g$.
    \item For 1-cells $M\colon A\to B$, $N\colon C\to A$ and $g\in \mathrm{Pic}_{\mathcal B}(A)$, $\iota\colon N\odot {}^gM \xRightarrow{\cong} N^{g^{-1}}\odot M$.
    \item For 1-cells $M\colon A\to B$, $N\colon B\to C$ and $h\in \mathrm{Pic}_{\mathcal B}(B)$, $\mathfrak{m}\colon M^h\odot {}^hN \xRightarrow{\cong} M\odot N$.
\end{enumerate}
\end{lemma}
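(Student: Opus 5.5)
Each isomorphism is obtained by unwinding Definition~\ref{def:twisted_1_cells} and then applying the associator, the unitors, and the structure $2$-cells of the invertible $1$-cell $g$ (Definition~\ref{def:invertible_1_cell}). No coherence argument beyond the existence of these composites is needed, since the statement only asserts canonical isomorphisms. One bookkeeping point needs care: the left twist uses the inverse. Thus ${}^{g^{-1}}U_A=(g^{-1})^{-1}\odot U_A$, and we must identify $(g^{-1})^{-1}$ with $g$. We take the chosen inverse of $g^{-1}$ to be $g$ itself, using the same structure $2$-cells with their roles swapped. This is legitimate because $\mathrm{Pic}_{\mathcal B}(A)$ consists of invertible $1$-cells with chosen inverse data. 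Alternatively, one can invoke the canonical isomorphism between any two inverses, built from $g\cong g\odot g^{-1}\odot (g^{-1})^{-1}\cong (g^{-1})^{-1}$.

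For (1), the isomorphism is the composite
\[
{}^{g^{-1}}U_A=(g^{-1})^{-1}\odot U_A\cong g\odot U_A\xRightarrow{\mathfrak l\text{ or }\mathfrak r} g\xRightarrow{\mathfrak r^{-1}} U_A\odot g=U_A^g .
\]
So both sides are canonically isomorphic to $g$ via unitors.

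For (2), take $N\colon C\to A$ and $M\colon A\to B$. Then
\[
N\odot {}^gM=N\odot(g^{-1}\odot M)\xRightarrow{\mathfrak a^{-1}}(N\odot g^{-1})\odot M=N^{g^{-1}}\odot M .
\]
The map $\iota$ is thus just the associator, which is why no invertibility of $g$ is actually used here.

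For (3), take $M\colon A\to B$, $N\colon B\to C$ and $h\in\mathrm{Pic}_{\mathcal B}(B)$. Define $\mathfrak m$ as the composite
\[
M^h\odot{}^hN=(M\odot h)\odot(h^{-1}\odot N)\cong M\odot(h\odot h^{-1})\odot N\xRightarrow{\id\odot\varepsilon\odot\id} M\odot U_B\odot N\cong M\odot N,
\]
where $\varepsilon\colon h\odot h^{-1}\Rightarrow U_B$ is the specified invertible $2$-cell and the unlabeled isomorphisms are associators and unitors. Each factor is invertible, so $\mathfrak m$ is an isomorphism. Naturality in $M$ and $N$ follows because it is a whiskering of naturals.

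The only step requiring thought is the inverse-of-inverse identification in (1). I would fix once and for all the convention $(g^{-1})^{-1}=g$ in $\mathrm{Pic}_{\mathcal B}(A)$. The ambiguity is resolved by the uniqueness of inverses in a bicategory up to canonical invertible $2$-cell, which is standard. By the coherence convention stated at the beginning of this section, the associators and unitors may then be suppressed.
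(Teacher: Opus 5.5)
Your proposal is correct and matches the paper's proof: (1) identifies both sides with $g$ via unitors, (2) is just the associator, and (3) reassociates and contracts $h\odot h^{-1}$ using the specified invertible $2$-cell. Your explicit identification $(g^{-1})^{-1}\cong g$ makes precise a step the paper writes silently as an equality. One cosmetic slip: with the paper's conventions the maps $g\odot U_A\Rightarrow g$ and $g\Rightarrow U_A\odot g$ are $\mathfrak r_g$ and $\mathfrak l_g^{-1}$ respectively, not $\mathfrak r^{-1}$.
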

\begin{proof}
These follow immediately from associativity, the unit axioms, and the invertibility of the twisting 1-cells in $\mathcal{B}$. Specifically: (1) ${}^{g^{-1}}U_A = g\odot U_A \cong g \cong U_A\odot g = U_A^g$; (2) $N\odot {}^gM = N\odot (g^{-1}\odot M) \cong (N\odot g^{-1})\odot M = N^{g^{-1}}\odot M$; and (3) $M^h\odot {}^hN = (M\odot h)\odot (h^{-1}\odot N) \cong M\odot (h\odot h^{-1})\odot N \cong M\odot N$.
\end{proof}

A $G$-twisting datum coherently packages these twists for all $g\in G.$

\begin{definition}\label{def:G_twisting_data}
Let $G$ be a group, regarded as a discrete monoidal category. A \emph{bicategory with $G$-twisting data} consists of a bicategory $\mathcal{B}$, and for each 0-cell $A \in \mathcal{B}$, a strong monoidal functor $\Theta_A \colon G \to \mathrm{Pic}_{\mathcal{B}}(A)$. Explicitly, this consists of:
\begin{enumerate}
\item For each $g \in G$, an invertible 1-cell $\Theta_A(g) \colon A \to A$.
\item For each $g, h \in G$, an invertible 2-cell $\mu^A_{g,h} \colon \Theta_A(g) \odot \Theta_A(h) \Rightarrow \Theta_A(gh)$.
\item An invertible 2-cell $\eta^A \colon U_A \Rightarrow \Theta_A(e)$.
\end{enumerate}
These structure 2-cells satisfy the standard associativity and unit
coherence conditions for a strong monoidal functor.
\end{definition}

\begin{remark}\label{rem:twist_notation}
For a 1-cell \(M\colon A\to B\) and \(g\in G\), we write
\(
{}^gM\coloneqq\Theta_A(g^{-1})\odot M\) and
\(M^g\coloneqq M\odot\Theta_B(g).
\)
Since the monoidal structure of \(\Theta_A\) canonically identifies
\(\Theta_A(g^{-1})\) with an inverse of \(\Theta_A(g)\), the first
expression is canonically isomorphic to the left twist indexed by
\(\Theta_A(g)\) in Definition~\ref{def:twisted_1_cells}.
The monoidal structure maps give canonical isomorphisms
\(
{}^{gh}M\cong{}^h({}^gM)\) and \(
M^{gh}\cong(M^g)^h.
\)
\end{remark}

\begin{example}[Twists in $\mathbf{Mod}(\mathcal V)$]\label{ex:modv_twists}
Suppose that $A$ and $B$ are monoids in $\mathcal V$ equipped with $G$-actions, and let $\Theta_A(g) = A^g$, $\Theta_B(g) = B^g$ be the corresponding base-change bimodules. For an $(A,B)$-bimodule $M$, its abstract left and right twists in this framework are defined as $A^{g^{-1}}\odot_A M$ and $M\odot_B B^g$. These   twists will be explicitly identified with the action-twisted bimodules ${}^gM$ and $M^g$ in Lemma~\ref{lem:base-change-twists}.
\end{example}

The following example shows that the same base-change
mechanism also occurs in a bicategory not of the form
\(\mathbf{Mod}(\mathcal V)\).

\begin{example}[Twisting data for retractive spaces]
\label{ex:retractive-G-twisting-data}
Let \(\mathcal R/G\text{-}\mathcal{T}op\) be the bicategory whose
0-cells are left \(G\)-spaces and whose 1-cells and 2-cells are those
of \(\mathcal R/\mathcal{T}op\) after forgetting the \(G\)-actions.
Thus, a 1-cell \(X\colon A\to B\) is an ordinary retractive space over
\(A\times B\), with no prescribed \(G\)-action.

For a left \(G\)-space \(A\) and \(g\in G\), define
\(
\Theta_A(g)
\coloneqq
(A\xrightarrow{g^{-1}}A)\colon A\to A,
\)
where \(g^{-1}\colon A\to A\) denotes the homeomorphism induced by the
\(G\)-action and the displayed arrow denotes its associated
base-change 1-cell. Explicitly, \(\Theta_A(g)\) is the retractive
space over \(A\times A\)
\[
A\times A
\xrightarrow{i}
(A\times A)\sqcup A
\xrightarrow{p_g}
A\times A,
\]
where \(p_g\) restricts to the identity on \(A\times A\) and to
\((\id_A,g^{-1})\) on the additional copy of \(A\).

Since \(g^{-1}\) is a homeomorphism, \(\Theta_A(g)\) is invertible,
with
\(
\Theta_A(g)^{-1}\cong\Theta_A(g^{-1}).
\)
The standard composition isomorphisms for base-change 1-cells give
\[
\begin{aligned}
\Theta_A(g)\odot\Theta_A(h)
&\cong
(A\xrightarrow{h^{-1}\circ g^{-1}}A)  =
(A\xrightarrow{(gh)^{-1}}A)
=
\Theta_A(gh),
\end{aligned}
\]
together with \(\Theta_A(e)\cong U_A\). These isomorphisms satisfy the
associativity and unit coherence conditions for base-change.
Consequently,
\(
\Theta_A\colon G\to\mathrm{Pic}_{\mathcal R/\mathcal{T}op}(A)
\)
defines \(G\)-twisting data.
\end{example}

\subsection{Bicategories of $G$-twists}

Definition~\ref{def:G_twisting_data} equips 0-cells with $G$-twisting data via the functors $\Theta$. To extend this twisting data to 1-cells, we specify how they interact with these 0-cell twists. We do this by collecting the 1-cells that coherently intertwine the left and right $\Theta$-twists into a new bicategory.

\begin{definition}\label{def:bicat_of_G_twists}
Let $\mathcal{B}$ be a bicategory with $G$-twisting data. The associated bicategory of $G$-twists, denoted by $\mathcal{B}^{G\text{-tw}}$, consists of the following:
\begin{itemize}
    \item \textbf{0-cells:} The same 0-cells as $\mathcal{B}$.

  \item \textbf{Category of 1-cells:} 
  For each pair of 0-cells $A,B$, an object of
$\mathcal B^{G\text{-tw}}(A,B)$ is a pair
\((M,\tau):A\to B,
\)
where $M:A\to B$ is a 1-cell in $\mathcal B$, and
\(
    \tau=\{\tau_{g}\}_{g\in G}
\)
is a family of invertible 2-cells for each $g \in G$, called the \emph{twisting constraints} of the pair \((M,\tau)\) where
\(
\tau_{g}:\Theta_A(g)\odot M \xRightarrow{\cong} M\odot \Theta_B(g)
\)  and \(g\in G\).
We call \((M,\tau)\)  a \emph{$G$-twisted 1-cell} and  write $\tau_g$ for $\tau_{g,M}$ when $M$ is clear. This family
    satisfies the following two coherence conditions:
    \begin{enumerate}
        \item[(a)] \textbf{Unit compatibility.} The composite
        \[ M \xRightarrow{\mathfrak{l}_M^{-1}} U_A \odot M \xRightarrow{\eta^A \odot \id} \Theta_A(e) \odot M \xRightarrow{\tau_e} M \odot \Theta_B(e) \xRightarrow{\id \odot (\eta^B)^{-1}} M \odot U_B \xRightarrow{\mathfrak{r}_M} M \]
        is the identity $2$-cell on $M$.

        \item[(b)] \textbf{Multiplicativity.} For all $g, h \in G$, the following diagram of 2-cells commutes:
        \[\begin{tikzcd}[column sep=5mm, row sep=6mm]
        \Theta_A(gh) \odot M \ar[r, Rightarrow, "\tau_{gh}"] \ar[d, Rightarrow, "(\mu^A_{g,h})^{-1} \odot \id"'] & M \odot \Theta_B(gh) \\
        (\Theta_A(g) \odot \Theta_A(h)) \odot M \ar[d, Rightarrow, "\mathfrak a"'] & M \odot (\Theta_B(g) \odot \Theta_B(h)) \ar[u, Rightarrow, "\id \odot \mu^B_{g,h}"'] \\
        \Theta_A(g) \odot (\Theta_A(h) \odot M) \ar[d, Rightarrow, "\id \odot \tau_h"'] & (M \odot \Theta_B(g)) \odot \Theta_B(h) \ar[u, Rightarrow, "\mathfrak a^{-1}"'] \\
        \Theta_A(g) \odot (M \odot \Theta_B(h)) \ar[r, Rightarrow, "\mathfrak a^{-1}"] & (\Theta_A(g) \odot M) \odot \Theta_B(h) \ar[u, Rightarrow, "\tau_g \odot \id"']
        \end{tikzcd}\]
    \end{enumerate}
A morphism
\(f:(M,\tau)\Longrightarrow (N,\sigma)
\)
in $\mathcal B^{G\text{-tw}}(A,B)$ is a 2-cell
$f:M\Rightarrow N$ in $\mathcal B(A,B)$ such that, for every $g\in G$,
the following square commutes:
\begin{equation}\label{eq:Gtw-2cell-compat}
    \begin{tikzcd}[column sep=5mm, row sep=5mm]
    \Theta_A(g) \odot M \ar[r, Rightarrow, "\tau_g"] \ar[d, Rightarrow, "\id \odot f"'] & M \odot \Theta_B(g) \ar[d, Rightarrow, "f \odot \id"] \\
    \Theta_A(g) \odot N \ar[r, Rightarrow, "\sigma_g"'] & N \odot \Theta_B(g)
    \end{tikzcd}
    \end{equation}
\item \textbf{Composition:} If
\(
    (M,\tau):A\to B,
    (N,\sigma):B\to C
\)
are 1-cells in $\mathcal B^{G\text{-tw}}$, then their composite is
\(    (M,\tau)\odot (N,\sigma)
    :=
    (M\odot N,\tau\star\sigma),\)  where the twisting constraint
\begin{equation}\label{eq:tau_def}
        (\tau\star\sigma)_{g}:
        \Theta_A(g)\odot (M\odot N)
        \xRightarrow{\cong}
        (M\odot N)\odot \Theta_C(g)
\end{equation} is
defined by the canonical composite
\[\begin{tikzcd}[row sep=5mm, column sep=6mm]
    \Theta_A(g)\odot (M\odot N) \arrow[r,Rightarrow,"\mathfrak a^{-1}"] & (\Theta_A(g)\odot M)\odot N \arrow[r,Rightarrow,"\tau_g\odot \id"] & (M\odot \Theta_B(g))\odot N \arrow[d,Rightarrow,"\mathfrak a"] \\
    (M\odot N)\odot \Theta_C(g) & M\odot (N\odot \Theta_C(g)) \arrow[l,Rightarrow,"\mathfrak a^{-1}"] & M\odot (\Theta_B(g)\odot N) \arrow[l,Rightarrow,"\id\odot \sigma_g"]
    \end{tikzcd}\]
  \item \textbf{Identity 1-cells:} For each object $A$, the identity 1-cell in
    $\mathcal B^{G\text{-tw}}(A,A)$ is
    \(
        (U_A,\tau^U_A),
    \)
    where $U_A$ is the unit 1-cell of $\mathcal B$, and the twisting constraint
    $
    \tau^U_g:\Theta_A(g)\odot U_A \xRightarrow{\cong} U_A\odot \Theta_A(g)
    $
    is given by
    \(
    \tau^U_g = \mathfrak l_{\Theta_A(g)}^{-1}\circ \mathfrak r_{\Theta_A(g)}.
    \)
\end{itemize}
\end{definition}

\begin{prop}\label{prop:bicat-of-G-twists}
Definition~\ref{def:bicat_of_G_twists} defines a bicategory
\(\mathcal B^{G\text{-tw}}\).
\end{prop}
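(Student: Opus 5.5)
The plan is to exhibit \(\mathcal B^{G\text{-tw}}\) as a bicategory whose structure isomorphisms are inherited from \(\mathcal B\), so that only closure statements need checking. The hom-categories \(\mathcal B^{G\text{-tw}}(A,B)\) are categories because the compatibility square \eqref{eq:Gtw-2cell-compat} is preserved by vertical composition (paste two squares) and holds for identities. We then define the associator, the unitors, and the horizontal composite of \(2\)-cells to be those of \(\mathcal B\). This leaves five things to prove:
\begin{enumerate}
\item the composite twisting constraint \(\tau\star\sigma\) and the unit constraint \(\tau^U\) satisfy the unit compatibility (a) and multiplicativity (b);
\item horizontal composition is a functor \(\mathcal B^{G\text{-tw}}(A,B)\times\mathcal B^{G\text{-tw}}(B,C)\to\mathcal B^{G\text{-tw}}(A,C)\);
\item the associator \(\mathfrak a\) is a morphism \(((M,\tau)\odot(N,\sigma))\odot(P,\pi)\Rightarrow (M,\tau)\odot((N,\sigma)\odot(P,\pi))\) in \(\mathcal B^{G\text{-tw}}\);
\item \(\mathfrak l\) and \(\mathfrak r\) are morphisms in \(\mathcal B^{G\text{-tw}}\);
\item the pentagon and triangle axioms hold.
\end{enumerate}
Item (5) is automatic, since it is an equation of \(2\)-cells in \(\mathcal B\). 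Naturality of \(\mathfrak a,\mathfrak l,\mathfrak r\) is likewise inherited, because morphisms in \(\mathcal B^{G\text{-tw}}\) are \(2\)-cells of \(\mathcal B\) and \(\mathcal B^{G\text{-tw}}(A,B)\to\mathcal B(A,B)\) is faithful.

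Throughout, I use the coherence theorem (as the paper does) to suppress associators and unitors. Then \((\tau\star\sigma)_g=(\id_M\odot\sigma_g)\circ(\tau_g\odot\id_N)\), and \(\tau^U_g\) is the identity of \(\Theta_A(g)\).

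For (1), multiplicativity of \(\tau\star\sigma\) is a pasting argument. Expand \((\tau\star\sigma)_{gh}\) using (b) for \(\tau\) and for \(\sigma\). The resulting diagram splits into three regions:
\begin{itemize}
\item the square for \(\tau\),
\item the square for \(\sigma\),
\item a middle region, where \(\tau_h\odot\id\) and \(\id\odot\sigma_g\) act on disjoint factors of \(\Theta_A(g)\odot M\odot\Theta_B(h)\odot N\) (after first applying \(\tau_g\)) and so commute by the interchange law in \(\mathcal B\).
\end{itemize}
The \(\mu^B_{g,h}\) appearing in the middle cancels with its inverse. Unit compatibility is similar: the \(\eta^B\) produced by \(\tau\) meets the \((\eta^B)^{-1}\) consumed by \(\sigma\), and each piece reduces to the identity. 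For \(\tau^U\), both axioms reduce to naturality of \(\mathfrak l,\mathfrak r\) together with the unit and triangle coherences.

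For (2), if \(f,f'\) are morphisms in \(\mathcal B^{G\text{-tw}}\), then \(f\odot f'\) makes the square \eqref{eq:Gtw-2cell-compat} commute for \(\tau\star\sigma\). This follows by pasting the squares for \(f\) and \(f'\) and using interchange. Functoriality is then inherited from \(\mathcal B\).

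For (3), the two composite twisting constraints on \(M\odot N\odot P\) coincide after inserting associators. Written out without suppression, both are the composite that moves \(\Theta(g)\) past \(M\), then \(N\), then \(P\), differing only by associator bookkeeping. MacLane coherence identifies the two, since both sides are built from the same \(\tau_g,\sigma_g,\pi_g\) whiskered by identities and interleaved with associators.

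For (4), the compatibility of \(\mathfrak l\colon U_A\odot M\Rightarrow M\) reduces to checking \((\tau^U\star\tau)_g=\tau_g\) modulo unitors. This follows because \(\tau^U_g=\mathfrak l^{-1}\circ\mathfrak r\) cancels against the unitor via the triangle identity and naturality; the argument for \(\mathfrak r\) is symmetric.

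The main obstacle is the bookkeeping in the multiplicativity check of (1). The interchange step must be placed precisely, and any use of coherence must be justified by the fact that every cell other than \(\tau,\sigma,\mu,\eta\) is a structural cell whiskered by identities.
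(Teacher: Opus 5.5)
Your proposal is correct and follows essentially the same route as the paper: inherit the associator, unitors and horizontal composition of \(2\)-cells from \(\mathcal B\), then verify that \(\tau\star\sigma\), \(\tau^U\), the compatibility squares and the structure cells all lie in \(\mathcal B^{G\text{-tw}}\), using pasting, interchange, and naturality/coherence of \(\mathfrak a,\mathfrak l,\mathfrak r\). One small slip in the multiplicativity check: you reach \(\Theta_A(g)\odot M\odot\Theta_B(h)\odot N\) after applying \(\tau_h\), and the cells that commute there by interchange are \(\tau_g\odot\id\) and \(\id\odot\sigma_h\), not \(\tau_h\) and \(\sigma_g\).
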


\begin{proof}
For each pair of objects \(A,B\), the data specified in
Definition~\ref{def:bicat_of_G_twists} form a category
\(\mathcal B^{G\text{-tw}}(A,B)\). Identity \(2\)-cells clearly satisfy
\eqref{eq:Gtw-2cell-compat}, and the composite of compatible \(2\)-cells is
again compatible by functoriality of composition in \(\mathcal B\).

If $(M,\tau):A\to B$ and $(N,\sigma):B\to C$ are 1-cells in $\mathcal{B}^{G\text{-tw}}$, their composite is the pair $(M\odot N, \tau\star\sigma)$, where the constraint $\tau\star\sigma$ defined in \eqref{eq:tau_def} satisfies the unit compatibility by the unit compatibility of $\tau$ and $\sigma$, together with the triangle axiom in $\mathcal{B}$. Similarly, the
multiplicativity condition for  $\tau\star\sigma$ follows from the
multiplicativity of \(\tau\) and \(\sigma\), using the pentagon axiom and the
naturality of the associator in \(\mathcal B\). The identity \(1\)-cells
\((U_A,\tau^U)\) satisfy the same axioms by coherence of the unitors.

Moreover, let
\(
f:(M,\tau)\Rightarrow (M',\tau')\) and \(
k:(N,\sigma)\Rightarrow (N',\sigma')
\)
be morphisms in \(\mathcal B^{G\text{-tw}}\). Thus the underlying
\(2\)-cells \(f:M\Rightarrow M'\) and \(k:N\Rightarrow N'\) satisfy
\eqref{eq:Gtw-2cell-compat} with respect to the pairs of twisting constraints
\((\tau,\tau')\) and \((\sigma,\sigma')\), respectively. We claim that the
 composite
\(
f\odot k:M\odot N\Rightarrow M'\odot N'
\)
satisfies \eqref{eq:Gtw-2cell-compat} with respect to the composite twisting
constraints \(\tau\star\sigma\) and \(\tau'\star\sigma'\). For each \(g\in G\),
expand \((\tau\star\sigma)_g\) and \((\tau'\star\sigma')_g\) by definition. The
required compatibility square is then the outer boundary of the  diagram
obtained from the compatibility squares for \(f\) and \(k\), together with the
naturality squares for the associator in \(\mathcal B\).

It remains to verify that the associator and unitors of \(\mathcal B\) define
\(2\)-cells in \(\mathcal B^{G\text{-tw}}\). For composable 1-cells
\(
(L,\lambda), (M,\tau), (N,\sigma)
\)
in $\mathcal B^{G\text{-tw}}$, the associator
\(
\mathfrak a_{L,M,N}\colon (L\odot M)\odot N \xRightarrow{\cong} L\odot(M\odot N)
\)
satisfies \eqref{eq:Gtw-2cell-compat} with respect to the induced twisting
constraints on both sides by naturality of \(\mathfrak a\) and the definition of
the composite constraint. The same argument applies to the left and right
unitors.

Since these underlying \(2\)-cells satisfy the pentagon and triangle identities
in \(\mathcal B\), the same identities hold in \(\mathcal B^{G\text{-tw}}\).
Hence Definition~\ref{def:bicat_of_G_twists} defines a bicategory
\(\mathcal B^{G\text{-tw}}\).
\end{proof}

\begin{remark}\label{rem:twist_notation_and_consistency}
Using the notation of Remark~\ref{rem:twist_notation}, the constraint of a
\(G\)-twisted \(1\)-cell \((M,\tau)\) is
\(
\tau_{g}:{}^{g^{-1}}M \xrightarrow{\cong} M^g.
\)
Equivalently, it defines the transport map
\begin{equation}\label{eq:g_M_def}
g_M \coloneqq (\tau_{g^{-1}})^{-1}: M^{g^{-1}} \xrightarrow{\cong} {}^gM.
\end{equation}
We use the constraints \(\tau_{g}\) in Definition~\ref{def:bicat_of_G_twists} because they compose naturally from left to right under composition, whereas the transport maps \(g_M\) run in the opposite direction and involve \(g^{-1}\).
\end{remark}

The following basic properties of the transport maps \(g_M\) are clear from the definitions.

\begin{lemma}\label{lem:gM-coherence}
For any $g\in G$, the following hold in $\mathcal B^{G\text{-tw}}$:
\begin{enumerate}
\item[(1)]
For every morphism
\(
\alpha:(M,\tau)\Rightarrow (N,\sigma)
\)
in \(\mathcal B^{G\text{-tw}}(A,B)\),     the compatibility condition
\eqref{eq:Gtw-2cell-compat} for the element \(g^{-1}\in G\) is equivalent to the commutativity of the following square
\[
\begin{tikzcd}[column sep=6mm, row sep=4mm]
M^{g^{-1}} \arrow[r,Rightarrow,"g_M"] \arrow[d,Rightarrow,"\alpha^{g^{-1}}"']
& {}^gM \arrow[d,Rightarrow,"{}^g\alpha"] \\
N^{g^{-1}} \arrow[r,Rightarrow,"g_N"']
& {}^gN.
\end{tikzcd}
\]

\item[(2)] For composable \(1\)-cells \((M,\tau):A\to B\) and \((N,\sigma):B\to C\), where
\(M\odot N\) is equipped with the composite twisting constraint
\(\tau\star\sigma\), the transport map
\(
g_{M\odot N}:(M\odot N)^{g^{-1}}\xRightarrow{\cong} {}^g(M\odot N)
\)
is the canonical composite
\begin{equation}\label{eq:gMN-formula}
(M\odot N)^{g^{-1}}
\xRightarrow{\;\mathfrak a\;}
M\odot N^{g^{-1}}
\xRightarrow{\;\id_M\odot g_N\;}
M\odot {}^gN
\xRightarrow{\;\iota\;}
M^{g^{-1}}\odot N
\xRightarrow{\;g_M\odot \id_N\;}
{}^gM\odot N
\xRightarrow{\;\mathfrak a\;}
{}^g(M\odot N).
\end{equation}

\item[(3)] For the unit \(1\)-cell \(U_A\), the transport map
\(
g_{U_A}:U_A^{g^{-1}}\xRightarrow{\cong}{}^gU_A
\)
is the inverse of the canonical isomorphism
\(
{}^gU_A\xRightarrow{\cong}U_A^{g^{-1}}
\)
from Lemma~\ref{lem:twist_unit}, applied to \(g^{-1}\).
\end{enumerate}
\end{lemma}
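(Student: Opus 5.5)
The plan is to unwind the definition $g_M=(\tau_{g^{-1}})^{-1}$ from \eqref{eq:g_M_def} in each of the three parts. In every case this reduces the claim to the defining property of the corresponding twisting constraint in Definition~\ref{def:bicat_of_G_twists}. Throughout, associators and unitors are suppressed by coherence except where they meet $\Theta$.

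\textbf{Part (1).} Write down the compatibility square \eqref{eq:Gtw-2cell-compat} for the element $g^{-1}$. Its horizontal arrows are $\tau_{g^{-1}}\colon \Theta_A(g^{-1})\odot M\Rightarrow M\odot\Theta_B(g^{-1})$ and $\sigma_{g^{-1}}$. Its vertical arrows are $\id\odot\alpha={}^g\alpha$ and $\alpha\odot\id=\alpha^{g^{-1}}$, using the notation of Remark~\ref{rem:twist_notation}. Both $\tau_{g^{-1}}$ and $\sigma_{g^{-1}}$ are invertible. A commuting square with invertible horizontal edges commutes if and only if the square formed by their inverses does. That inverted square is exactly the displayed square with $g_M$ and $g_N$, which gives the equivalence.

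\textbf{Part (2).} By definition, $g_{M\odot N}=((\tau\star\sigma)_{g^{-1}})^{-1}$. Expand $(\tau\star\sigma)_{g^{-1}}$ using the composite in \eqref{eq:tau_def}, which reads
\[
\mathfrak a^{-1}\circ(\id\odot\sigma_{g^{-1}})\circ\mathfrak a\circ(\tau_{g^{-1}}\odot\id)\circ\mathfrak a^{-1}.
\]
Inverting this composite reverses its order and inverts each factor. The factor $\sigma_{g^{-1}}^{-1}$ becomes $g_N$, and $\tau_{g^{-1}}^{-1}$ becomes $g_M$. The middle associator $M\odot(\Theta_B(g^{-1})\odot N)\cong(M\odot\Theta_B(g^{-1}))\odot N$ is precisely the isomorphism $\iota\colon M\odot{}^gN\cong M^{g^{-1}}\odot N$ of Lemma~\ref{lem:twist_swap}(2). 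It is taken at the invertible 1-cell $\Theta_B(g^{-1})$, which is canonically inverse to $\Theta_B(g)$. The outer associators become the two maps labelled $\mathfrak a$ in \eqref{eq:gMN-formula}.

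The only care needed is that $\iota$ in Lemma~\ref{lem:twist_swap} is stated with the twist written as $g^{-1}\odot M$. It therefore has to be matched against ${}^gN=\Theta_B(g^{-1})\odot N$ via Remark~\ref{rem:twist_notation}. This matching holds by construction, since $\iota$ is just the associator.

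\textbf{Part (3).} The identity constraint is $\tau^U_{g^{-1}}=\mathfrak l^{-1}_{\Theta_A(g^{-1})}\circ\mathfrak r_{\Theta_A(g^{-1})}$. Its inverse is $\mathfrak r^{-1}\circ\mathfrak l\colon U_A\odot\Theta_A(g^{-1})\Rightarrow\Theta_A(g^{-1})\odot U_A$. The canonical isomorphism of Lemma~\ref{lem:twist_unit}(1), applied to $g^{-1}$, is ${}^gU_A=\Theta_A(g^{-1})\odot U_A\cong\Theta_A(g^{-1})\cong U_A\odot\Theta_A(g^{-1})$, that is, $\mathfrak l^{-1}\circ\mathfrak r$. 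This is the inverse of $g_{U_A}$, as claimed.

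None of the three steps is a real obstacle. The only point requiring attention is the bookkeeping in Part (2): one must check that the associators produced by inverting \eqref{eq:tau_def} land in the right order and coincide with $\iota$ and the two $\mathfrak a$'s in \eqref{eq:gMN-formula}.
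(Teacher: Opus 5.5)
Your proposal is correct and is the unwinding of $g_M=(\tau_{g^{-1}})^{-1}$ that the paper has in mind when it calls these properties ``clear from the definitions''. Concretely, you invert the compatibility square for $g^{-1}$, invert the composite constraint $(\tau\star\sigma)_{g^{-1}}$ (identifying its middle associator with $\iota$), and invert $\tau^U_{g^{-1}}=\mathfrak l^{-1}\circ\mathfrak r$ to match Lemma~\ref{lem:twist_unit}; the associator directions and the translation between group-indexed and Pic-indexed twists are all handled correctly.
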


We now relate the twisting constraint \(\tau_{g}\) to a corresponding \emph{bi-twist isomorphism} \(\gamma_g\).

\begin{prop}\label{prop:gM-vs-gammaM}
For each $1$-cell $M:A\to B$ in $\mathcal B^{G\text{-tw}}$ and each $g\in G$, the twisting constraint
\(
\tau_g:\Theta_A(g)\odot M \xRightarrow{\cong} M\odot \Theta_B(g)
\)
canonically determines a bi-twist isomorphism
\begin{equation}\label{eq:gammaM}
\gamma_g: M \xRightarrow{\cong} {}^gM^g,
\end{equation}
where
\(
{}^gM^g \coloneqq \Theta_A(g^{-1})\odot M\odot \Theta_B(g).
\)
Conversely, every such isomorphism \(\gamma_g\) canonically determines
\(\tau_g\). For each \(g\in G\), these two constructions give a
bijective correspondence between twisting constraints and bi-twist
isomorphisms.
\end{prop}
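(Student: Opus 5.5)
The correspondence comes from whiskering \(\tau_g\) on the left by \(\Theta_A(g^{-1})\), which is invertible. Given \(\tau_g\), I will define \(\gamma_g\) as the composite
\[
M \xRightarrow{\mathfrak l^{-1}} U_A\odot M \xRightarrow{(\eta^A)\odot\id} \Theta_A(e)\odot M \xRightarrow{(\mu^A_{g^{-1},g})^{-1}\odot\id} \Theta_A(g^{-1})\odot\Theta_A(g)\odot M \xRightarrow{\id\odot\tau_g} \Theta_A(g^{-1})\odot M\odot\Theta_B(g),
\]
with associators inserted as needed. The target is \({}^gM^g\) by Remark~\ref{rem:twist_notation}. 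Conversely, given \(\gamma_g\colon M\Rightarrow {}^gM^g\), I will define \(\tau_g\) as
\[
\Theta_A(g)\odot M \xRightarrow{\id\odot\gamma_g} \Theta_A(g)\odot\Theta_A(g^{-1})\odot M\odot\Theta_B(g) \xRightarrow{\mu^A_{g,g^{-1}}\odot\id} \Theta_A(e)\odot M\odot\Theta_B(g) \xRightarrow{(\eta^A)^{-1}\odot\id} U_A\odot M\odot \Theta_B(g)\xRightarrow{\mathfrak l} M\odot\Theta_B(g).
\]
Both composites consist of invertible 2-cells, so both are isomorphisms.

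Conceptually, both constructions come from the equivalence of hom-categories
\[
\Theta_A(g^{-1})\odot -\colon \mathcal B(A,B)\to\mathcal B(A,B),
\]
with quasi-inverse \(\Theta_A(g)\odot-\). Left whiskering by an invertible 1-cell is fully faithful, hence bijective on 2-cells. The first construction is the action of this equivalence on \(\tau_g\), precomposed with the unit isomorphism \(\epsilon_M\colon M\cong \Theta_A(g^{-1})\odot\Theta_A(g)\odot M\) built from \(\mu^A_{g^{-1},g}\) and \(\eta^A\). The second construction is the inverse operation: whisker by \(\Theta_A(g)\), then collapse \(\Theta_A(g)\odot\Theta_A(g^{-1})\) using \(\mu^A_{g,g^{-1}}\) and \(\eta^A\).

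To check that the two constructions are mutually inverse, I will use naturality of \(\mu\) and of the associator and unitors to slide the whiskered \(\tau_g\) (respectively \(\gamma_g\)) past the structure cells. This leaves a composite of structure 2-cells of the form
\[
\Theta_A(g)\odot M\Rightarrow \Theta_A(g)\odot\Theta_A(g^{-1})\odot\Theta_A(g)\odot M\Rightarrow \Theta_A(g)\odot M,
\]
which mixes \(\mu_{g^{-1},g}^{-1}\) and \(\mu_{g,g^{-1}}\). I must show this composite is the identity; the analogous composite through \(\Theta_A(g^{-1})\) arises in the other direction. This step is the main obstacle. I plan to derive it from the associativity coherence of the strong monoidal functor \(\Theta_A\) applied to the triple \((g,g^{-1},g)\), combined with the unit axioms relating \(\eta^A\) to \(\mu^A_{e,g}\) and \(\mu^A_{g,e}\). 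These together give the zig-zag (triangle) identity for the adjoint equivalence \(\Theta_A(g)\dashv\Theta_A(g^{-1})\). If that identity proves awkward to verify directly, I will instead invoke the coherence theorem for monoidal functors: any two parallel composites of \(\mu\), \(\eta\), associators and unitors agree. With this in hand, both round-trip composites reduce to identities, so the correspondence is bijective for each fixed \(g\). The construction uses only the twisting data, so it is canonical.
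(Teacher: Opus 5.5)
Your proposal is correct and follows the paper's proof. It uses the same two composites: \(\eta^A\), \((\mu^A_{g^{-1},g})^{-1}\) and \(\id\odot\tau_g\) in one direction, and \(\id\odot\gamma_g\), \(\mu^A_{g,g^{-1}}\) and \((\eta^A)^{-1}\) in the other, and it reduces both round trips to identities via the coherence of the strong monoidal functor \(\Theta_A\) and bicategorical coherence. Your reduction to the zig-zag identities is the content the paper compresses into ``the inserted inverse twists cancel'': the triple \((g,g^{-1},g)\) handles one round trip, and the triple \((g^{-1},g,g^{-1})\), which you call the ``analogous'' case, handles the other.
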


\begin{proof}
Fix \(g\in G\) and a \(1\)-cell \(M:A\to B\) in
\(\mathcal B^{G\text{-tw}}\). Given \(\tau_g\), define \(\gamma_g\)
as the composite
\[
M \xRightarrow{\mathfrak l_M^{-1}} U_A\odot M
\xRightarrow{\eta^A\odot \id}
\Theta_A(e)\odot M
\xRightarrow{(\mu^A_{g^{-1},g})^{-1}\odot \id}
(\Theta_A(g^{-1})\odot \Theta_A(g))\odot M
\]
\[
\xRightarrow{\mathfrak a}
\Theta_A(g^{-1})\odot (\Theta_A(g)\odot M)
\xRightarrow{\id\odot \tau_{g}}
\Theta_A(g^{-1})\odot (M\odot \Theta_B(g))
\xRightarrow{\mathfrak a^{-1}}
(\Theta_A(g^{-1})\odot M)\odot \Theta_B(g).
\]
This defines the required bi-twist isomorphism
\(
\gamma_g:M\xRightarrow{\cong}{}^gM^g
\)
and hence an assignment \(F_g:\tau_g\mapsto\gamma_g\).

Conversely, given \(\gamma_g:M\xRightarrow{\cong}{}^gM^g\), define \(\tau_g\) by
\[
\Theta_A(g)\odot M
\xRightarrow{\id\odot \gamma_g}
\Theta_A(g)\odot\bigl((\Theta_A(g^{-1})\odot M)\odot \Theta_B(g)\bigr)
\xRightarrow{\mathfrak a^{-1}}
(\Theta_A(g)\odot(\Theta_A(g^{-1})\odot M))\odot \Theta_B(g)
\]
\[
\xRightarrow{\mathfrak a^{-1}\odot \id}
((\Theta_A(g)\odot\Theta_A(g^{-1}))\odot M)\odot \Theta_B(g)
\xRightarrow{(\mu^A_{g,g^{-1}}\odot \id)\odot \id}
(\Theta_A(e)\odot M)\odot \Theta_B(g)
\]
\[
\xRightarrow{((\eta^A)^{-1}\odot \id)\odot \id}
(U_A\odot M)\odot \Theta_B(g)
\xRightarrow{\mathfrak l_M\odot \id}
M\odot \Theta_B(g).
\]
This defines the assignment \(G_g:\gamma_g\mapsto\tau_g\).

Substituting either construction into the other, the inserted inverse
twists cancel through the coherence maps \(\mu^A\) and \(\eta^A\).
Hence, by the coherence axioms for \(\Theta_A\) and bicategorical
coherence,
\(
G_g(F_g(\tau_g))=\tau_g\) and \(F_g(G_g(\gamma_g))=\gamma_g.
\)
Thus \(F_g\) and \(G_g\) establish the claimed bijective correspondence.
\end{proof}

\begin{remark}\label{rem:naturality-is-equivariance}
The transport map \(g_M\) moves a single twist across a \(1\)-cell, whereas
\(\gamma_g\) identifies \(M\) with its corresponding bi-twist \({}^gM^g\).
Under the correspondence of Proposition~\ref{prop:gM-vs-gammaM}, the unit and
multiplicativity axioms for the twisting constraints \(\tau_{g}\) translate
into the following conditions on the bi-twist isomorphisms \(\gamma_g\):
\begin{enumerate}
    \item[(1)] \textbf{Unit.} The isomorphism
\(
\gamma_{e,M}:M\xrightarrow{\cong}{}^eM^e
\)
is the canonical isomorphism induced by the unit constraints
$\Theta_A(e)\cong U_A$ and $\Theta_B(e)\cong U_B$.
Equivalently, the composite
\(M\xRightarrow{\gamma_{e}}{}^eM^e\xrightarrow{\cong}M
\)
is the identity of $M$.

    \item[(2)] \textbf{Multiplicativity.} For every \(g,h\in G\), the isomorphism
    \(\gamma_{gh}\colon M\to {}^{gh}M^{gh}\) agrees with the canonical composite
    \(
    M \xRightarrow{\gamma_{h}} {}^hM^h
    \xRightarrow{{}^h\gamma_g^h} {}^h({}^gM^g)^h
    \xRightarrow{\cong} {}^{gh}M^{gh},
    \)
    where the monoidal structure maps induce the final isomorphism
    \(\mu^A,\mu^B\) and the associators in \(\mathcal B\).
\end{enumerate}

Likewise, the compatibility condition on \(2\)-cells with the twisting
constraints \(\tau_{g,-}\) is equivalent to the commutativity of the square
\begin{equation}\label{equiv:constraint}
\begin{tikzcd}
M \arrow[r,Rightarrow, "\alpha"]
  \arrow[d,Rightarrow,"\gamma_{g,M}"']
& N \arrow[d,Rightarrow,"\gamma_{g,N}"] \\
{}^gM^g \arrow[r,Rightarrow,"{}^g\alpha^g"']
& {}^gN^g,
\end{tikzcd}
\end{equation}
where \({}^g\alpha^g\) denotes the \(2\)-cell
\(
\Theta_A(g^{-1})\odot \alpha \odot \Theta_B(g).
\)
\end{remark}

\subsection{Equivalence with semilinear equivariant bimodules}
In this subsection, we specialize the construction of Section~3.2
to the bicategory of monoids and bimodules. We show that the twisting
constraints of Definition~\ref{def:bicat_of_G_twists} are equivalent
to semilinear \(G\)-actions on bimodules. This yields a biequivalence
between the bicategory
\(\mathbf{Mod}(\mathcal V)^{G\text{-tw}}\) constructed below and the
bicategory \(\mathbf{Mod}(\mathcal V)^G\) of \(G\)-monoids and
bimodules equipped with semilinear \(G\)-actions
(Corollary~\ref{cor:equiv-mod-G}).

Throughout this subsection, we let $(\mathcal V,\otimes,I)$ be a cocomplete closed symmetric monoidal category for which the relevant relative tensor products exist, and write  $\mathbf{Mod}(\mathcal V)$  for the bicategory of bimodules over monoids recalled in Example~\ref{ex:ModV}.

 Let $\mathbf{Mod}(\mathcal{V})_{G\text{-mon}}$ be the bicategory whose $0$-cells are pairs $(A, \alpha)$, where $A$ is a monoid in $\mathcal{V}$ and $\alpha\colon G\to \operatorname{Aut}_{\mathrm{Mon}(\mathcal{V})}(A)$ is a group homomorphism. Its \(1\)-cells are \((A,B)\)-bimodules, and its \(2\)-cells are
bimodule morphisms.
We equip $\mathbf{Mod}(\mathcal{V})_{G\text{-mon}}$ with $G$-twisting data by defining   $\Theta_{(A,\alpha)}(g) \coloneqq A^g$, where $A^g$ denotes the $(A,A)$-bimodule whose underlying object is $A$, equipped with the standard left action and the right action twisted by $\alpha_g:= \alpha(g)$.
For each \(g\in G\), the bimodule \(A^g\) is invertible, with inverse
canonically isomorphic to \(A^{g^{-1}}\). The standard base-change
isomorphisms
\(
A^g\odot_A A^h\xRightarrow{\cong}A^{gh},
A\xRightarrow{\cong}A^e,
\)
satisfy the associativity and unit coherence conditions and therefore
make
\[
\Theta_{(A,\alpha)}
\colon
G\longrightarrow \operatorname{Pic}_{\mathbf{Mod}(\mathcal V)_{G\text{-mon}}}
\bigl((A,\alpha)\bigr)
\]
a strong monoidal functor.

Applying Definition~\ref{def:bicat_of_G_twists}, set
\(
\mathbf{Mod}(\mathcal V)^{G\text{-tw}}
\coloneqq
\bigl(\mathbf{Mod}(\mathcal V)_{G\text{-mon}}\bigr)^{G\text{-tw}}.
\)
Thus a 1-cell from \((A,\alpha)\) to \((B,\beta)\) is an \((A,B)\)-bimodule \(M\) equipped with twisting constraints
\(
\tau_g\colon A^g\odot_A M \xRightarrow{\cong} M\odot_B B^g
\)
satisfying the unit and multiplicativity conditions of Definition~\ref{def:bicat_of_G_twists}. Its 2-cells are the compatible bimodule morphisms.

For comparison, let \(\mathbf{Mod}(\mathcal V)^G\) be the bicategory with the same objects, whose 1-cells are bimodules equipped with semilinear \(G\)-actions. Explicitly, a $1$-cell $(A,\alpha)\to (B,\beta)$ is a pair $(M,\phi)$, where $M$ is an $(A,B)$-bimodule and $\phi=\{\phi_g\}_{g\in G}$ is a family of automorphisms $\phi_g\colon M\xrightarrow{\cong} M$ in $\mathcal V$ such that \(\phi_e=\id_M,\)
\(\phi_{gh}=\phi_g\circ\phi_h,\) and each $\phi_g$ is $(\alpha_g,\beta_g)$-semilinear, i.e., the diagrams
\begin{equation}\label{eq:semilinearity-modV}
\begin{tikzcd}
A\otimes M \ar[r,"\lambda"] \ar[d,"\alpha_g\otimes \phi_g"'] & M \ar[d,"\phi_g"] \\
A\otimes M \ar[r,"\lambda"'] & M
\end{tikzcd}
\qquad
\begin{tikzcd}
M\otimes B \ar[r,"\rho"] \ar[d,"\phi_g\otimes \beta_g"'] & M \ar[d,"\phi_g"] \\
M\otimes B \ar[r,"\rho"'] & M
\end{tikzcd}
\end{equation}
commute. A $2$-cell $f\colon (M,\phi)\Rightarrow (N,\psi)$ is a bimodule map $f\colon M\to N$ such that $f\circ\phi_g=\psi_g\circ f$ for all $g\in G$. For composable $1$-cells $(M,\phi)$ and $(N,\psi)$, their composite is $M\odot_B N$, equipped with the automorphism $(\phi\odot_B\psi)_g$ induced by $\phi_g\otimes \psi_g$. The unit $1$-cell on $(A,\alpha)$ is the regular bimodule $A$, equipped with the family $\alpha_g\colon A\to A$. The associators and unitors are inherited from \(\mathbf{Mod}(\mathcal V)\).

By Proposition~\ref{prop:gM-vs-gammaM}, the constraints \(\tau_g\) may equivalently be encoded by bi-twist isomorphisms \(\gamma_{g}\). We first identify the left and right twists in \(\mathbf{Mod}(\mathcal V)\) with the usual action-twisted bimodules, and then identify the resulting bi-twist isomorphisms with semilinear automorphisms.

\begin{lemma}\label{lem:base-change-twists}
Let \((A,\alpha)\) and \((B,\beta)\) be \(G\)-monoids in
\(\mathcal V\), and let \(M\) be an \((A,B)\)-bimodule. For every
\(g\in G\), there are canonical isomorphisms of
\((A,B)\)-bimodules
\(
A^{g^{-1}}\odot_A M\xrightarrow{\cong}{}^gM
\) and
\(M\odot_B B^g\xrightarrow{\cong}M^g,
\)
where \({}^gM\) denotes \(M\) with left action
\(
\lambda_{{}^gM}
=
\lambda_M\circ(\alpha_g\otimes\id_M)
\)
and unchanged right \(B\)-action, while \(M^g\) denotes \(M\) with
unchanged left \(A\)-action and right action
\(
\rho_{M^g}
=
\rho_M\circ(\id_M\otimes\beta_g).
\)
\end{lemma}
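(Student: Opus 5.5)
The plan is to build both isomorphisms from the unit isomorphisms of $\mathbf{Mod}(\mathcal V)$, after first identifying the twisted base-change bimodules with action-twisted copies of the regular bimodule. Throughout I will argue with the defining coequalizers of the relative tensor products, so that everything stays inside $\mathcal V$ and no elements are needed. Elementwise formulas such as $a\otimes m\mapsto\alpha_g(a)m$ serve only as a guide.

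\textbf{Right twist.} This case is direct. The bimodule $B^g$ is the regular bimodule $B$ whose right action is precomposed with $\beta_g$. Consider the coequalizer defining $M\odot_B B^g$. Its parallel pair uses the right action of $M$ and the \emph{left} action of $B^g$, and the latter is the untwisted multiplication of $B$. So this is the same coequalizer diagram as the one for $M\odot_B B$, and the ordinary right unit isomorphism $\mathfrak r\colon M\odot_B B\cong M$ (induced by $\rho_M$) gives an isomorphism of underlying objects $M\odot_B B^g\cong M$. It remains to compare actions. The left $A$-action on $M\odot_B B^g$ is induced from $M$, so it is unchanged. The right $B$-action is induced from the right action of $B^g$, namely $\mu_B\circ(\id\otimes\beta_g)$. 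Passing this through $\rho_M$ and using associativity of $\rho_M$ gives $\rho_M\circ(\id_M\otimes\beta_g)=\rho_{M^g}$. Hence the unit isomorphism is an isomorphism of $(A,B)$-bimodules $M\odot_B B^g\cong M^g$.

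\textbf{Left twist.} Here the twist sits on the side that enters the coequalizer, so I first replace $A^{g^{-1}}$ by a bimodule whose right action is standard. Let ${}^gA$ denote $A$ with left action $\mu_A\circ(\alpha_g\otimes\id)$ and standard right action. I claim that $\alpha_g\colon A^{g^{-1}}\to{}^gA$ is an isomorphism of $(A,A)$-bimodules. Since $\alpha_g$ is a monoid automorphism with $\alpha_g\alpha_{g^{-1}}=\id$, this is the identity $\alpha_g(a\,x\,\alpha_{g^{-1}}(a'))=\alpha_g(a)\,\alpha_g(x)\,a'$, written diagrammatically. Now consider ${}^gA\odot_A M$. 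Its coequalizer involves only the standard right action of $A$ and the left action of $M$, so the left unit isomorphism $\mathfrak l\colon A\odot_A M\cong M$ applies verbatim. The induced left action is transported to $\lambda_M\circ(\alpha_g\otimes\id_M)=\lambda_{{}^gM}$, and the right $B$-action is unchanged. Composing $\alpha_g\odot\id_M$ with $\mathfrak l$ then gives the required isomorphism $A^{g^{-1}}\odot_A M\cong{}^gM$. As a sanity check, its inverse is the elementwise map $m\mapsto 1\otimes m$, and the relation $a\otimes m=1\otimes\alpha_g(a)m$ holds in the tensor product.

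\textbf{Main obstacle.} I expect the only real care to be needed in checking that the twisted actions really descend through the coequalizers, in a general $\mathcal V$ and without elements. The point to verify is that the twisted action maps are compatible with the parallel pair. This should follow because $\alpha_g$ and $\beta_g$ are monoid maps and $\otimes$ preserves the relevant colimits in each variable, $\mathcal V$ being closed.

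\textbf{Canonicity.} Both isomorphisms are natural in $M$ with respect to bimodule maps, since they are built from the natural unitors together with the fixed maps $\alpha_g$ and $\beta_g$. This is the sense of "canonical" needed when these identifications are used later to compare $\tau_g$ with semilinear actions.
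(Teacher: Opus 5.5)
Your proof is correct and follows essentially the same route as the paper. Both arguments build the isomorphisms from \(\alpha_g\odot_A\id_M\) (respectively the identity of \(B\) viewed as a map \(B^g\to B\)) followed by the unitors, and then check the transported actions. The only cosmetic difference is that you route the left case through the bimodule isomorphism \(\alpha_g\colon A^{g^{-1}}\to{}^gA\), whereas the paper uses \(\alpha_g\colon A^{g^{-1}}\to A\) only as a right \(A\)-module map and verifies the left action on the composite \(F_g\) directly.
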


\begin{proof}
We first construct the left isomorphism. Recall that the underlying object of \(A^{g^{-1}}\) is \(A\), equipped with its usual left \(A\)-action and the right \(A\)-action \(\mu_A\circ(\id_A\otimes\alpha_{g^{-1}})\). Because \(\alpha_g\) is a monoid automorphism and \(\alpha_g\circ\alpha_{g^{-1}}=\id_A\), the underlying morphism \(\alpha_g\colon A^{g^{-1}}\to A\) is an isomorphism of right \(A\)-modules. 

Applying \(-\odot_AM\), followed by the canonical left unitor, yields an isomorphism in \(\mathcal V\)
\(
F_g: A^{g^{-1}}\odot_AM \xrightarrow{\alpha_g\odot_A\id_M} A\odot_AM \xrightarrow{\cong} M.
\)
The compatibility of \(F_g\) with the left \(A\)-actions is immediate from the equality \(F_g\circ\lambda_{A^{g^{-1}}\odot_AM} = \lambda_M\circ(\alpha_g\otimes F_g)\). Since the right \(B\)-action is induced   from the unchanged right \(B\)-action on \(M\), \(F_g\) defines the desired isomorphism of \((A,B)\)-bimodules \(A^{g^{-1}}\odot_AM\xrightarrow{\cong}{}^gM\).

The right isomorphism
\(M\odot_B B^g\xrightarrow{\cong}M^g\)
is constructed analogously from the left \(B\)-module isomorphism
\(\id_B\colon B^g\to B\) and the canonical right unitor. The resulting
map is compatible with the unchanged left \(A\)-action and the twisted
right \(B\)-action on \(M^g\).
\end{proof}

\begin{lemma}\label{lem:gamma-vs-semilinear}
Fix \(g\in G\). Under the canonical identification of the underlying
object of \({}^gM^g\) with \(M\), giving an isomorphism of
\((A,B)\)-bimodules
\(
\gamma_g\colon M\xrightarrow{\cong}{}^gM^g
\)
is equivalent to giving an
\((\alpha_g,\beta_g)\)-semilinear automorphism
\(
\phi_g\colon M\xrightarrow{\cong}M
\)
in \(\mathcal V\).
\end{lemma}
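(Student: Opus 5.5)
By Lemma~\ref{lem:base-change-twists}, the bi-twist \({}^gM^g\) is \(M\) itself with both actions twisted: its left action is \(\lambda_M\circ(\alpha_g\otimes\id_M)\) and its right action is \(\rho_M\circ(\id_M\otimes\beta_g)\). The plan is therefore to reduce to a direct check. A morphism \(\phi_g\colon M\to{}^gM^g\) in \(\mathcal V\) is a bimodule map exactly when two squares commute. The first is \(\phi_g\circ\lambda_M=\lambda_{{}^gM^g}\circ(\id_A\otimes\phi_g)\). The second is the analogous square for the right action. The main point is to compare these conditions with the semilinearity squares \eqref{eq:semilinearity-modV}.

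The key steps are as follows.

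\begin{enumerate}
\item I fix the identification of the underlying object of \({}^gM^g\) with \(M\). This is the composite of the two isomorphisms \(F_g\) of Lemma~\ref{lem:base-change-twists}, one on the left and one on the right. Their order is immaterial, since the left and right twists commute up to the associator.
\item Given \(\gamma_g\), I set \(\phi_g\) to be the underlying morphism of \(\gamma_g\) transported along this identification. It is invertible because \(\gamma_g\) is.
\item I unwind the bimodule-map condition. Left linearity reads \(\phi_g\circ\lambda_M=\lambda_M\circ(\alpha_g\otimes\id_M)\circ(\id_A\otimes\phi_g)=\lambda_M\circ(\alpha_g\otimes\phi_g)\). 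This is precisely the left semilinearity square. The right side is handled the same way using \(\beta_g\).
\item Conversely, an \((\alpha_g,\beta_g)\)-semilinear automorphism satisfies these same equations, so it is a bimodule isomorphism \(M\to{}^gM^g\).
\end{enumerate}

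The two assignments are inverse because both are the identity on underlying morphisms once the identification is fixed.

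The main subtlety is the first step. I need to check carefully that the chosen identification really carries the action on \(A^{g^{-1}}\odot_A M\odot_B B^g\) to the twist by \(\alpha_g\), and not by \(\alpha_{g^{-1}}\). This is where the convention \({}^gM=\Theta_A(g^{-1})\odot M\) matters. The check uses the equality \(F_g\circ\lambda=\lambda_M\circ(\alpha_g\otimes F_g)\) recorded in the proof of Lemma~\ref{lem:base-change-twists}. Once the orientation is confirmed, everything else is a routine unwinding of the definitions.
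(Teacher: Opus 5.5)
Your proposal is correct and follows essentially the same route as the paper: transport \(\gamma_g\) along the identification of the underlying object of \({}^gM^g\) with \(M\), then observe that left and right linearity with respect to the twisted actions are exactly the two semilinearity squares in \eqref{eq:semilinearity-modV}. Your extra check that the identification from Lemma~\ref{lem:base-change-twists} yields the \(\alpha_g\)-twist rather than the \(\alpha_{g^{-1}}\)-twist is left implicit in the paper, and it does come out as you expect.
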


\begin{proof}
Transport \(\gamma_g\) along the canonical identification of the
underlying object of \({}^gM^g\) with \(M\), and denote the resulting
morphism in \(\mathcal V\) by \(f\colon M\to M\).
Under this identification, \(\gamma_g\) is a morphism of left
\(A\)-modules if and only if
\(
f\circ\lambda_M
=
\lambda_M\circ(\alpha_g\otimes f).
\)
This is exactly the left semilinearity condition in
\eqref{eq:semilinearity-modV}. Similarly, \(\gamma_g\) is a morphism of
right \(B\)-modules if and only if
\(
f\circ\rho_M
=
\rho_M\circ(f\otimes\beta_g).
\)
Hence \(f\) defines a bimodule map
\(M\to{}^gM^g\) if and only if it is
\((\alpha_g,\beta_g)\)-semilinear.
 Taking
\(\phi_g=f\) gives the desired correspondence.
\end{proof}

Combining Lemma~\ref{lem:gamma-vs-semilinear} with Remark~\ref{rem:naturality-is-equivariance} allows us to identify the  twisting data with a coherent semilinear $G$-action.

\begin{prop}\label{prop:comparison-vs-semilinear}
Let $(A,\alpha)$ and $(B,\beta)$ be objects of $\mathbf{Mod}(\mathcal V)^G$, and let $M$ be an $(A,B)$-bimodule.
The following data are equivalent:
\begin{enumerate}
\item\label{it:cmp-gamma}
for each $g\in G$, an isomorphism
$\gamma_g\colon M\xrightarrow{\cong} {}^gM^g$
such that the unit and multiplicativity conditions of
Remark~\ref{rem:naturality-is-equivariance} are satisfied;

\item\label{it:semi-gamma}
a family of automorphisms
$\phi_g\colon M\xrightarrow{\cong} M$
in $\mathcal V$ such that
$\phi_e=\id_M$, $\phi_{gh}=\phi_g\circ\phi_h$,
and each $\phi_g$ is $(\alpha_g,\beta_g)$-semilinear in the sense of \eqref{eq:semilinearity-modV}.
\end{enumerate}
Moreover, for composable \(G\)-twisted bimodules
\(M\colon A\to B\) and \(N\colon B\to C\), the bi-twist isomorphism
induced on \(M\odot_BN\) by the composite twisting constraint
corresponds to the semilinear automorphism
\(
\phi_g^M\odot_B\phi_g^N
\colon
M\odot_BN\to M\odot_BN.
\)
\end{prop}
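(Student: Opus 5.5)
The plan is to apply Lemma~\ref{lem:gamma-vs-semilinear} for each $g\in G$ separately and then translate the two coherence conditions of Remark~\ref{rem:naturality-is-equivariance} one at a time. For each $g$, Lemma~\ref{lem:gamma-vs-semilinear} gives a bijection $\gamma_g\leftrightarrow\phi_g$. Here $\phi_g$ is the underlying $\mathcal V$-morphism of $\gamma_g$, read through the canonical identification of the underlying object of ${}^gM^g$ with $M$. That identification is the composite of the isomorphisms of Lemma~\ref{lem:base-change-twists}: $\alpha_{g^{-1}}$-type maps on the left factor $A^{g^{-1}}$ and the identity on $B^g$. Bijectivity and semilinearity are then already in hand. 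What remains is to show that the unit condition corresponds to $\phi_e=\id_M$, and that multiplicativity corresponds to $\phi_{gh}=\phi_g\circ\phi_h$.

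\textbf{Unit.} Under Lemma~\ref{lem:base-change-twists} with $g=e$, the maps $\eta^A\colon A\to A^e$ and $\eta^B$ are identities on underlying objects. So the canonical isomorphism ${}^eM^e\cong M$ has underlying morphism $\id_M$, and condition (1) of Remark~\ref{rem:naturality-is-equivariance} reads exactly $\phi_e=\id_M$.

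\textbf{Multiplicativity.} This is the step I expect to be the main obstacle, because the identifications must be tracked carefully. First I would check that for any bimodule map $\kappa\colon M\to M'$, the twisted map ${}^h\kappa^h=\Theta_A(h^{-1})\odot\kappa\odot\Theta_B(h)$ corresponds, under Lemma~\ref{lem:base-change-twists}, to the same underlying morphism $\kappa$. This holds because the isomorphism $F_h$ is natural in $M$. Next I would check that the canonical isomorphism ${}^h({}^gM^g)^h\cong{}^{gh}M^{gh}$, built from $\mu^A,\mu^B$ and associators, has identity underlying morphism on $M$. This reduces to the fact that the base-change isomorphisms $A^{g}\odot_AA^{h}\cong A^{gh}$ are induced by multiplication together with the automorphisms $\alpha$, and that they are compatible with the maps $F$ of Lemma~\ref{lem:base-change-twists}; I would verify this on generators $a\otimes a'\otimes m$. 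Granting both checks, the composite $M\xrightarrow{\gamma_h}{}^hM^h\xrightarrow{{}^h\gamma_g^h}\cdots$ has underlying morphism $\phi_g\circ\phi_h$. Condition (2) of Remark~\ref{rem:naturality-is-equivariance} then says precisely $\phi_{gh}=\phi_g\circ\phi_h$. The order of composition must be checked against the convention for ${}^gM$ (left action precomposed with $\alpha_g$). If the computation instead yields $\phi_h\circ\phi_g$, I would replace $\phi_g$ by $\phi_{g^{-1}}^{-1}$-type reindexing, but I expect the stated order to come out directly.

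\textbf{Composites.} For $M\odot_BN$, I would use Lemma~\ref{lem:gM-coherence}(2), or equivalently the definition of $\tau\star\sigma$, and translate it through Proposition~\ref{prop:gM-vs-gammaM}. The bi-twist $\gamma_g^{M\odot N}$ is the composite of $\gamma_g^M\odot\gamma_g^N$ with the cancellation $\mathfrak m\colon M^g\odot_B{}^gN\cong M\odot_BN$ of Lemma~\ref{lem:twist_properties}(3). On underlying objects $\mathfrak m$ is the identity of $M\odot_BN$: it is induced by $B^g\odot_BB^{g^{-1}}\cong B$, which after the identifications of Lemma~\ref{lem:base-change-twists} is $\id$ on $M\otimes N$. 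Hence the underlying morphism of $\gamma_g^{M\odot N}$ is the map induced on the coequalizer by $\phi_g^M\otimes\phi_g^N$. This map is well defined because the $\beta_g$-semilinearity of $\phi^M_g$ and $\phi^N_g$ makes it respect the two parallel arrows defining $\odot_B$.
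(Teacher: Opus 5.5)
Your proposal is correct and follows the paper's own route. The paper applies Lemma~\ref{lem:gamma-vs-semilinear} for each $g$, translates the unit and multiplicativity conditions of Remark~\ref{rem:naturality-is-equivariance} into $\phi_e=\id_M$ and $\phi_{gh}=\phi_g\circ\phi_h$, and unwinds $\tau\star\sigma$ to obtain $\phi^M_g\odot_B\phi^N_g$, but it only asserts these translations, while you supply the underlying-object checks, which do go through: the order $\phi_g\circ\phi_h$ comes out directly (consistent with $\phi_g\circ\phi_h$ being $(\alpha_{gh},\beta_{gh})$-semilinear), and your one slip is that the identification $A^{g^{-1}}\odot_AM\cong{}^gM$ of Lemma~\ref{lem:base-change-twists} uses $\alpha_g$, not $\alpha_{g^{-1}}$, on the factor $A^{g^{-1}}$.
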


\begin{proof}
By Lemma~\ref{lem:gamma-vs-semilinear}, for each $g\in G$, the datum of
$\gamma_g$ is equivalent to the datum of an
$(\alpha_g,\beta_g)$-semilinear automorphism $\phi_g$ of $M$.
Under the correspondence of Proposition~\ref{prop:gM-vs-gammaM} and
Remark~\ref{rem:naturality-is-equivariance}, the unit axiom for
$\gamma$ becomes $\phi_e=\id_M$, and the multiplicativity axiom becomes
$\phi_{gh}=\phi_g\phi_h$. This proves the equivalence of
\eqref{it:cmp-gamma} and \eqref{it:semi-gamma}.
Finally, unwinding the composite twisting constraint and using
Lemma~\ref{lem:gamma-vs-semilinear}, its associated map on the
underlying object of \(M\odot_BN\) is
\(\phi_g^M\odot_B\phi_g^N\).
\end{proof}

\begin{cor}\label{cor:equiv-mod-G}
There is a biequivalence of bicategories
\(
\mathbf{Mod}(\mathcal V)^G
\simeq
\mathbf{Mod}(\mathcal V)^{G\text{-tw}}.
\)
\end{cor}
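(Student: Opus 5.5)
We construct a pseudofunctor \(\Phi\colon \mathbf{Mod}(\mathcal V)^G\to\mathbf{Mod}(\mathcal V)^{G\text{-tw}}\) that is the identity on \(0\)-cells. We then show that it is an isomorphism on each hom-category and is strictly compatible with composition and units up to the inherited coherence cells; such a pseudofunctor is a biequivalence. On a \(1\)-cell \((M,\phi)\colon(A,\alpha)\to(B,\beta)\), Lemma~\ref{lem:gamma-vs-semilinear} gives, for each \(g\), a bimodule isomorphism \(\gamma_g\colon M\to{}^gM^g\) whose underlying map is \(\phi_g\). We transport along the canonical isomorphisms \(A^{g^{-1}}\odot_AM\cong{}^gM\) and \(M\odot_BB^g\cong M^g\) of Lemma~\ref{lem:base-change-twists}, which identify the abstract bi-twist \(\Theta_A(g^{-1})\odot M\odot\Theta_B(g)\) with the action-twisted bimodule \({}^gM^g\). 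By Proposition~\ref{prop:comparison-vs-semilinear}, the conditions \(\phi_e=\id\) and \(\phi_{gh}=\phi_g\phi_h\) are exactly the unit and multiplicativity conditions of Remark~\ref{rem:naturality-is-equivariance}. Proposition~\ref{prop:gM-vs-gammaM} then converts the family \(\gamma\) into twisting constraints \(\tau\) satisfying (a) and (b) of Definition~\ref{def:bicat_of_G_twists}. We set \(\Phi(M,\phi)=(M,\tau)\).

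On \(2\)-cells, \(\Phi\) is the identity on underlying bimodule maps. A bimodule map \(f\) satisfies \(f\phi_g=\psi_g f\) if and only if the square \eqref{equiv:constraint} commutes, since under the identification of underlying objects \({}^gf^g\) is just \(f\). By Remark~\ref{rem:naturality-is-equivariance}, this square is equivalent to \eqref{eq:Gtw-2cell-compat}. Hence \(\Phi\) is bijective on \(2\)-cells, and so fully faithful on hom-categories. It is also essentially surjective, indeed surjective, on \(1\)-cells: given \((M,\tau)\), Proposition~\ref{prop:gM-vs-gammaM} produces \(\gamma\), and Proposition~\ref{prop:comparison-vs-semilinear} produces a semilinear action \(\phi\) with \(\Phi(M,\phi)=(M,\tau)\). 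Because the correspondences involved are bijections, each hom-functor is an isomorphism of categories.

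It remains to give \(\Phi\) pseudofunctor structure. For composition, the last clause of Proposition~\ref{prop:comparison-vs-semilinear} says that the bi-twist isomorphism of the composite constraint \(\tau\star\sigma\) corresponds to \(\phi_g\odot_B\psi_g\). This is precisely the action defining composition in \(\mathbf{Mod}(\mathcal V)^G\), so we take the composition constraint to be the identity on \(M\odot_BN\). For units, the constraint \(\tau^U_g=\mathfrak l^{-1}\mathfrak r\) on \(U_A=A\) must be checked to correspond to the action \(\alpha_g\). Unwinding the bi-twist gives the composite
\[
A\cong A^{g^{-1}}\odot_AA\odot_AA^g\to{}^gA^g,
\]
and under Lemma~\ref{lem:base-change-twists} its underlying map is \(\alpha_g\), since the left identification is induced by \(\alpha_g\) and the right one by the identity. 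The associators and unitors of both bicategories are inherited from \(\mathbf{Mod}(\mathcal V)\), so the pseudofunctor axioms hold trivially. A pseudofunctor that is bijective on \(0\)-cells and an isomorphism on hom-categories is a biequivalence.

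The main obstacle is the bookkeeping in the composition and unit comparisons. One must verify that the composite of the canonical identifications from Lemma~\ref{lem:base-change-twists} with the isomorphisms \(\mu,\eta\) and associators, used in Proposition~\ref{prop:gM-vs-gammaM}, really reduces to the naive identity on underlying objects, so that the maps are \(\phi_g\otimes\psi_g\) and \(\alpha_g\) with no stray automorphism. I would handle this by tracking elements: with \(\mathcal V\) closed, everything is determined on the images of \(M\otimes N\) and \(A\), and the identifications send \(a\otimes m\mapsto\alpha_g(a)m\) and \(m\otimes b\mapsto mb\).
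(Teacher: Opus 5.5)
Your proposal is correct and follows essentially the same route as the paper: an identity-on-objects assignment built from Lemma~\ref{lem:gamma-vs-semilinear}, Proposition~\ref{prop:gM-vs-gammaM} and Proposition~\ref{prop:comparison-vs-semilinear}, with $2$-cell compatibility matched through the square of Remark~\ref{rem:naturality-is-equivariance}, composition handled by the last clause of Proposition~\ref{prop:comparison-vs-semilinear}, and units handled by identifying $\tau^U$ with $\{\alpha_g\}$. Your version is slightly sharper than the paper's: you get an isomorphism rather than just an equivalence on hom-categories, with identity composition and unit constraints, and you make the unit computation explicit, which the paper only asserts.
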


\begin{proof}
The correspondence of
Proposition~\ref{prop:comparison-vs-semilinear} defines an assignment
\(
F\colon
\mathbf{Mod}(\mathcal V)^G
\to
\mathbf{Mod}(\mathcal V)^{G\text{-tw}}
\)
which is the identity on objects, underlying bimodules, and underlying
bimodule morphisms. Under the identification of
Lemma~\ref{lem:gamma-vs-semilinear}, the compatibility square of
Remark~\ref{rem:naturality-is-equivariance} becomes precisely
\(
f\circ\phi_g^M=\phi_g^N\circ f.
\)
Thus \(F\) induces an equivalence on every hom-category.

The final assertion of
Proposition~\ref{prop:comparison-vs-semilinear} gives compatibility
with horizontal composition. Moreover, the canonical twisting
constraints on the regular bimodule \(A\) correspond to the
semilinear action \(\{\alpha_g\}_{g\in G}\), so the unit \(1\)-cells
are preserved. Hence \(F\) defines a pseudofunctor. Since it is the
identity on objects and induces an equivalence on every hom-category,
it is a biequivalence.
\end{proof}

\subsection{Examples}
\label{subsec:G-twist-examples}
We now describe the bicategories of \(G\)-twists in several concrete
settings.

\begin{example}\label{ex:semilinear-G-twists}
The biequivalence of Corollary~\ref{cor:equiv-mod-G} has the following
concrete descriptions for rings, spectra, and based bispaces.
\begin{enumerate}
\item\label{case:ring-bimod}
For \(\mathcal V=\mathbf{Ab}\), Corollary~\ref{cor:equiv-mod-G}
specializes to
\(
\mathbf{Bimod}/\mathcal R\mathrm{ing}^G
\simeq
\mathbf{Bimod}/\mathcal R\mathrm{ing}^{G\text{-tw}}.
\) Concretely, a \(1\)-cell from \((A,\alpha)\) to \((B,\beta)\) is an \((A,B)\)-bimodule \(M\) equipped with a semilinear \(G\)-action satisfying \(\phi_g(amb) = \alpha_g(a)\phi_g(m)\beta_g(b)\). The corresponding twisting constraint is \(\tau_{g}\colon A^g\otimes_A M \xrightarrow{\cong} M\otimes_B B^g\), given by \(1\otimes m\mapsto \phi_g(m)\otimes 1\).

\item\label{case:borel-twisted-bimodules}
For \(\mathcal V=\mathbf{Sp}\),
Corollary~\ref{cor:equiv-mod-G} identifies  twisting
constraints on bimodule spectra with semilinear \(G\)-actions.
Thus, if \(R\) and \(S\) are ring spectra with \(G\)-actions and
\(M\) is an \((R,S)\)-bimodule spectrum with a semilinear
\(G\)-action, passage to the homotopy bicategory gives a
\(G\)-twisted \(1\)-cell in
\(
\bigl(\Ho\mathbf{Bimod}/\mathcal R_{\mathbf{Sp}}\bigr)^{G\text{-tw}}
\)
with twisting constraints
\(
\tau_{g,M}\colon
R^g\odot M
\xRightarrow{\cong}
M\odot S^g.
\)
These are \(G\)-actions on ordinary spectra, hence define naive
\(G\)-spectra rather than genuine \(G\)-spectra.
\item\label{case:twisted-bispaces}
For \(\mathcal V=\mathcal T\mathrm{op}_*\), restrict
\(\mathbf{Mod}(\mathcal T\mathrm{op}_*)^{G\text{-tw}}\) to the
\(0\)-cells \((H_+,\alpha^H)\) arising from topological groups \(H\)
equipped with \(G\)-actions by automorphisms. A \(1\)-cell from
\((K_+,\alpha^K)\) to \((H_+,\alpha^H)\) is a based
\((K,H)\)-bispace \(X\) equipped with twisting constraints
\(
\tau_{g}\colon
K^g\odot X
\xrightarrow{\cong}
X\odot H^g,\) where \(g\in G.
\)
By Corollary~\ref{cor:equiv-mod-G}, such constraints are equivalently
a based \(G\)-action \(\{\phi_g\}_{g\in G}\) on \(X\) satisfying
\(
\phi_g(k\cdot x\cdot h)
=
\alpha_g^K(k)\cdot\phi_g(x)\cdot\alpha_g^H(h).
\)
\end{enumerate}
\end{example}

In the preceding example, twisting constraints encode additional
equivariant structure on a \(1\)-cell. In the following two examples,
the ambient equivariance itself provides canonical twisting
constraints.

\begin{example}
\label{ex:HoBimodSpCn}
By \cite[Example~4.1.1(1)]{AGHKK},
\(\mathbf{Sp}^{C_n}\) satisfies the hypotheses of
Remark~\ref{rem:HoModV}. Hence
\(
\Ho\mathbf{Mod}(\mathbf{Sp}^{C_n})
=
\Ho\mathbf{Bimod}/\mathbf{Sp}^{C_n}
\)
is defined.

Since \(C_n\) is abelian, the action of each \(g\in C_n\) on a
genuine \(C_n\)-spectrum is a morphism in
\(\mathbf{Sp}^{C_n}\). Thus, for \(C_n\)-ring spectra \(R,S\) and an
equivariant \((R,S)\)-bimodule spectrum \(M\), the action of \(g\)
defines an
\((\alpha_g^R,\alpha_g^S)\)-semilinear automorphism of \(M\).
Applying Corollary~\ref{cor:equiv-mod-G} at the point-set level and
then passing to the homotopy bicategory yields a family of canonical
twisting constraints
\(
\tau_{g,M}^{\mathrm{can}}\colon
R^g\odot M
\xRightarrow{\cong}
M\odot S^g,\) for \(g\in C_n,
\)
which makes \(M\) a canonical \(C_n\)-twisted \(1\)-cell.
\end{example}

\begin{example}\label{ex:green-functors}
Let \(G\) be a finite abelian group, and let
\(
\mathcal V
=
(\mathbf{Mack}_G,\square,\underline A)
\)
be the symmetric monoidal category of \(G\)-Mackey functors.  A monoid
in \(\mathcal V\) is a \(G\)-Green functor.  For \(g\in G\) and
\(H\leq G\), left multiplication gives a \(G\)-equivariant orbit
automorphism
\(
\ell_g^H\colon
G/H\rightarrow G/H,
xH\mapsto gxH.
\)
For a \(G\)-Green functor \(\underline R\), these orbit
automorphisms induce automorphisms
\(
\alpha_g\colon
\underline R\xrightarrow{\cong}\underline R
\)
compatible with restrictions, transfers, multiplication, and the
unit.  Likewise, they induce on every
\((\underline R,\underline S)\)-bimodule Mackey functor
\(\underline M\) an
\((\alpha_g^{\underline R},\alpha_g^{\underline S})\)-semilinear
automorphism
\(
\phi_g^{\underline M}\colon
\underline M\xrightarrow{\cong}\underline M.
\)
Corollary~\ref{cor:equiv-mod-G} thus gives canonical twisting
constraints
\(
\tau_{g}^{\mathrm{can}}\colon
\underline R^g\square_{\underline R}\underline M
\xRightarrow{\cong}
\underline M\square_{\underline S}\underline S^g.
\)

By \cite[Theorem~4.3 and Proposition~4.4]{BGHL},
\(s\mathbf{Mack}_G\) has the model-categorical structure required in
Remark~\ref{rem:HoModV}; in the cyclic case, see also
\cite[Example~4.1.1(2)]{AGHKK}. Hence
\(\Ho\mathbf{Mod}(s\mathbf{Mack}_G)\) is defined.

The automorphisms above extend levelwise to simplicial \(G\)-Green
functors and their bimodules, since they commute with the simplicial
face and degeneracy maps. For a simplicial
\((\underline R_\bullet,\underline S_\bullet)\)-bimodule
\(\underline M_\bullet\), they therefore induce canonical twisting
constraints
\(
\tau_{g,\underline M_\bullet}^{\mathrm{can}}\colon
\Theta_{\underline R_\bullet}(g)
\square_{\underline R_\bullet}^{\mathbb L}
\underline M_\bullet
\xRightarrow{\cong}
\underline M_\bullet
\square_{\underline S_\bullet}^{\mathbb L}
\Theta_{\underline S_\bullet}(g),\) for \(g\in G.
\)
Thus \(\underline M_\bullet\) determines a canonical \(G\)-twisted
\(1\)-cell in
\(\bigl(\Ho\mathbf{Mod}(s\mathbf{Mack}_G)\bigr)^{G\text{-tw}}.
\)
\end{example}

\section{Twisted shadows on bicategories of $G$-twists}
In this section, we construct \(g\)-twisted shadows on bicategories of
\(G\)-twists, establish their properties, and identify
twisted Hochschild and topological Hochschild constructions as examples.

\subsection{The induced $g$-twisted shadow}
\begin{definition}\label{def:g-twisted-shadow}
Let $(\mathcal B,\langle\!\langle-\rangle\!\rangle,\mathcal T)$ be a $\mathcal T$-shadowed bicategory equipped with $G$-twisting data, and let $\mathcal B^{G\text{-tw}}$ be the associated bicategory of $G$-twists. Fix $g\in G$. For each object $R\in \mathrm{Ob}(\mathcal B)$, the \emph{induced $g$-twisted $\mathcal T$-shadow functor} is the functor
\(
{}^g\!\langle\!\langle-\rangle\!\rangle_R:\mathcal B^{G\text{-tw}}(R,R)\to \mathcal T
\)
defined on a $1$-cell $(M,\tau):R\to R$ by
\(
{}^g\!\langle\!\langle (M,\tau)\rangle\!\rangle_R
\coloneqq
\langle\!\langle {}^g M\rangle\!\rangle_R,
\)
and on a $2$-cell $f:(M,\tau)\Rightarrow (N,\sigma)$ by
\(
{}^g\!\langle\!\langle f\rangle\!\rangle_R
\coloneqq
\langle\!\langle {}^g f\rangle\!\rangle_R.
\)
\end{definition}

\begin{prop}\label{prop:g_twisted_axioms}
The induced functor ${}^g\!\langle\!\langle-\rangle\!\rangle$ admits natural isomorphisms
\[
{}^g\theta_{M,N}:
{}^g\!\langle\!\langle M\odot N\rangle\!\rangle_R
\xrightarrow{\cong}
{}^g\!\langle\!\langle N\odot M\rangle\!\rangle_S
\]
for \(1\)-cells
\(
(M,\tau):R\to S, (N,\sigma):S\to R
\)
in \(\mathcal B^{G\text{-tw}}\), called the \emph{twisted cyclic isomorphisms}, satisfying the following twisted hexagon and twisted triangle axioms:

\begin{itemize}
\item For \(1\)-cells \((M,\tau):R\to S\), \((N,\sigma):S\to T\), and \((P,\omega):T\to R\), the \emph{twisted hexagon} diagram commutes:
\[
\begin{tikzcd}[column sep=5mm, row sep=6mm]
{}^g\!\langle\!\langle (M \odot N)\odot P\rangle\!\rangle_R
  \arrow[r,"{}^g\theta"]
  \arrow[d,"{}^g\!\langle\!\langle\mathfrak a\rangle\!\rangle"']
&
{}^g\!\langle\!\langle P\odot (M\odot N)\rangle\!\rangle_T
  \arrow[r,"{}^g\!\langle\!\langle\mathfrak a\rangle\!\rangle"]
&
{}^g\!\langle\!\langle (P\odot M)\odot N\rangle\!\rangle_T
  \arrow[d,"{}^g\theta"]
\\
{}^g\!\langle\!\langle M\odot (N\odot P)\rangle\!\rangle_R
  \arrow[r,"{}^g\theta"']
&
{}^g\!\langle\!\langle (N\odot P)\odot M\rangle\!\rangle_S
  \arrow[r,"{}^g\!\langle\!\langle\mathfrak a\rangle\!\rangle"']
&
{}^g\!\langle\!\langle N\odot (P\odot M)\rangle\!\rangle_S
\end{tikzcd}
\]

\item For any \(1\)-cell \((M,\tau):R\to R\), the \emph{twisted triangle} diagram commutes:
\[
\begin{tikzcd}[column sep=3mm, row sep=5mm]
{}^g\!\langle\!\langle M\odot U_R\rangle\!\rangle_R
  \arrow[rr,"{}^g\theta"]
  \arrow[dr,"{}^g\!\langle\!\langle\mathfrak r\rangle\!\rangle"']
&&
{}^g\!\langle\!\langle U_R\odot M\rangle\!\rangle_R
  \arrow[dl,"{}^g\!\langle\!\langle\mathfrak l\rangle\!\rangle"]
\\
&
{}^g\!\langle\!\langle M\rangle\!\rangle_R
&
\end{tikzcd}
\]
\end{itemize}
\end{prop}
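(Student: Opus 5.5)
We define the twisted cyclic isomorphism by combining the ordinary cyclicity \(\theta\) with the twisting constraint of the \(1\)-cell that the twist has to cross. For \((M,\tau)\colon R\to S\) and \((N,\sigma)\colon S\to R\), we have \({}^g\sh{M\odot N}_R=\sh{\Theta_R(g^{-1})\odot M\odot N}\). Set \({}^g\theta_{M,N}\) to be the composite
\[
\sh{\Theta_R(g^{-1})\odot M\odot N}\xrightarrow{\theta}\sh{N\odot\Theta_R(g^{-1})\odot M}\xrightarrow{\sh{\sigma_{g^{-1}}^{-1}\odot\id_M}}\sh{\Theta_S(g^{-1})\odot N\odot M}={}^g\sh{N\odot M}_S .
\]
Here \(\theta=\theta_{\Theta_R(g^{-1})\odot M,\,N}\) is the ordinary cyclicity, and \(\sigma_{g^{-1}}\colon\Theta_S(g^{-1})\odot N\Rightarrow N\odot\Theta_R(g^{-1})\). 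Associators are suppressed by coherence, as in the rest of Section~3. Both maps are isomorphisms, so \({}^g\theta_{M,N}\) is an isomorphism.

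Naturality in \(2\)-cells \(f\colon(M,\tau)\Rightarrow(M',\tau')\) and \(k\colon(N,\sigma)\Rightarrow(N',\sigma')\) of \(\mathcal B^{G\text{-tw}}\) will follow in two steps. First, naturality of \(\theta\) handles the first arrow. Second, the compatibility square \eqref{eq:Gtw-2cell-compat} for \(k\) at the element \(g^{-1}\) shows that \(\sigma_{g^{-1}}^{-1}\) commutes with \({}^gk\) and \(k\). Naturality in \(f\) for the second arrow is just functoriality of \(\odot\). This is where the twisting constraints, and not merely the underlying \(2\)-cells, are used.

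For the \textbf{twisted triangle}, take \(N=U_R\) with constraint \(\tau^U_{g^{-1}}=\mathfrak l^{-1}\circ\mathfrak r\). The composite \({}^g\theta_{M,U_R}\) is then
\[
\sh{\Theta(g^{-1})\odot M\odot U_R}\xrightarrow{\theta}\sh{U_R\odot\Theta(g^{-1})\odot M}\to\sh{\Theta(g^{-1})\odot U_R\odot M},
\]
where the second arrow is built from unitors. Applying \(\sh{\mathfrak l}\) to the target reduces the claim to the ordinary triangle axiom for the \(1\)-cell \(\Theta(g^{-1})\odot M\), together with the triangle identity in \(\mathcal B\) relating \(\mathfrak l\) and \(\mathfrak r\) on \(\Theta(g^{-1})\odot U_R\odot M\).

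For the \textbf{twisted hexagon}, expand both paths. Write \(\Theta_X\) for \(\Theta_X(g^{-1})\), so the source is \(\sh{\Theta_R\odot M\odot N\odot P}\) and the target is \(\sh{\Theta_S\odot N\odot P\odot M}\).

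The lower path applies \({}^g\theta_{M,N\odot P}\). This consists of \(\theta\) followed by \((\sigma\star\omega)_{g^{-1}}^{-1}\). By the definition of the composite constraint \eqref{eq:tau_def}, the inverse constraint is \(\sigma_{g^{-1}}^{-1}\) applied after \(\omega_{g^{-1}}^{-1}\).

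The upper path has two steps:
\begin{enumerate}
\item \({}^g\theta_{M\odot N,P}\), which is \(\theta\) followed by \(\omega_{g^{-1}}^{-1}\), landing in \(\sh{\Theta_T\odot P\odot M\odot N}\);
\item \({}^g\theta_{P\odot M,N}\), which is \(\theta\) followed by \(\sigma_{g^{-1}}^{-1}\).
\end{enumerate}

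Using naturality of \(\theta\), we will slide \(\omega_{g^{-1}}^{-1}\) past the second \(\theta\). Both paths then become an ordinary \(\theta\)-composite followed by the same \(2\)-cell \(\sigma_{g^{-1}}^{-1}\) after \(\omega_{g^{-1}}^{-1}\). The two \(\theta\)-composites agree by the ordinary hexagon axiom applied to the three \(1\)-cells \(\Theta_R\odot M\), \(N\) and \(P\).

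The main obstacle is this hexagon bookkeeping. One has to check that the ordinary cyclicity maps move exactly the blocks intended, with \(\Theta_R\) kept glued to \(M\). One also has to check that, after sliding \(\omega_{g^{-1}}^{-1}\) past \(\theta\), the \(2\)-cell lands on the correct factor. We would do this carefully with associators restored, using the ordinary hexagon and naturality of \(\theta\) with respect to \(\mathfrak a\).
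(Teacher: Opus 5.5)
Your proposal is correct and follows essentially the same route as the paper: your \({}^g\theta_{M,N}\), namely \(\theta\) followed by \(\sigma_{g^{-1}}^{-1}\odot\id_M\), is exactly the paper's composite \eqref{def:twisted-cyclicity} with transport map \(g_N=(\sigma_{g^{-1}})^{-1}\). As in the paper, the hexagon is reduced to the ordinary hexagon for \(\Theta_R(g^{-1})\odot M\), \(N\), \(P\) using naturality of \(\theta\) and the composite-constraint formula (which the paper packages as Lemma~\ref{lem:gM-coherence}(2)), and the triangle is reduced to the ordinary triangle axiom via the unit constraint \(\tau^U\).
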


\begin{proof}
 For \(1\)-cells \((M,\tau):R\to S\) and \((N,\sigma):S\to R\),   the twisted cyclicity isomorphism ${}^g\theta_{M,N}$ is defined as the composite
\begin{equation}\label{def:twisted-cyclicity}
{}^g\sh{M\odot N}_R \cong \sh{{}^gM \odot N}_R \xRightarrow{\theta} \sh{N \odot {}^gM}_S \xRightarrow{\sh{\iota_{M,N}}} \sh{N^{g^{-1}} \odot M}_S \xRightarrow{\sh{g_N \odot \id_M}} \sh{{}^gN \odot M}_S \cong {}^g\sh{N \odot M}_S.
\end{equation}

For the twisted hexagon, let
\((M,\tau):R\to S\), \((N,\sigma):S\to T\), and
\((P,\omega):T\to R\). Consider the composites along the upper-right and down-right paths.
Expanding \({}^g\theta\) along either route
yields an alternating sequence of ordinary cyclicity maps
\(\theta\) and associators \(\mathfrak a\), interleaved with the
\(2\)-cells \(\iota\) and \(g_{(-)}\).   
Crucially, because the ordinary shadow $\theta$ and the associator $\mathfrak{a}$ are natural isomorphisms, they commute with $\iota$ and $g_{(-)}$. This naturality allows us to reorder the composition: we can evaluate the sequence of $\theta$ and $\mathfrak{a}$ maps first, followed by the sequence of $\iota$ and $g_{(-)}$ maps. 

After this reordering, the underlying sequence of $\theta$ and $\mathfrak{a}$ evaluates exactly to the ordinary shadow hexagon applied to the triple $({}^gM, N, P)$, which commutes by the axioms of the shadowed bicategory. It then remains only to compare the sequence of $\iota$ and $g_{(-)}$ maps. The upper-right path applies $g_P$ and then $g_N$, whereas the down-right path applies the combined map $g_{N \odot P}$. By Lemma~\ref{lem:gM-coherence}(2), the transport map
\(
g_{N\odot P}:(N\odot P)^{g^{-1}}\xRightarrow{\cong}{}^g(N\odot P)
\)
is the canonical composite obtained from \(g_P\), then \(\iota\), then \(g_N\),
together with the evident associators. This is exactly the transport composite
appearing along the upper-right route.  

For the twisted triangle, apply Lemma~\ref{lem:gM-coherence}(3) to identify
\(g_{U_R}\) with the canonical unit isomorphism. The two routes then reduce,
via the ordinary shadow triangle and the naturality of \(\theta\), to the same
map
\(
{}^g\sh{M\odot U_R}_R \longrightarrow {}^g\sh{M}_R.
\)
Hence the twisted triangle commutes.
\end{proof}

\begin{remark}\label{rem:BPW-comparison}
Unlike the ordinary shadow hexagon, the right-hand vertical arrow in
the twisted hexagon above points downward because cyclic rotation
also transports the \(g\)-twist. Up to our left-to-right convention
for horizontal composition, the twisted hexagon and triangle are the
same as the axioms of \cite[Definition~3.1]{BPW} with
\(\Sigma=\operatorname{Id}_{\mathcal B^{G\text{-tw}}}\).
In \cite{BPW}, the twist is specified by an endobifunctor
\(\Sigma\). Here, the \(g\)-twist instead comes from the internal
twisting \(1\)-cells \(\Theta_A(g)\) and the twisting constraints
\(\tau\).
\end{remark}

We now study the structural features of \(g\)-twisted shadows that arise from 
the \(G\)-twisting data. Conjugation transports \(g\)-twisted shadows to conjugate twists and induces centralizer actions.
In the cyclic case, this equivariant structure controls the twofold composite 
of the twisted cyclicity isomorphism and consequently yields the corresponding 
\(2n\)-periodicity statement.

\begin{prop}\label{prop:conj-transport-twisted-shadows}
Let \(\mathcal B\) be a \(\mathcal T\)-shadowed bicategory equipped
with \(G\)-twisting data. For every object \(R\) and every
\(g,h\in G\), there is a natural isomorphism
\[
\mathbf C_{h,g}\colon
{}^g\sh{-}_R
\xRightarrow{\cong}
{}^{hgh^{-1}}\sh{-}_R.
\]
These conjugation-transport isomorphisms are compatible with the
twisted cyclicity maps of
Proposition~\ref{prop:g_twisted_axioms} and satisfy
\(
\mathbf C_{e,g}=\id\) and \(
\mathbf C_{k,hgh^{-1}}\circ\mathbf C_{h,g}
=
\mathbf C_{kh,g}
\)
for all \(g,h,k\in G\).
\end{prop}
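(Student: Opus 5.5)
The plan is to build \(\mathbf C_{h,g}\) from the twisting constraints of the \(1\)-cell itself, together with the ordinary cyclicity \(\theta\) used to move a twist around the shadow. For \((M,\tau)\colon R\to R\), first use the monoidal structure of \(\Theta_R\) (Remark~\ref{rem:twist_notation}) to rewrite
\[
{}^{hgh^{-1}}M
=\Theta_R(hg^{-1}h^{-1})\odot M
\cong \Theta_R(h)\odot\Theta_R(g^{-1})\odot\Theta_R(h^{-1})\odot M .
\]
Then apply \(\theta\) to rotate the leading factor \(\Theta_R(h)\) to the end. This yields \(\sh{\Theta_R(g^{-1})\odot {}^{h}M\odot\Theta_R(h)}\), which is \(\sh{{}^g({}^hM^h)}\).

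Next, apply \(\sh{{}^g\gamma_h^{-1}}\), where \(\gamma_h\colon M\Rightarrow{}^hM^h\) is the bi-twist isomorphism of Proposition~\ref{prop:gM-vs-gammaM}. The result is an isomorphism \(\sh{{}^{hgh^{-1}}M}\cong\sh{{}^gM}\), and its inverse is taken as \((\mathbf C_{h,g})_M\). Naturality in \(2\)-cells \(f\colon(M,\tau)\Rightarrow(N,\sigma)\) follows from three facts: naturality of \(\theta\), functoriality of \(\Theta\)-composition, and the square \eqref{equiv:constraint}, which says that \(\gamma_h\) commutes with \(f\).

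The identity \(\mathbf C_{e,g}=\id\) should follow from three ingredients. The first is the unit condition on \(\gamma_e\) in Remark~\ref{rem:naturality-is-equivariance}(1). The second is the fact that \(\Theta_R(e)\cong U_R\). The third is the ordinary triangle axiom, which makes \(\theta\) on a unit factor agree with the unitors.

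The cocycle identity \(\mathbf C_{k,hgh^{-1}}\circ\mathbf C_{h,g}=\mathbf C_{kh,g}\) proceeds in three steps. First, use the hexagon axiom for \(\sh{-}\) to merge the two successive rotations, of \(\Theta(h)\) and then \(\Theta(k)\), into one rotation of \(\Theta(k)\odot\Theta(h)\cong\Theta(kh)\), using naturality of \(\theta\) with respect to \(\mu_{k,h}\). Second, merge the two \(\gamma\)'s by the multiplicativity condition of Remark~\ref{rem:naturality-is-equivariance}(2). Third, check that the \(\mu\)-rearrangements of the \(\Theta\)'s agree, which follows from the coherence of the strong monoidal functor \(\Theta_R\).

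For compatibility with the twisted cyclicity maps, I must show that for \((M,\tau)\colon R\to S\) and \((N,\sigma)\colon S\to R\) the square
\[
\mathbf C_{h,g}\circ {}^g\theta_{M,N}={}^{hgh^{-1}}\theta_{M,N}\circ \mathbf C_{h,g}
\]
commutes. The plan is to expand both paths into ordinary \(\theta\)'s, associators, \(\iota\)'s, transport maps \(g_N\) (or \((hgh^{-1})_N\)), and \(\gamma_h\)'s. As in the proof of Proposition~\ref{prop:g_twisted_axioms}, naturality lets me reorder so that all the \(\theta\)'s and \(\mathfrak a\)'s come first, and these reduce to a single ordinary hexagon instance.

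On the \(\gamma\) side, I use the identity \(\gamma_{h,M\odot N}\), computed from \(\tau\star\sigma\) as in Lemma~\ref{lem:gM-coherence}(2). This expresses the composite's bi-twist as \(\gamma_{h,M}\) and \(\gamma_{h,N}\) glued along the cancellation \(\mathfrak m\) of Lemma~\ref{lem:twist_properties}(3).

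The main obstacle is matching the transport factors, namely showing that \((hgh^{-1})_N\) agrees with \(g_N\) conjugated by \(\gamma_{h,N}\). I would first derive an explicit formula \((hgh^{-1})_N\cong{}^h(g_N)^h\) from the multiplicativity of \(\tau\) applied to \(hg^{-1}h^{-1}=h\cdot g^{-1}\cdot h^{-1}\), and then paste it into the diagram. The remaining cells commute by naturality and bicategorical coherence, which by the paper's convention I suppress.
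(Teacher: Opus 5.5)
Your construction and verification strategy are essentially the paper's. The paper defines \(\mathbf C_{h,g}(M)\) by inserting \(\Theta_R(h^{-1})\odot\Theta_R(h)\) on the right of \(\Theta_R(g^{-1})\odot M\), rotating \(\Theta_R(h)\) to the front with \(\theta\), and absorbing \(\Theta_R(h^{-1})\) via \(h_M=(\tau_{h^{-1}})^{-1}\); this agrees with your \(\mathbf C_{h,g}\) (the inverse of your \(\gamma_h\)-based map) by the unit and multiplicativity axioms, and the paper proves naturality, \(\mathbf C_{e,g}=\id\), the cocycle law and compatibility with \({}^g\theta\) from the same ingredients you list.

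One small slip: multiplicativity of \(\tau\) for \(h\cdot g^{-1}\cdot h^{-1}\) exhibits \((hgh^{-1})_N\) as \(\Theta_S(h)\odot g_N\odot\Theta_R(h^{-1})={}^{h^{-1}}(g_N)^{h^{-1}}\), sandwiched between \(\tau_{h,N}^{-1}\) and \(h_N\). It is not \({}^h(g_N)^h\), which is what appears in \((h^{-1}gh)_N\); this does not affect the argument.
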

\begin{proof}For \(M\in\mathcal B^{G\text{-tw}}(R,R)\), define the component
\(\mathbf C_{h,g}(M)\) by the following composite:
$$\begin{aligned}
{}^g\sh{M}_R &= \sh{\Theta_R(g^{-1})\odot M}_R \xRightarrow{\cong} \sh{\Theta_R(g^{-1})\odot M\odot \Theta_R(h^{-1})\odot \Theta_R(h)}_R \\
&\xRightarrow{\theta}\sh{\Theta_R(h)\odot \Theta_R(g^{-1})\odot M\odot \Theta_R(h^{-1})}_R \xRightarrow{\;\sh{\id\odot\id\odot h_M}\;} \sh{\Theta_R(h)\odot \Theta_R(g^{-1})\odot \Theta_R(h^{-1})\odot M}_R \\
&\xRightarrow{\;\sh{(\mu^R_{hg^{-1},\,h^{-1}}\circ(\mu^R_{h,g^{-1}}\odot \id))\odot \id_M}\;} \sh{\Theta_R(hg^{-1}h^{-1})\odot M}_R = {}^{hgh^{-1}}\sh{M}_R.
\end{aligned}$$
Here \(h_M\colon M^{h^{-1}}\xRightarrow{\cong}{}^hM\) is the twist transport map from
Remark~\ref{rem:twist_notation_and_consistency} and
Lemma~\ref{lem:gM-coherence}.

Naturality in \(M\) follows from the naturality of the transport map \(h_M\),
together with functoriality of the shadow \(\sh{-}_R\) and the naturality of the
ordinary cyclicity isomorphism \(\theta\).

For \(h=e\), the transport map is
\(
e_M=(\tau_{e,M})^{-1}\colon
M\odot\Theta_R(e)\to\Theta_R(e)\odot M.
\)
By the unit compatibility of the twisting constraints, this is the
canonical composite
\[
M\odot\Theta_R(e)
\xRightarrow{\id\odot(\eta^R)^{-1}}
M\odot U_R
\xRightarrow{\mathfrak r_M}
M
\xRightarrow{\mathfrak l_M^{-1}}
U_R\odot M
\xRightarrow{\eta^R\odot\id}
\Theta_R(e)\odot M.
\]
Consequently, after identifying \(\Theta_R(e)\) with \(U_R\), the
cyclic rotation involving this factor reduces, by the ordinary shadow
triangle axiom, to the identity. The remaining multiplication maps
in the definition of \(\mathbf C_{e,g}\) reduce to the appropriate
unitors by the unit coherence of the strong monoidal functor
\(\Theta_R\). Hence
\(\mathbf C_{e,g}=\id\).

To verify the composition law
\(
\mathbf C_{k,hgh^{-1}}\circ\mathbf C_{h,g}
=
\mathbf C_{kh,g},
\)
expand both sides using the definition of the conjugation-transport
maps. The ordinary shadow hexagon identifies the two successive
cyclic rotations on the left-hand side with the single cyclic
rotation occurring on the right-hand side. The multiplicativity
axiom for the twisting constraints identifies the successive
transport maps associated to \(h\) and \(k\) with the transport map
associated to \(kh\). Finally, the associativity coherence of the
strong monoidal functor \(\Theta_R\) identifies the two ways of
combining the inserted twisting \(1\)-cells. Hence the two composites
agree.

Compatibility with twisted cyclicity means that, for
\(M:R\to S\) and \(N:S\to R\), the square
\[
\begin{tikzcd}[column sep=4mm,row sep=5mm]
{}^g\sh{M\odot N}_R
  \arrow[r,"{}^g\theta"]
  \arrow[d,"\mathbf C_{h,g}(M\odot N)"']
&
{}^g\sh{N\odot M}_S
  \arrow[d,"\mathbf C_{h,g}(N\odot M)"]
\\
{}^{hgh^{-1}}\sh{M\odot N}_R
  \arrow[r,"{}^{hgh^{-1}}\theta"']
&
{}^{hgh^{-1}}\sh{N\odot M}_S
\end{tikzcd}
\]
commutes. After expanding the four maps, this follows from the
ordinary shadow hexagon, the naturality of ordinary cyclicity, and
the multiplicativity of the twisting constraints.
\end{proof}

 \begin{cor}\label{cor:centralizer-action}
Fix \(g\in G\), and let \(C_G(g)\) denote the centralizer of \(g\).
For each object \(R\), the assignment
\[
h\in C_G(g)\ \longmapsto\ \mathbf C_{h,g}(-)\;:\;
{}^g\sh{(-)}_R \Rightarrow {}^g\sh{(-)}_R
\]
defines a left action of \(C_G(g)\) on the \(g\)-twisted shadow
\({}^g\sh{(-)}_R\) by natural automorphisms.
Equivalently, the family \(\{\mathbf C_{h,g}\}_{h\in C_G(g)}\) determines a group homomorphism
\(
C_G(g)\to \mathrm{Aut}\bigl({}^g\sh{(-)}_R\bigr),
\)
and this action is compatible with the twisted cyclicity isomorphisms
\({}^g\theta\) of Proposition~\ref{prop:g_twisted_axioms}.
\end{cor}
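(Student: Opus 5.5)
The corollary should follow directly from Proposition~\ref{prop:conj-transport-twisted-shadows} by restricting to the centralizer. For \(h\in C_G(g)\) we have \(hgh^{-1}=g\), so each \(\mathbf C_{h,g}\) is a natural isomorphism \({}^g\sh{-}_R\Rightarrow{}^g\sh{-}_R\), that is, a natural automorphism of the \(g\)-twisted shadow functor. The plan is to define \(\Phi\colon C_G(g)\to\mathrm{Aut}\bigl({}^g\sh{(-)}_R\bigr)\) by \(\Phi(h)=\mathbf C_{h,g}\). The action axioms then come from the two identities already recorded in Proposition~\ref{prop:conj-transport-twisted-shadows}.

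\emph{Unit.} The identity \(\mathbf C_{e,g}=\id\) gives \(\Phi(e)=\id\).

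\emph{Multiplicativity.} For \(h,k\in C_G(g)\), the composition law \(\mathbf C_{k,hgh^{-1}}\circ\mathbf C_{h,g}=\mathbf C_{kh,g}\) specializes, since \(hgh^{-1}=g\), to \(\mathbf C_{k,g}\circ\mathbf C_{h,g}=\mathbf C_{kh,g}\). This is exactly \(\Phi(k)\circ\Phi(h)=\Phi(kh)\), so \(\Phi\) is a group homomorphism and the action is a left action. In particular \(\mathbf C_{h^{-1},g}\) is inverse to \(\mathbf C_{h,g}\), which again confirms that each \(\Phi(h)\) is an automorphism.

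\emph{Compatibility with twisted cyclicity.} This is the commuting square stated in Proposition~\ref{prop:conj-transport-twisted-shadows}. When \(hgh^{-1}=g\), its bottom arrow \({}^{hgh^{-1}}\theta\) becomes \({}^g\theta\), giving \(\mathbf C_{h,g}(N\odot M)\circ{}^g\theta_{M,N}={}^g\theta_{M,N}\circ\mathbf C_{h,g}(M\odot N)\).

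The only delicate point is bookkeeping. The equality \(hgh^{-1}=g\) holds in \(G\), so \(\Theta_R(hg^{-1}h^{-1})\) is literally \(\Theta_R(g^{-1})\), and the targets \({}^{hgh^{-1}}\sh{M}_R\) and \({}^g\sh{M}_R\) coincide on the nose rather than up to a further coherence isomorphism. This holds because \(G\) is a discrete monoidal category and \(\Theta_R\) is evaluated on group elements. Consequently no extra \(\mu^R\)-correction arises, and all the work is inherited from Proposition~\ref{prop:conj-transport-twisted-shadows}.
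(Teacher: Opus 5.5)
Your proposal is correct and matches the paper's approach: the corollary is stated without a separate proof, as the specialization of Proposition~\ref{prop:conj-transport-twisted-shadows} to \(h\in C_G(g)\). There, \(hgh^{-1}=g\) turns \(\mathbf C_{e,g}=\id\), the composition law, and the cyclicity square into the unit, homomorphism, and compatibility statements, exactly as you describe. Your remark that \(\Theta_R(hg^{-1}h^{-1})=\Theta_R(g^{-1})\) holds on the nose, because \(G\) is discrete, is the right justification that each \(\mathbf C_{h,g}\) is genuinely an endomorphism of \({}^g\sh{(-)}_R\).
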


 \begin{cor}\label{cor:abelian-G-action}
If \(G\) is abelian, then for every \(g,h\in G\) the maps
\(
\mathbf C_{h,g}\colon {}^g\sh{(-)}_R \Rightarrow {}^g\sh{(-)}_R
\)
define a \(G\)-action on the \(g\)-twisted shadow.
Hence \({}^g\sh{(-)}_R\) canonically lifts to a functor
\(
{}^g\sh{(-)}_R\colon \mathcal B^{G\text{-tw}}(R,R)\to \mathcal T^G,\) where
\(\mathcal T^G:=\operatorname{Fun}(BG,\mathcal T)\).
\end{cor}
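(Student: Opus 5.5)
The plan is to derive this directly from Proposition~\ref{prop:conj-transport-twisted-shadows}, specialized to the case where $G$ is abelian. In that case $hgh^{-1}=g$ for all $g,h\in G$, so $C_G(g)=G$. Each $\mathbf C_{h,g}$ is therefore a natural isomorphism ${}^g\sh{-}_R\Rightarrow{}^g\sh{-}_R$, that is, a natural automorphism of the $g$-twisted shadow. The composition law $\mathbf C_{k,hgh^{-1}}\circ\mathbf C_{h,g}=\mathbf C_{kh,g}$ then reads $\mathbf C_{k,g}\circ\mathbf C_{h,g}=\mathbf C_{kh,g}$, and together with $\mathbf C_{e,g}=\id$ this shows that $h\mapsto\mathbf C_{h,g}$ is a group homomorphism $G\to\mathrm{Aut}({}^g\sh{-}_R)$. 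Equivalently, this is Corollary~\ref{cor:centralizer-action} with $C_G(g)=G$.

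Next I would repackage this action as the lift to $\mathcal T^G=\operatorname{Fun}(BG,\mathcal T)$.

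\textbf{On objects.} For $(M,\tau)\in\mathcal B^{G\text{-tw}}(R,R)$, define a functor $BG\to\mathcal T$. It sends the unique object to ${}^g\sh{M}_R$ and each morphism $h$ to the component $\mathbf C_{h,g}(M)$. Functoriality of this $BG$-diagram is exactly the identity and composition law from the first step, evaluated at $M$.

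\textbf{On morphisms.} For a $2$-cell $f\colon(M,\tau)\Rightarrow(N,\sigma)$, the map ${}^g\sh{f}_R$ must be a morphism of $G$-objects. The required equivariance
\[
\mathbf C_{h,g}(N)\circ{}^g\sh{f}_R={}^g\sh{f}_R\circ\mathbf C_{h,g}(M)
\]
is precisely the naturality of $\mathbf C_{h,g}$ asserted in Proposition~\ref{prop:conj-transport-twisted-shadows}.

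Functoriality of the lifted assignment is inherited from that of ${}^g\sh{-}_R$. Composing with the forgetful functor $\mathcal T^G\to\mathcal T$ recovers the original shadow, which justifies the word \emph{lift}.

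The argument is essentially formal, so no step is a real obstacle. The one point needing care is orientation. The statement asks for a left action, and the composition law is $\mathbf C_{k,g}\circ\mathbf C_{h,g}=\mathbf C_{kh,g}$, which indeed gives a left action. Since $G$ is abelian, left and right actions coincide anyway, so no convention issue can arise in identifying $BG$-diagrams with $G$-actions. If one also wants the lifted twisted cyclicity maps ${}^g\theta$ to be $G$-equivariant, this follows from the compatibility square in Proposition~\ref{prop:conj-transport-twisted-shadows} with $hgh^{-1}=g$.
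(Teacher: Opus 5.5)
Your proposal is correct and follows the paper's intended route. The paper gives no separate proof: the corollary is the specialization of Proposition~\ref{prop:conj-transport-twisted-shadows} and Corollary~\ref{cor:centralizer-action} to $C_G(g)=G$. The resulting homomorphism $G\to\mathrm{Aut}\bigl({}^g\sh{(-)}_R\bigr)$ is then repackaged as a functor to $\operatorname{Fun}(BG,\mathcal T)$, with naturality of $\mathbf C_{h,g}$ giving the equivariance of ${}^g\sh{f}_R$, exactly as you describe.
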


The next proposition identifies the twofold composite of twisted
cyclicity with the action of \(g\) given by the conjugation-transport
map \(\mathbf C_{g,g}\).

\begin{prop}\label{prop:twisted_2n_return}
Let \(g\in G\). For \(1\)-cells
\(M\colon R\to S\) and \(N\colon S\to R\) in
\(\mathcal B^{G\text{-tw}}\), the twofold twisted cyclicity operator
\(
\Psi
\coloneqq
{}^g\theta_{N,M}\circ{}^g\theta_{M,N}
\colon
{}^g\sh{M\odot N}_R
\xrightarrow{\cong}
{}^g\sh{M\odot N}_R
\)
coincides with
\(\mathbf C_{g,g}(M\odot N)\).

In particular, if \(g\) has finite order \(n\), then the \(2n\)-fold
composite of twisted cyclicity is the identity:
\(
\bigl({}^g\theta_{N,M}\circ{}^g\theta_{M,N}\bigr)^n
=
\id_{{}^g\sh{M\odot N}_R}.
\)
\end{prop}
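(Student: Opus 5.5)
The plan is to expand both twisted cyclicity maps using the definition \eqref{def:twisted-cyclicity} and reduce the composite $\Psi$ to the single ordinary rotation that defines $\mathbf C_{g,g}(M\odot N)$. Write ${}^g\theta_{M,N}$ as $\sh{g_N\odot\id_M}\circ\sh{\iota_{M,N}}\circ\theta_{{}^gM,N}$ (up to the canonical identifications ${}^g(M\odot N)\cong{}^gM\odot N$). Similarly, ${}^g\theta_{N,M}=\sh{g_M\odot\id_N}\circ\sh{\iota_{N,M}}\circ\theta_{{}^gN,M}$. Then $\Psi$ is a string of two ordinary rotations interleaved with the transports $g_N$ and $g_M$ and the reassociations $\iota$. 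By naturality of $\theta$, I will slide the first transport $\sh{g_N\odot\id_M}$ and the $\iota$ past the second rotation. This leaves two consecutive ordinary rotations, first of $({}^gM,N)$ and then of $(\Theta_R(g^{-1})\odot N, M)$ after rewriting $N^{g^{-1}}\odot M\cong N\odot\Theta_R(g^{-1})\odot M$. These are followed by the 2-cells $g_N$ and $g_M$ applied inside the shadow.

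Next I use the ordinary hexagon axiom, in the form saying that successive rotations of a three-fold composite agree with a single rotation. The aim is to rewrite the pair of rotations as one rotation moving the block $\Theta_R(g^{-1})$ (the twist carried around) past $M\odot N$ together with the transports. By Lemma~\ref{lem:gM-coherence}(2), the composite of $g_N$, $\iota$, and $g_M$ with the associators is exactly $g_{M\odot N}$. So $\Psi$ should become: insert nothing, rotate $\Theta_R(g^{-1})\odot M\odot N$ to $M\odot N\odot\Theta_R(g^{-1})$, apply $g_{M\odot N}\colon (M\odot N)^{g^{-1}}\Rightarrow{}^g(M\odot N)$, and reassociate.

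Then I compare this with $\mathbf C_{g,g}(M\odot N)$ for $h=g$. There one inserts $\Theta_R(g^{-1})\odot\Theta_R(g)\cong U_R$ at the end and rotates $\Theta_R(g)$ to the front. One then applies $g_{M\odot N}$ and contracts $\Theta_R(g)\odot\Theta_R(g^{-1})\odot\Theta_R(g^{-1})$ to $\Theta_R(g^{-1})$ via $\mu^R$. Using the triangle axiom and the unit and associativity coherence of $\Theta_R$, the inserted pair $\Theta_R(g)\odot\Theta_R(g^{-1})$ cancels against the contraction. Rotating a single $\Theta_R(g)$ forward is then equivalent, by the hexagon, to rotating the complementary block $\Theta_R(g^{-1})\odot M\odot N$ backward. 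This matches the reduced form of $\Psi$.

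I expect the main obstacle to be this bookkeeping: making sure the direction of rotation and the placement of $g^{-1}$ versus $g$ agree, so that the identification holds on the nose. The periodicity then follows formally. Proposition~\ref{prop:conj-transport-twisted-shadows} gives $\mathbf C_{g,g}^n=\mathbf C_{g^n,g}=\mathbf C_{e,g}=\id$, since the composition law applies with $hgh^{-1}=g$. Hence $\Psi^n=\id$.
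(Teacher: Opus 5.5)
Your proposal follows essentially the paper's proof: expand both twisted cyclicity maps, slide $g_N$ past the second rotation by naturality, merge the two rotations into $\theta_{\Theta_R(g^{-1}),\,M\odot N}$ via the hexagon, merge the transports into $g_{M\odot N}$ via Lemma~\ref{lem:gM-coherence}(2), identify the result with $\mathbf C_{g,g}(M\odot N)$, and deduce periodicity from the composition law for $\mathbf C$. For the identification step you give more detail than the paper, via the zig-zag cancellation of the inserted pair.

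One point that both you and the paper leave implicit: the two rotations move $N$ and then $M$ from back to front. (The second rotation is $\theta_{N\odot\Theta_R(g^{-1}),M}$, not $\theta_{\Theta_R(g^{-1})\odot N,M}$ as you wrote.) The hexagon only composes rotations in the front-to-back direction, so merging them also needs $\theta_{Y,X}\circ\theta_{X,Y}=\id$. This follows from the ordinary hexagon with third factor $U_R$ together with the triangle axiom.
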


\begin{proof}
Write
\(
T_R=\Theta_R(g^{-1})
\)
and
\(
T_S=\Theta_S(g^{-1}).
\)
Expanding both twisted cyclicity maps and suppressing associators,
\(\Psi\) is the composite
\begin{align*}
\sh{T_R\odot M\odot N}_R
&\xrightarrow{\theta}
\sh{N\odot T_R\odot M}_S
\xrightarrow{\sh{g_N\odot\id_M}}
\sh{T_S\odot N\odot M}_S \\
&\xrightarrow{\theta}
\sh{M\odot T_S\odot N}_R
\xrightarrow{\sh{g_M\odot\id_N}}
\sh{T_R\odot M\odot N}_R.
\end{align*}
By the ordinary shadow hexagon and the naturality of \(\theta\), the
two cyclicity maps combine to the ordinary cyclicity isomorphism
\(
\sh{T_R\odot(M\odot N)}_R
\xrightarrow{\theta}
\sh{(M\odot N)\odot T_R}_R.
\)
By Lemma~\ref{lem:gM-coherence}(2), the remaining transport maps combine
to
\(
g_{M\odot N}\colon
(M\odot N)\odot T_R
\xrightarrow{\cong}
T_R\odot(M\odot N).
\)
Thus \(\Psi\) is the composite
\[
\sh{T_R\odot(M\odot N)}_R
\xrightarrow{\theta}
\sh{(M\odot N)\odot T_R}_R
\xrightarrow{\sh{g_{M\odot N}}}
\sh{T_R\odot(M\odot N)}_R,
\]
which is precisely
\(\mathbf C_{g,g}(M\odot N)\) after applying the canonical unit and
multiplication identifications in the definition of
\(\mathbf C_{g,g}\).

If \(g\) has order \(n\), then
\(\langle g\rangle\subseteq C_G(g)\). By
Corollary~\ref{cor:centralizer-action},
\(
\Psi^n
=
\mathbf C_{g,g}(M\odot N)^n
=
\mathbf C_{g^n,g}(M\odot N)
=
\mathbf C_{e,g}(M\odot N)
=
\id.
\)
\end{proof}

\subsection{\(G\)-Morita invariance}

Morita invariance is a fundamental property of shadows. In the present
setting, the appropriate notion is Morita equivalence internal to the
bicategory \(\mathcal B^{G\text{-tw}}\). We begin by comparing
dualizability in \(\mathcal B^{G\text{-tw}}\) with dualizability in the
underlying bicategory \(\mathcal B\). We then apply this comparison to
recognize \(G\)-Morita equivalences and prove that the induced
\(g\)-twisted shadow is invariant under \(G\)-Morita equivalence.

We first record how twisting interacts with duality in the underlying bicategory.

\begin{lemma}\label{lem:dual-of-twist}
Let \(\mathcal B\) be a bicategory equipped with \(G\)-twisting data, and let \(M:R\to S\) be a left dualizable \(1\)-cell in \(\mathcal B\), with chosen left dual \({}^*M:S\to R\). Then, for every \(g\in G\), the left-twisted \(1\)-cell 
\(
{}^gM=\Theta_R(g^{-1})\odot M
\)
is left dualizable in \(\mathcal B\), and 
\(
({}^*M)^g = {}^*M\odot\Theta_R(g)
\)
is canonically a left dual of \({}^gM\). Consequently, there is a canonical isomorphism in \(\mathcal B(S,R)\)
\(
{}^*({}^gM)\xrightarrow{\cong}({}^*M)^g.
\)
\end{lemma}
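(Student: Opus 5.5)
We will build the left duality data for \({}^gM\) by conjugating the given data for \(M\) by the invertible \(1\)-cell \(\Theta_R(g^{-1})\). Uniqueness of duals up to canonical isomorphism then gives the final assertion. Throughout we suppress associators and unitors, as the paper does by coherence.

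Write \(T=\Theta_R(g^{-1})\) and \(T'=\Theta_R(g)\). Recall the conventions of Definition~\ref{def:dualizability}: \(M\) has a left dual \({}^*M\) with \(\bar\eta\colon U_S\Rightarrow {}^*M\odot M\) and \(\bar\epsilon\colon M\odot{}^*M\Rightarrow U_R\). Let
\[
\phi\colon T'\odot T\Rightarrow U_R,\qquad \psi\colon T\odot T'\Rightarrow U_R
\]
be the invertible \(2\)-cells built from \(\mu^R_{g,g^{-1}}\), \(\mu^R_{g^{-1},g}\) and \((\eta^R)^{-1}\). We define the coevaluation
\[
\bar\eta'\colon U_S\xRightarrow{\bar\eta}{}^*M\odot M\xRightarrow{\id\odot\phi^{-1}\odot\id}{}^*M\odot T'\odot T\odot M=({}^*M)^g\odot{}^gM
\]
and the evaluation
\[
\bar\epsilon'\colon {}^gM\odot({}^*M)^g=T\odot M\odot{}^*M\odot T'\xRightarrow{\id\odot\bar\epsilon\odot\id}T\odot T'\xRightarrow{\psi}U_R.
\]

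The main step is to check the two triangle identities. For the first, the composite
\[
{}^gM\Rightarrow{}^gM\odot({}^*M)^g\odot{}^gM\Rightarrow{}^gM
\]
expands to \(T\odot M\) followed by insertion of \(\phi^{-1}\) in the middle, then \(\bar\epsilon\), then \(\psi\) on the left. By interchange, the \(\bar\eta\)/\(\bar\epsilon\) part becomes \(\id_T\) composed with the triangle identity for \(M\), which is the identity. What remains is the composite
\[
T\Rightarrow T\odot T'\odot T\Rightarrow T
\]
given by \(\psi\odot\id_T\circ\id_T\odot\phi^{-1}\). This must be the identity.

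That is the obstacle, because it asks \((\phi,\psi)\) to form an adjoint equivalence. It follows from the associativity and unit coherence of the strong monoidal functor \(\Theta_R\): both \(\mu^R_{g^{-1},g}\) and \(\mu^R_{g,g^{-1}}\) compare to \(\Theta_R(g^{-1}gg^{-1})\), and the two bracketings agree by the associativity axiom. If this verification got messy, the fallback is to replace \(\psi\) by the unique \(\psi'\) making \((\phi^{-1},\psi')\) an adjoint equivalence. This is the standard adjustment of an equivalence, and the canonicity of the resulting dual is unaffected. The second triangle identity is handled symmetrically, using the analogous zig-zag for \(T'\).

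Finally, left duals are unique up to unique compatible isomorphism. The comparison 2-cell is \({}^*({}^gM)\Rightarrow({}^*M)^g\) built from the coevaluation of one dual pair and the evaluation of the other. This gives the canonical isomorphism \({}^*({}^gM)\cong({}^*M)^g\) in \(\mathcal B(S,R)\) and completes the proof.
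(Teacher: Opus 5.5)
Your proposal is correct and follows the paper's proof. The paper also makes \(\Theta_R(g)\) a left dual of \(\Theta_R(g^{-1})\) using \(\mu^R\) and \(\eta^R\) (your \(\phi^{-1}\) and \(\psi\)), uses the same coevaluation and evaluation for \(({}^*M)^g\), derives the triangle identities from those of \((M,{}^*M)\) and the strong monoidal coherence of \(\Theta_R\), and concludes by uniqueness of left duals; your direct check of the zig-zag via the associativity and unit axioms is sound, so the adjoint-equivalence fallback is not needed.
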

\begin{proof}
Put
\(u=\Theta_R(g^{-1})
\) and \(v=\Theta_R(g).\)
The strong monoidal structure of \(\Theta_R\) gives \(v\) the structure
of a left dual of \(u\). 
Explicitly, the coevaluation $\bar{\eta}_u$ and evaluation $\bar{\epsilon}_u$ are defined by
\begin{align*}
\bar{\eta}_u&\colon
U_R
\xrightarrow{\eta^R}
\Theta_R(e)
\xrightarrow{(\mu^R_{g,g^{-1}})^{-1}}
\Theta_R(g)\odot\Theta_R(g^{-1})
=
v\odot u, \\
\bar{\epsilon}_u&\colon
u\odot v
=
\Theta_R(g^{-1})\odot\Theta_R(g)
\xrightarrow{\mu^R_{g^{-1},g}}
\Theta_R(e)
\xrightarrow{(\eta^R)^{-1}}
U_R.\end{align*}
The triangle identities follow from the coherence axioms for the
strong monoidal functor \(\Theta_R\).

Let \({}^*M\) be the chosen left dual of \(M\), with coevaluation
\(\bar{\eta}_M\) and evaluation \(\bar{\epsilon}_M\). The composite
\(
{}^*M\odot v
=
{}^*M\odot\Theta_R(g)
=
({}^*M)^g
\)
is a left dual of
\(
{}^gM=u\odot M.
\)
Indeed, its coevaluation is
\[
U_S
\xrightarrow{\bar{\eta}_M}
{}^*M\odot M
\cong
{}^*M\odot U_R\odot M
\xrightarrow{
  \id_{{}^*M}\odot\bar{\eta}_u\odot\id_M
}
{}^*M\odot v\odot u\odot M,
\]
and its evaluation is
\(
u\odot M\odot{}^*M\odot v
\xrightarrow{
  \id_u\odot\bar{\epsilon}_M\odot\id_v
}
u\odot U_R\odot v
\cong
u\odot v
\xrightarrow{\bar{\epsilon}_u}
U_R.
\)
The triangle identities follow from those for
\((M,{}^*M)\) and \((u,v)\). Thus \(({}^*M)^g\) is a left dual of
\({}^gM\). By uniqueness of left duals, there is a canonical
isomorphism
\(
{}^*({}^gM)\xrightarrow{\cong}({}^*M)^g
\)
compatible with the duality data.
\end{proof}

Dualizability can be lifted to \(\mathcal B^{G\text{-tw}}\).

\begin{prop}\label{prop:Gtw-dualizable-recognition}
Let
\(
U:\mathcal B^{G\text{-tw}}\to\mathcal B
\)
be the forgetful pseudofunctor.
A \(G\)-twisted \(1\)-cell
\((M,\tau):R\to S\)
is left dualizable in
\(\mathcal B^{G\text{-tw}}\)
if and only if its underlying \(1\)-cell \(M\) is left dualizable
in \(\mathcal B\).

More precisely, if \({}^*M:S\to R\) is a chosen left dual of \(M\),
then the twisting constraints on \(M\) canonically induce twisting
constraints on \({}^*M\).
\end{prop}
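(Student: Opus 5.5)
The "only if" direction is immediate: the forgetful pseudofunctor \(U\) is strict on composites and units. A left duality in \(\mathcal B^{G\text{-tw}}\) consists of \(1\)-cells and \(2\)-cells of \(\mathcal B^{G\text{-tw}}\) satisfying the triangle identities. Applying \(U\) therefore yields a left duality \(({}^*M,\bar\eta,\bar\epsilon)\) for \(M\) in \(\mathcal B\).

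For the converse, fix a left dual \({}^*M\) of \(M\) in \(\mathcal B\), with coevaluation \(\bar\eta\colon U_S\Rightarrow {}^*M\odot M\) and evaluation \(\bar\epsilon\colon M\odot{}^*M\Rightarrow U_R\). The constraint \(\sigma_g\colon\Theta_S(g)\odot{}^*M\Rightarrow{}^*M\odot\Theta_R(g)\) is defined as the mate of \(\tau_g^{-1}\) under the dualities \((M,{}^*M)\) and \((\Theta(g^{-1}),\Theta(g))\), the latter recorded in the proof of Lemma~\ref{lem:dual-of-twist}. Concretely, \(\sigma_g\) is the composite
\[
\Theta_S(g)\odot{}^*M\xRightarrow{\bar\eta\odot\id}{}^*M\odot M\odot\Theta_S(g)\odot{}^*M\xRightarrow{\id\odot\tau_g^{-1}\odot\id}{}^*M\odot\Theta_R(g)\odot M\odot{}^*M\xRightarrow{\id\odot\id\odot\bar\epsilon}{}^*M\odot\Theta_R(g),
\]
with unitors suppressed. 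Its inverse is the analogous mate of \(\tau_g\), built with the roles of the inserted twisting cells exchanged using \(\mu\) and \(\eta\). The triangle identities for \((M,{}^*M)\) show that the two composites are identities, so \(\sigma_g\) is invertible. Equivalently, one can use Lemma~\ref{lem:dual-of-twist}: \(\gamma_g\colon M\cong{}^gM^g\) induces on left duals an isomorphism \({}^*({}^gM^g)\cong{}^*M\). The right-hand version of the same lemma identifies \({}^*({}^gM^g)\) with \({}^g({}^*M)^g\), and this produces the bi-twist isomorphism for \({}^*M\) via Proposition~\ref{prop:gM-vs-gammaM}.

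Next, the coherence conditions for \(\sigma\) have to be checked. Mates are functorial: the mate of a composite of \(2\)-cells is the composite of the mates, and the mate of an identity or unitor is the canonical one. The unit axiom for \(\sigma\) therefore follows from the unit axiom for \(\tau\) together with the unit coherence of \(\Theta\). Multiplicativity follows by taking mates of the multiplicativity hexagon for \(\tau\). It also uses that the duality \((\Theta(g^{-1}h^{-1}),\Theta(gh))\) is compatible, through \(\mu\), with the composite of the dualities for \(g\) and \(h\), which is the associativity coherence of \(\Theta\).

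Finally, \(\bar\eta\) and \(\bar\epsilon\) must be \(2\)-cells of \(\mathcal B^{G\text{-tw}}\), meaning the squares \eqref{eq:Gtw-2cell-compat} commute with respect to \(\sigma\star\tau\), \(\tau\star\sigma\), and the unit constraints \(\tau^U\). Once \(\sigma_g\) is substituted, each square reduces to a triangle identity (snake cancellation) for \((M,{}^*M)\) followed by \(\tau_g\tau_g^{-1}=\id\). The triangle identities in \(\mathcal B^{G\text{-tw}}\) then hold automatically, because they hold in \(\mathcal B\) and \(2\)-cells of \(\mathcal B^{G\text{-tw}}\) are determined by their underlying \(2\)-cells.

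I expect the multiplicativity of \(\sigma\) to be the main obstacle, since it needs careful bookkeeping of the interaction between mates and the monoidal structure \(\mu\) of \(\Theta\). I would handle it with string-diagram calculus, treating \(\Theta\) as a strictified monoidal functor by coherence.
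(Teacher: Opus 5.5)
Your proof is correct. It has the same skeleton as the paper's: transport the twisting data to \({}^*M\) by a mate, get the coherence axioms from functoriality of mates, and inherit the triangle identities from \(\mathcal B\). You take a different mate, however.

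The paper passes to bi-twists. It identifies \({}^*({}^gM^g)\cong{}^g({}^*M)^g\) using Lemma~\ref{lem:dual-of-twist} and the reverse-order formula for duals of composites. It then defines the bi-twist of \({}^*M\) as the inverse of the left mate \({}^*\gamma_{g,M}\). This makes invertibility free, since left mates of isomorphisms between left dualizable \(1\)-cells are isomorphisms. The cost is that unit, multiplicativity and compatibility of \(\bar\eta,\bar\epsilon\) must be pushed through that canonical identification and its coherence with \(\mu\).

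You instead take the one-sided mate of \(\tau_g^{-1}\) with respect to \((M,{}^*M)\) alone, which makes the remaining checks transparent. The squares for \(\bar\eta\) and \(\bar\epsilon\) reduce to one snake identity followed by \(\tau_g\tau_g^{-1}=\id\). Multiplicativity follows because mates respect horizontal pasting and are natural in the \(2\)-cells \(\mu^R,\mu^S\) on the outer legs. Your displayed \(\sigma_g\) uses no duality for the \(\Theta\)'s. So your mentions of the duality \((\Theta(g^{-1}),\Theta(g))\) in its definition, and of \(\Theta\)-dualities compatible with \(\mu\) in the multiplicativity step, are superfluous; they belong to the paper's route.

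The price of your route is that invertibility of \(\sigma_g\) needs a separate argument. Your explicit inverse is only sketched, and checking it would also use the coherence of \(\Theta\), not just the triangle identities for \((M,{}^*M)\). A cleaner argument runs as follows. The unit and multiplicativity axioms for \(\sigma\), whose mate proofs never use invertibility, show that \(\sigma_{g^{-1}}\) and \(\sigma_g\) paste, in either order, to a canonical isomorphism. Hence \(\id_{\Theta_S(g^{-1})}\odot\sigma_g\) is split mono and \(\sigma_g\odot\id_{\Theta_R(g^{-1})}\) is split epi. Since composing with an invertible \(1\)-cell is an equivalence of hom-categories, \(\sigma_g\) is invertible. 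Alternatively, your ``equivalently'' remark settles it, once you check that \(\sigma_g\) agrees with the paper's construction.
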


\begin{proof}
The forward implication follows immediately by applying the forgetful pseudofunctor \(U\).

Conversely, suppose that the underlying \(1\)-cell \(M\) is left dualizable in \(\mathcal B\), with chosen left dual \(N={}^*M\). Let \(\gamma_{g,M}\colon M\xrightarrow{\cong}{}^gM^g\) be the bi-twist isomorphism corresponding to the twisting constraint \(\tau_{g,M}\). By Lemma~\ref{lem:dual-of-twist} and the standard reverse-order formula for left duals of composites, the bi-twist \({}^gM^g = \Theta_R(g^{-1})\odot M\odot\Theta_S(g)\) is left dualizable, with left dual canonically isomorphic to \({}^*({}^gM^g) \cong \Theta_S(g^{-1})\odot N\odot\Theta_R(g) = {}^gN^g\). 

Let \({}^*\gamma_{g,M}\colon {}^*({}^gM^g)\xrightarrow{\cong}{}^*M=N\) denote the left mate of \(\gamma_{g,M}\). Using the canonical identification above, define \(\gamma_{g,N}\colon N \xrightarrow{({}^*\gamma_{g,M})^{-1}} {}^*({}^gM^g) \xrightarrow{\cong} {}^gN^g\).
The mate correspondence is compatible with identities, vertical
composition, and horizontal composition. Hence the unit and
multiplicativity conditions for \(\gamma_{g,M}\) transpose to the
corresponding conditions for \(\gamma_{g,N}\). Moreover, the defining
mate equations say precisely that the coevaluation and evaluation
maps for \((M,N)\) are compatible with the induced twisting
constraints. Thus they define \(2\)-cells in
\(\mathcal B^{G\text{-tw}}\). Their triangle identities are the
underlying triangle identities in \(\mathcal B\), so
\((M,N)\) is a dual pair in \(\mathcal B^{G\text{-tw}}\).
\end{proof}

\begin{cor}\label{cor:algebraic_dualizable_recognition}
Let \(M\colon R\to S\) be a 1-cell in \(\mathbf{Mod}(\mathcal V)^G\). Then \(M\) is left dualizable if and only if its underlying bimodule is left dualizable in \(\mathbf{Mod}(\mathcal V)\). 
\end{cor}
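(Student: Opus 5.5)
The corollary follows by transporting Proposition~\ref{prop:Gtw-dualizable-recognition} along the biequivalence of Corollary~\ref{cor:equiv-mod-G}. Let \(F\colon\mathbf{Mod}(\mathcal V)^G\to\mathbf{Mod}(\mathcal V)^{G\text{-tw}}\) be the pseudofunctor constructed there. It is the identity on objects, on underlying bimodules, and on underlying bimodule maps, and it sends a semilinear action \(\phi\) to the twisting constraints determined via Lemma~\ref{lem:gamma-vs-semilinear} and Proposition~\ref{prop:gM-vs-gammaM}.

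\textbf{Step 1: left dualizability is invariant under biequivalences.} A pseudofunctor carries a dual pair to a dual pair: one conjugates the coevaluation and evaluation by its structure isomorphisms \(F(M)\odot F(N)\cong F(M\odot N)\) and \(F(U)\cong U\), and the triangle identities follow from the coherence of these isomorphisms. For the converse I would use a pseudo-inverse \(F^{-1}\). If \(F(M)\) is left dualizable, then so is \(F^{-1}F(M)\). This cell is equivalent to \(M\) through an invertible \(2\)-cell, and dualizability is preserved by composing the duality data with such an isomorphism. Alternatively, one can argue directly that \(F\) is locally an equivalence and compatible with composition and units, so the duality \(2\)-cells lift uniquely. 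Hence \((M,\phi)\) is left dualizable in \(\mathbf{Mod}(\mathcal V)^G\) exactly when \(F(M,\phi)=(M,\tau)\) is left dualizable in \(\mathbf{Mod}(\mathcal V)^{G\text{-tw}}\).

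\textbf{Step 2: apply Proposition~\ref{prop:Gtw-dualizable-recognition} to the base bicategory.} Take \(\mathcal B=\mathbf{Mod}(\mathcal V)_{G\text{-mon}}\), so that \(\mathbf{Mod}(\mathcal V)^{G\text{-tw}}=\mathcal B^{G\text{-tw}}\). The proposition then says that \((M,\tau)\) is left dualizable if and only if \(M\) is left dualizable in \(\mathcal B\).

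\textbf{Step 3: identify dualizability in \(\mathcal B\) with dualizability in \(\mathbf{Mod}(\mathcal V)\).} The bicategory \(\mathbf{Mod}(\mathcal V)_{G\text{-mon}}\) has the same \(1\)-cells, \(2\)-cells, composition, and units as \(\mathbf{Mod}(\mathcal V)\); its objects merely carry extra \(G\)-action labels. The forgetful pseudofunctor to \(\mathbf{Mod}(\mathcal V)\) is therefore locally an isomorphism of hom-categories and strictly preserves composition and units. Any left dual of \(M\) in \(\mathbf{Mod}(\mathcal V)\), as a \((B,A)\)-bimodule together with its duality \(2\)-cells, is then literally a left dual in \(\mathcal B\) between \((B,\beta)\) and \((A,\alpha)\), and the converse holds just as directly.

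\textbf{Expected obstacle.} The only point needing care is Step 1, namely making sure the pseudofunctor's coherence isomorphisms genuinely transport the triangle identities. Here this is nearly trivial, because \(F\) is the identity on underlying data and its compositor is the identity on underlying bimodules (Proposition~\ref{prop:comparison-vs-semilinear}). So I would simply check that the evaluation and coevaluation of \(M\) in \(\mathbf{Mod}(\mathcal V)^{G\text{-tw}}\) commute with the semilinear actions, and hence define \(2\)-cells in \(\mathbf{Mod}(\mathcal V)^G\).
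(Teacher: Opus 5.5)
Your proposal is correct and follows essentially the same route as the paper: the forward direction comes from forgetting the equivariant structure, and the converse applies Proposition~\ref{prop:Gtw-dualizable-recognition} to \(\mathbf{Mod}(\mathcal V)_{G\text{-mon}}\) and transports the resulting dual pair along the biequivalence of Corollary~\ref{cor:equiv-mod-G}. The paper's proof is terser than yours, but it additionally identifies the induced structure on \({}^*M=\Hom_R(M,R)\) as the conjugate action \(\phi^{\,{}^*M}_g=\Hom_R(\phi^M_{g^{-1}},\alpha_g)\), which the statement itself does not require.
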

\begin{proof}
The forward implication follows by forgetting the \(G\)-equivariant
structure. Conversely, suppose that the underlying bimodule \(M\) is
left dualizable in \(\mathbf{Mod}(\mathcal V)\), with canonical left
dual
\(
{}^*M=\Hom_R(M,R).
\)
By Proposition~\ref{prop:Gtw-dualizable-recognition}, the equivariant
structure on \(M\) canonically induces an equivariant structure on
\({}^*M\), and the evaluation and coevaluation maps are equivariant.

Under the equivalence of
Corollary~\ref{cor:equiv-mod-G}, the induced structure on \({}^*M\) is
the conjugate \(G\)-action
\(
\phi^{\,{}^*M}_g
=
\Hom_R(\phi^M_{g^{-1}},\alpha_g):
\Hom_R(M,R)\to\Hom_R(M,R).
\)
Thus \({}^*M\) is a left dual of \(M\) in
\(\mathbf{Mod}(\mathcal V)^G\).
\end{proof}

\begin{example}\label{ex:dualizable_equivariant_module}
Consider the bicategory of $G$-twists $\mathbf{Bimod}/\mathcal{R}\mathrm{ing}^G$ for an abelian group $G$ (as in Example~\ref{ex:semilinear-G-twists}
\textup{(\ref{case:ring-bimod})}). An equivariant 1-cell $(M,\phi) \colon (R,\alpha) \to (S,\beta)$ is left dualizable if and only if its underlying module $M$ is a finitely generated projective left $R$-module. In this case, Corollary~\ref{cor:algebraic_dualizable_recognition} guarantees that the canonical left dual ${}^*M \coloneqq \Hom_R(M,R)$ lifts to a left dual in $\mathbf{Bimod}/\mathcal{R}\mathrm{ing}^G$ when equipped with the conjugate $G$-action: $(g\cdot f)(m) \coloneqq \alpha_g\!\Big(f\big(\phi^M_{g^{-1}}(m)\big)\Big), $ for $ g\in G,\ f\in {}^*M,\ m\in M.$

The semilinear \(G\)-action is additional structure on the underlying
\((R,S)\)-bimodule \(M\). Horizontal composition in
\(\mathbf{Bimod}/\mathcal{R}\mathrm{ing}^G\) is still the relative
tensor product over \(S\), namely \(M\otimes_S N\). Therefore the
dualizability condition concerns the underlying left \(R\)-module
structure on \(M\).
\end{example}

\begin{example}\label{ex:dualizable_green_functor}
Let $\mathbf{Bimod}/\mathbf{Green}^{G}$ be the bicategory of $G$-twists for a finite abelian group $G$, as in Example~\ref{ex:green-functors}. By Proposition~\ref{prop:dualizable-bimodule}, an equivariant $1$-cell $(\underline M,\phi^{\underline M})\in \mathbf{Bimod}/\mathbf{Green}^{G}(\underline R,\underline S)$ is left dualizable if and only if its underlying left \(\underline R\)-module is finitely generated
projective. In this case, Corollary~\ref{cor:algebraic_dualizable_recognition} guarantees that the canonical left dual ${}^*\underline M \coloneqq \underline{\mathrm{Hom}}_{\underline R}(\underline M,\underline R)$ lifts to a valid left dual in $\mathbf{Bimod}/\mathbf{Green}^{G}$ when equipped with the conjugate $G$-equivariant structure. Explicitly, evaluated at a finite $G$-set $X$, an element $f \in {}^*\underline M(X) \cong \mathbf{Mod}_{\underline R}(\underline{M} \square \underline{A}_X, \underline R)$ is mapped to: $\big(\phi_g^{{}^*\underline M}\big)_X(f) = \alpha_g \circ f \circ (\phi^{\underline M}_{g^{-1}} \square \id_{\underline{A}_X}).$
\end{example}

  A dual pair whose evaluation and coevaluation $2$-cells are isomorphisms is called a Morita equivalence (Definition~\ref{def:morita_equivalence}) between the corresponding $0$-cells. We formalize this for bicategories of $G$-twists.  

\begin{definition}\label{def:G_morita_equivalence}
Let \(\mathcal{B}^{G\text{-tw}}\) be a bicategory of \(G\)-twists. A \emph{\(G\)-Morita equivalence} between \(R\) and \(S\) is a Morita equivalence internal to \(\mathcal{B}^{G\text{-tw}}\); that is, a pair of \(1\)-cells \(M\in\mathcal{B}^{G\text{-tw}}(R,S)\) and \(N\in\mathcal{B}^{G\text{-tw}}(S,R)\), together with invertible 2-cells \(\bar\eta\colon U_S \xrightarrow{\cong} N\odot M\) and \(\bar\epsilon\colon M\odot N \xrightarrow{\cong} U_R\) in \(\mathcal{B}^{G\text{-tw}}\) satisfying the triangle identities to exhibit \((N,M)\) as a dual pair. In this case, we say \(R\) and \(S\) are \emph{\(G\)-Morita equivalent}, written \(R\simeq_G S\).
\end{definition}

The preceding recognition result for dualizability extends directly to Morita equivalences in any bicategory of \(G\)-twists.

\begin{cor}\label{cor:general_morita_recognition}
Let \(\mathcal{B}^{G\text{-tw}}\) be a bicategory of \(G\)-twists, and let \((M,\tau) \in \mathcal{B}^{G\text{-tw}}(R,S)\) be a \(G\)-twisted \(1\)-cell. Then \((M,\tau)\) provides a \(G\)-Morita equivalence between \(R\) and \(S\) if and only if its underlying \(1\)-cell \(M\) provides a Morita equivalence in the underlying bicategory \(\mathcal{B}\).
\end{cor}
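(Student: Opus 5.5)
The forward implication is immediate. Suppose \((M,\tau)\) and some \((N,\sigma)\), together with invertible \(2\)-cells \(\bar\eta\colon U_S\Rightarrow N\odot M\) and \(\bar\epsilon\colon M\odot N\Rightarrow U_R\) in \(\mathcal B^{G\text{-tw}}\), form a \(G\)-Morita equivalence. Apply the forgetful pseudofunctor \(U\colon\mathcal B^{G\text{-tw}}\to\mathcal B\). Since \(U\) is the identity on underlying \(2\)-cells and preserves horizontal composition, units and coherence cells, the images of \(\bar\eta\) and \(\bar\epsilon\) are invertible and still satisfy the triangle identities. Hence \((N,M)\) is a dual pair in \(\mathcal B\) with invertible unit and counit. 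The standard fact that an adjoint equivalence can be read in both directions (invert \(\bar\eta,\bar\epsilon\) to get the other dual pair) then gives a Morita equivalence in the sense of Definition~\ref{def:morita_equivalence}.

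For the converse, assume the underlying \(M\) is part of a Morita equivalence in \(\mathcal B\) with some \(N\). First I would note that \(N\) is a left dual \({}^*M\), with invertible coevaluation \(\bar\eta\colon U_S\Rightarrow N\odot M\) and evaluation \(\bar\epsilon\colon M\odot N\Rightarrow U_R\). This is the dual pair \((N,M)\) whose evaluation is inverse to the other pair's coevaluation. By Proposition~\ref{prop:Gtw-dualizable-recognition}, the twisting constraints \(\tau\) on \(M\) canonically induce twisting constraints \(\sigma\) on \(N\) via mates. Moreover, with these constraints \(\bar\eta\) and \(\bar\epsilon\) are \(2\)-cells of \(\mathcal B^{G\text{-tw}}\), i.e.\ they satisfy \eqref{eq:Gtw-2cell-compat} for the composite constraints \(\sigma\star\tau\), \(\tau\star\sigma\) and the unit constraints \(\tau^U\). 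The triangle identities hold in \(\mathcal B^{G\text{-tw}}\) because they are checked on underlying \(2\)-cells.

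It remains to see that \(\bar\eta\) and \(\bar\epsilon\) are invertible in \(\mathcal B^{G\text{-tw}}\), not merely in \(\mathcal B\). This is the one point needing an argument, but it is easy: if an underlying \(2\)-cell \(f\) is invertible and satisfies the compatibility square \eqref{eq:Gtw-2cell-compat}, then so does \(f^{-1}\). To see this, pre- and post-compose the square with \(\id\odot f^{-1}\) and \(f^{-1}\odot\id\), using that the constraints are invertible. Thus \(\bar\eta^{-1}\) and \(\bar\epsilon^{-1}\) are again \(2\)-cells of \(\mathcal B^{G\text{-tw}}\), and \(((M,\tau),(N,\sigma),\bar\eta,\bar\epsilon)\) is a \(G\)-Morita equivalence in the sense of Definition~\ref{def:G_morita_equivalence}.

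The main obstacle, if any, is a subtlety in the converse. The dual \(N\) supplied by the Morita equivalence in \(\mathcal B\) need not literally be the chosen left dual used in Proposition~\ref{prop:Gtw-dualizable-recognition}. I would handle this by running the mate construction of that proposition with this particular dual pair \((N,M)\); the construction only uses the duality data, not a specific model of the dual. Alternatively, one can transport the induced constraints along the canonical isomorphism between left duals, which is compatible with the duality data.
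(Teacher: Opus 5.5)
Your proposal is correct and follows the same route as the paper: you forget to \(\mathcal B\) for the forward direction, and for the converse you lift the dual pair via Proposition~\ref{prop:Gtw-dualizable-recognition} and use that a \(2\)-cell of \(\mathcal B^{G\text{-tw}}\) is invertible exactly when its underlying \(2\)-cell is. You also supply details the paper leaves implicit, namely that the inverse of a compatible \(2\)-cell is again compatible, that an invertible dual pair can be reversed to meet Definition~\ref{def:morita_equivalence}, and that the mate construction works with whatever left dual the Morita equivalence provides.
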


\begin{proof}
By Definition~\ref{def:G_morita_equivalence}, a \(G\)-Morita equivalence is precisely a dual pair whose evaluation and coevaluation maps are isomorphisms. By Proposition~\ref{prop:Gtw-dualizable-recognition}, the entire dual pair structure (including the \(2\)-cells) lifts from \(\mathcal{B}\) to \(\mathcal{B}^{G\text{-tw}}\). Furthermore, a \(2\)-cell in \(\mathcal{B}^{G\text{-tw}}\) is an isomorphism if and only if its underlying \(2\)-cell is an isomorphism in \(\mathcal{B}\). The result follows immediately.
\end{proof}

We illustrate this general recognition principle through biproducts in closed symmetric monoidal categories.

\begin{example}\label{ex:general_morita_biproduct}
Suppose that the closed symmetric monoidal category \(\mathcal V\) has finite biproducts \(\oplus\). Let \(k\geq 1\), let \(R\) be a \(G\)-monoid, and set \(P\coloneqq R^{\oplus k}\) and \(S\coloneqq \End_R(P)^{\mathrm{op}}\). The diagonal \(G\)-action on \(P\) induces the conjugation \(G\)-action on \(S\), making \(S\) a \(G\)-monoid in \(\mathcal V\). In the underlying bicategory \(\mathbf{Mod}(\mathcal V)\), the bimodules \(P\colon R\to S\) and \(Q\coloneqq\Hom_R(P,R)\colon S\to R\) form a Morita equivalence with canonical isomorphisms \(P\odot_S Q\cong R\) and \(Q\odot_R P\cong S\). By Corollary~\ref{cor:general_morita_recognition}, the \(G\)-twisted structure on \(P\) canonically induces one on \(Q\), yielding a \(G\)-Morita equivalence \(R \simeq_G S\).
\end{example}

This general framework specializes naturally across different settings:

\begin{enumerate}
\item  For a \(C_n\)-Green functor \(\underline{R}\), taking biproducts of Mackey functors yields a \(C_n\)-Morita equivalence between \(\underline{R}\) and \(\underline{\End}_{\underline{R}}(\underline{R}^{\oplus k})^{\mathrm{op}}\) in \(\mathbf{Bimod}/\mathbf{Green}^{C_n}\).
	
\item  Let \(G=C_n\), and let \(R\) be a ring spectrum with a naive \(C_n\)-action. The finite free \(R\)-module \(P\coloneqq R^{\vee k}\) carries the diagonal action, and its endomorphism ring spectrum \(S\coloneqq \End_R(P)^{\mathrm{op}}\) inherits the conjugation action. By Corollary~\ref{cor:general_morita_recognition}, this defines a \(C_n\)-Morita equivalence between \(R\) and \(S\) in the twisted homotopy bicategory \((\Ho\mathbf{Bimod}/\mathcal{R}_{\mathbf{Sp}})^{C_n\text{-tw}}\), modeling Borel \(C_n\)-equivariant ring spectra and semilinear bimodule spectra (see Example~\ref{ex:semilinear-G-twists}
\textup{(\ref{case:borel-twisted-bimodules})}).
\end{enumerate}

We now prove that \(g\)-twisted shadows are  $G$-Morita invariant.

\begin{prop}\label{prop:morita-invariance}
Let
\((\mathcal B,\sh{-},\mathcal T)\)
be a \(\mathcal T\)-shadowed bicategory equipped with
\(G\)-twisting data, and  
\({}^g\sh{-}\)
denote the induced \(g\)-twisted shadow on
\(\mathcal B^{G\text{-tw}}\).
If \(R\) and \(S\) are \(G\)-Morita equivalent, then
\(
{}^g\sh{U_R}\cong{}^g\sh{U_S}.
\)
\end{prop}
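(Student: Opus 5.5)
The plan is to mimic the classical argument for Morita invariance of shadows, using the twisted cyclicity isomorphism \({}^g\theta\) of Proposition~\ref{prop:g_twisted_axioms} in place of \(\theta\). Let \((M,\tau)\colon R\to S\) and \((N,\sigma)\colon S\to R\) be the \(1\)-cells of a \(G\)-Morita equivalence, with invertible \(2\)-cells \(\bar\eta\colon U_S\xrightarrow{\cong}N\odot M\) and \(\bar\epsilon\colon M\odot N\xrightarrow{\cong}U_R\) in \(\mathcal B^{G\text{-tw}}\). The candidate isomorphism is the composite
\[
{}^g\sh{U_R}_R
\xrightarrow{{}^g\sh{\bar\epsilon^{-1}}}
{}^g\sh{M\odot N}_R
\xrightarrow{{}^g\theta_{M,N}}
{}^g\sh{N\odot M}_S
\xrightarrow{{}^g\sh{\bar\eta^{-1}}}
{}^g\sh{U_S}_S .
\]

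The first step is to check that each arrow is a well-defined isomorphism in \(\mathcal T\). Since \(\bar\epsilon\) and \(\bar\eta\) are invertible \(2\)-cells of \(\mathcal B^{G\text{-tw}}\), their inverses are again \(2\)-cells there: they are compatible with the twisting constraints because the compatibility square \eqref{eq:Gtw-2cell-compat} can be inverted. Since \({}^g\sh{-}\) is a functor on \(\mathcal B^{G\text{-tw}}(R,R)\) and on \(\mathcal B^{G\text{-tw}}(S,S)\) (Definition~\ref{def:g-twisted-shadow}), it sends these inverses to isomorphisms. The middle arrow is an isomorphism by Proposition~\ref{prop:g_twisted_axioms}. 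Here it is essential that \(M\) and \(N\) carry twisting constraints, because \({}^g\theta_{M,N}\) uses the transport map \(g_N\) from \eqref{eq:g_M_def}. This explains why the hypothesis is a \(G\)-Morita equivalence rather than a Morita equivalence in \(\mathcal B\).

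Only an isomorphism is asserted, so the triangle identities are not needed for existence. The main point of care, rather than a real obstacle, is that the composite genuinely lands in \({}^g\sh{U_S}_S\), the shadow twisted by \(\Theta_S(g^{-1})\), rather than in \(\sh{{}^gU_S}\) computed with an \(R\)-twist. Unwinding \eqref{def:twisted-cyclicity} settles this: \({}^g\theta_{M,N}\) ends with \(\sh{g_N\odot\id_M}\colon\sh{N^{g^{-1}}\odot M}_S\to\sh{{}^gN\odot M}_S\), and \({}^gN\odot M={}^g(N\odot M)\) up to the associator, with the twist taken at \(S\). The twisted \(2\)-cell \({}^g\bar\eta^{-1}\) then maps \(\Theta_S(g^{-1})\odot N\odot M\) to \(\Theta_S(g^{-1})\odot U_S\), as required.

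If one also wants the isomorphism to be canonical, independent of choices, one would use the twisted hexagon and triangle together with the triangle identities, following the argument of \cite[Prop.~4.6]{CP}. This is not needed for the statement.
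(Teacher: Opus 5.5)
Your proposal is correct and follows the paper's proof: the paper uses the same composite ${}^g\sh{\bar\epsilon^{-1}}$, then ${}^g\theta_{M,N}$, then ${}^g\sh{\bar\eta^{-1}}$, and likewise does not invoke the triangle identities. Your extra checks are correct elaborations of steps the paper leaves implicit: that inverses of compatible $2$-cells are again $2$-cells in $\mathcal B^{G\text{-tw}}$, and that the twist ends up being taken at $S$ via $g_N$.
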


\begin{proof}
By \(G\)-Morita equivalence, there is a dual pair \((N,M)\) with \(N\in\mathcal{B}^{G\text{-tw}}(S,R)\) and \(M\in\mathcal{B}^{G\text{-tw}}(R,S)\), equipped with invertible 2-cells \(\bar\eta\colon U_S \xrightarrow{\cong} N\odot M\) and \(\bar\epsilon\colon M\odot N \xrightarrow{\cong} U_R\). Applying the twisted shadow functor \({}^g\sh{-}\) and the twisted cyclic isomorphism yields the composite isomorphism
\[
{}^g\sh{U_R} \xrightarrow{{}^g\sh{\bar\epsilon^{-1}}} {}^g\sh{M \odot N} \xrightarrow{{}^g\theta} {}^g\sh{N \odot M} \xrightarrow{{}^g\sh{\bar\eta^{-1}}} {}^g\sh{U_S}
.\]
Therefore, \({}^g\sh{U_R}\cong {}^g\sh{U_S}\).
\end{proof}

\subsection{Twisted THH and twisted Hochschild constructions}
Having established the structural theory of the 
$g$-twisted shadow, we now show that the motivating example, twisted 
THH, fits precisely into this framework.

\begin{thm}\label{thm:twisted-thh-shadow}
The assignment
\(
(R,M)\mapsto\THH_{C_n}(R;M)
\)
defines a \(g\)-twisted
\(\Ho\mathbf{Sp}^{C_n}\)-valued shadow on
\(\Ho\mathbf{Bimod}/\mathbf{Sp}^{C_n}\).
\end{thm}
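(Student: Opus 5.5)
The plan is to deduce the theorem from the general machinery of Section~4.1: exhibit \(\THH_{C_n}(R;-)\) as the induced \(g\)-twisted shadow \({}^g\sh{-}\) of Definition~\ref{def:g-twisted-shadow}, and then read off the twisted cyclicity and coherence axioms from Proposition~\ref{prop:g_twisted_axioms}. Concretely, take \(\mathcal V=\mathbf{Sp}^{C_n}\), which satisfies the hypotheses of Remark~\ref{rem:HoModV} by Example~\ref{ex:HoBimodSpCn}. By Remark~\ref{rem:HH-construction}, the Hochschild construction \(\HH_{\mathbf{Sp}^{C_n}}(R;M)=|B^{\mathrm{cyc}}_\bullet(R;M)|\) is an ordinary \(\Ho\mathbf{Sp}^{C_n}\)-valued shadow on \(\Ho\mathbf{Mod}(\mathbf{Sp}^{C_n})\). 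Equip each \(C_n\)-ring spectrum \(R\) with the twisting data \(\Theta_R(h)=R^h\), the base-change bimodules, exactly as in the \(\mathbf{Mod}(\mathcal V)_{G\text{-mon}}\) construction. The strong monoidal structure descends to the homotopy bicategory, because \(R^h\) is cofibrant as a right \(R\)-module, so the derived composites agree with the point-set ones. Every equivariant bimodule then carries the canonical twisting constraints \(\tau^{\mathrm{can}}\) of Example~\ref{ex:HoBimodSpCn}. This yields an inclusion of \(\Ho\mathbf{Bimod}/\mathbf{Sp}^{C_n}\) into \((\Ho\mathbf{Mod}(\mathbf{Sp}^{C_n}))^{C_n\text{-tw}}\), given by \(M\mapsto(M,\tau^{\mathrm{can}})\). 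Equivariant bimodule maps commute with the \(C_n\)-action, so this inclusion is a pseudofunctor. The shadow on \(\Ho\mathbf{Bimod}/\mathbf{Sp}^{C_n}\) is then the restriction of \({}^g\sh{-}\) along it.

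Next comes the identification of values. By Lemma~\ref{lem:base-change-twists}, \({}^gM=R^{g^{-1}}\odot M\) is canonically isomorphic to the action-twisted bimodule \({}^gM\) of Definition~\ref{def:twisted-bimodule-spectra}. This holds in the homotopy bicategory as well, since the map \(R^{g^{-1}}\odot_R M\to M\) is a point-set isomorphism between cofibrant objects. Hence
\[
{}^g\sh{M}=\HH_{\mathbf{Sp}^{C_n}}(R;{}^gM)=|B^{\mathrm{cyc}}_\bullet(R;{}^gM)|\cong\THH_{C_n}(R;M),
\]
by Remark~\ref{rem:twisted-bar-equivalence} and \eqref{eq:twisted-thh-cyc-realization}. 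This identification is natural in \(2\)-cells, and I would transport the twisted cyclicity maps \eqref{def:twisted-cyclicity} along it. Proposition~\ref{prop:g_twisted_axioms} then gives the twisted hexagon and triangle for \(\THH_{C_n}\) immediately, because both are diagrams of isomorphisms conjugate to the ones already verified.

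The main obstacle is the homotopical bookkeeping. One has to check that the canonical constraints \(\tau^{\mathrm{can}}\) survive passage to \(\Ho\) with the unit and multiplicativity axioms of Definition~\ref{def:bicat_of_G_twists} intact. The concern is that composition in the homotopy bicategory is \(|B_\bullet(M;S;N)|\), whereas Corollary~\ref{cor:equiv-mod-G} is point-set. My approach would be to note that \(\phi_g^M\) acts levelwise on the two-sided bar construction, by \(\phi_g\wedge\alpha_g^{\wedge q}\wedge\phi_g\), and commutes with faces and degeneracies. That gives the composite constraint \(\tau\star\sigma\) on realizations, matching Proposition~\ref{prop:comparison-vs-semilinear}, with the required identities holding already at the simplicial level. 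For added robustness, I would also check that the resulting \({}^g\theta\) agrees with the cyclicity map of \cite[Remark~4.2.6]{AGHKK}. This should follow from unwinding \(g_N=(\tau_{g^{-1}})^{-1}\), which under Lemma~\ref{lem:gamma-vs-semilinear} is just the action of \(g\) on \(N\).
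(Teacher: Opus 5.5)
Your proposal is correct and follows the paper's route. You identify \(\THH_{C_n}(R;M)\cong|B^{\mathrm{cyc}}_\bullet(R;{}^gM)|\) via Remark~\ref{rem:twisted-bar-equivalence}, recognize the right side as the ordinary Hochschild shadow applied to \({}^gM=\Theta_R(g^{-1})\odot M\), and obtain the twisted hexagon and triangle from Proposition~\ref{prop:g_twisted_axioms}. The homotopical bookkeeping you add, namely descending \(\Theta_R\) and \(\tau^{\mathrm{can}}\) to the homotopy bicategory and treating \(M\mapsto(M,\tau^{\mathrm{can}})\) as a pseudofunctor into the bicategory of twists, is left implicit in the paper through Example~\ref{ex:HoBimodSpCn}, so it is a useful elaboration rather than a different argument.
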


\begin{proof}
By Remark~\ref{rem:twisted-bar-equivalence}, there is a natural
isomorphism
\(
\THH_{C_n}(R;M)
\cong
\THH(R;{}^gM).
\) The right-hand side is obtained by applying the ordinary
\(\THH\)-shadow to the left-twisted \(1\)-cell
\({}^gM=\Theta_R(g^{-1})\odot_R M\). Hence
\(\THH_{C_n}(R;-)\) is naturally isomorphic to the induced
\(g\)-twisted shadow of Definition~\ref{def:g-twisted-shadow}.
\end{proof}

The same construction applies to monoids in a symmetric monoidal model category.  

\begin{definition}\label{def:twisted-HH-construction}
Let \(\mathcal V\) be a symmetric monoidal simplicial model category
satisfying the hypotheses of Remark~\ref{rem:HoModV}. Let \(g\in G\),
and let \(A\) be an object of
\(\Ho\mathbf{Mod}(\mathcal V)^{G\text{-tw}}\). The \emph{\(g\)-twisted Hochschild construction}
is the functor
\[
\HH_{\mathcal V}^g(A;-):
\Ho\mathbf{Mod}(\mathcal V)^{G\text{-tw}}(A,A)
\longrightarrow
\Ho\mathcal V
\]
defined on a \(1\)-cell \((M,\tau)\) by
\(
\HH_{\mathcal V}^g(A;(M,\tau))
:=
\left|B_\bullet^{\mathrm{cyc}}(A;{}^gM)\right|,
\)
where \(|-|\) denotes the geometric realization in the Hochschild construction
recalled in Remark~\ref{rem:HH-construction}.
On a morphism
\(
f:(M,\tau)\Rightarrow (N,\sigma)
\)
in \(\Ho\mathbf{Mod}(\mathcal V)^{G\text{-tw}}(A,A)\), we set
\(
\HH_{\mathcal V}^g(A;f)
:=
\left|B_\bullet^{\mathrm{cyc}}(A;{}^gf)\right|,
\)
where \({}^gf=\Theta_A(g^{-1})\odot_A f\).
When the twisting constraint is clear from context, we write
\(\HH_{\mathcal V}^g(A;M)\).
\end{definition}

\begin{remark}
Definition~\ref{def:twisted-HH-construction} agrees with the definition of
\emph{twisted Hochschild homology} given in \cite[Def.~4.1.3]{AGHKK} when the automorphism
$\varphi$ arises from the action of an element $g \in G$.
Similar categorical trace formalisms also appear in algebraic geometry:
for example, the categorical approach developed in \cite{Hemo} defines the
$\phi$-twisted Hochschild homology
$\HH(\mathcal A, {}^\phi\mathcal A)$ of a monoidal $\infty$-category,
where $\phi$  is a monoidal endofunctor; Frobenius provides an
important example in applications.
\end{remark}

We can now show that, in general, $\HH_{\mathcal{V}}^g$ is a $g$-twisted shadow on $\Ho\mathbf{Mod}(\mathcal{V})^{G\text{-tw}}$. The twisted Hochschild construction explicitly realizes the abstract $g$-twisted shadow of Definition~\ref{def:g-twisted-shadow} applied to the ordinary Hochschild shadow.

\begin{thm}\label{thm:twisted-HH-shadow}
The assignment
\(
(A,M)\mapsto\HH_{\mathcal V}^g(A;M)
\)
defines a \(g\)-twisted \(\Ho\mathcal V\)-valued shadow on the bicategory of $G$-twists 
$\Ho\mathbf{Mod}(\mathcal V)^{G\text{-tw}}$.
\end{thm}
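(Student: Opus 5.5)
The plan is to reduce Theorem~\ref{thm:twisted-HH-shadow} to the general construction of Definition~\ref{def:g-twisted-shadow} and Proposition~\ref{prop:g_twisted_axioms}, exactly as in the proof of Theorem~\ref{thm:twisted-thh-shadow}. By Remark~\ref{rem:HH-construction}, i.e.\ \cite[Proposition~3.1.5]{AGHKK}, the ordinary Hochschild construction \(\HH_{\mathcal V}(A;-)=|B^{\mathrm{cyc}}_\bullet(A;-)|\) is a \(\Ho\mathcal V\)-valued shadow on \(\Ho\mathbf{Mod}(\mathcal V)\). Restricting along the forgetful pseudofunctor from \(\Ho\mathbf{Mod}(\mathcal V)_{G\text{-mon}}\) gives a shadowed bicategory with \(G\)-twisting data \(\Theta_A(g)=A^g\). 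Proposition~\ref{prop:g_twisted_axioms} then produces a \(g\)-twisted shadow \({}^g\sh{(M,\tau)}=\HH_{\mathcal V}(A;\Theta_A(g^{-1})\odot M)\) on \(\Ho\mathbf{Mod}(\mathcal V)^{G\text{-tw}}\). The twisted cyclicity isomorphisms are those of \eqref{def:twisted-cyclicity}, and the twisted hexagon and triangle hold there.

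It remains to identify this abstract \(g\)-twisted shadow with \(\HH^g_{\mathcal V}(A;-)\) of Definition~\ref{def:twisted-HH-construction}, via a natural isomorphism in \(\Ho\mathcal V\). A shadow transported along a natural isomorphism of the underlying functors is again a shadow, once \(\theta\) is conjugated accordingly. There is one subtlety. In Definition~\ref{def:twisted-HH-construction}, \({}^gM\) is read as the action-twisted bimodule (left action precomposed with \(\alpha_g\)), whereas the abstract twist is the derived composite \(A^{g^{-1}}\odot M=|B_\bullet(A^{g^{-1}};A;M)|\). So I must produce a natural isomorphism in \(\Ho\,{}_A\mathbf{Mod}_A\) between these two bimodules.

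This comparison is the step I expect to be the main obstacle, because Lemma~\ref{lem:base-change-twists} is only stated for the underived relative tensor product. I would proceed as follows. The point-set map \(\alpha_g\colon A^{g^{-1}}\to A\) is an isomorphism of right \(A\)-modules (and an isomorphism of \((A,A)\)-bimodules onto \(A\) with twisted left action), so it induces an isomorphism of simplicial objects
\[
B_\bullet(A^{g^{-1}};A;M)\cong B_\bullet({}^{g}A;A;M).
\]
The augmentation of the two-sided bar construction to \({}^gA\otimes_A M\cong{}^gM\) is a weak equivalence, by the standard extra-degeneracy argument. This argument only uses the unchanged right \(A\)-structure and the cofibrancy of \(A\) assumed in Remark~\ref{rem:HoModV}, and it is compatible with fibrant replacement. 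Every map involved is natural in \(M\), so this yields a natural isomorphism \({}^g(-)\cong A^{g^{-1}}\odot(-)\) of functors \(\Ho\,{}_A\mathbf{Mod}_A\to\Ho\,{}_A\mathbf{Mod}_A\), and the same argument handles \(2\)-cells \({}^gf\).

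Applying \(\HH_{\mathcal V}(A;-)\) to this isomorphism gives the required natural isomorphism \(\HH^g_{\mathcal V}(A;M)\cong{}^g\sh{(M,\tau)}\). Transporting the twisted cyclicity maps across it shows that \(\HH^g_{\mathcal V}\) satisfies the twisted hexagon and triangle axioms, which proves the theorem. As a sanity check, the same reasoning also recovers Theorem~\ref{thm:twisted-thh-shadow} when \(\mathcal V=\mathbf{Sp}^{C_n}\), consistent with Remark~\ref{rem:twisted-bar-equivalence}.
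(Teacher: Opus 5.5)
Your argument is correct and is essentially the paper's proof, which applies Proposition~\ref{prop:g_twisted_axioms} to the ordinary Hochschild shadow on $\Ho\mathbf{Mod}(\mathcal V)$ from \cite[Proposition~3.1.5]{AGHKK}. Your extra comparison between the action-twisted bimodule and the derived composite $A^{g^{-1}}\odot M$ is sound but not needed for the theorem, because Definition~\ref{def:twisted-HH-construction} already takes ${}^gM=\Theta_A(g^{-1})\odot M$ and ${}^gf=\Theta_A(g^{-1})\odot_A f$; that reading also covers twisting data not arising from monoid automorphisms, and your check is what matches the construction with the action-twisted definition of \cite{AGHKK}.
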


\begin{proof}
By Definition~\ref{def:twisted-HH-construction}, evaluating the geometric realization of the ordinary Hochschild bar construction on the twisted $1$-cell ${}^gM$ yields $\HH_{\mathcal{V}}^g(A; M)$. Since the ordinary Hochschild construction defines a valid shadow on the underlying bicategory $\Ho\mathbf{Mod}(\mathcal{V})$ (Remark~\ref{rem:HH-construction}), 
applying Proposition~\ref{prop:g_twisted_axioms} canonically equips
\(\HH_{\mathcal V}^g\) with the twisted cyclicity isomorphisms, ensuring it satisfies both the twisted hexagon and twisted triangle axioms.
\end{proof}

Taking
\(
\mathcal V=s\mathbf{Mack}_{C_n}
\)
in Theorem~\ref{thm:twisted-HH-shadow}, for \(g\in C_n\) and
\(i\geq 0\) we set
\(
\underline{\HH}^{g}_i
:=
H_i\circ\HH^g_{s\mathbf{Mack}_{C_n}},
\)
where
\(
H_i\colon
\Ho(s\mathbf{Mack}_{C_n})
\to
\mathbf{Mack}_{C_n}
\)
is the \(i\)-th homology functor.

When \(g=e^{2\pi i/n}\) is the chosen generator, we write
\(
\underline{\HH}^{C_n}_i
:=
\underline{\HH}^{g}_i.
\)
On Green functors and bimodule Mackey functors regarded as constant
simplicial objects, this agrees with the \(C_n\)-twisted Hochschild
homology of Definition~\ref{def:twisted-HH-Green}; see
\cite[Definition~4.3.3]{AGHKK}.

\begin{cor}\label{cor:twisted-HH-Green}
For every \(g\in C_n\) and \(i\geq 0\), the functor
\(\underline{\HH}^{g}_i\) defines a \(g\)-twisted
\(\mathbf{Mack}_{C_n}\)-valued shadow on
\(\Ho\mathbf{Mod}(s\mathbf{Mack}_{C_n})^{C_n\text{-tw}}.
\)
In particular, for the chosen generator \(g=e^{2\pi i/n}\), this
gives the \(g\)-twisted shadow
\(\underline{\HH}^{C_n}_i\).
\end{cor}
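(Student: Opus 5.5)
The corollary is obtained by postcomposing the $g$-twisted shadow of Theorem~\ref{thm:twisted-HH-shadow} with a functor. Specialize Theorem~\ref{thm:twisted-HH-shadow} to $\mathcal V=s\mathbf{Mack}_{C_n}$ and $G=C_n$. This is legitimate: Remark~\ref{rem:HoModV} and Example~\ref{ex:green-functors} record that $s\mathbf{Mack}_{C_n}$ satisfies the required model-categorical hypotheses, and that $\Ho\mathbf{Mod}(s\mathbf{Mack}_{C_n})$ carries $C_n$-twisting data coming from the orbit automorphisms ($C_n$ is abelian). The theorem then says that $\HH^g_{s\mathbf{Mack}_{C_n}}$ is a $g$-twisted $\Ho(s\mathbf{Mack}_{C_n})$-valued shadow on $\Ho\mathbf{Mod}(s\mathbf{Mack}_{C_n})^{C_n\text{-tw}}$. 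Concretely, its twisted cyclicity isomorphisms ${}^g\theta$ are given by \eqref{def:twisted-cyclicity}, and they satisfy the twisted hexagon and twisted triangle of Proposition~\ref{prop:g_twisted_axioms}.

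Next I would record the general principle: if ${}^g\sh{-}$ is a $g$-twisted $\mathcal T$-shadow and $F\colon\mathcal T\to\mathcal T'$ is any functor, then $F\circ{}^g\sh{-}$ is a $g$-twisted $\mathcal T'$-shadow, with cyclicity isomorphisms $F({}^g\theta)$. Naturality of $F({}^g\theta)$ follows from functoriality of $F$. Functors send commuting diagrams to commuting diagrams and isomorphisms to isomorphisms, so the twisted hexagon and twisted triangle transport directly, as do the maps $F({}^g\sh{\mathfrak a})$, $F({}^g\sh{\mathfrak l})$, $F({}^g\sh{\mathfrak r})$. No exactness or monoidality of $F$ is needed, because the shadow axioms involve only composition and identities in the target category.

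Finally apply this with $F=H_i\colon\Ho(s\mathbf{Mack}_{C_n})\to\mathbf{Mack}_{C_n}$. The one point to check is that $H_i$ is well defined on the homotopy category. Homology of the normalized chain complex of Mackey functors (via Dold--Kan, computed levelwise on orbits) sends weak equivalences to isomorphisms, so it factors through the localization. This is the only non-formal step, and it is standard. The statement for the chosen generator $g=e^{2\pi i/n}$ is then a special case, and it is identified with $\underline{\HH}^{C_n}_i$ by definition, consistent with Definition~\ref{def:twisted-HH-Green}.
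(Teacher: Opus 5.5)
Your argument is correct and follows the same route as the paper. The paper gives no separate proof: it defines $\underline{\HH}^g_i := H_i\circ\HH^g_{s\mathbf{Mack}_{C_n}}$ and treats the corollary as immediate from Theorem~\ref{thm:twisted-HH-shadow} with $\mathcal V=s\mathbf{Mack}_{C_n}$. Your two observations—that postcomposing a $g$-twisted shadow with any functor preserves the twisted hexagon and triangle, and that $H_i$ descends to $\Ho(s\mathbf{Mack}_{C_n})$—just make explicit the steps the paper leaves unstated.
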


We next give a concrete description of the degree-zero construction.

\begin{definition}\label{def:twisted-HH0}
Let \(\mathcal V\) be a symmetric monoidal category admitting the
relevant coequalizers.
Let \((R,\alpha)\) be a \(G\)-monoid in \(\mathcal V\), and let \(N\) be an \((R,R)\)-bimodule representing an endomorphism \(1\)-cell in \(\mathbf{Mod}(\mathcal V)^{G\text{-tw}}\). 

By Lemma~\ref{lem:base-change-twists}, the left twist \({}^gN=R^{g^{-1}}\odot_RN\) is identified with the bimodule whose underlying object is \(N\), whose right \(R\)-action is unchanged, and whose left \(R\)-action is
\(
{}^g\lambda = \lambda_N\circ(\alpha_g\otimes\id_N).
\)

The \emph{zeroth \(g\)-twisted Hochschild homology} of \(R\) with coefficients in \(N\), denoted by \(\HH_0^g(R;N)\), is defined as the coequalizer
\begin{equation}\label{eq:twisted-HH0-qg}
R\otimes N
\mathrel{
  \begin{matrix}
    \xrightarrow{\mathmakebox[\widthof{$\scriptstyle \rho\circ \mathfrak{s}_{R,N}$}]{\scriptstyle {}^g\lambda}} \\[-1.5ex]
    \xrightarrow[\scriptstyle \rho\circ \mathfrak{s}_{R,N}]{} 
  \end{matrix}
}
N
\xrightarrow{q^{(g)}}
\HH_0^g(R;N).
\end{equation}
In particular, for \(N=R\), we simply write \(\HH_0^g(R) \coloneqq \HH_0^g(R; R)\).
\end{definition}

\begin{example}\label{ex:twisted_HH0_concrete}
Twisted zeroth Hochschild homology admits the following concrete descriptions.

(1) Consider the bicategory of $G$-twists \( \mathbf{Bimod}/\mathcal{R}\mathrm{ing}^{G} \).
Let \( R \) be a ring equipped with a $G$-action by automorphisms (a $G$-ring), 
and let \( M \) be an \( (R,R) \)-bimodule in this bicategory. 
Then the associated \( g \)-twisted zeroth Hochschild homology is
\(
\HH_0^g(R;M)
  = M\,\big/\,
    \bigl\langle\, g(r)\cdot m - m\cdot r \;\bigm|\; r\in R,\ m\in M \,\bigr\rangle,
\)
the quotient of the underlying abelian group of 
$M$ by the subgroup generated by these differences.
Furthermore, if $G$ is abelian, Corollary~\ref{cor:abelian-G-action} implies that this quotient inherits a well-defined $G$-action induced from $M$, canonically lifting $\HH_0^g(R;M)$ to a $G$-module.

(2) Consider the bicategory with $C_n$-twists $\mathbf{Bimod}/\mathbf{Green}^{C_n}.$ Let $\underline{R}$ be a $C_n$-Green functor 
and $\underline{M}$ an $(\underline{R},\underline{R})$-bimodule Mackey functor. 
Then the $g$-twisted zeroth Hochschild homology is the $C_n$-Mackey functor $\underline{\HH}_0^g(\underline{R}; \underline{M})$, which is given by the coequalizer diagram \eqref{eq:twisted-HH0-qg} evaluated in the category of Mackey functors, replacing the tensor product $\otimes$ with the box product $\square$.
\end{example}

\section{Twisted bicategorical trace}
In the ordinary (untwisted) setting, bicategorical shadows give rise to bicategorical traces that recover classical traces. For instance, the bicategorical trace of an endomorphism of a dualizable $1$-cell in $\mathbf{Bimod}/\mathcal R\mathrm{ing}$ recovers the Hattori--Stallings trace. After applying algebraic $K$-theory, the bicategorical trace construction
recovers the degree zero Dennis trace map
\(
K_0(R) \to \HH_0(R).
\)

We begin by defining the \(g\)-twisted bicategorical trace in a shadowed bicategory of \(G\)-twists. We then introduce the \(g\)-twisted Hattori--Stallings trace in the algebraic settings of \(G\)-rings and \(C_n\)-Green functors, and prove that, in \(\mathbf{Bimod}/\mathcal R\mathrm{ing}^G\) and \(\mathbf{Bimod}/\mathbf{Green}^{C_n}\), the \(g\)-twisted bicategorical trace recovers a \(g\)-twisted Hattori--Stallings trace. Finally, we use this identification to obtain a categorical realization of the degree \(0\) twisted equivariant Dennis trace.

 \subsection{The $g$-twisted bicategorical traces and $g$-twisted Hattori--Stallings traces}

 We now define the trace associated to a \(g\)-twisted shadow on
\(\mathcal B^{G\text{-tw}}\). Given a left dualizable \(1\)-cell in
\(\mathcal B^{G\text{-tw}}\), the \(g\)-twisted trace is defined by the same trace composite as in the untwisted case, with the cyclicity
replaced by the twisted cyclic isomorphism of the \(g\)-twisted shadow.

\begin{definition}\label{def:twisted-bicat-trace}
Let $(\mathcal{B}, \sh{-}, \mathcal{T})$ be a $\mathcal{T}$-shadowed bicategory equipped with $G$-twisting data, and let $\mathcal{B}^{G\text{-tw}}$ be an associated bicategory of $G$-twists. Fix \(g\in G\). Let
\(M\in \mathcal B^{G\text{-tw}}(R,S)\) be left dualizable with left dual
\(N\in \mathcal B^{G\text{-tw}}(S,R)\).

The $g$-twisted bicategorical trace of a 2-cell 
$f : M \odot Q \Rightarrow P \odot M$ (where $Q: S \to S$ and $P: R \to R$), denoted $\operatorname{tr}^g(f)$, is the composite:
\[
\begin{aligned}
{}^g\sh{Q}_S 
&\cong {}^g\sh{U_S\odot Q}_S 
\xrightarrow{{}^g\sh{\bar{\eta}\odot \id_Q}_S} 
{}^g\sh{N\odot M\odot Q}_S 
\xrightarrow{{}^g\sh{\id_N\odot f}_S} 
{}^g\sh{N\odot P\odot M}_S \\
&\qquad \xrightarrow{{}^g\theta} 
{}^g\sh{M\odot N\odot P}_R 
\xrightarrow{{}^g\sh{\bar{\epsilon}\odot \id_P}_R} 
{}^g\sh{U_R\odot P}_R 
\cong {}^g\sh{P}_R .
\end{aligned}
\]
where $\bar{\eta}: U_S \Rightarrow N \odot M$ and $\bar{\epsilon}: M \odot N \Rightarrow U_R$ are the coevaluation and evaluation maps for $(M,N)$, and ${}^g\theta$ is the twisted cyclic isomorphism.

Similarly, for a 2-cell $h : Q \odot N \Rightarrow N \odot P$ (where $Q: S \to S$ and $P: R \to R$), the trace $\operatorname{tr}^g(h)$ is the composite:
\[
\begin{aligned}
{}^g\sh{Q}_S
&\cong {}^g\sh{Q\odot U_S}_S
\xrightarrow{{}^g\sh{\id_Q\odot \bar{\eta}}_S}
{}^g\sh{Q\odot N\odot M}_S
\xrightarrow{{}^g\sh{h\odot \id_M}_S}
{}^g\sh{N\odot P\odot M}_S \\
&\xrightarrow{{}^g\theta}
{}^g\sh{P\odot M\odot N}_R
\xrightarrow{{}^g\sh{\id_P\odot \bar{\epsilon}}_R}
{}^g\sh{P\odot U_R}_R
\cong {}^g\sh{P}_R .
\end{aligned}
\]
\end{definition}

In the untwisted setting, the bicategorical trace of the identity 
2-cell recovers the classical Hattori--Stallings trace (Example~\ref{ex:HS-as-bicat-trace}). We now define a
$g$-twisted analogue for an arbitrary
$R$-linear map from a finitely generated
projective module to its 
$g$-twist.

Let $R$ be a ring equipped with a $G$-action, and let $g \in G$. For any finitely generated projective left $R$-module $P$, the tensor--Hom adjunction together with dualizability yields a canonical isomorphism
$
\delta_{P,{}^{g}P}: {}^*P\otimes_R {}^{g}P \xrightarrow{\cong} \Hom_R(P,{}^{g}P).
$

\begin{definition}\label{def:twisted_trace}
Let \(P\) be a finitely generated projective left \(R\)-module. The \emph{\(g\)-twisted Hattori--Stallings trace} on \(P\) is the map
\(
\operatorname{tr}_{\mathrm{HS}}^g \colon \operatorname{Hom}_R(P, {}^gP) \to \mathrm{HH}_0^g(R)
\)
defined by sending any \(R\)-linear map \(\psi \colon P \to {}^gP\) to
\(
\operatorname{tr}_{\mathrm{HS}}^g(\psi) := \pi^{(g)}(\delta_{P, {}^gP}^{-1}(\psi)),
\)
where \(\pi^{(g)} \colon {}^\ast P \otimes_R {}^gP \to \mathrm{HH}_0^g(R)\) is the canonical map given by \(\pi^{(g)}(\xi \otimes p) = [\xi(p)]\).
\end{definition}

\begin{remark}
The map $\pi^{(g)}$ is well-defined: for $\xi \in {}^*P$, $p \in P$, and $r \in R$, we have the following sequence of equalities:
\(
\pi^{(g)}(\xi\cdot r\otimes p)
=
[\xi(p)r]
=
[g(r)\xi(p)]
=
[\xi(g(r)p)]
=
\pi^{(g)}(\xi\otimes r\cdot_{{}^gP}p).
\)
The first equality follows from the definition of the right $R$-action on the dual module ${}^*P$. The second equality holds in $\HH_0^g(R)$ due to the defining relation. The third equality uses the \(R\)-linearity of \(\xi\), and
the last equality uses the definition of the left \(R\)-action on
\({}^gP\). Hence \(\pi^{(g)}\) respects the balancing relation and descends
to a well-defined map on \({}^*P\otimes_R{}^gP\).
\end{remark}

 \begin{remark}
The map \(\operatorname{tr}_{\mathrm{HS}}^g \colon \operatorname{Hom}_R(P, {}^gP) \to \mathrm{HH}_0^g(R)\) constructed in Definition \ref{def:twisted_trace} is analogous to the twisted trace introduced by \cite[Section 2.4.6]{BPW}, who define a map $\operatorname{tr}^\varphi \colon \mathcal{R}\text{ep}(A) \to A/[A,A]_\varphi$ for an algebra endomorphism $\varphi$. In our setting, the twist is induced by the group action $g \in G$, and the target space \(\mathrm{HH}_0^g(R)\) serves as the corresponding space of twisted coinvariants.
\end{remark}

To establish the cyclicity property of the twisted Hattori--Stallings trace, we must frequently manipulate the canonical tensor--Hom isomorphism $\delta$ under both pre- and post-composition. We record these fundamental naturality properties in the following lemma.

\begin{lemma}\label{lem:delta-naturality}
Let $P$ and $Q$ be finitely generated projective left $R$-modules, and let $M$ and $N$ be left $R$-modules.
\begin{enumerate}
\item If $f \colon Q \to N$ and $h \colon P \to Q$ are $R$-linear maps, then
\(
\delta_{P,N}^{-1}(f \circ h) = (h^* \otimes \id_N)\delta_{Q,N}^{-1}(f),
\)
where $h^* \colon {}^*Q \to {}^*P$ is the precomposition map induced by $h$.
\item If $f \colon P \to M$ and $\alpha \colon M \to N$ are $R$-linear maps, then
\(
\delta_{P,N}^{-1}(\alpha \circ f) = (\id_{{}^*P} \otimes \alpha)\delta_{P,M}^{-1}(f).
\)
\end{enumerate}
\end{lemma}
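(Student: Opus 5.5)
The plan is to make \(\delta\) completely explicit on elementary tensors and reduce both identities to one-line computations there. For finitely generated projective left \(R\)-modules \(P\) and any left \(R\)-module \(M\), the canonical map
\[
\delta_{P,M}\colon {}^*P\otimes_R M\to \Hom_R(P,M),\qquad \delta_{P,M}(\xi\otimes m)=\bigl(p\mapsto \xi(p)\,m\bigr),
\]
is an isomorphism by dualizability of \(P\). This is the map used in Definition~\ref{def:twisted_trace}, with \(M={}^gP\). First I will check that this formula is well defined and \(R\)-linear in the appropriate sense. Then I will use that it is natural in both variables: contravariantly in \(P\) through \(h^*\), and covariantly in \(M\) through postcomposition. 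Both statements of the lemma say exactly that the inverse of a natural isomorphism is natural. So it suffices to prove the corresponding naturality squares for \(\delta\) itself and then conjugate by the isomorphisms.

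For (1), I will verify that
\[
\delta_{P,N}\circ(h^*\otimes\id_N)=(-\circ h)\circ\delta_{Q,N}
\]
as maps \({}^*Q\otimes_R N\to\Hom_R(P,N)\). On an elementary tensor \(\xi\otimes n\), both sides send \(p\) to \(\xi(h(p))\,n\). Elementary tensors generate, so the two maps agree. Next I apply both sides to \(\delta_{Q,N}^{-1}(f)\) and apply \(\delta_{P,N}^{-1}\), which gives
\[
\delta_{P,N}^{-1}(f\circ h)=(h^*\otimes\id_N)\,\delta_{Q,N}^{-1}(f).
\]
Here \(h^*\otimes\id_N\) is well defined on \(\otimes_R\) because \(h^*\) is right \(R\)-linear: \((\xi r)\circ h=(\xi\circ h)r\).

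For (2), the argument is the same with
\[
\delta_{P,N}\circ(\id\otimes\alpha)=(\alpha\circ-)\circ\delta_{P,M}.
\]
On \(\xi\otimes m\), both sides send \(p\) to \(\xi(p)\,\alpha(m)=\alpha(\xi(p)\,m)\), where the equality uses \(R\)-linearity of \(\alpha\). Inverting \(\delta\) then gives the claim.

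The only point needing care is that in the intended application \(N\) or \(M\) is a twist \({}^gP\), so the relevant left action is the twisted one. The formula \(\xi(p)\,m\) must then be read using the action of \(N\) itself. No property of \(N\) is used beyond its being a left \(R\)-module and \(\alpha\) being linear for the given actions. I therefore expect no real obstacle: the statement is formal naturality, and the one thing to confirm is that the definition of \(\delta\) in Definition~\ref{def:twisted_trace} is the standard one above, which holds for arbitrary target modules.
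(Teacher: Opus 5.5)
Your proof is correct and takes the same approach as the paper. The paper states this lemma as the naturality of the tensor--Hom isomorphism \(\delta\) in both variables and gives no further argument. Checking the two naturality squares on elementary tensors and then inverting \(\delta\), as you do, is exactly the intended verification.
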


Now we show that twisted Hattori--Stallings trace satisfies analogues of the usual additivity and cyclicity properties.

\begin{prop}[Twisted additivity]\label{prop:twisted-additivity}
Let $P=P_1\oplus P_2$ be a direct sum of finitely generated projective left $R$-modules. If $\psi_i: P_i \to {}^gP_i$ are $R$-linear maps for $i=1,2$, then their direct sum $\psi_1 \oplus \psi_2: P \to {}^gP$ satisfies:
\(
\operatorname{tr}_{\mathrm{HS}}^{g}(\psi_1\oplus \psi_2) = \operatorname{tr}_{\mathrm{HS}}^{g}(\psi_1) + \operatorname{tr}_{\mathrm{HS}}^{g}(\psi_2).
\)
\end{prop}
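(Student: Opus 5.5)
The plan is to compute $\delta_{P,{}^gP}^{-1}(\psi_1\oplus\psi_2)$ explicitly via the inclusions $i_k\colon P_k\to P$ and projections $p_k\colon P\to P_k$. These satisfy $p_ki_k=\id_{P_k}$ and $i_1p_1+i_2p_2=\id_P$. Since twisting only changes the left action by precomposition with $g$, the underlying maps ${}^gi_k\colon {}^gP_k\to{}^gP$ and ${}^gp_k$ are again $R$-linear. They exhibit ${}^gP={}^gP_1\oplus{}^gP_2$, and
\[
\psi_1\oplus\psi_2 \;=\; {}^gi_1\circ\psi_1\circ p_1 \;+\; {}^gi_2\circ\psi_2\circ p_2 .
\]
The map $\delta_{P,{}^gP}$ is additive, being an isomorphism of abelian groups, so its inverse is additive as well. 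Together with additivity of $\pi^{(g)}$, this reduces the claim to showing
\[
\operatorname{tr}_{\mathrm{HS}}^g({}^gi_k\circ\psi_k\circ p_k)=\operatorname{tr}_{\mathrm{HS}}^g(\psi_k)\qquad(k=1,2),
\]
where the left-hand trace is taken on $P$ and the right-hand trace on $P_k$.

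To prove this, I apply Lemma~\ref{lem:delta-naturality} twice. Part (1), with $h=p_k$, gives
\[
\delta_{P,{}^gP}^{-1}({}^gi_k\psi_kp_k)=(p_k^*\otimes\id)\,\delta_{P_k,{}^gP}^{-1}({}^gi_k\psi_k).
\]
Part (2), with $\alpha={}^gi_k$, then gives
\[
\delta_{P_k,{}^gP}^{-1}({}^gi_k\psi_k)=(\id\otimes {}^gi_k)\,\delta_{P_k,{}^gP_k}^{-1}(\psi_k).
\]
Writing $\delta_{P_k,{}^gP_k}^{-1}(\psi_k)=\sum_j\xi_j\otimes x_j$ with $\xi_j\in{}^*P_k$ and $x_j\in P_k$, we get
\[
\delta_{P,{}^gP}^{-1}({}^gi_k\psi_kp_k)=\sum_j (\xi_j\circ p_k)\otimes i_k(x_j).
\]
Applying $\pi^{(g)}$ yields $\sum_j[\xi_j(p_ki_k(x_j))]=\sum_j[\xi_j(x_j)]$, which is exactly $\operatorname{tr}_{\mathrm{HS}}^g(\psi_k)$.

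The only point needing care is that Lemma~\ref{lem:delta-naturality} is applied with target modules ${}^gP_k$ and ${}^gP$. This is legitimate because the lemma allows arbitrary left $R$-modules $M,N$ in the target slot, and ${}^gi_k$ is $R$-linear between the twisted modules. I expect this bookkeeping to be the main, though mild, obstacle: one must confirm that the identification $({}^gP)_k$ versus ${}^gP_k$ is compatible with $\delta$ and with $\pi^{(g)}$. Here $\pi^{(g)}$ depends only on evaluating $\xi$ at the underlying element, so it is compatible. Once that is checked, summing over $k=1,2$ completes the argument.
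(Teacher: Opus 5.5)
Your proof is correct and takes the same approach as the paper, whose entire proof is the one-line remark that the result follows from the compatibility of $\delta$ and $\pi^{(g)}$ with direct sum decompositions. Your splitting $\psi_1\oplus\psi_2={}^gi_1\psi_1p_1+{}^gi_2\psi_2p_2$, combined with the two applications of Lemma~\ref{lem:delta-naturality}, spells out that compatibility explicitly; note that it is also the special case $h=p_k$ of the twisted cyclicity in Proposition~\ref{prop:algebraic-sliding}.
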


\begin{proof}
This follows directly from the compatibility of the tensor--Hom isomorphism $\delta$ and the canonical map $\pi^{(g)}$ with direct sum decompositions.
\end{proof}

\begin{prop}[Twisted cyclicity]\label{prop:algebraic-sliding}
Let $P$ and $Q$ be finitely generated projective left $R$-modules. For any $R$-linear maps $\psi: Q \to {}^gP$ and $h: P \to Q$, we have:
\(
\operatorname{tr}_{\mathrm{HS}}^g(\psi \circ h) = \operatorname{tr}_{\mathrm{HS}}^g({}^g h \circ \psi).
\)
\end{prop}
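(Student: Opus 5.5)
The plan is to reduce both sides to explicit elements of \({}^*P\otimes_R{}^gP\) and \({}^*Q\otimes_R{}^gQ\) via Lemma~\ref{lem:delta-naturality}, and then compare their images in \(\HH_0^g(R)\).

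Note first that \({}^gh\colon{}^gP\to{}^gQ\) has the same underlying map as \(h\). It is \(R\)-linear for the twisted actions, because \(h(g(r)p)=g(r)h(p)\). Also, \(\psi\circ h\colon P\to{}^gP\) and \({}^gh\circ\psi\colon Q\to{}^gQ\) are \(R\)-linear, so both traces are defined.

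First compute the left side. Since \(Q\) is finitely generated projective, write \(\delta_{Q,{}^gP}^{-1}(\psi)=\sum_i\xi_i\otimes p_i\) with \(\xi_i\in{}^*Q\) and \(p_i\in{}^gP\); that is, \(\psi(q)=\sum_i\xi_i(q)\cdot p_i\), where the dot is the module action on \({}^gP\). By Lemma~\ref{lem:delta-naturality}(1),
\[
\delta_{P,{}^gP}^{-1}(\psi\circ h)=\sum_i(\xi_i\circ h)\otimes p_i .
\]
Applying \(\pi^{(g)}\) gives
\[
\operatorname{tr}^g_{\mathrm{HS}}(\psi\circ h)=\sum_i[\xi_i(h(p_i))].
\]
Next compute the right side. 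By Lemma~\ref{lem:delta-naturality}(2) with \(\alpha={}^gh\),
\[
\delta_{Q,{}^gQ}^{-1}({}^gh\circ\psi)=\sum_i\xi_i\otimes h(p_i),
\]
so
\[
\operatorname{tr}^g_{\mathrm{HS}}({}^gh\circ\psi)=\sum_i[\xi_i(h(p_i))].
\]
The two expressions agree.

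The main point requiring care is the form of \(\delta\) and \(\pi^{(g)}\). Definition~\ref{def:twisted_trace} writes \(\pi^{(g)}(\xi\otimes p)=[\xi(p)]\), while the domain is \({}^*P\otimes_R{}^gP\). I will fix the convention that \(\delta(\xi\otimes p)(x)=\xi(x)\cdot_{{}^gP}p=g(\xi(x))p\), which is the \(R\)-linear map \(P\to{}^gP\). Under this convention \(\pi^{(g)}\) is well defined, as in the preceding remark.

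With this convention, Lemma~\ref{lem:delta-naturality}(2) is valid for the \(R\)-linear map \({}^gh\colon{}^gP\to{}^gQ\). This gives \(\delta(\xi\otimes h(p))={}^gh\circ\delta(\xi\otimes p)\), and it uses only \(h(g(r)p)=g(r)h(p)\).

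The remaining check is that the same \(\xi_i\) and \(p_i\) appear in both computations. This holds because \(\delta_{Q,{}^gP}\) and \(\delta_{Q,{}^gQ}\) are natural in the second variable. No \(\HH_0^g\) relation is needed at all: the cyclicity comes entirely from naturality in both variables, exactly as in the untwisted case.
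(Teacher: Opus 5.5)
Your proposal is correct and follows essentially the same route as the paper. Both arguments use the two parts of Lemma~\ref{lem:delta-naturality} to express $\delta^{-1}_{P,{}^gP}(\psi\circ h)$ and $\delta^{-1}_{Q,{}^gQ}({}^gh\circ\psi)$ in terms of $\delta^{-1}_{Q,{}^gP}(\psi)$, and then observe that $\pi^{(g)}_P\circ(h^*\otimes\id)$ and $\pi^{(g)}_Q\circ(\id\otimes{}^gh)$ both send $\xi\otimes p$ to $[\xi(h(p))]$. The only difference is that the paper checks this as an equality of maps on pure tensors of ${}^*Q\otimes_R{}^gP$, whereas you evaluate on a chosen representative $\sum_i\xi_i\otimes p_i$, which amounts to the same thing.
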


\begin{proof}
By Lemma \ref{lem:delta-naturality}, the pre- and post-composition compatibilities of the tensor--Hom isomorphism yield
\[
\delta_{P,{}^gP}^{-1}(\psi\circ h)
=
(h^*\otimes \id_{{}^gP})\delta_{Q,{}^gP}^{-1}(\psi), \quad
\text{and} \quad 
\delta_{Q,{}^gQ}^{-1}({}^gh\circ\psi)
=
(\id_{{}^*Q}\otimes{}^gh)\delta_{Q,{}^gP}^{-1}(\psi),
\]
where \(h^*:{}^*Q\to{}^*P\) is precomposition by \(h\).

It therefore suffices to show that the two maps
\(
\pi_P^{(g)}\circ(h^*\otimes \id_{{}^gP})\) and 
\(\pi_Q^{(g)}\circ(\id_{{}^*Q}\otimes{}^gh)
\)
from \({}^*Q\otimes_R{}^gP\) to \(\HH_0^g(R)\) agree. Since both maps are
additive, it is enough to check this on pure tensors. Let
\(\xi\otimes p\in{}^*Q\otimes_R{}^gP\). Viewing \(p\) as an element of the
underlying abelian group of \(P\), we have
\[
\pi_P^{(g)}\bigl((h^*\otimes\id_{{}^gP})(\xi\otimes p)\bigr)
=
\pi_P^{(g)}\bigl((\xi\circ h)\otimes p\bigr)
=
[(\xi\circ h)(p)]
=
[\xi(h(p))].
\]
On the other hand,
\(
\pi_Q^{(g)}\bigl((\id_{{}^*Q}\otimes{}^gh)(\xi\otimes p)\bigr)
=
\pi_Q^{(g)}\bigl(\xi\otimes{}^gh(p)\bigr)
=
[\xi({}^gh(p))].
\)
The map \({}^gh:{}^gP\to{}^gQ\) has the same underlying
abelian-group map as \(h:P\to Q\). Hence
\({}^gh(p)=h(p)\), and the two expressions agree in
\(\HH_0^g(R)\).
\end{proof}

\begin{remark}
In either the ring or Green-functor setting, whenever
\((P,\gamma)\colon R\to S\) is a left dualizable \(1\)-cell, its
twisting structure gives a bi-twist isomorphism
\(\gamma_g\colon P\xrightarrow{\cong}{}^gP^g\).
After forgetting the right \(S\)-module structure, we use the same
notation for the induced isomorphism of left \(R\)-modules
\(\gamma_g\colon P\xrightarrow{\cong}{}^gP\).
Accordingly, for a left \(R\)-linear endomorphism \(f\colon P\to P\),
we write
\(\operatorname{tr}_{\mathrm{HS},\gamma}^{\,g}(f)
:=
\operatorname{tr}_{\mathrm{HS}}^{\,g}(\gamma_g\circ f)\).
\end{remark}

Let \((P,\gamma)\colon R\to S\) be a left dualizable \(1\)-cell,
let \(f\colon P\to P\) be left \(R\)-linear, and let
\(h\colon(P,\gamma)\Rightarrow(P,\gamma)\) be an endomorphism
\(2\)-cell. The equivariance condition gives
\(\gamma_g\circ h={}^gh\circ\gamma_g\).
Applying Proposition~\ref{prop:algebraic-sliding} to
\(\gamma_g\circ f\colon P\to{}^gP\) and \(h\colon P\to P\) yields
\(
\operatorname{tr}_{\mathrm{HS},\gamma}^{\,g}(f\circ h)
=
\operatorname{tr}_{\mathrm{HS},\gamma}^{\,g}(h\circ f).
\)

As in the untwisted case
(Example~\ref{ex:HS-as-bicat-trace}), the following theorem identifies
the \(g\)-twisted bicategorical trace with the
\(g\)-twisted Hattori--Stallings trace.

\begin{thm}\label{thm:twHS-recovery-general}
In the bicategory of $G$-twists $\mathbf{Bimod}/\mathcal{R}\mathrm{ing}^{G\text{-tw}}$,
let $R$ and $S$ be $G$-rings, let ${}_R P_S$ be a left dualizable $1$-cell,
and let $f\colon P \Rightarrow P$ be a $2$-cell. For each $s\in S$, let
\(
\rho_s\colon P\to P
\)
denote right multiplication by $s$. Then
\(\operatorname{tr}^g(f)([s])
=
\operatorname{tr}_{\mathrm{HS},\gamma}^{\,g}(f\circ\rho_s)
=
\operatorname{tr}_{\mathrm{HS},\gamma}^{\,g}(\rho_s\circ f).\)
\end{thm}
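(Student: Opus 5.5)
The plan is to compute $\operatorname{tr}^g(f)$ by a direct element chase in $\mathbf{Bimod}/\mathcal R\mathrm{ing}^{G\text{-tw}}$, following the untwisted computation of Example~\ref{ex:HS-as-bicat-trace}. Here $Q=U_S=S$ and $P=U_R=R$ in Definition~\ref{def:twisted-bicat-trace}, so the source is ${}^g\sh{U_S}=\HH_0^g(S)$ and the target is $\HH_0^g(R)$ (Example~\ref{ex:twisted_HH0_concrete}). The second equality in the statement needs no work. Since $f$ is a $2$-cell, it is in particular right $S$-linear, so $f\circ\rho_s=\rho_s\circ f$ as maps of left $R$-modules. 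Thus only the first equality needs proof.

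\emph{Duality data.} Since $P$ is left dualizable, it is finitely generated projective as a left $R$-module. I will choose a dual basis $p_1,\dots,p_k\in P$ and $\xi_1,\dots,\xi_k\in{}^*P=\Hom_R(P,R)$ with $\sum_i\xi_i(p)p_i=p$ for all $p\in P$. With the left dual $N={}^*P$ carrying the conjugate action of Example~\ref{ex:dualizable_equivariant_module}, the structure maps are
\[
\bar\eta\colon S\to N\otimes_R P,\quad s\mapsto \textstyle\sum_i \xi_i\otimes p_i s,
\qquad
\bar\epsilon\colon P\otimes_S N\to R,\quad p\otimes\xi\mapsto\xi(p).
\]
Proposition~\ref{prop:Gtw-dualizable-recognition} guarantees that these are $2$-cells in the bicategory of $G$-twists. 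I will also record the dual-basis formula $\delta_{P,{}^gP}^{-1}(\psi)=\sum_i\xi_i\otimes\psi(p_i)$, so that $\operatorname{tr}^g_{\mathrm{HS}}(\psi)=\sum_i[\xi_i(\psi(p_i))]$.

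\emph{Chasing the composite.} I will use Lemma~\ref{lem:base-change-twists} to identify every left twist with the action-twisted bimodule on the same underlying group. Then I follow $[s]$ through the maps of Definition~\ref{def:twisted-bicat-trace}, using the twisted cyclicity \eqref{def:twisted-cyclicity}.
\begin{enumerate}
\item Applying ${}^g\sh{\bar\eta}$ gives $[\sum_i\xi_i\otimes p_is]$ in $\sh{{}^gN\otimes_R P}$.
\item Applying ${}^g\sh{\id_N\odot f}$ gives $[\sum_i\xi_i\otimes f(p_is)]$.
\item The ordinary cyclicity $\theta$ swaps the factors to $\sum_i f(p_is)\otimes\xi_i$.
\item The map $\iota$ moves the twist across.
\item The map $g_P\odot\id$ applies the transport $g_P$. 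Under Proposition~\ref{prop:gM-vs-gammaM} and Lemma~\ref{lem:gamma-vs-semilinear}, this is the semilinear automorphism, i.e.\ the underlying map of $\gamma_g\colon P\to{}^gP$.
\item Finally $\bar\epsilon$ produces $\sum_i[\xi_i(\gamma_g f(p_is))]\in\HH_0^g(R)$.
\end{enumerate}
By the dual-basis formula, this sum is exactly $\operatorname{tr}^g_{\mathrm{HS}}(\gamma_g\circ f\circ\rho_s)=\operatorname{tr}^g_{\mathrm{HS},\gamma}(f\circ\rho_s)$. Well-definedness on classes is automatic, because every map in the chase is a map of shadows.

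\emph{Expected obstacle: bookkeeping of twist conventions.} The hard part is tracking the twist conventions through the chase, not the algebra itself. Specifically, I must check that:
\begin{itemize}
\item the element $s$ of ${}^gS=S^{g^{-1}}\odot_S S$ is carried by the identification of Lemma~\ref{lem:base-change-twists} to $s$ itself, and not to $g^{\pm1}(s)$;
\item the composite $\iota$ followed by $g_P\odot\id$ really acts by $\gamma_g$ (built from $\tau_g$), rather than by $\gamma_{g^{-1}}$ or its inverse.
\end{itemize}
I would settle both points by writing the isomorphisms $A^{g^{-1}}\odot_A M\cong{}^gM$ explicitly via $\alpha_g$, as in Lemma~\ref{lem:base-change-twists}. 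I would then use the explicit formula $1\otimes m\mapsto\phi_g(m)\otimes1$ for $\tau_g$ from Example~\ref{ex:semilinear-G-twists}(\ref{case:ring-bimod}), together with $g_M=(\tau_{g^{-1}})^{-1}$. If a stray $g^{\pm1}$ appears on the scalar, I will absorb it using the defining relation $[g(r)x]=[xr]$ in $\HH_0^g$, which is the same relation that made $\pi^{(g)}$ well defined.
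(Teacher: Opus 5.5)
Your proposal is correct and follows essentially the paper's route. The paper runs the same computation as a commutative diagram: its left region is your dual-basis identity $\delta_{P,P}^{-1}(f\circ\rho_s)=(\id_{{}^*P}\otimes f)\bar\eta(s)$, its middle region is naturality of $\delta$ together with the definition of ${}^g\theta$, its right triangle is the universal property of $\pi^{(g)}$, and the second equality comes from right $S$-linearity of $f$.

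The bookkeeping point you flagged resolves cleanly. Both the transport $g_P=(\tau_{g^{-1}})^{-1}$ and $\gamma_g$ have underlying map $\phi_g$. The map $\iota$ and the identification $S^{g^{-1}}\otimes_S S\cong{}^gS$ are the identity on underlying elements. So no stray $g^{\pm1}$ appears.
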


\begin{proof}
Following the fixed notation, we regard the bi-twist isomorphism as a left $R$-linear map $\gamma_g\colon P \to {}^gP$. Let $q$ denote the ordinary Hochschild quotient map for the $S$-bimodule ${}^*P \otimes_R {}^gP$. We evaluate $\operatorname{tr}^g(f)$ via the following commutative diagram in $\mathbf{Ab}$:

\begin{equation}\label{eq:trace-diagram}
\begin{tikzcd}[column sep=1.7em, row sep=2em, every label/.append style={font=\scriptsize}]
\HH_0^g(S)
  \arrow[r, "\HH_0^g(\bar{\eta})"]
& \HH_0^g(S;{}^*P\otimes_R P)
  \arrow[r, "\HH_0^g(\id\otimes{f})"]
& \HH_0^g(S;{}^*P\otimes_R P)
  \arrow[r, "{}^g\theta"]
& \HH_0^g(R;P\otimes_S {}^*P)
  \arrow[r, "\HH_0^g(\bar{\epsilon})"]
& \HH_0^g(R)
\\
S\arrow[u, "q^{(g)}"]
& {}^*P\otimes_R P
  \arrow[ur, "q^{(g)}"']
& {} 
& \HH_0(S;{}^*P \otimes_R {}^gP)\arrow[u,"\theta","\cong"']
& {}
\\
S
  \arrow[u, equal]
  \arrow[r, "f\circ \rho_{(\cdot)}"]
& \End_R(P)
  \arrow[u, "\delta^{-1}_{P,P}","\cong"']
  \arrow[r, "(\gamma_g)_*"]
& \Hom_R(P,{}^gP)
  \arrow[r, "\delta^{-1}_{P,{}^gP}", "\cong"']
& {}^*P\otimes_R {}^gP
  \arrow[u, " q"]
  \arrow[uur, "\pi^{(g)}"']
& {}
\end{tikzcd}
\end{equation}
The composition along the top row represents the bicategorical trace $\operatorname{tr}^g(f)$, while evaluating $s \in S$ along the bottom row and up the right side computes the Hattori--Stallings trace $\operatorname{tr}_{\mathrm{HS},\gamma}^{\,g}(f \circ \rho_s)$. The desired equality follows from the commutativity of this diagram.

To verify the commutativity of the leftmost region, note that since the quotient map $S \to \HH_0^g(S)$ is surjective, it suffices to check the equality on elements $s \in S$. Following the top path, $s$ maps to $[\bar{\eta}(s)] \in \HH_0^g(S; {}^*P \otimes_R P)$ and then to $[(\id_{{}^*P}\otimes f)\bar{\eta}(s)]$. Following the bottom path, 
$s$ maps to $f \circ \rho_s$ as a left
\(R\)-linear endomorphism of \(P\), which lifts to $\delta^{-1}_{P,P}(f\circ \rho_s) \in {}^*P \otimes_R P$. By explicitly evaluating the coevaluation map on the dual basis of the finitely generated projective left $R$-module $P$, and utilizing the right $S$-linearity of $\bar{\eta}$, we obtain the identity in ${}^*P \otimes_R P$:
$\delta_{P,P}^{-1}(f\circ\rho_s) = (\id_{{}^*P}\otimes f)\bar{\eta}(s)$.
Applying the canonical map to twisted Hochschild homology yields the equality of the classes.

 To verify that the middle region commutes, we trace an element $h \in \End_R(P)$ along both paths. By Lemma~\ref{lem:delta-naturality}, the bottom path yields the identity  in \({}^*P \otimes_R {}^gP:\)
 \(\delta^{-1}_{P,{}^gP}(\gamma_g \circ h) = (\id_{{}^*P} \otimes \gamma_g)\bigl(\delta^{-1}_{P,P}(h)\bigr). 
\)
Projecting this element to homology and applying the untwisted cyclic isomorphism $\theta$ (the right-hand upward path) coincides exactly with applying the twisted cyclic isomorphism ${}^g\theta$ to the homology class of $\delta^{-1}_{P,P}(h)$ (the top path), by the  definition of ${}^g\theta$.

Note that \({}^*P\otimes_R{}^gP\) is an \(S\)-bimodule, and \(\pi^{(g)}((s\cdot\xi)\otimes p) = [\xi(p\cdot s)] = \pi^{(g)}(\xi\otimes(p\cdot s))\). Hence \(\pi^{(g)}\) coequalizes the two \(S\)-actions and induces a unique map \(\overline{\pi}^{(g)}\colon \HH_0(S;{}^*P\otimes_R{}^gP)\to\HH_0^g(R)\) such that \(\overline{\pi}^{(g)}\circ q=\pi^{(g)}\). To verify that the rightmost triangle commutes, it remains to show that \(\HH_0^g(\bar\epsilon)\circ\theta=\overline{\pi}^{(g)}\). After precomposition with \(q\), we have \((\HH_0^g(\bar\epsilon)\circ\theta\circ q)(\xi\otimes p) = \HH_0^g(\bar\epsilon)([p\otimes\xi]) = [\xi(p)] = \pi^{(g)}(\xi\otimes p)\). Since \(q\) is an epimorphism, the desired equality follows.

Tracing an element $s \in S$ along the bottom path: $s$ maps to $f\circ \rho_s$, then to $(\gamma_g)_*(f\circ \rho_s) = \gamma_g \circ f\circ \rho_s$, which lifts to $\delta^{-1}_{P,{}^gP}(\gamma_g \circ f\circ \rho_s)$ and projects via $\pi^{(g)}$ to $\operatorname{tr}_{\mathrm{HS},\gamma}^{g}(f\circ \rho_s)$. By the commutativity of the diagram, this equals the image along the top row, establishing:
\(
\operatorname{tr}^g(f)([s]) = \operatorname{tr}_{\mathrm{HS},\gamma}^{g}(f\circ \rho_s).
\)

Since $f$ is right $S$-linear, we have $f\circ \rho_s=\rho_s\circ f$, and the
second equality follows.
\end{proof}

Since \(\mathbb Z\) is regarded as a \(G\)-ring with trivial \(G\)-action, its
\(g\)-twisted Hochschild quotient is canonically identified with \(\mathbb Z\):
\(
\HH_0^g(\mathbb Z)\cong \mathbb Z.
\)
Under this identification, \(q^{(g)}(1)=[1]\).

\begin{cor}\label{cor:twHS-recovery-Z}
In the setting of Theorem~\ref{thm:twHS-recovery-general}, suppose
$P\colon R\to \mathbb Z$. Then
\(
\operatorname{tr}^g(\id_P)([1])
=
\operatorname{tr}_{\mathrm{HS},\gamma}^{\,g}(\id_P).
\)
\end{cor}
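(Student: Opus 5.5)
This is a direct specialization of Theorem~\ref{thm:twHS-recovery-general}. I would take $S=\mathbb Z$ with the trivial $G$-action, $f=\id_P$, and $s=1$. The plan has three steps.

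First, I check that the hypotheses hold. The $1$-cell $P\colon R\to\mathbb Z$ is a left dualizable $1$-cell of $\mathbf{Bimod}/\mathcal R\mathrm{ing}^{G\text{-tw}}$. The identity $\id_P$ is a $2$-cell, since it trivially satisfies the compatibility square \eqref{eq:Gtw-2cell-compat}. Theorem~\ref{thm:twHS-recovery-general} then gives
\[
\operatorname{tr}^g(\id_P)([s])=\operatorname{tr}^{\,g}_{\mathrm{HS},\gamma}(\id_P\circ\rho_s)
\]
for every $s\in\mathbb Z$.

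Second, I evaluate at $s=1$. Right multiplication by $1\in\mathbb Z$ on the $(R,\mathbb Z)$-bimodule $P$ is the identity, so $\rho_1=\id_P$ and the right-hand side becomes $\operatorname{tr}^{\,g}_{\mathrm{HS},\gamma}(\id_P)$.

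Third, I match the left-hand side with the notation of the corollary. The source of $\operatorname{tr}^g(\id_P)$ is ${}^g\sh{U_{\mathbb Z}}=\HH_0^g(\mathbb Z)$. Because the action on $\mathbb Z$ is trivial, the defining relations $g(r)m-mr$ all vanish, so $\HH_0^g(\mathbb Z)\cong\mathbb Z$. Under this identification $q^{(g)}(1)=[1]$, as recorded just before the corollary. Hence the class $[s]$ with $s=1$ is exactly the element $[1]$ appearing in the statement.

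I expect no real obstacle here. The only point to watch is that the identification $\HH_0^g(\mathbb Z)\cong\mathbb Z$ used in the statement agrees with the quotient map $q^{(g)}$ appearing in diagram \eqref{eq:trace-diagram}, and that agreement holds by construction.
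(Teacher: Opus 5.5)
Your proposal is correct and is exactly the intended argument. The paper gives no separate proof and treats the corollary as the specialization $S=\mathbb Z$ (trivial action), $f=\id_P$, $s=1$ of Theorem~\ref{thm:twHS-recovery-general}, using $\rho_1=\id_P$ and the identification $\HH_0^g(\mathbb Z)\cong\mathbb Z$ with $q^{(g)}(1)=[1]$ recorded just before the statement; the vanishing of the relations $g(r)m-mr$ uses commutativity of $\mathbb Z$ as well as triviality of the action, which is immediate.
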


We next construct the $C_n$--Mackey functor analogue of the twisted Hattori--Stallings trace.

Recall from Example~\ref{ex:twisted_HH0_concrete} that for a $C_n$-Green functor $\underline R$ and an $\underline R$-bimodule $\underline M$, the Mackey functor $\underline{\HH}_0^g(\underline R;\underline M)$ is the coequalizer of the twisted left action and the standard right action. Specializing the monoidal product in \eqref{eq:twisted-HH0-qg} to the box product $\square$, let
\begin{equation}\label{eq:qM-g}
q^{(g)} \colon \underline{M} \longrightarrow \underline{\HH}_0^g(\underline{R};\underline{M})
\end{equation}
be the canonical quotient map. Under the identification $\underline{\HH}_0^g(\underline R;\underline M)\cong \underline{\HH}_0(\underline R,{}^{g}\underline M)$, we  also write
\begin{equation}\label{eq:qgM}
q_{{}^{g}\underline M}\colon {}^{g}\underline M \longrightarrow \underline{\HH}_0^g(\underline R;\underline M)
\end{equation}
for the corresponding quotient map. In particular, for any finite $C_n$--set $X$, there is an identification of underlying abelian groups $\underline M(X) = {}^{g}\underline M(X)$, and for $m\in \underline M(X)$ we write
$$[m]\coloneqq q^{(g)}_X(m)=(q_{{}^{g}\underline M})_X(m) \in \underline{\HH}_0^{\,g}(\underline R;\underline M)(X).$$

Let $\underline P$ be a finitely generated projective left $\underline R$-module. As established in Example~\ref{ex:dualizable_green_functor}, the finite generation and projectivity of $\underline P$ ensure that it is left dualizable, with its canonical left dual ${}^*\underline{P} := \underline{\Hom}_{\underline{R}}(\underline{P}, \underline{R})$. Consequently, we obtain the canonical tensor--Hom isomorphism:
\(
\delta_{\underline{P},{}^g\underline{P}} \colon {}^*\underline{P} \square_{\underline{R}} {}^g\underline{P} \xrightarrow{\cong} \underline{\Hom}_{\underline{R}}(\underline{P}, {}^g\underline{P}).\)

Recall from Definition~\ref{def:relative-box-equalizer} that ${}^{*}\underline{P}\square_{\underline R}{}^{g}\underline{P}$ is the coequalizer of the right $\underline R$-action on ${}^{*}\underline{P}$ and the (twisted) left $\underline R$-action on ${}^{g}\underline{P}$.
Let $\bar{\epsilon}\colon \underline{P}\square {}^{*}\underline{P}$
$\to \underline{R}$ be the  evaluation. Define the $g$-twisted evaluation $\bar{\epsilon}^{(g)}\colon {}^{*}\underline{P}\square {}^{g}\underline{P}\to {}^{g}\underline{R}$ to be the composite
\begin{equation}\label{eq:PgP-to-gR}{}^{*}\underline{P}\square {}^{g}\underline{P} \xrightarrow{\;\mathfrak{s}\;} {}^{g}\underline{P}\square {}^{*}\underline{P} \xrightarrow{\;\cong\;} {}^{g}\underline{R}\square_{\underline R}(\underline{P}\square {}^{*}\underline{P}) \xrightarrow{\;\mathrm{id}\square \bar{\epsilon}\;} {}^{g}\underline{R}\square_{\underline R}\underline{R} \xrightarrow{\;\cong\;} {}^{g}\underline{R}.\end{equation}

The map \(\bar{\epsilon}^{(g)}\) sends the two  parallel maps  ${}^{*}\underline{P}\square \underline{R}\square {}^{g}\underline{P}\rightrightarrows{}^{*}\underline{P}\square {}^{g}\underline{P}$ to the
twisted left and ordinary right \(\underline R\)-actions on
\({}^g\underline R\). 
Since
\(q_{{}^g\underline R}:{}^g\underline R\to
\underline{\HH}_0^g(\underline R)\)
coequalizes the twisted left and ordinary right
\(\underline R\)-actions, the composite
\(q_{{}^g\underline R}\circ\bar\epsilon^{(g)}\) coequalizes the two
parallel arrows defining
\({}^*\underline P\square_{\underline R}{}^g\underline P\).
Hence it induces a unique morphism
\(\pi^{(g)}\) making the following diagram commute:
\begin{equation}\label{diag:twisted-pi-definition}
\begin{tikzcd}[column sep=3em,row sep=1.5em]
{}^{*}\underline{P}\square \underline R \square {}^g\underline{P}
  \ar[r,shift left=.55ex,"\mathrm{id}\square \lambda_{{}^{g}\underline{P}}"]
  \ar[r,shift right=.55ex,swap,"\rho_{{}^{*}\underline{P}}\square\mathrm{id}"]
&
{}^{*}\underline{P}\square {}^g\underline{P}
\ar[r,"b"]
  \ar[d,"\bar{\epsilon}^{(g)}"']
&
{}^{*}\underline{P}\square_{\underline R}{}^g\underline{P}
  \ar[d,dashed,"\pi^{(g)}"]
\\
{} &
{}^{g}\underline R
  \ar[r,"q"]
&
\underline{\HH}_0^{\,g}(\underline{R})
\end{tikzcd}
\end{equation}

\begin{definition}\label{def:twisted-trace-hom-morphism}
Let $\underline{P}$ be a finitely generated projective left $\underline{R}$--module. The \emph{$g$--twisted Hattori--Stallings trace} of $\underline P$ is the morphism
$\operatorname{tr}_{\mathrm{HS}}^{g} \;\coloneqq\; \pi^{(g)} \circ \delta_{\underline{P},\,{}^{g}\underline{P}}^{-1} \;:\; \underline{\Hom}_{\underline{R}}(\underline{P}, {}^{g}\underline{P}) \to \underline{\HH}_0^{g}(\underline{R}).$
Evaluated at a finite $C_n$--set $X$ for an element $\psi\in \underline{\Hom}_{\underline{R}}(\underline{P}, {}^{g}\underline{P})(X)$, this gives:
$
\bigl(\operatorname{tr}_{\mathrm{HS}}^g\bigr)_X(\psi) = \pi_X^{(g)}\!\bigl( (\delta_{\underline{P},\,{}^{g}\underline{P}}^{-1})_X(\psi) \bigr) \in \underline{\HH}_0^{g}(\underline{R})(X).
$
\end{definition}

We now establish the Mackey-functor-valued analogue of
Proposition~\ref{prop:algebraic-sliding}. Let \(\underline P\) and
\(\underline Q\) be finitely generated projective left
\(\underline R\)-modules. Bouc's associative pairing
(Example~\ref{ex:endomorphism-green}) gives a morphism of Mackey
functors
\(
\hat\circ\colon
\underline{\Hom}_{\underline R}(\underline Q,{}^g\underline P)
\square
\underline{\Hom}_{\underline R}(\underline P,\underline Q)
\to
\underline{\Hom}_{\underline R}(\underline P,{}^g\underline P).
\)

To formulate twisted cyclicity for this pairing, we first construct
the morphism on internal Homs induced by twisting. The construction
rests on the following compatibility between twisting and
tensor--Hom duality.

\begin{lemma}\label{lem:twisted-dual-box}
Let $\underline{P}$ and $\underline{Q}$ be finitely generated projective left $\underline{R}$-modules. For any $g \in C_n$, there is a natural isomorphism $\kappa_{g}\colon {}^*({}^g\underline{P}) \square_{\underline{R}} {}^g\underline{Q} \xrightarrow{\cong} {}^*\underline{P} \square_{\underline{R}} \underline{Q}$, given as the composite \({}^*({}^g\underline{P}) \square_{\underline{R}} {}^g\underline{Q} \xrightarrow{\cong} ({}^*\underline{P})^g \square_{\underline{R}} {}^g\underline{Q} \xrightarrow{\mathfrak{m}} {}^*\underline{P} \square_{\underline{R}} \underline{Q}.\)
\end{lemma}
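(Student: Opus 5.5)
The plan is to build $\kappa_g$ as the composite displayed in the statement and to check separately that each of its two factors is a well-defined natural isomorphism of Mackey functors.

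\textbf{Step 1: the first factor.} This step is just the Green-functor incarnation of Lemma~\ref{lem:dual-of-twist}. In the bicategory $\mathbf{Bimod}/\mathbf{Green}^{C_n}$, regard $\underline P$ as a left dualizable $1$-cell $\underline R\to\underline A$. Then Lemma~\ref{lem:dual-of-twist} gives a canonical isomorphism
\[
{}^*({}^g\underline P)\cong({}^*\underline P)^g ,
\]
natural in $\underline P$. To be explicit, I would use Lemma~\ref{lem:base-change-twists} to write down the underlying map. An element $f\in{}^*({}^g\underline P)(X)$ is an $\underline R$-linear map ${}^g\underline P\square\underline A_X\to\underline R$, and it is sent to $\alpha_g^{-1}\circ f$. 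I then need to check two things:
\begin{itemize}
\item the map $\alpha_g^{-1}\circ f$ is $\underline R$-linear from $\underline P\square\underline A_X$ to $\underline R$, because $\alpha_g$ is a Green-functor automorphism;
\item the right $\underline R$-action on ${}^*\underline P$ becomes the $g$-twisted right action, which is exactly what $({}^*\underline P)^g$ requires.
\end{itemize}
Naturality in $\underline P$ is clear, since precomposition commutes with postcomposition by $\alpha_g^{-1}$. Applying $-\square_{\underline R}{}^g\underline Q$ then gives an isomorphism, since the relative box product is a functor and so preserves isomorphisms.

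\textbf{Step 2: the second factor.} This is Lemma~\ref{lem:twist_properties}(3), applied in $\mathbf{Mod}(\mathbf{Mack}_{C_n})$ with $M={}^*\underline P$, $N=\underline Q$ and $h=\Theta_{\underline R}(g)$. It gives
\[
\mathfrak m\colon({}^*\underline P)^g\square_{\underline R}{}^g\underline Q=({}^*\underline P\square_{\underline R}\underline R^g)\square_{\underline R}(\underline R^{g^{-1}}\square_{\underline R}\underline Q)\cong{}^*\underline P\square_{\underline R}\underline Q .
\]
Here I use the identifications of Lemma~\ref{lem:base-change-twists} together with the invertibility $\underline R^g\square_{\underline R}\underline R^{g^{-1}}\cong\underline R$ coming from the strong monoidal structure $\mu$. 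Naturality of $\mathfrak m$ in both variables follows from naturality of the associator and unitors. Composing with Step 1 gives $\kappa_g$, which is natural in $\underline P$ and in $\underline Q$.

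\textbf{Main obstacle.} The real content is confirming that the twist conventions line up, so that Step 2 is a genuine identification of coequalizers. On underlying Mackey functors, $({}^*\underline P)^g\square{}^g\underline Q$ and ${}^*\underline P\square\underline Q$ are both ${}^*\underline P\square\underline Q$, so the claim is that the two pairs of parallel arrows have the same coequalizer. The first pair balances $\xi r$ against $\xi\otimes rq$ in the untwisted product. The second balances $\xi\cdot\alpha_g(r)\otimes q$ against $\xi\otimes\alpha_g(r)q$. Substituting $r'=\alpha_g(r)$, which is bijective because $\alpha_g$ is an automorphism of $\underline R$, shows the two pairs generate the same relations.

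I would verify this substitution at the level of the coequalizer diagrams of Definition~\ref{def:relative-box-equalizer}, using $\alpha_g\square\id$ on the middle $\underline R$ factor. This is the same check that makes $\mathfrak m$ the identity on underlying objects after the identifications of Lemma~\ref{lem:base-change-twists}. With that verified, $\kappa_g$ is on representatives just $f\otimes q\mapsto\alpha_g^{-1}\circ f\otimes q$.
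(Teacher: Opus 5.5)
Your overall route is the paper's. The first factor is Lemma~\ref{lem:dual-of-twist} applied to the left dualizable $1$-cell $\underline P\colon\underline R\to\underline A$. The second is the twist cancellation $\mathfrak m$ of Lemma~\ref{lem:twist_properties}(3) with $M={}^*\underline P$ and $N=\underline Q$. At that level the argument is complete. Your Step~2 check is also correct: after the identifications of Lemma~\ref{lem:base-change-twists}, $\mathfrak m$ is the identity on representatives.

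However, the explicit formula you give for the first factor is off by an inverse, and both checks you list for it would fail.

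\textbf{Linearity fails.} With the convention of Lemma~\ref{lem:base-change-twists}, an $\underline R$-linear $f\colon{}^g\underline P\square\underline A_X\to\underline R$ satisfies $f(\alpha_g(r)p)=r\,f(p)$ (written elementwise; the diagrammatic version uses $\lambda\circ(\alpha_g\square\id)$). Hence $(\alpha_g^{-1}\circ f)(rp)=\alpha_g^{-2}(r)\,\alpha_g^{-1}(f(p))$. This is not $\underline R$-linear unless $\alpha_g^2=\id$.

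\textbf{The right action is wrong.} Likewise $\alpha_g^{-1}\circ(f\cdot r)=(\alpha_g^{-1}\circ f)\cdot\alpha_g^{-1}(r)$. This is the $g^{-1}$-twisted right action, so it lands in $({}^*\underline P)^{g^{-1}}$ rather than $({}^*\underline P)^g$.

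\textbf{The fix.} The correct map is $f\mapsto\alpha_g\circ f$. It is $\underline R$-linear, and it satisfies $\alpha_g\circ(f\cdot r)=(\alpha_g\circ f)\cdot\alpha_g(r)$, which is exactly the right action on $({}^*\underline P)^g$. It is also the isomorphism forced by uniqueness of duals in Lemma~\ref{lem:dual-of-twist}: in this model the evaluation constructed there is $p\otimes\xi\mapsto\alpha_g^{-1}(\xi(p))$, and compatibility with the evaluation $p\otimes f\mapsto f(p)$ gives $\alpha_g\circ f$. Consequently $\kappa_g$ is $f\otimes q\mapsto(\alpha_g\circ f)\otimes q$ on representatives.

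Since your Step~1 already obtains the isomorphism abstractly from Lemma~\ref{lem:dual-of-twist}, correcting or omitting the explicit formula leaves your proof intact.
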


\begin{proof}
The first isomorphism is the instance of Lemma~\ref{lem:dual-of-twist} in the bicategory $\mathbf{Bimod}/\mathbf{Green}^{C_n}$, applied to the left dualizable $1$-cell $\underline{P}$. The second isomorphism is the twist cancellation $\mathfrak{m}\colon ({}^*\underline{P})^g \square_{\underline{R}} {}^g\underline{Q} \xrightarrow{\cong} {}^*\underline{P} \square_{\underline{R}} \underline{Q}$ from Lemma~\ref{lem:twist_cancel}, applied to $M = {}^*\underline{P}$ and $N = \underline{Q}$. 
\end{proof}

Via the tensor--Hom adjunctions, Lemma~\ref{lem:twisted-dual-box} induces a natural morphism of Mackey functors $\mathrm{tw}_g\colon \underline{\Hom}_{\underline R}(\underline P,\underline Q) \to \underline{\Hom}_{\underline R}({}^{g}\underline P,{}^{g}\underline Q)$ defined by the composite \[\underline{\Hom}_{\underline R}(\underline P,\underline Q) \xrightarrow{\delta^{-1}_{\underline P,\underline Q}} {}^{*}\underline P \square_{\underline R} \underline Q \xrightarrow{\kappa_{g}^{-1}} {}^{*}({}^{g}\underline P)\square_{\underline R} {}^{g}\underline Q \xrightarrow{\delta_{{}^{g}\underline P,{}^{g}\underline Q}} \underline{\Hom}_{\underline R}({}^{g}\underline P,{}^{g}\underline Q).\] Equivalently, $\mathrm{tw}_g$ is characterized by the identity
\begin{equation}\label{eq:twg-characterization}
\delta^{-1}_{{}^{g}\underline P,{}^{g}\underline Q}\circ \mathrm{tw}_g = \kappa_{g}^{-1}\circ \delta^{-1}_{\underline P,\underline Q}.
\end{equation}

Evaluated on a finite \(C_n\)-set \(X\), for \(h\in \underline{\Hom}_{\underline R}(\underline P,\underline Q)(X)\), let \(h_X\colon \underline P_X\to \underline Q\) denote the corresponding \(\underline R\)-module morphism under the canonical identification \(\underline{\Hom}_{\underline R}(\underline P,\underline Q)(X) \cong \mathbf{Mack}_{C_n}(\underline A_X,\underline{\Hom}_{\underline R}(\underline P,\underline Q)) \cong \mathbf{Mod}_{\underline R}(\underline A_X\square \underline P,\underline Q)\), given by the Yoneda lemma and the tensor--Hom adjunction (Proposition~\ref{prop:bicategorical-tensor-Hom}).
Put
\(
L\coloneqq\Theta_{\underline R}(g^{-1}),
\)
so that
\(
{}^g\underline P=L\square_{\underline R}\underline P
\)
and similarly for \(\underline Q\).

\begin{lemma}\label{lem:twisting-compatible-with-yoneda}
Under the canonical identification \(({}^g\underline P)_X \cong L\square_{\underline R}\underline P_X\), the element \((\mathrm{tw}_g)_X(h)\) corresponds to \({}^g h_X\coloneqq \id_L\square_{\underline R}h_X \colon L\square_{\underline R}\underline P_X \to L\square_{\underline R}\underline Q = {}^g\underline Q\).
\end{lemma}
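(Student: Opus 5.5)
The plan is to reduce the claim to a statement about the tensor--Hom isomorphism $\delta$ evaluated at $X$, and then check it on the image of a generating set. First I unwind $(\mathrm{tw}_g)_X(h)$ through \eqref{eq:twg-characterization}. I set $t\coloneqq(\delta^{-1}_{\underline P,\underline Q})_X(h)\in({}^*\underline P\square_{\underline R}\underline Q)(X)$. Then $(\mathrm{tw}_g)_X(h)=(\delta_{{}^g\underline P,{}^g\underline Q})_X\bigl((\kappa_g^{-1})_X(t)\bigr)$. So it suffices to show that $\delta_{{}^g\underline P,{}^g\underline Q}\circ\kappa_g^{-1}$, at level $X$, sends $t$ to the element whose Yoneda adjunct is $\id_L\square_{\underline R}h_X$.

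The cleanest route is to work at the level of adjunct morphisms rather than elements. Under the identification $\underline{\Hom}_{\underline R}(\underline P,\underline Q)(X)\cong\mathbf{Mod}_{\underline R}(\underline A_X\square\underline P,\underline Q)$, the map $\delta_{\underline P,\underline Q}$ corresponds to the following composite:
\[
\underline A_X\square\underline P\xrightarrow{t\square\id}({}^*\underline P\square_{\underline R}\underline Q)\square\underline P\cong\underline Q\square_{\underline R}(\underline P\square{}^*\underline P)\xrightarrow{\bar\epsilon}\underline Q,
\]
up to the symmetry used in \eqref{eq:PgP-to-gR}. I then apply the functor $L\square_{\underline R}(-)$ to this composite. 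Since $L\square_{\underline R}-$ is a strong (invertible) operation commuting with $\underline A_X\square-$ via the symmetry and associativity, the result is the adjunct of $\id_L\square_{\underline R}h_X$. It remains to check that the composite obtained by applying $L$ coincides with the composite defining $\delta_{{}^g\underline P,{}^g\underline Q}$ applied to $\kappa_g^{-1}(t)$. By Lemma~\ref{lem:dual-of-twist}, the left dual $({}^*\underline P)^g$ of ${}^g\underline P=L\odot\underline P$ has evaluation
\[
(\id_u\odot\bar\epsilon_{\underline P}\odot\id_v)\ \text{followed by}\ \bar\epsilon_u.
\]
Inserting this formula and the cancellation $\mathfrak m$ from Lemma~\ref{lem:twist_cancel}, the $v\odot u$ factors produced by $\kappa_g^{-1}$ are cancelled exactly by $\bar\epsilon_u$, leaving $\id_L$ composed with the original evaluation.

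I expect the main obstacle to be the bookkeeping in this last comparison. There are three kinds of identification to match. The first is the canonical isomorphism ${}^*({}^g\underline P)\cong({}^*\underline P)^g$, which is compatible with duality data by the uniqueness clause of Lemma~\ref{lem:dual-of-twist}. The second is the triangle-identity cancellation of $u\odot v$ via $\mu^R$ and $\eta^R$. The third is the interchange of $L\square_{\underline R}-$ with $\underline A_X\square-$. My approach is to suppress associators by coherence, as the paper does, and reduce everything to a string diagram. There, naturality of the symmetry $\mathfrak s$ with respect to the $\underline A_X$ factor handles the Yoneda part. The zig-zag for $(u,v)$ collapses the inserted twists.

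If the string-diagram argument is awkward, the fallback is to check the identity on generators. Since $\underline P$ is a summand of some $\underline R_Y$, both sides are natural in $\underline P$ and $\underline Q$ and additive. It therefore suffices to verify the claim for $\underline P=\underline R_Y$ and $\underline Q=\underline R_Z$ finite free. In that case everything is explicit through representables and Lemma~\ref{lemma:selfdual}, and ${}^g\underline R_Y\cong\underline R_Y$ via $\alpha_g$, so the identity can be read off directly.
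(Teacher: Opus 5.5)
Your proposal is correct and takes essentially the same route as the paper. You unwind $(\mathrm{tw}_g)_X(h)$ via \eqref{eq:twg-characterization} as $\kappa_g^{-1}$ applied to the Yoneda representative of $h$, note that $\kappa_g^{-1}$ inserts the coevaluation of the dual pair $(L,\Theta_{\underline R}(g))$, and evaluate with the dual $({}^*\underline P)^g$ of ${}^g\underline P$ from Lemma~\ref{lem:dual-of-twist}. The triangle identity $(\bar\epsilon_L\square_{\underline R}\id_L)\circ(\id_L\square_{\underline R}\bar\eta_L)=\id_L$ then leaves exactly $\id_L\square_{\underline R}h_X$, so the finite-free fallback is not needed.

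One small bookkeeping slip: the adjunct of $\delta_{\underline P,\underline Q}$ should factor as $(\underline P\square{}^*\underline P)\square_{\underline R}\underline Q\to\underline R\square_{\underline R}\underline Q\cong\underline Q$, not through $\underline Q\square_{\underline R}(\underline P\square{}^*\underline P)$. Since $\underline Q$ is only a left $\underline R$-module, the symmetry of $\square$ alone does not produce the latter.
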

\begin{proof}
Under the Yoneda identification and the tensor--Hom isomorphism, \(h\) corresponds to a morphism \(u_X\colon \underline A_X \to {}^*\underline P\square_{\underline R}\underline Q\). By \eqref{eq:twg-characterization}, the morphism corresponding to \((\mathrm{tw}_g)_X(h)\) is \(v_X=\kappa_g^{-1}\circ u_X\colon \underline A_X \to {}^*({}^g\underline P) \square_{\underline R}{}^g\underline Q\). Under the canonical identifications for duals of composites, \(\kappa_g^{-1}\) inserts the coevaluation \(\bar\eta_L\). Evaluating \(v_X\) via the evaluation map for \(L\square_{\underline R}\underline P\) introduces the composite \((\bar\epsilon_L\square_{\underline R}\id_L) \circ (\id_L\square_{\underline R}\bar\eta_L) = \id_L\) by the triangle identity. Hence the resulting morphism is precisely \(\id_L\square_{\underline R}h_X = {}^gh_X\).
\end{proof}

By Lemma~\ref{lem:twisting-compatible-with-yoneda}, for every finite
\(C_n\)-set \(X\), the \(X\)-component of \(\mathrm{tw}_g\)
corresponds to \(h_X\mapsto{}^gh_X\). Hence the internal
Mackey-functor analogue of
Proposition~\ref{prop:algebraic-sliding} takes the following form.

\begin{prop}[Equivariant twisted cyclicity]
\label{prop:eq-algebraic-sliding}
Let \(\underline P\) and \(\underline Q\) be finitely generated
projective left \(\underline R\)-modules, and let
\[
\mathfrak{s}\colon
\underline{\Hom}_{\underline R}(\underline Q,{}^g\underline P)
\square
\underline{\Hom}_{\underline R}(\underline P,\underline Q)
\xrightarrow{\cong}
\underline{\Hom}_{\underline R}(\underline P,\underline Q)
\square
\underline{\Hom}_{\underline R}(\underline Q,{}^g\underline P)
\]
be the symmetry isomorphism. Write
\(\operatorname{tr}_{\mathrm{HS},\underline P}^g\) and
\(\operatorname{tr}_{\mathrm{HS},\underline Q}^g\)
for the twisted Hattori--Stallings traces associated to
\(\underline P\) and \(\underline Q\), respectively. Then
\(\operatorname{tr}_{\mathrm{HS},\underline P}^g\circ\hat\circ
=
\operatorname{tr}_{\mathrm{HS},\underline Q}^g
\circ\hat\circ
\circ(\mathrm{tw}_g\square\id)
\circ\mathfrak{s}.
\)
\end{prop}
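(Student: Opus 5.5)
The plan is to mimic the proof of Proposition~\ref{prop:algebraic-sliding}, internalized to Mackey functors. Both sides are morphisms of Mackey functors out of the box product \(\underline{\Hom}_{\underline R}(\underline Q,{}^g\underline P)\square\underline{\Hom}_{\underline R}(\underline P,\underline Q)\). I would first transport everything through the tensor--Hom isomorphisms \(\delta\), so the trace becomes \(\pi^{(g)}\) applied to an element of a relative box product. Concretely, using \(\delta_{\underline Q,{}^g\underline P}\) and \(\delta_{\underline P,\underline Q}\), Bouc's pairing \(\hat\circ\) on the left-hand side corresponds to the contraction
\[
({}^*\underline Q\square_{\underline R}{}^g\underline P)\square({}^*\underline P\square_{\underline R}\underline Q)\longrightarrow {}^*\underline P\square_{\underline R}{}^g\underline P,
\]
which evaluates the \({}^*\underline Q\) factor against the \(\underline Q\) factor. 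This is the internal version of Lemma~\ref{lem:delta-naturality}. Similarly, after applying \(\mathrm{tw}_g\) and \(\kappa_g\) via \eqref{eq:twg-characterization}, the right-hand side corresponds to
\[
({}^*\underline P\square_{\underline R}\underline Q)\square({}^*\underline Q\square_{\underline R}{}^g\underline P)\longrightarrow {}^*\underline Q\square_{\underline R}{}^g\underline Q,
\]
which evaluates the \({}^*\underline P\) factor against the \({}^g\underline P\) factor, with the twist moved across by \(\kappa_g\).

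The second step reduces both sides to a single ``full contraction'' map. Composing with \(\pi^{(g)}\), both sides factor through the morphism
\[
{}^*\underline Q\square{}^g\underline P\square{}^*\underline P\square\underline Q\longrightarrow \underline{\HH}_0^g(\underline R)
\]
obtained by evaluating one pair, multiplying the results in \(\underline R\), and applying \(q\). The two sides differ only in which pair is evaluated first and in where the twist is placed. The comparison then rests on two facts. First, \(q\colon{}^g\underline R\to\underline{\HH}_0^g(\underline R)\) coequalizes \({}^g\lambda\) and \(\rho\circ\mathfrak s\), which is the defining relation \([g(r)x]=[xr]\) in the ring case. Second, \(\kappa_g\) is built from Lemma~\ref{lem:dual-of-twist} and the cancellation \(\mathfrak m\) of Lemma~\ref{lem:twist_cancel}. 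With these, the cyclic move \([\xi(h(p))]=[\xi({}^gh(p))]\) becomes a commutative diagram of Mackey functors, built from the symmetry, associativity of \(\square\), and the triangle identities for the duality between \(L=\Theta_{\underline R}(g^{-1})\) and its inverse.

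Since all maps are morphisms of Mackey functors, and the box product is generated by its representable pieces, a levelwise check is also available. Via Yoneda, it suffices to verify the equality after precomposing with maps \(\underline A_X\square\underline A_Y\to\) source, for finite \(C_n\)-sets \(X,Y\). For such maps, Lemma~\ref{lem:twisting-compatible-with-yoneda} identifies \(\mathrm{tw}_g\) with \(h_X\mapsto{}^gh_X\), and Example~\ref{ex:endomorphism-green} describes \(\hat\circ\) via the diagonal \(\Delta\). Under these identifications, the required equality is the untwisted-style identity \(\operatorname{tr}(\psi\circ h)=\operatorname{tr}({}^gh\circ\psi)\) for \(\underline R\)-module maps \(h\colon\underline P_X\to\underline Q\) and \(\psi\colon\underline Q_Y\to{}^g\underline P\). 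That identity can be proved exactly as in Proposition~\ref{prop:algebraic-sliding}, using Lemma~\ref{lem:delta-naturality} internalized.

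I expect the main obstacle to be bookkeeping the symmetry \(\mathfrak s\) together with the representable factors \(\underline A_X,\underline A_Y\) and the diagonal comultiplication. Specifically, one must check that after \(\mathfrak s\) the diagonal spans \(\Delta_T\) on the two sides match, using cocommutativity of \(\Delta\), and that \(\kappa_g\) commutes with these representable factors. I would first settle this in the case \(\underline P=\underline R_X\), \(\underline Q=\underline R_Y\) finite free, where duals and \(\delta\) are explicit. I would then pass to direct summands by naturality of both sides in idempotents.
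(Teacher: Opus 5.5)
Your main route is the paper's argument: transport $\hat\circ$ through $\delta$ to contraction maps, replace $\mathrm{tw}_g$ by $\kappa_g^{-1}$ via \eqref{eq:twg-characterization}, and then compare the two full contractions on the unreduced box product using that $q\colon{}^g\underline R\to\underline{\HH}_0^g(\underline R)$ coequalizes the twisted left and ordinary right actions. The paper does the same, componentwise with $\Delta_X$, via \eqref{eq:pi_contraction} and the epimorphism $p$. The reduction to finite free modules and the levelwise alternative are not needed, because the coequalizer argument applies directly to arbitrary finitely generated projective $\underline P,\underline Q$; also, in the levelwise variant as you wrote it, $\psi\circ h$ only makes sense after tensoring $h$ with $\id_{\underline A_Y}$.
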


\begin{proof}
By the universal property of the box product, it suffices to compare
the associated componentwise pairings at every finite \(C_n\)-set
\(X\). Fix
\(
\psi\in
\underline{\Hom}_{\underline R}
(\underline Q,{}^g\underline P)(X),
h\in
\underline{\Hom}_{\underline R}
(\underline P,\underline Q)(X).
\)
Under the Yoneda identifications and the inverse tensor--Hom
isomorphisms, let
\(
u_\psi\colon
\underline A_X
\to
{}^*\underline Q\square_{\underline R}{}^g\underline P,
u_h\colon
\underline A_X
\to
{}^*\underline P\square_{\underline R}\underline Q
\)
be the morphisms corresponding to \(\psi\) and \(h\), respectively. Let \(m_{\underline P,\underline Q}\colon ({}^*\underline P\square_{\underline R}\underline Q) \square \allowbreak ({}^*\underline Q\square_{\underline R}{}^g\underline P) \to {}^*\underline P\square_{\underline R}{}^g\underline P\) be the contraction morphism evaluating the  \(\underline Q\) factors, and  \(m^g_{\underline Q,\underline P}\colon ({}^*({}^g\underline P)\square_{\underline R}{}^g\underline Q) \square \allowbreak ({}^*\underline Q\square_{\underline R}{}^g\underline P) \allowbreak \to {}^*\underline Q\square_{\underline R}{}^g\underline Q\) be the corresponding twisted contraction evaluating the \({}^g\underline P\) factors (which requires rearranging the tensor factors via symmetry).

By the description of Bouc's componentwise pairing in Example~\ref{ex:endomorphism-green}, and using the cocommutativity of $\Delta_X$, we have
\(
(\delta^{-1}_{\underline P,{}^g\underline P})_X (\psi\mathbin{\hat\circ}h) = m_{\underline P,\underline Q} \circ (u_h\square u_\psi) \circ \Delta_X.
\)
For the right-hand composite, \eqref{eq:twg-characterization} identifies the morphism
corresponding to $(\mathrm{tw}_g)_X(h)$ with $\kappa_g^{-1}\circ u_h$, yielding \[(\delta^{-1}_{\underline Q,{}^g\underline Q})_X ((\mathrm{tw}_g)_X(h)\mathbin{\hat\circ}\psi) = m^g_{\underline Q,\underline P} \circ (\kappa_g^{-1}\square\id) \circ (u_h\square u_\psi) \circ \Delta_X.\] It therefore remains to prove the equality
\begin{equation}\label{eq:pi_contraction}
\pi_{\underline P}^{(g)} \circ m_{\underline P,\underline Q} = \pi_{\underline Q}^{(g)} \circ m^g_{\underline Q,\underline P} \circ (\kappa_g^{-1}\square\id) 
\end{equation}
as maps $({}^*\underline P\square_{\underline R}\underline Q) \square ({}^*\underline Q\square_{\underline R}{}^g\underline P) \to \underline{\HH}_0^g(\underline R)$.

Let
\(
p\colon
{}^*\underline P\square\underline Q
\square{}^*\underline Q\square{}^g\underline P
\to
({}^*\underline P\square_{\underline R}\underline Q)
\square
({}^*\underline Q\square_{\underline R}{}^g\underline P)
\)
be the canonical quotient map. Since relative box products are
defined by coequalizers and \(\square\) preserves coequalizers in
each variable, \(p\) is an epimorphism. Thus equality of two
morphisms from its target may be checked after precomposition
with \(p\).
Let
\(
\sigma\colon
{}^*\underline P\square\underline Q
\square{}^*\underline Q\square{}^g\underline P
\xrightarrow{\cong}
({}^*\underline P\square{}^g\underline P)
\square
(\underline Q\square{}^*\underline Q)
\)
be the canonical permutation of the factors, and set
\(s
:=
(\bar\epsilon_{\underline P}^{(g)}
 \square\bar\epsilon_{\underline Q})
\circ\sigma
\colon
{}^*\underline P\square\underline Q
\square{}^*\underline Q\square{}^g\underline P
\to
{}^g\underline R\square\underline R.
\)
Here \(\bar\epsilon_{\underline P}^{(g)}\) is the twisted evaluation
and \(\bar\epsilon_{\underline Q}\) is the ordinary evaluation.
Unwinding the definitions of the contraction maps and of
\(\kappa_g\), and using the triangle identities, the composites of
the two sides of \eqref{eq:pi_contraction} with \(p\) are
\(
q\circ\mu_R\circ s
\) and
\(q\circ\mu_L^{(g)}\circ s,
\)
where \(\mu_R, \mu_L^{(g)}\colon {}^g\underline{R}\square\underline{R}\to{}^g\underline{R}\) denote the ordinary right action and the twisted left action (precomposed with the symmetry isomorphism \(\mathfrak{s}\)), respectively.
 Since
\(q\colon{}^g\underline R\to\underline{\HH}_0^g(\underline R)\)
coequalizes these actions, the two composites agree. Since \(p\) is
epimorphic, \eqref{eq:pi_contraction} follows.

Applying \eqref{eq:pi_contraction} to
\((u_h\square u_\psi)\circ\Delta_X\), and using the definition of the
twisted Hattori--Stallings trace, gives
\(
\bigl(\operatorname{tr}_{\mathrm{HS},\underline P}^g\bigr)_X
(\psi\mathbin{\hat\circ}h)
=
\bigl(\operatorname{tr}_{\mathrm{HS},\underline Q}^g\bigr)_X
\bigl((\mathrm{tw}_g)_X(h)\mathbin{\hat\circ}\psi\bigr).
\)
Since this holds for every finite \(C_n\)-set \(X\), the asserted
morphisms of Mackey functors are equal.
\end{proof}

Let \(\underline R\) and \(\underline S\) be \(C_n\)-Green functors,
and 
\(
(\underline P,\gamma)\colon\underline R\to\underline S
\)
be a left dualizable \(1\)-cell in
\(\mathbf{Bimod}/\mathbf{Green}^{C_n}\).
Postcomposition with
\(\gamma_g\colon\underline P\to{}^g\underline P\) defines
\(
(\gamma_g)_*\colon
\underline{\End}_{\underline R}(\underline P)
\to
\underline{\Hom}_{\underline R}
(\underline P,{}^g\underline P).
\)
Set
\(
\operatorname{tr}_{\mathrm{HS},\gamma}^{\,g}
:=
\operatorname{tr}_{\mathrm{HS}}^{\,g}\circ(\gamma_g)_*
\colon\)
\(\underline{\End}_{\underline R}(\underline P)
\to
\underline{\HH}_0^{\,g}(\underline R).
\)

The right \(\underline S\)-action on \(\underline P\) has, under the
tensor--Hom adjunction, an adjoint morphism of Mackey functors
\(
\varrho\colon
\underline S\longrightarrow
\underline{\End}_{\underline R}(\underline P).
\)
Explicitly, for a finite \(C_n\)-set \(X\), write
\(\underline P_X=\underline P\square\underline A_X\) and
\(\underline S_X=\underline S\square\underline A_X\).
If \(s\in\underline S(X)\), let
\(\widehat s\colon\underline S_X\to\underline S\)
be the corresponding morphism of left \(\underline S\)-modules.
Under the Yoneda identification, \(\varrho_X(s)\) is represented by
the composite
\begin{equation}\label{eq:rhoXs-def}
\varrho_{X,s}\colon
\underline P_X
\xrightarrow{\cong}
\underline P\square_{\underline S}\underline S_X
\xrightarrow{\id_{\underline P}\square_{\underline S}\widehat s}
\underline P\square_{\underline S}\underline S
\xrightarrow{\mathfrak r_{\underline P}}
\underline P .
\end{equation}
Postcomposition with an endomorphism \(2\)-cell \(f\colon\underline P\Rightarrow\underline P\) yields an element in \(\underline{\End}_{\underline R}(\underline P)(X)\), which is represented by \(f\circ\varrho_{X,s}\).

The following theorem is the Mackey-functor-valued analogue of
Theorem~\ref{thm:twHS-recovery-general}.

 \begin{thm}\label{thm:tw-eq-bicat-vs-HS}
With the notation above, let $f\colon \underline P\Rightarrow \underline P$ be a $2$-cell. Then the following diagram in $\mathbf{Mack}_{C_n}$ commutes:
\[
\begin{tikzcd}[column sep=3em, row sep=1.5em]
\underline S
\arrow[r,"f_*\circ\varrho"]
\arrow[d,"q^{(g)}"']
&
\underline{\End}_{\underline R}(\underline P)
\arrow[d,"\operatorname{tr}_{\mathrm{HS},\gamma}^{\,g}"]\\
\underline{\HH}_0^{\,g}(\underline S) \arrow[r, "\operatorname{tr}^g(f)"'] & \underline{\HH}_0^{\,g}(\underline R)
\end{tikzcd}
\]
where $f_*=\underline{\Hom}_{\underline R}(\underline P,f) \colon \underline{\operatorname{End}}_{\underline{R}}(\underline{P}) \longrightarrow \underline{\operatorname{End}}_{\underline{R}}(\underline{P})$ is the morphism of Mackey functors induced by post-composition with the $2$-cell $f$.
Equivalently, after evaluating at a finite $C_n$-set $X$, for every $s\in\underline S(X)$, one has
\[
(\operatorname{tr}^g(f))_X(q_X^{(g)}(s))
=
\bigl(\operatorname{tr}_{\mathrm{HS},\gamma}^{\,g}\bigr)_X
(f\circ\varrho_{X,s})
=
\bigl(\operatorname{tr}_{\mathrm{HS},\gamma}^{\,g}\bigr)_X
(\varrho_{X,s}\circ f_X).
\]
\end{thm}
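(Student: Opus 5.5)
The plan is to follow the proof of Theorem~\ref{thm:twHS-recovery-general}, promoting each element-level identity to an equality of morphisms of Mackey functors. Concretely, I will build the Mackey-functor analogue of the diagram \eqref{eq:trace-diagram}. The top row is $\operatorname{tr}^g(f)$ in $\mathbf{Bimod}/\mathbf{Green}^{C_n}$: it is the composite of ${}^g\sh{\bar\eta}$, ${}^g\sh{\id\square_{\underline R} f}$, ${}^g\theta$, and ${}^g\sh{\bar\epsilon}$, with each shadow given by $\underline{\HH}_0^g$ as in Example~\ref{ex:twisted_HH0_concrete}. The bottom row is
\[
\underline S\xrightarrow{f_*\circ\varrho}\underline{\End}_{\underline R}(\underline P)\xrightarrow{(\gamma_g)_*}\underline{\Hom}_{\underline R}(\underline P,{}^g\underline P)\xrightarrow{\delta^{-1}}{}^*\underline P\square_{\underline R}{}^g\underline P\xrightarrow{\pi^{(g)}}\underline{\HH}_0^g(\underline R).
\]
All maps involved are morphisms of Mackey functors, and the quotient $q^{(g)}\colon\underline S\to\underline{\HH}_0^g(\underline S)$ is an epimorphism. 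It therefore suffices to check commutativity of each region after precomposing with the relevant quotient maps. Where evaluation at finite $C_n$-sets $X$ is convenient, I will use the Yoneda identifications of Example~\ref{ex:endomorphism-green}.

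The first region, on the left, requires the identity $\delta^{-1}_{\underline P,\underline P}\circ f_*\circ\varrho=(\id_{{}^*\underline P}\square_{\underline R} f)\circ\bar\eta$ as morphisms $\underline S\to{}^*\underline P\square_{\underline R}\underline P$. I plan to prove this without dual bases. The coevaluation $\bar\eta$ is adjoint to the right action, by the construction of the tensor--Hom isomorphism $\delta$ from the duality data. Hence $\delta\circ\bar\eta=\varrho$, which is essentially the definition of $\varrho$ via \eqref{eq:rhoXs-def}. Postcomposing with $f$ and applying Lemma~\ref{lem:delta-naturality}(2) in its internal Mackey form then gives the identity; applying $q^{(g)}$ finishes this region.

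The middle region follows from Lemma~\ref{lem:delta-naturality}, which gives $\delta^{-1}\circ(\gamma_g)_*=(\id\square\gamma_g)\circ\delta^{-1}$. Combined with the definition \eqref{def:twisted-cyclicity} of ${}^g\theta$, this identifies the upward map through $\theta$ with ${}^g\theta$. Here I must check that the transport map $g_{\underline P}$ used in ${}^g\theta$ agrees with $\gamma_g$ viewed as a map $\underline P\to{}^g\underline P$, after forgetting the $\underline S$-structure and cancelling twists. I expect this to be the \textbf{main obstacle}: it requires unwinding Proposition~\ref{prop:gM-vs-gammaM} together with Lemma~\ref{lem:base-change-twists} in the Mackey setting, keeping track of which side carries $g$ versus $g^{-1}$. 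I would first verify it on underlying objects, using Corollary~\ref{cor:equiv-mod-G} to reduce to the semilinear automorphism $\phi_g$.

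The rightmost triangle requires $\underline{\HH}_0^g(\bar\epsilon)\circ\theta\circ q=\pi^{(g)}$. By diagram \eqref{diag:twisted-pi-definition}, $\pi^{(g)}$ is characterized by $\pi^{(g)}\circ b=q\circ\bar\epsilon^{(g)}$, and $b$ is epic, so both sides can be compared after precomposing with ${}^*\underline P\square{}^g\underline P$. There both sides reduce to the symmetry followed by evaluation followed by $q$, since the shadow cyclicity $\theta$ for the box product is induced by the symmetry $\mathfrak s$. This gives the first displayed equality. For the second, I will show $f\circ\varrho_{X,s}=\varrho_{X,s}\circ f_X$ using the right $\underline S$-linearity of $f$, i.e.\ naturality of \eqref{eq:rhoXs-def} in $\underline P$, where $f_X=f\square\id_{\underline A_X}$.
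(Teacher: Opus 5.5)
Your proposal is correct and follows the paper's proof. Both verify the Mackey analogue of Diagram~\eqref{eq:trace-diagram} region by region: the tensor--Hom adjunction ($\delta\circ\bar\eta=\varrho$) for the left square, naturality of $\delta$ together with the definition of ${}^g\theta$ for the middle region, the universal property of $\pi^{(g)}$ for the right triangle, and right $\underline S$-linearity of $f$ for the final equality. The identification of $g_{\underline P}$ with $\gamma_g$, which you flag as the main obstacle, is left implicit in the paper; it holds because both correspond to the semilinear automorphism $\phi_g$, using $\phi_{g^{-1}}=\phi_g^{-1}$ (that is, multiplicativity of the twisting constraints).
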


\begin{proof}
The proof is the internal Mackey-functor analogue of
Diagram~\ref{eq:trace-diagram}. Namely, replace modules, tensor
products, and Hom groups in that diagram by Mackey-functor modules,
relative box products, and internal Hom Mackey functors, respectively.

The resulting diagram commutes for the same  reasons:
the left-hand region follows from the tensor--Hom adjunction and the fact that
\(\varrho\colon\underline S\to
\underline{\End}_{\underline R}(\underline P)\)
is the adjunct of the right \(\underline S\)-action on
\(\underline P\);
the middle region follows from the naturality of \(\delta\) and the
definition of the twisted cyclicity map \({}^g\theta\); and the
right-hand triangle follows from the universal property defining
\(\pi^{(g)}\) in
Diagram~\ref{diag:twisted-pi-definition}. Consequently, \(
\operatorname{tr}^g(f)\circ q^{(g)}
=
\operatorname{tr}_{\mathrm{HS},\gamma}^{\,g}
\circ f_*\circ\varrho.
\)

Evaluating this identity at a finite \(C_n\)-set \(X\) and applying it
to \(s\in\underline S(X)\) gives
\(
(\operatorname{tr}^g(f))_X(q_X^{(g)}(s))
=
\bigl(\operatorname{tr}_{\mathrm{HS},\gamma}^{\,g}\bigr)_X
(f\circ\varrho_{X,s}).
\)
Finally, by the definition of \(\varrho_{X,s}\) in \eqref{eq:rhoXs-def}, and
under the canonical identification
\(
\underline P_X\cong \underline P\square_{\underline S}\underline S_X,
\)
the map \(f_X=f\square\id_{\underline A_X}\) corresponds to
\(f\square_{\underline S}\id_{\underline S_X}\). Since \(f\) is a morphism of
right \(\underline S\)-modules, it commutes with the right unitor
\(
\mathfrak r_{\underline P}:
\underline P\square_{\underline S}\underline S\to \underline P.
\)
Therefore
\(
f\circ\varrho_{X,s}
=
\varrho_{X,s}\circ f_X.
\)
\end{proof}

\subsection{The degree-zero twisted Dennis trace}

Having shown that the \(g\)-twisted bicategorical trace recovers the
\(g\)-twisted Hattori--Stallings trace, we now show that, on classes
arising from \(G\)-twisted \(1\)-cells, the degree-zero twisted Dennis
trace introduced in \cite[Section~6.1]{AGHKK} is realized by the
\(g\)-twisted bicategorical trace.

 Following \cite[Remark~3.2.11]{AGHKK}, a perfect left $R$-module is a cofibrant, finitely generated left $R$-module $P$ such that, for every left $R$-module $N$, the canonical morphism 
\[
\underline{\Hom}_R(P,R)\otimes_R^{\mathbb L}N
\longrightarrow
\underline{\Hom}_R(P,N)
\]
is an isomorphism in $\Ho(\mathcal V)$. This is the tensor--Hom criterion ensuring that $P$, regarded as a $1$-cell 
\(
P\colon R\to I,
\)
is left dualizable in $\Ho\mathbf{Mod}(\mathcal V)$.

For module spectra, this recovers the standard notion of perfect modules; for \(\mathcal V=s\mathbf{Ab}\), it corresponds via the Dold--Kan correspondence to bounded complexes of finitely generated projective modules (and in particular, to finitely generated projective modules concentrated in degree zero).

For an $R$-bimodule $M$ in $\mathcal V$, let $\End(R,M)$ denote the category of $M$-parametrized endomorphisms. Its objects are pairs $(P,u)$, where $P$ is a perfect left $R$-module and 
\(
u\colon P\to M\otimes_R P
\)
is a morphism of left $R$-modules. A morphism 
\(
a\colon(P,u)\to(Q,v)
\)
is a morphism of left $R$-modules satisfying 
\(
v\circ a = (\id_M\otimes_R a)\circ u.
\)

Write 
\(
K(R;M) := K(\End(R,M)).
\)
The generalized Dennis trace construction recalled in the proof of \cite[Theorem~6.1.1]{AGHKK} gives a map 
\(
\operatorname{trc}_{R;M}\colon
K(R;M)\longrightarrow\HH_{\mathcal V}(R;M).
\)
For rings and ring spectra, this is the construction of \cite[Definition~6.16]{CLMPI}. On degree zero, it sends the class of an $M$-parametrized endomorphism to the homotopy class of its bicategorical trace:
\[
\pi_0(\operatorname{trc}_{R;M})([(P,u)])
= [\operatorname{tr}([u])]
\in
\pi_0\HH_{\mathcal V}(R;M),
\]
where 
\([u]\colon
P\longrightarrow M\otimes_R^{\mathbb L}P
\)
is the $2$-cell in $\Ho\mathbf{Mod}(\mathcal V)$ represented by $u$; see \cite[Example~6.26]{CLMPI}. Since $P$ is perfect, it is left dualizable in this homotopy bicategory, and hence $\operatorname{tr}([u])$ is defined.

The trace above vanishes on the image of the zero-endomorphism inclusion 
\(
K(R)\to K(R;M)
\)
and therefore factors through the reduced endomorphism $K$-theory $\widetilde{K}(R;M)$; see \cite[Remark~6.21]{CLMPI}. The reduced factorization is the map appearing in \cite[Theorem~1.2]{CLMPI}.  

Now let $R$ be a cofibrant monoid in $\mathcal V$ equipped with a $G$-action 
\(
\alpha\colon
G\to
\operatorname{Aut}_{\operatorname{Mon}(\mathcal V)}(R),
\)
and fix $g\in G$. Following \cite[Section~6.1]{AGHKK}, let 
\(
\Perf_{(R,g)}
:=
\Perf_{(R,\alpha_g)}.
\)
An object of $\Perf_{(R,g)}$ is a perfect left $R$-module $P$ equipped with an $\alpha_g$-semilinear automorphism 
\(
\phi\colon P\xrightarrow{\cong}P.
\)
By Lemma~\ref{lem:gamma-vs-semilinear}, such an object may equivalently be written as a pair $(P,u)$, where 
\(
u\colon P\xrightarrow{\cong}{}^gP
\)
is an isomorphism of left $R$-modules in $\mathcal V$. A morphism 
\(
a\colon(P,u)\longrightarrow(Q,v)
\)
is a morphism of left $R$-modules satisfying 
\(
v\circ a={}^g a\circ u.
\)

Let 
\(
\Aut(R,M)\subseteq\End(R,M)
\)
denote the full subcategory spanned by the pairs $(P,u)$ for which $u$ is an isomorphism. The canonical identification 
\(
{}^gR\otimes_R P\cong{}^gP
\)
gives 
\(
\Perf_{(R,g)}
\cong
\Aut(R,{}^gR)
\subseteq
\End(R,{}^gR).
\)
Write 
\(
K(R,g) := K(\Perf_{(R,g)})\) and \(
K_0(R,g) := \pi_0K(R,g).
\)
Composing the inclusion above with the generalized Dennis trace gives the twisted Dennis trace of \cite[Theorem~6.1.1]{AGHKK}:
\[
K(R,g)
\longrightarrow
K(R;{}^gR)
\xrightarrow{\operatorname{trc}_{R;{}^gR}}
\HH_{\mathcal V}(R;{}^gR).
\]
Passing to degree zero gives 
\(
\operatorname{tr}_{\mathrm{Dennis}}^g\colon
K_0(R,g)
\to
\pi_0\HH_{\mathcal V}(R;{}^gR).
\)
For $(P,u)\in\Perf_{(R,g)}$, let 
\(
[u]\colon P\xrightarrow{\cong}{}^gP
\)
denote the $2$-cell in $\Ho\mathbf{Mod}(\mathcal V)$ represented by $u$. Under the identification 
\(
{}^gP
\cong
{}^gR\otimes_R^{\mathbb L}P,
\)
this is the $2$-cell corresponding to the ${}^gR$-parametrized endomorphism $u$. By \cite[Remark~6.1.3]{AGHKK} and \cite[Example~6.26]{CLMPI},
\[
\operatorname{tr}_{\mathrm{Dennis}}^g([(P,u)])
= [\operatorname{tr}([u])]
\in
\pi_0\HH_{\mathcal V}(R;{}^gR).
\]
We now realize this value as a $g$-twisted bicategorical trace.

\begin{prop}\label{prop:algebraic_dennis_trace_recovery}
Let $(P,u)\in\Perf_{(R,g)}$. Suppose that the $1$-cell $P\colon R\to I$ in $\Ho\mathbf{Mod}(\mathcal V)$ is equipped with twisting constraints $\tau$, so that 
\(
(P,\tau)\colon R\to I
\)
is a $1$-cell in $\Ho\mathbf{Mod}(\mathcal V)^{G\text{-tw}}$. Using the canonical identification $P^g\cong P$, let 
\(
\gamma_{g,P}\colon
P\xrightarrow{\cong}{}^gP
\)
denote its $g$-bi-twist isomorphism. If 
\(
f\colon(P,\tau)\Rightarrow(P,\tau)
\)
is a $2$-cell satisfying 
\(
[u]=\gamma_{g,P}\circ f
\)
in the homotopy category of left $R$-modules, then 
\[
\operatorname{tr}_{\mathrm{Dennis}}^g([(P,u)])
= [\operatorname{tr}^g(f)]
\in
\pi_0\HH_{\mathcal V}(R;{}^gR).
\]
\end{prop}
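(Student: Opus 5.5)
We compare two composites with the same source and target. On one side, the degree-zero twisted Dennis trace is the ordinary (untwisted) bicategorical trace of the $2$-cell $[u]\colon P\Rightarrow {}^gR\odot P$ in $\Ho\mathbf{Mod}(\mathcal V)$. The dual pair is $(P,{}^*P)$ with $P\colon R\to I$, and the shadow is the Hochschild construction. This trace is a map
\[
\sh{U_I}=\HH_{\mathcal V}(I)\to \sh{{}^gR}_R=\HH_{\mathcal V}(R;{}^gR),
\]
evaluated on the unit class. On the other side, $\operatorname{tr}^g(f)$ is the $g$-twisted trace of Definition~\ref{def:twisted-bicat-trace} with $Q=U_I$ and $P=U_R$, using the lifted dual pair from Proposition~\ref{prop:Gtw-dualizable-recognition}. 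Its target is ${}^g\sh{U_R}=\sh{{}^gU_R}\cong\HH_{\mathcal V}(R;{}^gR)$. The plan is to expand $\operatorname{tr}^g(f)$ into underlying $\mathcal B$-data and recognize it as the ordinary trace of $\gamma_{g,P}\circ f$. This mirrors the middle region of Diagram~\eqref{eq:trace-diagram}, now carried out at the level of abstract shadows.

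\textbf{Step 1: expand the twisted trace.} Substituting the definition \eqref{def:twisted-cyclicity} of ${}^g\theta$, $\operatorname{tr}^g(f)$ becomes
\[
\sh{{}^gU_I}\xrightarrow{\bar\eta}\sh{{}^g({}^*P\odot P)}\xrightarrow{f}\sh{{}^g({}^*P\odot P)}\xrightarrow{\theta}\sh{P\odot{}^g({}^*P)}\xrightarrow{\sh{\iota}}\sh{P^{g^{-1}}\odot {}^*P}\xrightarrow{\sh{g_P\odot\id}}\sh{{}^gP\odot{}^*P}\xrightarrow{\bar\epsilon}\sh{{}^gU_R}.
\]
Here I have written the untwisted structure on the $I$ side. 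Since $I$ carries the trivial twisting, $\Theta_I(g)\cong U_I$, so the first leg is just the ordinary coevaluation. Two maps appear in this composite. The transport $g_P\colon P^{g^{-1}}\to{}^gP$ is the transport map of Remark~\ref{rem:twist_notation_and_consistency}. Under $P^{g^{-1}}\cong P$ (trivial twist at $I$), it becomes exactly the bi-twist isomorphism $\gamma_{g,P}\colon P\to{}^gP$, which I will check via Proposition~\ref{prop:gM-vs-gammaM}. The map $\iota$ is the canonical reassociation of Lemma~\ref{lem:twist_properties}(2), which is simply moving $\Theta_R(g^{-1})$ from the left of ${}^*P$ to the right of $P$.

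\textbf{Step 2: rewrite as an ordinary trace.} Using naturality of $\theta$ and of the associators, I will slide $\sh{\gamma_{g,P}\odot\id}$ and $f$ past $\theta$. After this, the composite reads: coevaluate, apply $(\gamma_{g,P}\circ f)\odot\id_{{}^*P}$, rotate with the ordinary $\theta$, and evaluate $P\odot{}^*P\to U_R$ with the $\Theta_R(g^{-1})$ factor carried along. This is precisely the ordinary bicategorical trace of the $2$-cell
\[
\gamma_{g,P}\circ f\colon P\Rightarrow {}^gP\cong {}^gR\odot P,
\]
with $Q=U_I$ and the parameter $P={}^gR$. The identification is up to the canonical isomorphism $\sh{{}^gU_R}\cong\sh{{}^gR}$.

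\textbf{Step 3: conclude.} By hypothesis, $\gamma_{g,P}\circ f=[u]$ as $2$-cells in $\Ho\mathbf{Mod}(\mathcal V)$. Hence $[\operatorname{tr}^g(f)]=[\operatorname{tr}([u])]$, and the displayed formula for $\operatorname{tr}^g_{\mathrm{Dennis}}$ recalled before the statement finishes the proof.

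The main obstacle is the bookkeeping in Step~2. I must verify that $\iota$, the unit identifications $\Theta_I(g)\cong U_I$ and ${}^gU_R\cong {}^gR$, and the identification $P^{g^{-1}}\cong P$ all cancel coherently. In particular, the correspondence $g_P\leftrightarrow\gamma_{g,P}$ of Proposition~\ref{prop:gM-vs-gammaM} should not introduce a stray $\mu$ or $\eta$ factor. My approach is to invoke bicategorical coherence to treat every such map as a canonical isomorphism built from $\mathfrak a,\mathfrak l,\mathfrak r,\mu,\eta$, which makes all parallel canonical composites equal. The only non-canonical ingredients are then $f$, $\gamma_{g,P}$, $\bar\eta$ and $\bar\epsilon$, whose relative order is preserved by the naturality moves.
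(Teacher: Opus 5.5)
Your argument is the paper's argument run in the opposite direction. The paper substitutes $[u]=\gamma_{g,P}\circ f$ into the composite defining $\operatorname{tr}([u])$ and recognizes the twisted cyclicity ${}^g\theta$. You unfold ${}^g\theta$ inside $\operatorname{tr}^g(f)$ and recognize $\operatorname{tr}(\gamma_{g,P}\circ f)$. The outline is sound. However, the two identifications you leave to ``coherence'' do not follow from coherence for $\mathfrak a,\mathfrak l,\mathfrak r,\mu,\eta$ plus naturality. So your claim that the only non-canonical ingredients are $f,\gamma_{g,P},\bar\eta,\bar\epsilon$ is not accurate: $\tau_{g^{-1}}$ and $\theta$ also enter.

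\textbf{First gap: $g_P$ versus $\gamma_{g,P}$.} The transport appearing in ${}^g\theta_{{}^*P,P}$ is
\[
g_P=(\tau_{g^{-1}})^{-1}\colon P\odot\Theta_I(g^{-1})\Rightarrow\Theta_R(g^{-1})\odot P .
\]
By contrast, Proposition~\ref{prop:gM-vs-gammaM} builds $\gamma_{g,P}$ from $\tau_g$, and nothing in that proposition relates $\tau_g$ to $\tau_{g^{-1}}$. What does relate them is the multiplicativity axiom for the pair $(g^{-1},g)$, together with unit compatibility. These two axioms show that $(\tau_{g^{-1}}\odot\id_{\Theta_I(g)})\circ\gamma_{g,P}$ is the canonical isomorphism $P\cong P\odot\Theta_I(g^{-1})\odot\Theta_I(g)$ built from $\eta^I$, $\mu^I_{g^{-1},g}$ and unitors. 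Hence $\gamma_{g,P}=g_P$ once $\Theta_I$ is trivialized.

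\textbf{Second gap: which cyclicity map appears.} Write $T=\Theta_R(g^{-1})\cong{}^gR$. After you slide $g_P\circ f$ across $\theta$ by naturality, your composite rotates by $\theta_{{}^*P,\,T\odot P}$ and evaluates into $\sh{T\odot U_R}$, with the twist on the left. The ordinary trace of $[u]\colon P\Rightarrow T\odot P$, in the form of Definition~\ref{def:twisted-bicat-trace} with trivial twist, instead rotates by $\theta_{{}^*P\odot T,\,P}$ and lands in $\sh{U_R\odot T}$.

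Matching the two requires three shadow facts:
\begin{itemize}
\item the shadow hexagon $\theta_{{}^*P\odot T,\,P}=\theta_{T,\,P\odot{}^*P}\circ\theta_{{}^*P,\,T\odot P}$;
\item naturality of $\theta_{T,-}$ with respect to $\bar\epsilon$;
\item the triangle axiom, which identifies $\theta_{T,U_R}$ with the unitor isomorphism $\sh{T\odot U_R}\cong\sh{U_R\odot T}$.
\end{itemize}
This is what your ``canonical isomorphism $\sh{{}^gU_R}\cong\sh{{}^gR}$'' hides.

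Both points are routine to supply, and the paper's own proof passes over them as well. They are nonetheless where the actual content of your Step~2 lies: you need the twisting-constraint axioms and shadow coherence, not bicategorical coherence alone.
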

\begin{proof}
Since $P$ is perfect, its underlying $1$-cell in $\Ho\mathbf{Mod}(\mathcal V)$ is left dualizable. Therefore Proposition~\ref{prop:Gtw-dualizable-recognition} implies that $(P,\tau)$ is left dualizable in $\Ho\mathbf{Mod}(\mathcal V)^{G\text{-tw}}$, so $\operatorname{tr}^g(f)$ is defined.

By definition of the degree-zero twisted Dennis trace, we have
\(
\operatorname{tr}_{\mathrm{Dennis}}^g([(P,u)]) = [\operatorname{tr}([u])].
\)
The hypothesis $[u]=\gamma_{g,P}\circ f$ allows us to replace $[u]$ by $\gamma_{g,P}\circ f$ in the trace composite defining $\operatorname{tr}([u])$. The coevaluation, the map induced by $f$, and the evaluation are then precisely the corresponding maps in Definition~\ref{def:twisted-bicat-trace}. Moreover, by \eqref{def:twisted-cyclicity}, the cyclicity map together with the transport determined by $\gamma_{g,P}$ is exactly the $g$-twisted cyclicity isomorphism ${}^g\theta$. Consequently, 
\(
\operatorname{tr}([u])
= \operatorname{tr}^g(f)\colon
I\to
\HH_{\mathcal V}(R;{}^gR).
\)
Passing to homotopy classes proves the result.
\end{proof}
The same argument gives the coefficient version.

\begin{cor}\label{cor:generalized-twisted-trace-comparison}
Let 
\(
(M,\tau_M)\colon R\to R\) and \(
(P,\tau_P)\colon R\to I
\)
be $1$-cells in $\Ho\mathbf{Mod}(\mathcal V)^{G\text{-tw}}$, and suppose that $P$ is perfect. Let 
\(
f\colon
(P,\tau_P)
\Rightarrow
(M,\tau_M)\otimes_R^{\mathbb L}(P,\tau_P)
\)
be a $2$-cell. Define 
\[
\widehat f\colon
P
\xrightarrow{f}
M\otimes_R^{\mathbb L}P
\xrightarrow{\gamma_{g,M\otimes_R^{\mathbb L}P}}
{}^g\bigl(M\otimes_R^{\mathbb L}P\bigr)
\cong
{}^gM\otimes_R^{\mathbb L}P.
\]
Suppose that $(P,u)\in\End(R,{}^gM)$ and that 
\([u]=\widehat f
\)
in the homotopy category of left $R$-modules. Then 
\[
\pi_0(\operatorname{trc}_{R;{}^gM})([(P,u)])
= [\operatorname{tr}^g(f)]
\in
\pi_0\HH_{\mathcal V}(R;{}^gM).
\]
\end{cor}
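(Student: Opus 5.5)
The plan is to rerun the proof of Proposition~\ref{prop:algebraic_dennis_trace_recovery} with coefficients ${}^gM$ in place of ${}^gR$. First, since $P$ is perfect, its underlying $1$-cell $P\colon R\to I$ is left dualizable in $\Ho\mathbf{Mod}(\mathcal V)$. Proposition~\ref{prop:Gtw-dualizable-recognition} then makes $(P,\tau_P)$ left dualizable in $\Ho\mathbf{Mod}(\mathcal V)^{G\text{-tw}}$, with dual ${}^*P$ carrying the induced twisting constraints. Hence the $g$-twisted trace of the $2$-cell $f\colon P\Rightarrow M\odot P$ (written in the paper's left-to-right composition convention with $Q=U_I$ and the $R$-endomorphism $M$) is defined by Definition~\ref{def:twisted-bicat-trace}. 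It is a map ${}^g\sh{U_I}=I\to{}^g\sh{M}_R=\HH_{\mathcal V}(R;{}^gM)$. On the other side, by \cite[Example~6.26]{CLMPI}, $\pi_0(\operatorname{trc}_{R;{}^gM})([(P,u)])=[\operatorname{tr}([u])]$, where $\operatorname{tr}$ is the ordinary bicategorical trace of $[u]\colon P\Rightarrow {}^gM\otimes^{\mathbb L}_R P$. So it suffices to prove the equality of maps $\operatorname{tr}([u])=\operatorname{tr}^g(f)$ in $\Ho\mathcal V$.

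Next I will substitute $[u]=\widehat f$ into the ordinary trace composite. That composite is: coevaluation $\bar\eta$, then $\id_{{}^*P}\odot\widehat f$, then the ordinary cyclicity $\theta$, then evaluation $\bar\epsilon$. I then factor $\widehat f$ as $f$, followed by $\gamma_{g,M\odot P}$, followed by the canonical identification ${}^g(M\odot P)\cong{}^gM\odot P$. Using naturality of $\theta$, the steps involving $\bar\eta$ and $f$ match the first two arrows of $\operatorname{tr}^g(f)$, once we note that ${}^g\sh{-}$ applied to $U_I$-level data is trivial because $I$ carries the trivial twisting.

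The remaining task is to identify the middle portion with ${}^g\theta$ from \eqref{def:twisted-cyclicity}, applied to the pair $(M\odot P,{}^*P)$, or more precisely in the form used in the trace. On the trace side, that portion is the ordinary $\theta$ preceded by $\gamma_{g,M\odot P}$. On the twisted side, it is $\theta$, then $\iota$, then the transport map $g_{{}^*P}$. I would use Lemma~\ref{lem:gM-coherence}(2) to express $\gamma_{g,M\odot P}$ (equivalently the transport $g_{M\odot P}$) through $\gamma_{g,M}$ and $\gamma_{g,P}$. I would then use that the bi-twist on ${}^*P$ is the mate of $\gamma_{g,P}$ (Proposition~\ref{prop:Gtw-dualizable-recognition}), together with the fact that $\bar\eta$ and $\bar\epsilon$ are $2$-cells of $\Ho\mathbf{Mod}(\mathcal V)^{G\text{-tw}}$ (they satisfy \eqref{equiv:constraint}). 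With these, the $\gamma_{g,P}$-transport can be slid across the evaluation and turned into the ${}^*P$-transport appearing in ${}^g\theta$.

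I expect this sliding step to be the main obstacle. The bookkeeping is delicate: the twist on the $P$ factor must cancel against the mate on ${}^*P$, leaving only the twist on the $M$ factor, and $\Theta_I(g)\cong U_I$ must be used via the unit axiom. I would first try to verify this with a single large diagram. In that diagram, the compatibility of $\bar\epsilon$ with twisting constraints and the triangle identities collapse the two transports, and the ordinary shadow hexagon handles rearranging $\theta$. Finally, passing to $\pi_0$ gives $[\operatorname{tr}^g(f)]$.
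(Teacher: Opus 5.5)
Your overall strategy (substitute $[u]=\widehat f$ into the ordinary trace composite and recognize ${}^g\theta$) is the same as the paper's. However, the step you flag as the main obstacle does not go through as planned, for two reasons.

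First, you misidentify the transport on the twisted side. With your data, the rotation in Definition~\ref{def:twisted-bicat-trace} is ${}^g\theta_{{}^*P\odot M,\,P}\colon{}^g\sh{{}^*P\odot M\odot P}_I\to{}^g\sh{P\odot{}^*P\odot M}_R$. It moves $P$ to the front, so by \eqref{def:twisted-cyclicity} the only transport it contains is $g_P$, not $g_{{}^*P}$. Up to the identifications coming from the trivial twisting on $I$, $g_P$ is just $\gamma_{g,P}$.

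Second, and more seriously, splitting $\gamma_{g,M\odot P}=(g_M\odot\id_P)\circ(\id_M\odot g_P)$ by Lemma~\ref{lem:gM-coherence}(2) produces a factor $g_M$ with no counterpart in $\operatorname{tr}^g(f)$. Carrying it through $\theta$ and $\bar\epsilon$ (naturality plus the hexagon) gives
\[
\operatorname{tr}([u])=\sh{g_M}\circ\theta_{\Theta_R(g^{-1}),M}\circ\operatorname{tr}^g(f)=\mathbf C_{g,g}(M)\circ\operatorname{tr}^g(f).
\]
So ``leaving only the twist on the $M$ factor'' is exactly the discrepancy, not a match. None of the tools you list can remove it, because you never use that $f$ is a $2$-cell of $\mathcal B^{G\text{-tw}}$, and that hypothesis is essential. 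Work in $\mathbf{Bimod}/\mathcal R\mathrm{ing}^{C_2\text{-tw}}$ with $R=I=\mathbb Z$ and $P=\mathbb Z$ carrying trivial actions, $M=\mathbb Z^2$ with $g$ swapping the summands, and the non-equivariant $f(1)=(1,0)$. The composite of Definition~\ref{def:twisted-bicat-trace} still makes sense and gives $(1,0)$, while $\operatorname{tr}([u])=(0,1)$ in $\HH_0^g(\mathbb Z;\mathbb Z^2)=\mathbb Z^2$.

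The missing step is to apply \eqref{equiv:constraint} to $f$ before anything else: $\gamma_{g,M\odot P}\circ f={}^gf\circ\gamma_{g,P}$. Then $[u]$ becomes ${}^gf\circ\gamma_{g,P}$, and no $M$-transport ever appears. Your mate idea then works, but in a different place. Compatibility of $\bar\eta$ with the twisting constraints trades $\id_{{}^*P}\odot\gamma_{g,P}$ for a transport on ${}^*P$, which slides past $f$ and $\theta$ by naturality. Compatibility of $\bar\epsilon$ then converts it back into $g_P\odot\id$ after the rotation, which is precisely ${}^g\theta_{{}^*P\odot M,P}$.

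Alternatively, keep your decomposition and then show $\mathbf C_{g,g}(M)\circ\operatorname{tr}^g(f)=\operatorname{tr}^g(f)\circ\mathbf C_{g,g}(U_I)=\operatorname{tr}^g(f)$. This uses naturality of $\mathbf C_{g,g}$ along the $\mathcal B^{G\text{-tw}}$-$2$-cells $\bar\eta$, $f$, $\bar\epsilon$, and its compatibility with ${}^g\theta$ from Proposition~\ref{prop:conj-transport-twisted-shadows}.

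The paper's own proof only says this is ``the same argument'' as Proposition~\ref{prop:algebraic_dennis_trace_recovery} and is silent on this point. There $M=R$, the extra transport is the canonical $g_{U_R}$, and $\mathbf C_{g,g}(U_R)=\id$, so nothing needs saying. With general coefficients, the equivariance of $f$ is genuinely needed.
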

\begin{proof}
By the degree-zero description of the generalized Dennis trace, 
\(
\pi_0(\operatorname{trc}_{R;{}^gM})([(P,u)]) = [\operatorname{tr}([u])].
\)
Substituting $[u]=\widehat f$ into this trace composite and using \eqref{def:twisted-cyclicity} identifies the resulting cyclicity and transport maps with ${}^g\theta$. The resulting composite is therefore the one defining $\operatorname{tr}^g(f)$.
\end{proof}

\begin{remark}\label{rem:connection-to-MM}
Suppose that $G=C_n$, that $R$ is a $C_n$-ring spectrum,
and that $g$ is the chosen generator. Let
$\Perf_R^{C_n}$ denote the category of perfect
left $R$-modules equipped with a coherent semilinear $C_n$-action.
By \cite[Theorem~1.3]{MM}, there is an equivalence
\(
K_{C_n}(R)^{C_n}
\simeq
K\bigl(\Perf_R^{C_n}\bigr).
\)

Restriction of a semilinear $C_n$-action to the chosen generator
defines an exact functor
\(F_g:
\Perf_R^{C_n}
\to
\Perf_{(R,g)}.
\)
The composite
\[
K_{C_n}(R)^{C_n}
\xrightarrow{\simeq}
K\bigl(\Perf_R^{C_n}\bigr)
\xrightarrow{K(F_g)}
K(R,g)
\xrightarrow{\operatorname{tr}_{\mathrm{Dennis}}^g}
\THH(R;{}^gR)
\cong
\THH_{C_n}(R)
\]
agrees with the twisted Dennis trace of
\cite[Corollary~6.2.1]{AGHKK}.
For an object
\(
(P,\tau)\in\Perf_R^{C_n},
\)
restriction to the chosen generator gives
\(
F_g(P,\tau)
=
(P,u_g)\in\Perf_{(R,g)},
\)
where the twisting constraint on \(P\) induces an identification
\(
[u_g]=\gamma_{g,P}\colon
P\xrightarrow{\cong}{}^gP
\)
in the homotopy category of left \(R\)-modules.
Therefore
Proposition~\ref{prop:algebraic_dennis_trace_recovery}
applies with \(f=\id_P\). Consequently, the degree-zero
value of the composite above on the class of \((P,\tau)\) is
\(
[\operatorname{tr}^g(\id_P)]
\in
\pi_0\THH(R;{}^gR)
\cong
\pi_0\THH_{C_n}(R).
\)
\end{remark}

 \subsection{Twisted equivariant Dennis traces} The degree-zero twisted Dennis trace discussed in Section~5.2 is a
homomorphism of abelian groups. For a \(G\)-Green functor \(\underline R\) where \(G\) is abelian, we construct a Mackey-functor-valued refinement of
this trace. At the orbit \(G/G\),
this morphism recovers the degree-zero generalized Dennis trace
associated to the automorphism induced by \(g\), as in
\cite[Section~6.1]{AGHKK}.

Throughout this subsection, put $G = C_n$, fix $g \in G$, and let $\underline{R}$ be a $G$-Green functor. For $H \le G$, write $\underline{R}_H := \operatorname{Res}_H^G \underline{R}$. Similarly, for an $\underline{R}$-bimodule $\underline{M}$, write $\underline{M}_H := \operatorname{Res}_H^G \underline{M}$. Recall from Example~\ref{ex:green-functors} that the action of $g$ induces an automorphism $\alpha_g\colon \underline{R} \xrightarrow{\cong} \underline{R}$ of the $G$-Green functor. For $H \le G$, we write $\alpha_g^H := \operatorname{Res}_H^G(\alpha_g)\colon \underline{R}_H \xrightarrow{\cong} \underline{R}_H$ for the induced automorphism of the restricted $H$-Green functor. If $\underline{P}$ is a left $\underline{R}_H$-module, we write ${}^g\underline{P}$ for the left module obtained by precomposing the left action with $\alpha_g^H$.

\begin{remark}\label{rem:restricted-twisting-data}
Although \(\underline R_H\) is an
\(H\)-Green functor, the twisting used below is still indexed by the
chosen element \(g\in C_n\), not necessarily by an element of \(H\). The \(C_n\)-twisting
data on \(\underline R\) restrict to a group homomorphism
\(
\alpha^H:C_n\to
\operatorname{Aut}_{\mathrm{Green}_H}(\underline R_H),
a\mapsto
\alpha_a^H:=\operatorname{Res}^{C_n}_H(\alpha_a).
\)
Thus the superscript \(g\) in
\(\underline{\HH}_0^g(\underline R_H;\underline M_H)\)
does not mean \(g\) is regarded as an element of \(H\) but denotes the \(g\)-twisted Hochschild construction of
Definition~\ref{def:twisted-HH-construction}, applied in
\(\mathbf{Mack}_H\) using the restricted \(C_n\)-twisting data \(\alpha_g^H\).
\end{remark}

Following \cite[Section~6.1]{AGHKK}, adapted to Green functors, let
\(\operatorname{Perf}_{(\underline R_H,g)}\)
be the exact category whose objects are pairs
\((\underline P,\gamma_{g,\underline P}),
\)
where \(\underline P\) is a finitely generated projective left
\(\underline R_H\)-module and
\(\gamma_{g,\underline P}\colon
\underline P\xrightarrow{\cong}{}^g\underline P\) is an isomorphism of left
\(\underline R_H\)-modules.  A morphism
\(
f\colon(\underline P,\gamma_{g,\underline P})
\to
(\underline Q,\gamma_{g,\underline Q})
\)
is a module map \(f\colon \underline P\to\underline Q\) such that
\(
{}^g f\circ \gamma_{g,\underline P}
=
\gamma_{g,\underline Q}\circ f .
\)
The exact structure is inherited from the exact category of perfect
\(\underline R_H\)-modules: a sequence is exact if
and only if its underlying sequence of modules is exact.

We first record that the change-of-groups functors preserve perfect
modules and are compatible with the additional \(g\)-twisted structure.

\begin{lemma}\label{lem:twisted-functor-extends}
Let \(K\leq H\leq C_n\). Restriction and induction induce exact functors
\[
\operatorname{Res}^H_K:
\operatorname{Perf}_{(\underline R_H,g)}
\longrightarrow
\operatorname{Perf}_{(\underline R_K,g)},
\qquad
\operatorname{Ind}^H_K:
\operatorname{Perf}_{(\underline R_K,g)}
\longrightarrow
\operatorname{Perf}_{(\underline R_H,g)}.
\]
For every \(a\in C_n\), conjugation induces an exact functor
\(
c_a^*:
\operatorname{Perf}_{(\underline R_H,g)}
\longrightarrow
\operatorname{Perf}_{(\underline R_{aHa^{-1}},aga^{-1})}.
\)
Since \(C_n\) is abelian, \(aHa^{-1}=H\) and \(aga^{-1}=g\), so the
target is \(\operatorname{Perf}_{(\underline R_H,g)}\).
\end{lemma}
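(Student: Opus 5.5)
The argument has two parts: first show that the underlying change-of-groups functors preserve finitely generated projective modules and are exact, then show that they carry a twisting isomorphism \(\gamma_{g,\underline P}\colon\underline P\xrightarrow{\cong}{}^g\underline P\) to a twisting isomorphism compatibly with morphisms.

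\textbf{Preservation of projectivity and exactness.} On underlying Mackey functors, \(\operatorname{Res}^H_K\) and \(\operatorname{Ind}^H_K\) are precomposition functors (Definition~\ref{def:mackey-change-of-groups}), so they are exact orbitwise. They also form an ambidextrous adjunction, hence preserve direct sums and summands.

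For the free modules, I would first show \(\operatorname{Res}^H_K(\underline R_H\square\underline A_X)\cong\underline R_K\square\underline A_{\res X}\), using that restriction is strong monoidal and sends representables to representables. Next, Lemma~\ref{lem:projection-formula-Mackey} gives
\[
\operatorname{Ind}^H_K(\underline R_K\square\underline A_Y)
\cong\operatorname{Ind}^H_K(\operatorname{Res}^H_K\underline R_H\square\underline A_Y)
\cong\underline R_H\square\operatorname{Ind}^H_K\underline A_Y ,
\]
and \(\operatorname{Ind}^H_K\underline A_Y\cong\underline A_{H\times_KY}\) by ambidexterity and Yoneda. The same projection formula supplies the \(\underline R_H\)-module structure on induced modules. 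Since finite free modules go to finite free modules and summands are preserved, finitely generated projective modules are preserved.

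\textbf{Compatibility with the twist.} The twisting automorphism on \(\underline R_K\) is by definition \(\alpha^K_g=\operatorname{Res}^H_K(\alpha^H_g)\) (Remark~\ref{rem:restricted-twisting-data}). Therefore \(\operatorname{Res}^H_K({}^g\underline P)={}^g(\operatorname{Res}^H_K\underline P)\) on the nose, and \(\operatorname{Res}^H_K(\gamma_{g,\underline P})\) is the required isomorphism.

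For induction, I need a natural isomorphism \(\operatorname{Ind}^H_K({}^g\underline Q)\cong{}^g\operatorname{Ind}^H_K\underline Q\). I would obtain it by applying the projection-formula isomorphism to the twisted action \(\lambda\circ(\alpha^K_g\square\id)\) and using naturality of the projection formula in the \(\underline R_H\) variable with respect to \(\alpha^H_g\). The induced structure map is then
\[
\operatorname{Ind}^H_K\underline Q\xrightarrow{\operatorname{Ind}\gamma}\operatorname{Ind}^H_K({}^g\underline Q)\cong{}^g\operatorname{Ind}^H_K\underline Q .
\]
The condition \({}^gf\circ\gamma_{\underline P}=\gamma_{\underline Q}\circ f\) on morphisms is preserved by functoriality together with naturality of this identification in \(\underline Q\). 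I expect this step to be the main obstacle: one must check that the projection-formula isomorphism is \(\underline R_H\)-linear for the twisted action. I would verify this orbitwise, using the explicit formula \((\operatorname{Ind}\underline N)(X)=\underline N(\res X)\), on which the twisted action is visibly compatible.

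\textbf{Exactness of the lifted functors.} Exactness in \(\operatorname{Perf}_{(-,g)}\) is detected on underlying modules, so the lifted functors are exact because the underlying ones are.

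\textbf{Conjugation.} For \(a\in C_n\), the conjugation functor \(c_a^*\) is induced by the equivalence \(\mathcal B_H\to\mathcal B_{aHa^{-1}}\). It is strong monoidal and exact, and it carries \(\alpha^H_g\) to \(\alpha_{aga^{-1}}\). Applying it to \((\underline P,\gamma)\) therefore gives an object of \(\operatorname{Perf}_{(\underline R_{aHa^{-1}},aga^{-1})}\). Since \(C_n\) is abelian, \(aHa^{-1}=H\) and \(aga^{-1}=g\), so the target is \(\operatorname{Perf}_{(\underline R_H,g)}\) as stated.
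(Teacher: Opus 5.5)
Your proposal is correct and follows essentially the same route as the paper. Like the paper, you show that finite free modules are preserved (via strong monoidality for restriction and conjugation, and the projection formula for induction), transport \(\gamma_g\) along the canonical identifications \(\Phi_{\operatorname{Res}}\), \(\Phi_{\operatorname{Ind}}\), \(\Phi_{c_a}\), get compatibility with morphisms from naturality, and read off exactness from the underlying modules. Two minor remarks: the restriction identification is canonical rather than literally on the nose, since it passes through \(\underline R_K\cong\operatorname{Res}^H_K\underline R_H\); and the induction step you flag as the main obstacle already follows from naturality of the projection formula in the \(\underline R_H\)-variable, so the orbitwise check is not needed.
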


\begin{proof}
Restriction and conjugation are exact strong symmetric monoidal functors,
so they preserve finite free modules and their retracts. For induction,
the projection formula gives
\(
\operatorname{Ind}_K^H
\bigl(\underline R_K\square\underline A_Y\bigr)
\cong
\underline R_H\square\underline A_{H\times_KY}
\)
for every finite \(K\)-set \(Y\). Hence induction also preserves finite
free modules and their retracts. Thus all three change-of-groups functors
preserve finitely generated projective modules.

It remains to verify compatibility with the chosen \(g\)-twist.
For \(K\leq H\), the maps \(\ell_g^H\) and \(\ell_g^K\) are compatible
with restriction. Hence the induced automorphisms of restricted Green
functors satisfy
\(\operatorname{Res}^H_K(\alpha_g^H)=\alpha_g^K\). This gives a canonical identification
\(
\Phi_{\operatorname{Res}}\colon
\operatorname{Res}^H_K({}^g\underline P)
\xrightarrow{\cong}
{}^g(\operatorname{Res}^H_K\underline P).
\)
Thus restriction sends \((\underline P,\gamma_{g})\) to
\(
\left(
\operatorname{Res}^H_K\underline P,\,
\Phi_{\operatorname{Res}}\circ
\operatorname{Res}^H_K(\gamma_{g})
\right).
\)

For induction, the projection formula gives a natural identification
\(
\Phi_{\operatorname{Ind}}\colon
\operatorname{Ind}^H_K({}^g\underline Q)
\xrightarrow{\cong}
{}^g(\operatorname{Ind}^H_K\underline Q).
\)
Thus induction sends \((\underline Q,\gamma_{g})\) to
\(
\left(
\operatorname{Ind}^H_K\underline Q,\,
\Phi_{\operatorname{Ind}}\circ
\operatorname{Ind}^H_K(\gamma_{g})
\right).
\)

For conjugation, there is a canonical identification
\(
c_a^*(\underline R_H)
\cong
\underline R_{aHa^{-1}}
\)
under which
\(
c_a^*(\alpha_g^H)
=
\alpha_{aga^{-1}}^{aHa^{-1}}.
\)
Consequently, strong monoidality of \(c_a^*\) gives a canonical
isomorphism
\(
\Phi_{c_a}\colon
c_a^*({}^g\underline P)
\xrightarrow{\cong}
{}^{aga^{-1}}(c_a^*\underline P).
\)
Since \(C_n\) is abelian, \(aHa^{-1}=H\) and \(aga^{-1}=g\), so this
becomes
\(
\Phi_{c_a}\colon
c_a^*({}^g\underline P)
\xrightarrow{\cong}
{}^g(c_a^*\underline P).
\)
Thus \(c_a^*\) sends \((\underline P,\gamma_{g})\) to
\(
\left(
c_a^*\underline P,\,
\Phi_{c_a}\circ c_a^*(\gamma_{g})
\right).
\)

On morphisms, the functors are given by applying the underlying
restriction, induction, or conjugation functor. Naturality of \(\Phi_{\operatorname{Res}}\), \(\Phi_{\operatorname{Ind}}\),
and \(\Phi_{c_a}\) gives compatibility with the defining squares for
morphisms in the categories \(\operatorname{Perf}_{(-,g)}\). Exactness follows from the underlying exact functors,
since exact sequences in \(\operatorname{Perf}_{(\underline R_H,g)}\) are
defined on the underlying perfect modules.
\end{proof}

\begin{definition}\label{def:twisted-equivariant-K0}
For \(H\leq C_n\), define
\(
\underline K_0^g(\underline R)(C_n/H)
\coloneqq
K_0\bigl(\operatorname{Perf}_{(\underline R_H,g)}\bigr).
\)

For \(K\leq H\), the restriction and transfer maps are induced by the exact functors of Lemma~\ref{lem:twisted-functor-extends}; explicitly,
\(
\res_K^H[(\underline P,\gamma_{g})]
=
\left[
\left(
\operatorname{Res}_K^H\underline P,\,
\Phi_{\operatorname{Res}}\circ
\operatorname{Res}_K^H(\gamma_{g})
\right)
\right],
\)
and
\(
\tr_K^H[(\underline Q,\gamma_{g})]
=
\left[
\left(
\operatorname{Ind}_K^H\underline Q,\,
\Phi_{\operatorname{Ind}}\circ
\operatorname{Ind}_K^H(\gamma_{g})
\right)
\right].
\)
For \(a\in C_n\), the conjugation map is induced by
\(
c_a^*[(\underline P,\gamma_{g})]
=
\left[
\left(
c_a^*\underline P,\,
\Phi_{c_a}\circ c_a^*(\gamma_{g})
\right)
\right].
\)
\end{definition}

\begin{prop}\label{prop:twisted-K0-Mackey}
The assignment in Definition~\ref{def:twisted-equivariant-K0} defines a
\(C_n\)-Mackey functor. We call it the \emph{\(g\)-twisted
\(K_0\)-Mackey functor} of \(\underline R\).
\end{prop}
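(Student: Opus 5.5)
The plan is to check the Mackey axioms at the level of exact functors between the categories \(\operatorname{Perf}_{(\underline R_H,g)}\), and then pass to \(K_0\). Since \(K_0\) of an exact category is functorial for exact functors and sends naturally isomorphic exact functors to equal maps, it suffices to produce natural isomorphisms of exact functors that lift each Mackey relation. Additivity of the resulting maps is automatic, because the functors of Lemma~\ref{lem:twisted-functor-extends} are exact and hence additive. The relations we must lift are:
\begin{itemize}
\item transitivity, \(\res^H_K\res^L_H=\res^L_K\) and \(\tr^L_H\tr^H_K=\tr^L_K\), with \(\res^H_H=\tr^H_H=\id\);
\item compatibility of conjugations with restriction and transfer, together with \(c_a^*=\id\) for \(a\in H\) on the \(C_n/H\) level;
\item the double-coset formula. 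Since \(C_n\) is abelian, it reads
\[
\res^H_L\tr^H_K=\sum_{a\in H/LK}\tr^L_{L\cap K}\,c_a^*\,\res^K_{L\cap K}
\]
for \(K,L\le H\).
\end{itemize}

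First I would treat the underlying untwisted functors. For Mackey functors there are classical natural isomorphisms \(\operatorname{Res}^H_K\operatorname{Res}^L_H\cong\operatorname{Res}^L_K\) and \(\operatorname{Ind}^L_H\operatorname{Ind}^H_K\cong\operatorname{Ind}^L_K\), coming from \(\res\) and \(\ind\) of finite sets via Definition~\ref{def:mackey-change-of-groups}. The double-coset formula lifts to a Mackey decomposition
\[
\operatorname{Res}^H_L\operatorname{Ind}^H_K\underline Q\cong\bigoplus_{a}\operatorname{Ind}^L_{L\cap K}c_a^*\operatorname{Res}^K_{L\cap K}\underline Q,
\]
which comes from the \(L\)-set decomposition \(\res^H_L(H\times_K Y)\cong\bigsqcup_a L\times_{L\cap K}(\text{twisted }Y)\). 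These isomorphisms are compatible with the module structures over the restricted Green functors, because the Green functor \(\underline R_H\) is itself obtained by restriction and the projection formula (Lemma~\ref{lem:projection-formula-Mackey}) is natural. In particular they preserve finitely generated projective modules, as already shown in Lemma~\ref{lem:twisted-functor-extends}. For \(a\in H\), the isomorphism \(c_a^*\cong\id\) on \(\underline R_H\)-modules is given by the Weyl-action of \(a\), which is trivial on \(H\)-Mackey functors since \(a\in H\).

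The key step, and the main obstacle, is to show that each of these natural isomorphisms \(\Xi\) is a morphism in \(\operatorname{Perf}_{(-,g)}\). Concretely, it must intertwine the twisting isomorphisms built from \(\Phi_{\operatorname{Res}}\), \(\Phi_{\operatorname{Ind}}\) and \(\Phi_{c_a}\): for transitivity of restriction this means \(\Phi_{\operatorname{Res}^H_K}\circ\operatorname{Res}^H_K(\Phi_{\operatorname{Res}^L_H})\) must correspond to \(\Phi_{\operatorname{Res}^L_K}\) under \(\Xi\), and similarly for the other relations. My approach is to observe that every \(\Phi\) is induced by the single family of orbit automorphisms \(\ell_g\) of Example~\ref{ex:green-functors}. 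These automorphisms commute with all maps of finite \(C_n\)-sets because \(C_n\) is abelian, so they commute with induction and restriction of sets and with the decomposition \(\res^H_L(H\times_KY)\cong\bigsqcup_a\cdots\). Therefore \({}^g(-)\), which is base change along \(\alpha_g\), is a natural transformation of the 2-functor of change-of-groups, and the required squares reduce to naturality of \(\Xi\) applied to \(\gamma_g\), together with this commutation. For the double-coset case I additionally need the twist to respect direct sums, which holds because \({}^g(-)\) is additive.

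Finally, I would pass to \(K_0\). The relation \([\underline P\oplus\underline P']=[\underline P]+[\underline P']\) with the direct-sum twisting turns the direct-sum isomorphism into the sum formula. The remaining Mackey identities then follow from the equalities of induced maps, so \(\underline K_0^g(\underline R)\) is a \(C_n\)-Mackey functor.
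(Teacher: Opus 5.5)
Your proposal is correct and follows essentially the same route as the paper: you lift each canonical change-of-groups natural isomorphism (transitivity, conjugation, double coset) to the categories \(\operatorname{Perf}_{(\underline R_H,g)}\). You do this by splitting the required square into naturality at \(\gamma_g\) plus compatibility of the twist identifications \(\Phi\) with those isomorphisms, and then pass to \(K_0\). Beyond the paper, you justify that compatibility via left multiplication by \(g\) being a natural automorphism of the identity on finite \(C_n\)-sets (since \(C_n\) is abelian), and you treat explicitly the direct sum in the double-coset formula and \(c_a^*\cong\id\) for \(a\in H\); the latter is a natural isomorphism given by the action of \(a\), not a literally trivial map. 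The paper only asserts these steps.
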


\begin{proof}
By Lemma~\ref{lem:twisted-functor-extends}, restriction, transfer, and
conjugation are induced by exact functors between the exact categories \(\operatorname{Perf}_{(\underline R_H,g)}\). Hence they induce homomorphisms on
Grothendieck groups.

For the underlying perfect-module categories, the identity, transitivity,
conjugation, and double-coset relations are induced by the standard
canonical natural isomorphisms among the corresponding composites of
restriction, induction, and conjugation functors. These functors preserve
perfect modules by the first paragraph of the proof of
Lemma~\ref{lem:twisted-functor-extends}.

We claim that these canonical natural isomorphisms lift to the categories
\(\operatorname{Perf}_{(\underline R_H,g)}\). Let \(F\) be any composite of
restriction, induction, and conjugation functors appearing in these
Mackey identities. By Lemma~\ref{lem:twisted-functor-extends}, together
with the projection formula in the induction case, the functor \(F\)
comes equipped with a canonical natural identification
\(
\chi_F(\underline P)\colon
F({}^g\underline P)\xrightarrow{\cong}{}^gF(\underline P),
\)
obtained by composing the corresponding maps
\(\Phi_{\operatorname{Res}}\), \(\Phi_{\operatorname{Ind}}\), and
\(\Phi_{c_a}\).

If \(\eta\colon F_1\xRightarrow{\cong}F_2\) is one of the canonical
natural isomorphisms appearing in the Mackey identities, then the following diagram commutes:
\[\begin{tikzcd}[column sep=3.5em, row sep=1.5em]
F_1(\underline P) 
  \ar[r,"F_1(\gamma_{g})"] 
  \ar[d,"\eta_{\underline P}"'] 
& 
F_1({}^g\underline P) 
  \ar[r,"\chi_{F_1}(\underline P)"] 
  \ar[d,"\eta_{{}^g\underline P}"] 
& 
{}^gF_1(\underline P) 
  \ar[d,"{}^g\eta_{\underline P}"] 
\\
F_2(\underline P) 
  \ar[r,"F_2(\gamma_{g})"'] 
& 
F_2({}^g\underline P) 
  \ar[r,"\chi_{F_2}(\underline P)"'] 
& 
{}^gF_2(\underline P)
\end{tikzcd}\]
The top square commutes by the naturality of \(\eta\) evaluated on the
morphism \(\gamma_{g,\underline P}\). The bottom square commutes because
the standard change-of-groups isomorphisms, including the double-coset
isomorphism, are compatible with the automorphisms \(\alpha_g\) and with
the projection-formula identifications used to define \(\chi\). Hence the
outer rectangle commutes, which precisely means that
\(\eta_{\underline P}\) defines an isomorphism
\(
\bigl(F_1(\underline P),
\chi_{F_1}(\underline P)\circ F_1(\gamma_{g})\bigr)
\cong
\bigl(F_2(\underline P),
\chi_{F_2}(\underline P)\circ F_2(\gamma_{g})\bigr)
\)
in the corresponding category \(\operatorname{Perf}_{(-,g)}\).

Consequently, the canonical natural isomorphisms proving the Mackey
identities for the underlying perfect module categories also prove the
same identities in the exact categories
\(\operatorname{Perf}_{(\underline R_H,g)}\). Passing to Grothendieck
groups gives the Mackey functor identities for
\(\underline K_0^g(\underline R)\).
\end{proof}

 We next record the corresponding compatibility for the twisted
degree-zero Hochschild homology.

\begin{lemma}\label{lem:twisted-HH0-restriction}
Let \(H\leq C_n\), and let \(\underline M\) be an
\((\underline R,\underline R)\)-bimodule.  There is a canonical isomorphism
of \(H\)-Mackey functors
\(
\underline{\HH}_0^g
\bigl(\underline R_H;\underline M_H\bigr)
\cong
\operatorname{Res}^{C_n}_H
\underline{\HH}_0^g(\underline R;\underline M),
\)
natural in both \(\underline R\) and \(\underline M\).  In particular, taking
\(\underline M=\underline R\) gives
\(
\underline{\HH}_0^g(\underline R_H)
\cong
\operatorname{Res}^{C_n}_H\underline{\HH}_0^g(\underline R).
\)
\end{lemma}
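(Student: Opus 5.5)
The plan is to exhibit both sides as coequalizers of parallel pairs of \(H\)-Mackey functors and to show that \(\operatorname{Res}^{C_n}_H\) carries one pair to the other. The isomorphism then follows from exactness of restriction.

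By Example~\ref{ex:twisted_HH0_concrete}(2), the Mackey functor \(\underline{\HH}_0^g(\underline R;\underline M)\) is the coequalizer in \(\mathbf{Mack}_{C_n}\) of the two maps
\[
{}^g\lambda=\lambda_{\underline M}\circ(\alpha_g\square\id),\qquad \rho\circ\mathfrak s\colon \underline R\square\underline M\rightrightarrows\underline M .
\]
The restriction functor \(\operatorname{Res}^{C_n}_H=(-)\circ\ind_H^{C_n}\) has both a left and a right adjoint, namely \(\operatorname{Ind}^{C_n}_H\), by the ambidextrous adjunction of Definition~\ref{def:mackey-change-of-groups}. Hence it preserves all colimits, in particular coequalizers. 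Applying \(\operatorname{Res}^{C_n}_H\) therefore identifies \(\operatorname{Res}^{C_n}_H\underline{\HH}_0^g(\underline R;\underline M)\) with the coequalizer of
\[
\operatorname{Res}({}^g\lambda),\ \operatorname{Res}(\rho\circ\mathfrak s)\colon \operatorname{Res}(\underline R\square\underline M)\rightrightarrows \underline M_H .
\]

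Next I use that \(\operatorname{Res}^{C_n}_H\) is strong symmetric monoidal for the box product. This is the fact already used in the proof of Lemma~\ref{lem:twisted-functor-extends}; it can also be read off from the projection formula, Lemma~\ref{lem:projection-formula-Mackey}, together with \(\operatorname{Res}\underline A\cong\underline A\). It gives a canonical isomorphism \(\operatorname{Res}(\underline R\square\underline M)\cong\underline R_H\square\underline M_H\), compatible with the symmetry \(\mathfrak s\). Strong monoidality also ensures that \(\underline R_H\) is an \(H\)-Green functor and that \(\underline M_H\) is an \(\underline R_H\)-bimodule, with actions given by \(\operatorname{Res}\lambda\) and \(\operatorname{Res}\rho\) transported along this isomorphism.

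Under these identifications, \(\operatorname{Res}(\rho\circ\mathfrak s)\) becomes \(\rho_{\underline M_H}\circ\mathfrak s\). Likewise, \(\operatorname{Res}({}^g\lambda)\) becomes \(\lambda_{\underline M_H}\circ(\alpha_g^H\square\id)\), because \(\alpha_g^H\) is by definition \(\operatorname{Res}(\alpha_g)\) (Remark~\ref{rem:restricted-twisting-data}). The parallel pair is therefore exactly the one defining \(\underline{\HH}_0^g(\underline R_H;\underline M_H)\), and the universal property yields the asserted canonical isomorphism.

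Naturality in \(\underline R\) and \(\underline M\) follows from naturality of the monoidal structure isomorphisms of \(\operatorname{Res}\) and of the comparison map between coequalizers. The special case is obtained by setting \(\underline M=\underline R\).

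The only step I expect to require genuine care is checking that the monoidal structure isomorphism of \(\operatorname{Res}^{C_n}_H\) is compatible with the symmetry. This compatibility is needed so that \(\operatorname{Res}(\rho\circ\mathfrak s)\) really is \(\rho_H\circ\mathfrak s\). I would verify it on representables \(\underline A_X\square\underline A_Y\cong\underline A_{X\times Y}\), where restriction sends \(X\times Y\) to \(\res X\times\res Y\) and the swap maps visibly correspond. The general case then follows by extending along colimits, since both \(\square\) and \(\operatorname{Res}\) preserve colimits in each variable.
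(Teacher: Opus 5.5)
Your argument is correct and is essentially the paper's: apply \(\operatorname{Res}^{C_n}_H\), which preserves colimits and is strong symmetric monoidal, to the defining coequalizer, and use \(\operatorname{Res}^{C_n}_H(\alpha_g)=\alpha_g^H\). The only difference is cosmetic: the paper moves the twist into the coefficients, via \(\underline{\HH}_0^g(\underline R;\underline M)=\underline{\HH}_0(\underline R;{}^g\underline M)\) and \(\operatorname{Res}^{C_n}_H({}^g\underline M)\cong{}^g(\underline M_H)\), whereas you keep it in the action map (your aside that strong monoidality can be read off from the projection formula is shaky, but the representable check you describe is the right justification).
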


\begin{proof}
By the definition of \(g\)-twisted Hochschild homology, there is an
identification
\(
\underline{\HH}_0^g(\underline R;\underline M)
=
\underline{\HH}_0(\underline R;{}^g\underline M).
\)
Since
\(
\operatorname{Res}_H^G(\alpha_g)=\alpha_g^H,
\)
there is a canonical identification of \(\underline R_H\)-bimodules
\(
\operatorname{Res}_H^G({}^g\underline M)
\cong
{}^g(\underline M_H).
\)
The restriction functor
\(\operatorname{Res}_H^G:\mathbf{Mack}_G\to\mathbf{Mack}_H\)
preserves colimits and is strong symmetric monoidal. Applying it to the
coequalizer defining
\(\underline{\HH}_0(\underline R;{}^g\underline M)\) therefore gives
the coequalizer defining
\(\underline{\HH}_0(\underline R_H;{}^g\underline M_H)\). Hence
\(
\underline{\HH}_0
\bigl(\underline R_H;
\operatorname{Res}_H^G({}^g\underline M)
\bigr)
\cong
\operatorname{Res}_H^G
\underline{\HH}_0(\underline R;{}^g\underline M).
\)
Using the preceding identification of coefficients, this becomes
\(
\underline{\HH}_0^g
\bigl(\underline R_H;\underline M_H\bigr)
\cong
\operatorname{Res}_H^G
\underline{\HH}_0^g(\underline R;\underline M).
\)
The case \(\underline M=\underline R\) follows immediately. Naturality
in \(\underline R\) and \(\underline M\) follows from the naturality of
the defining coequalizer and of the identification
\(\operatorname{Res}_H^G({}^g\underline M)\cong{}^g(\underline M_H)\).
\end{proof}

\begin{remark}\label{rem:twisted-orbit-eval}
The natural isomorphism in Lemma~\ref{lem:twisted-HH0-restriction} can
be evaluated explicitly on orbits. For \(K\le H\le C_n\), evaluating at the
\(H\)-orbit \(H/K\) gives a canonical isomorphism
\[
\vartheta_K^{H,g}\colon
\underline{\HH}_0^g(\underline R_H;\underline M_H)(H/K)
\xrightarrow{\cong}
\underline{\HH}_0^g(\underline R;\underline M)(C_n/K),
\] where
\(
\underline{\HH}_0^g(\underline R_H;\underline M_H)(H/K)
\cong
\bigl(
\operatorname{Res}^{C_n}_H
\underline{\HH}_0^g(\underline R;\underline M)
\bigr)(H/K)
\cong
\underline{\HH}_0^g(\underline R;\underline M)(C_n/K)
\).

In particular, when \(K=H\), we write
\[
\vartheta_H^g:=\vartheta_H^{H,g}\colon
\underline{\HH}_0^g(\underline R_H;\underline M_H)(H/H)
\xrightarrow{\cong}
\underline{\HH}_0^g(\underline R;\underline M)(C_n/H).
\]

Applying the same restriction argument as in
Lemma~\ref{lem:twisted-HH0-restriction} to the inclusion \(K\le H\), and using
the canonical identifications
\(
\underline R_K\cong \operatorname{Res}_K^H\underline R_H\) and
\(\underline M_K\cong \operatorname{Res}_K^H\underline M_H,
\)
we obtain a canonical isomorphism
\[
\iota_K^{H,g}\colon
\underline{\HH}_0^g(\underline R_K;\underline M_K)(K/K)
\xrightarrow{\cong}
\underline{\HH}_0^g(\underline R_H;\underline M_H)(H/K).
\]

These maps are compatible with the canonical orbit identifications. Namely,
the composite
\[
\underline{\HH}_0^g(\underline R_K;\underline M_K)(K/K)
\xrightarrow{\ \iota_K^{H,g}\ }
\underline{\HH}_0^g(\underline R_H;\underline M_H)(H/K)
\xrightarrow{\ \vartheta_K^{H,g}\ }
\underline{\HH}_0^g(\underline R;\underline M)(C_n/K)
\]
is the  map
\(
\vartheta_K^g:=\vartheta_K^{K,g}\colon
\underline{\HH}_0^g(\underline R_K;\underline M_K)(K/K)
\xrightarrow{\cong}
\underline{\HH}_0^g(\underline R;\underline M)(C_n/K).
\)
Equivalently,
\(
\vartheta_K^g=\vartheta_K^{H,g}\circ \iota_K^{H,g}.
\)
\end{remark}

Since the isomorphism in Lemma~\ref{lem:twisted-HH0-restriction} is an
isomorphism of \(H\)-Mackey functors, the maps
\(\vartheta_K^{H,g}\) are compatible with restriction and transfer in the
\(H\)-Mackey functor variables.  They are also compatible with conjugation,
where conjugation sends the \(g\)-twist to the \(aga^{-1}\)-twist; since
\(C_n\) is abelian, this is again the \(g\)-twist.

We now define the degree-zero \(g\)-twisted equivariant Dennis trace.
For \(H\leq C_n\) and a finitely generated projective left
\(\underline R_H\)-module \(\underline P\), write
\(
\operatorname{tr}_{\mathrm{HS},\underline P}^{g,H}\colon
\underline{\Hom}_{\underline R_H}
   (\underline P,{}^g\underline P)
\to
\underline{\HH}_0^g(\underline R_H)
\)
for the \(H\)-Mackey-functor-valued \(g\)-twisted
Hattori--Stallings trace.  For \(L\leq H\), we denote its component at
\(H/L\) by
\(
\bigl(
\operatorname{tr}_{\mathrm{HS},\underline P}^{g,H}
\bigr)_{H/L}.
\)
We suppress \(\underline P\) from the notation when it is clear from
the context.

If
\(
(\underline P,\gamma_g)\in
\operatorname{Perf}_{(\underline R_H,g)},
\)
then
\(
\gamma_g\in
\underline{\Hom}_{\underline R_H}
(\underline P,{}^g\underline P)(H/H),
\)
and therefore
\(
\bigl(
\operatorname{tr}_{\mathrm{HS},\underline P}^{g,H}
\bigr)_{H/H}(\gamma_g)
\in
\underline{\HH}_0^g(\underline R_H)(H/H).
\)
Composing with \(\vartheta_H^g\) gives an element of
\(\underline{\HH}_0^g(\underline R)(C_n/H)\).

\begin{definition}\label{def:twisted-equivariant-Dennis}
For each orbit \(C_n/H\), the \emph{\(g\)-twisted equivariant Dennis trace} at \(C_n/H\) is the map
\[
\bigl(\tr_{\mathrm{Dennis}}^g\bigr)_{C_n/H}\colon
\underline K_0^g(\underline R)(C_n/H)
\longrightarrow
\underline{\HH}_0^g(\underline R)(C_n/H)
\]
defined on a class represented by a \(g\)-twisted perfect
\(\underline R_H\)-module
\((\underline P,\gamma_{g})\) by
\[
\bigl(\tr_{\mathrm{Dennis}}^g\bigr)_{C_n/H}
\bigl([(\underline P,\gamma_{g})]\bigr)
:=
\vartheta_H^g
\left(
(\tr_{\mathrm{HS}}^{g,H})_{H/H}
(\gamma_{g})
\right).
\]
This is well-defined on \(K_0\) by
\cite[Theorem~6.1.1 and Remark~6.1.3]{AGHKK}, applied in
\(\mathcal V=s\mathbf{Mack}_H\), since the displayed formula is the
degree-zero value of the twisted Dennis trace constructed there.

\end{definition}

To show that the maps in Definition~\ref{def:twisted-equivariant-Dennis}
assemble to a morphism of Mackey functors, it remains to compare the
twisted Hattori--Stallings classes under restriction, induction, and
conjugation of \(g\)-twisted perfect modules.

The compatibility with induction is not formal from strong symmetric
monoidality, since induction is only oplax monoidal. We first record the
finite-free case.

\begin{lemma}\label{lem:twisted-HS-trace-ind-free}
Let \(K\leq H\leq C_n\), let \(Y\) be a finite \(K\)-set, and set
\(
\underline F:=\underline R_K\square\underline A_Y.
\)
For every morphism \(f\) in \(\underline{\Hom}_{\underline R_K}(\underline F,{}^g\underline F)(K/K)\), the following diagram commutes:
\[
\begin{tikzcd}[column sep=2.7em,row sep=2em]
\underline{\Hom}_{\underline R_K}
(\underline F,{}^g\underline F)(K/K)
  \ar[r,"{(\tr_{\mathrm{HS}}^{g,K})_{K/K}}"]
  \ar[d,"{f\mapsto
  \Phi_{\operatorname{Ind}}\circ\operatorname{Ind}_K^H(f)}"']
&
\underline{\HH}_0^g(\underline R_K)(K/K)
  \ar[d,"{\tr_K^H\circ\iota_K^{H,g}}"]
\\
\underline{\Hom}_{\underline R_H}
\bigl(\operatorname{Ind}_K^H\underline F,
{}^g\operatorname{Ind}_K^H\underline F\bigr)(H/H)
  \ar[r,"{(\tr_{\mathrm{HS}}^{g,H})_{H/H}}"']
&
\underline{\HH}_0^g(\underline R_H)(H/H).
\end{tikzcd}
\]
\end{lemma}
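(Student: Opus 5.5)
The plan is to make both traces completely explicit in the finite-free case and compare them through the Mackey double-coset formula. Throughout, Burnside-category evaluations of \(\underline R_K\)-linear maps out of a free module are computed via the Yoneda lemma and the tensor--Hom adjunction.

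\textbf{Step 1 (finite-free trace formula).} Since \(\underline F=\underline R_K\square\underline A_Y\) and \(\underline A_Y\) is self-dual (Lemma~\ref{lemma:selfdual}), we have \({}^*\underline F\cong\underline A_Y\square\underline R_K\). Hence
\[
{}^*\underline F\square_{\underline R_K}{}^g\underline F\cong {}^g\underline R_K\square\underline A_{Y\times Y},
\]
and \(\delta^{-1}\) identifies \(\underline{\Hom}_{\underline R_K}(\underline F,{}^g\underline F)(K/K)\) with \(\underline R_K(Y\times Y)\). Under this identification a map \(f\) corresponds to a ``matrix'' \(m_f\in\underline R_K(Y\times Y)\). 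Unwinding \(\bar\epsilon^{(g)}\) in \eqref{eq:PgP-to-gR} through the self-duality evaluation span \(Y\times Y\xleftarrow{\Delta}Y\to K/K\) should then give
\[
(\tr^{g,K}_{\mathrm{HS}})_{K/K}(f)=q\bigl(\tr_{Y}^{K/K}\,\res_{\Delta_Y}(m_f)\bigr)\in\underline{\HH}_0^g(\underline R_K)(K/K).
\]
Here \(\res_{\Delta_Y}\) is restriction along the diagonal \(\Delta_Y\colon Y\to Y\times Y\), and \(\tr_Y^{K/K}\) is transfer along \(Y\to K/K\). The same formula, with \(H\) and \(X:=H\times_KY\) in place of \(K\) and \(Y\), computes the bottom trace. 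I would verify this first with \(\underline R\) untwisted; the twist only enters through the coequalizer \(q\), so it does not affect the manipulation.

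\textbf{Step 2 (induction on matrices).} The projection formula gives \(\operatorname{Ind}_K^H\underline F\cong\underline R_H\square\underline A_X\). Under the identification of Step~1, the map \(\Phi_{\operatorname{Ind}}\circ\operatorname{Ind}_K^H(f)\) has matrix
\[
j_!(m_f)\in\underline R_H(X\times X),
\]
where \(j\colon H\times_K(Y\times Y)\to X\times X\) is the canonical map and \(j_!\) is the associated covariant (transfer) map. Here \(m_f\) is regarded in \(\underline R_H(H\times_K(Y\times Y))\cong\underline R_K(Y\times Y)\). I would establish this by tracking how \(\operatorname{Ind}_K^H\) acts on the representing element through the ambidextrous adjunction of Definition~\ref{def:mackey-change-of-groups}, using that \(\Phi_{\operatorname{Ind}}\) is compatible with \(\alpha_g\) (Lemma~\ref{lem:twisted-functor-extends}).

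\textbf{Step 3 (double coset).} Form the pullback of \(\Delta_X\colon X\to X\times X\) along \(j\). It is \(H\times_K\Delta_Y\), that is, the image of \(H\times_K Y\) inside \(H\times_K(Y\times Y)\). Indeed, \([h,y]=[h',y']\) in \(X\) together with a lift \([h,(y,y')]\) forces the point to lie on the diagonal. The Mackey double-coset formula then gives
\[
\res_{\Delta_X}\, j_!(m_f)=(\ind\Delta_Y)\text{-restriction of }m_f,
\]
pushed into \(\underline R_H(X)\). Now apply \(\tr_X^{H/H}\). This transfer factors as \(X=H\times_KY\to H\times_K(K/K)=H/K\to H/H\). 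The first factor is the induced image of \(\tr_Y^{K/K}\), and under \(\operatorname{Res}^H_K\) the second factor is \(\tr_K^H\) at \(H/K\).

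\textbf{Step 4 (conclusion).} Since \(q\) is a morphism of Mackey functors, it commutes with transfers. Identifying \(\underline{\HH}_0^g(\underline R_H)(H/K)\) with \(\underline{\HH}_0^g(\underline R_K)(K/K)\) via \(\iota_K^{H,g}\) (Remark~\ref{rem:twisted-orbit-eval}), the two paths around the square agree.

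The main obstacle I expect is Step~2. There one must check that the matrix of the induced map really is \(j_!(m_f)\), with no stray conjugation or twist contributions, since induction is only oplax monoidal and the dual of \(\operatorname{Ind}\underline F\) has to be matched with \(\operatorname{Ind}\) of the dual. To settle this I would argue directly on representables \(\underline A_Y\), where everything reduces to spans in the Burnside category.
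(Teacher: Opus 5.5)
Your proposal is correct and takes essentially the same route as the paper. Both arguments are a span calculation in the Burnside category. They use the same ingredients: the self-duality of \(\underline A_Y\); the projection formula \(\operatorname{Ind}_K^H\underline F\cong\underline R_H\square\underline A_{X}\) with \(X=H\times_KY\); the canonical map \(j\colon H\times_K(Y\times Y)\to X\times X\); and the factorization of \(X\to H/H\) through \(H/K\). The paper compresses most of this into ``the same span calculation''; your matrix description, and the identification of the pullback of \(\Delta_X\) along \(j\) with \(H\times_K\Delta_Y\), spell that step out.

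One remark on the obstacle you anticipate in Step~2. You compute the \(H\)-level trace directly from the self-duality of \(X\), so you never have to match the dual of \(\operatorname{Ind}_K^H\underline F\) with \(\operatorname{Ind}_K^H\) of the dual. What remains is only the Yoneda computation showing that the induced matrix is \(j_!(m_f)\).
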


\begin{proof}
The representable Mackey functor \(\underline A_Y\) is self-dual in
\(\mathbf{Mack}_K\), and hence the finite free module
\(\underline F=\underline R_K\square\underline A_Y\) is left
dualizable over \(\underline R_K\), with left dual
\(
{}^*\underline F
\cong
\underline A_Y\square\underline R_K.
\)
By the projection formula,
\(
\operatorname{Ind}_K^H\underline F
\cong
\underline R_H\square\underline A_{H\times_KY},
\)
which is likewise finite free and left dualizable over
\(\underline R_H\).

In the Burnside category \(\mathcal B_K\), the coevaluation and
evaluation exhibiting \(Y\) as self-dual are represented by
\(
K/K\xleftarrow{\ }Y\xrightarrow{\Delta}Y\times Y,
Y\times Y\xleftarrow{\Delta}Y\xrightarrow{\ }K/K.
\)
Applying \(H\times_K-\), composing with \(H/K\to H/H\), and using the
canonical map
\(
H\times_K(Y\times Y)\to
(H\times_KY)\times(H\times_KY),
[h,(y_1,y_2)]\mapsto([h,y_1],[h,y_2]),
\)
gives the coevaluation and evaluation used to compute the trace after
induction. The twist changes only the coefficient action, and the
identification
\(
\Phi_{\operatorname{Ind}}\colon
\operatorname{Ind}_K^H({}^g\underline F)
\xrightarrow{\cong}
{}^g(\operatorname{Ind}_K^H\underline F)
\)
identifies the induced twisted coefficient action with the
\(H\)-level one. Consequently, the same span calculation, followed by
the twisted Hochschild quotient, gives
\(
(\tr_{\mathrm{HS}}^{g,H})_{H/H}
\bigl(\Phi_{\operatorname{Ind}}\circ
\operatorname{Ind}_K^H(f)\bigr)
=
\tr_K^H\left(
\iota_K^{H,g}\left(
(\tr_{\mathrm{HS}}^{g,K})_{K/K}(f)
\right)\right).
\)
This is the asserted commutativity.
\end{proof}

\begin{prop}\label{prop:twisted-HS-Mackey-operations}
Let \(K\le H\le C_n\). The \(g\)-twisted Hattori--Stallings trace is
compatible with restriction, transfer, and conjugation in the following sense.

If
\(
(\underline P,\gamma_{g,\underline P})\in
\Perf_{(\underline R_H,g)},
\)
then
\[
\res_K^H
\left(
(\tr_{\mathrm{HS}}^{g,H})_{H/H}
(\gamma_{g,\underline P})
\right)
=
\iota_K^{H,g}
\left(
(\tr_{\mathrm{HS}}^{g,K})_{K/K}
\left(
\Phi_{\operatorname{Res}}\circ
\operatorname{Res}_K^H(\gamma_{g,\underline P})
\right)
\right).
\]

If
\(
(\underline Q,\gamma_{g,\underline Q})\in
\Perf_{(\underline R_K,g)},
\)
then
\[
\tr_K^H
\left(
\iota_K^{H,g}
\left(
(\tr_{\mathrm{HS}}^{g,K})_{K/K}
(\gamma_{g,\underline Q})
\right)
\right)
=
(\tr_{\mathrm{HS}}^{g,H})_{H/H}
\left(
\Phi_{\operatorname{Ind}}\circ
\operatorname{Ind}_K^H(\gamma_{g,\underline Q})
\right).
\]

Moreover, for \(a\in C_n\), if
\(
(\underline P,\gamma_{g,\underline P})\in
\Perf_{(\underline R_H,g)},
\)
then
\[
c_a^*
\left(
(\tr_{\mathrm{HS}}^{g,H})_{H/H}
(\gamma_{g,\underline P})
\right)
=
(\tr_{\mathrm{HS}}^{g,aHa^{-1}})_{aHa^{-1}/aHa^{-1}}
\left(
\Phi_{c_a}\circ c_a^*(\gamma_{g,\underline P})
\right).
\]
Since \(C_n\) is abelian, \(aHa^{-1}=H\).
\end{prop}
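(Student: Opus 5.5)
The plan is to treat the three assertions separately. Restriction and conjugation should follow formally from strong symmetric monoidality. Transfer is the one that needs work, and I would reduce it to the finite-free case already handled in Lemma~\ref{lem:twisted-HS-trace-ind-free}.

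\emph{Restriction and conjugation.} The functors \(\operatorname{Res}^H_K\) and \(c_a^*\) are exact, strong symmetric monoidal, and preserve colimits. I would first check that they carry every ingredient of \((\tr_{\mathrm{HS}}^{g,H})\) to the corresponding ingredient at level \(K\) (resp.\ \(aHa^{-1}\)). These ingredients are the left dual \({}^*\underline P=\underline{\Hom}_{\underline R_H}(\underline P,\underline R_H)\), the tensor--Hom isomorphism \(\delta_{\underline P,{}^g\underline P}\), the twisted evaluation \(\bar\epsilon^{(g)}\) of \eqref{eq:PgP-to-gR}, and the quotient \(q\colon{}^g\underline R_H\to\underline{\HH}_0^g(\underline R_H)\).

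Concretely, a strong monoidal functor sends a dual pair to a dual pair. Applying \(\operatorname{Res}^H_K\) to the coevaluation and evaluation of \((\underline P,{}^*\underline P)\) therefore gives duality data for \(\operatorname{Res}^H_K\underline P\). The identification \(\Phi_{\operatorname{Res}}\) intertwines the twisted actions because \(\operatorname{Res}^H_K(\alpha_g^H)=\alpha_g^K\). It follows that \(\operatorname{Res}^H_K\) applied to diagram \eqref{diag:twisted-pi-definition} is that diagram for \(\operatorname{Res}^H_K\underline P\), once we identify \(\operatorname{Res}^H_K\underline{\HH}_0^g(\underline R_H)\cong\underline{\HH}_0^g(\underline R_K)\) as in Lemma~\ref{lem:twisted-HH0-restriction} and Remark~\ref{rem:twisted-orbit-eval}.

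This yields a commutative square of \(K\)-Mackey functors comparing \(\operatorname{Res}^H_K(\tr_{\mathrm{HS}}^{g,H})\) with \(\tr_{\mathrm{HS}}^{g,K}\) after the twist correction \(\Phi_{\operatorname{Res}}\circ-\). Evaluating at \(K/K\), the left side becomes \(\res^H_K\) composed with the trace at \(H/H\); this is just the definition of \(\operatorname{Res}\) on \(\operatorname{Hom}\) and \(\underline{\HH}_0^g\) at orbits. The correction \(\iota_K^{H,g}\) appears exactly as defined in Remark~\ref{rem:twisted-orbit-eval}. Conjugation is handled by the same argument with \(\Phi_{c_a}\), using \(aga^{-1}=g\).

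\emph{Transfer.} Here we cannot argue monoidally, so I reduce to Lemma~\ref{lem:twisted-HS-trace-ind-free}. Write \(\underline Q\) as a retract of a finite free module \(\underline F=\underline R_K\square\underline A_Y\), with \(i\colon\underline Q\to\underline F\), \(r\colon\underline F\to\underline Q\) and \(ri=\id\). Set
\[
\tilde f:={}^g i\circ\gamma_{g,\underline Q}\circ r\in\underline{\Hom}_{\underline R_K}(\underline F,{}^g\underline F)(K/K).
\]
Twisted cyclicity (Proposition~\ref{prop:eq-algebraic-sliding}, evaluated at \(K/K\) where Bouc's pairing is composition, or its algebraic form Proposition~\ref{prop:algebraic-sliding}) gives \(\tr^{g,K}_{\mathrm{HS}}(\tilde f)=\tr^{g,K}_{\mathrm{HS}}({}^g r\circ{}^g i\circ\gamma_{g,\underline Q})=\tr^{g,K}_{\mathrm{HS}}(\gamma_{g,\underline Q})\). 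Next, \(\operatorname{Ind}^H_K\) preserves the retraction. By naturality of \(\Phi_{\operatorname{Ind}}\), the map \(\Phi_{\operatorname{Ind}}\circ\operatorname{Ind}(\tilde f)\) factors as \({}^g\operatorname{Ind}(i)\circ(\Phi_{\operatorname{Ind}}\circ\operatorname{Ind}\gamma_{g,\underline Q})\circ\operatorname{Ind}(r)\). Twisted cyclicity at level \(H\) then identifies its trace with the trace of \(\Phi_{\operatorname{Ind}}\circ\operatorname{Ind}\gamma_{g,\underline Q}\). Finally, Lemma~\ref{lem:twisted-HS-trace-ind-free} applied to \(\tilde f\) finishes the argument.

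The main obstacle is the naturality of \(\Phi_{\operatorname{Ind}}\) with respect to \({}^g i\) and \({}^g r\). It comes from the projection formula, so this step needs care. I would verify it by checking that the projection-formula isomorphism of Lemma~\ref{lem:projection-formula-Mackey} is natural in the module variable and commutes with precomposition by \(\alpha_g\).
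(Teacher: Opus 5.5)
Your proposal is correct and follows essentially the same route as the paper. Restriction and conjugation are handled by strong symmetric monoidality together with the twist identifications \(\Phi_{\operatorname{Res}}\) and \(\Phi_{c_a}\). The transfer formula is reduced to Lemma~\ref{lem:twisted-HS-trace-ind-free} by replacing \(\gamma_{g,\underline Q}\) with \({}^g i\circ\gamma_{g,\underline Q}\circ r\) on a finite free \(\underline F\), then applying Proposition~\ref{prop:eq-algebraic-sliding} at \(K/K\) and again, after induction and using naturality of \(\Phi_{\operatorname{Ind}}\), at \(H/H\).
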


\begin{proof}
The restriction functor \(\operatorname{Res}_K^H\) is strong symmetric
monoidal and is compatible with the chosen \(g\)-twists through
\(\Phi_{\operatorname{Res}}\). It therefore preserves the duality maps
and the twisted Hochschild coequalizer used to define the
Hattori--Stallings trace. Evaluating the resulting comparison at \(K/K\)
and using \(\iota_K^{H,g}\) gives the restriction formula.

We next prove the transfer formula. Choose a finite free
\(\underline R_K\)-module
\(
\underline F=\underline R_K\square\underline A_Y
\)
and a splitting
\(
\underline Q\xrightarrow{i}\underline F
\xrightarrow{p}\underline Q, p\circ i=\id_{\underline Q}.
\)
Define
\(
\widetilde\gamma_g
:={}^{g}i\circ\gamma_{g,\underline Q}\circ p
\colon
\underline F\longrightarrow{}^g\underline F.
\)
By evaluating the equivariant twisted cyclicity (Proposition~\ref{prop:eq-algebraic-sliding}) at the orbit $K/K$ for the module morphisms
\(
p:\underline F\to\underline Q
\)
and
\(
{}^gi\circ\gamma_{g,\underline Q}:
\underline Q\to{}^g\underline F,
\)
gives
\(
(\tr_{\mathrm{HS}}^{g,K})_{K/K}(\gamma_{g,\underline Q})
=
(\tr_{\mathrm{HS}}^{g,K})_{K/K}(\widetilde\gamma_g),
\)
because
\(
{}^gp\circ{}^gi\circ\gamma_{g,\underline Q}
=\gamma_{g,\underline Q}.
\)
After applying induction, the naturality and coherence of
\(\Phi_{\operatorname{Ind}}\), together with the induced splitting,
give in the same way
\(
(\tr_{\mathrm{HS}}^{g,H})_{H/H}
\bigl(\Phi_{\operatorname{Ind}}\circ
\operatorname{Ind}_K^H(\gamma_{g,\underline Q})\bigr)
=
(\tr_{\mathrm{HS}}^{g,H})_{H/H}
\bigl(\Phi_{\operatorname{Ind}}\circ
\operatorname{Ind}_K^H(\widetilde\gamma_g)\bigr).
\)
Applying Lemma~\ref{lem:twisted-HS-trace-ind-free} to
\(\widetilde\gamma_g\) now gives
\(
(\tr_{\mathrm{HS}}^{g,H})_{H/H}
\bigl(\Phi_{\operatorname{Ind}}\circ
\operatorname{Ind}_K^H(\gamma_{g,\underline Q})\bigr)=
\tr_K^H\left(
\iota_K^{H,g}\left(
(\tr_{\mathrm{HS}}^{g,K})_{K/K}
(\gamma_{g,\underline Q})
\right)\right),
\)
which is the transfer formula.

Finally, conjugation by \(a\) is a strong symmetric monoidal
equivalence and is compatible with the twisting through
\(\Phi_{c_a}\). It therefore transports the twisted
Hattori--Stallings trace over \(\underline R_H\) to the trace over
\(\underline R_{aHa^{-1}}\), with twist \(aga^{-1}\). Since \(C_n\) is
abelian, this is again the \(g\)-twist, giving the conjugation formula.
\end{proof}

\begin{thm}\label{thm:twisted-equivariant-Dennis}
The homomorphisms of Definition~\ref{def:twisted-equivariant-Dennis} assemble
to a morphism of \(C_n\)-Mackey functors
\(
\tr_{\mathrm{Dennis}}^g\colon
\underline K_0^g(\underline R)
\to
\underline{\HH}_0^g(\underline R).
\)
We call this morphism the \emph{\(g\)-twisted equivariant Dennis trace in
degree zero}.
\end{thm}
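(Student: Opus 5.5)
The strategy is to reduce the claim to the three compatibilities of Proposition~\ref{prop:twisted-HS-Mackey-operations}, transported along the orbit identifications $\vartheta$ of Remark~\ref{rem:twisted-orbit-eval}. Since a morphism of $C_n$-Mackey functors is a family of orbitwise homomorphisms compatible with restriction, transfer, and conjugation, it suffices to check three things. First, each component $(\tr_{\mathrm{Dennis}}^g)_{C_n/H}$ is a group homomorphism. Second, for $K\le H$ the squares with $\res_K^H$ and $\tr_K^H$ commute. Third, for $a\in C_n$ the square with $c_a^*$ commutes. Additivity on $K_0$ is already part of Definition~\ref{def:twisted-equivariant-Dennis}, which is well defined by \cite[Theorem~6.1.1]{AGHKK}. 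One can also see it directly: on direct sums this is Proposition~\ref{prop:twisted-additivity} (in its Mackey form). For a general short exact sequence, use the splitting of sequences of projective modules together with the cyclicity Proposition~\ref{prop:eq-algebraic-sliding}; triangular twisted automorphisms then have the same trace as their diagonal part.

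For restriction, take a class $[(\underline P,\gamma_g)]\in \underline K_0^g(\underline R)(C_n/H)$. The value of $(\tr^g_{\mathrm{Dennis}})_{C_n/K}\circ\res_K^H$ on it is
\[
\vartheta_K^g\bigl((\tr_{\mathrm{HS}}^{g,K})_{K/K}(\Phi_{\operatorname{Res}}\circ\operatorname{Res}_K^H(\gamma_g))\bigr).
\]
Write $\vartheta_K^g=\vartheta_K^{H,g}\circ\iota_K^{H,g}$ and apply the restriction formula of Proposition~\ref{prop:twisted-HS-Mackey-operations}. This gives
\[
\vartheta_K^{H,g}\bigl(\res_K^H(\tr_{\mathrm{HS}}^{g,H})_{H/H}(\gamma_g)\bigr).
\]
Because the isomorphism of Lemma~\ref{lem:twisted-HH0-restriction} is an isomorphism of $H$-Mackey functors, $\vartheta^{H,g}$ intertwines the $H$-level restriction $\res_K^H$ with the $C_n$-level restriction. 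The expression therefore equals $\res_K^H\vartheta_H^g(\cdots)$, as required. Transfer is handled symmetrically: start from $[(\underline Q,\gamma_g)]$ at $C_n/K$, rewrite $\vartheta_K^g$ as $\vartheta_K^{H,g}\iota_K^{H,g}$, pull $\tr_K^H$ through $\vartheta^{H,g}$, and apply the transfer formula of Proposition~\ref{prop:twisted-HS-Mackey-operations}. That formula rests on Lemma~\ref{lem:twisted-HS-trace-ind-free} and the reduction to finite free modules via idempotent splitting.

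For conjugation, use the conjugation formula of Proposition~\ref{prop:twisted-HS-Mackey-operations} together with the stated compatibility of $\vartheta$ with conjugation. Since $C_n$ is abelian, $aHa^{-1}=H$ and $aga^{-1}=g$, so both sides stay in the same $g$-twisted setting and no reindexing is required.

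The main obstacle is the bookkeeping of identifications rather than any new computation. One must make sure that the $H$-level operations $\res_K^H,\tr_K^H$ on $\underline{\HH}_0^g(\underline R_H)$ correspond, under $\vartheta^{H,g}$ and $\iota_K^{H,g}$, to the $C_n$-level Mackey operations on $\underline{\HH}_0^g(\underline R)$. The transfer case is the delicate one, because induction is only oplax monoidal. To handle it, I would use Lemma~\ref{lem:twisted-HH0-restriction} as an isomorphism of $H$-Mackey functors, and then identify the induction/projection-formula maps $\Phi_{\operatorname{Ind}}$ with the transfer structure of $\operatorname{Res}^{C_n}_H$ via the canonical orbit identification $(\operatorname{Res}_H^{C_n}\underline M)(H/K)\cong\underline M(C_n/K)$ of Definition~\ref{def:mackey-change-of-groups}.
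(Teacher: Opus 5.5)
Your proposal is correct and follows essentially the same route as the paper. You establish well-definedness orbitwise on $K_0$, then transport the restriction, transfer, and conjugation formulas of Proposition~\ref{prop:twisted-HS-Mackey-operations} along $\vartheta_K^g=\vartheta_K^{H,g}\circ\iota_K^{H,g}$, using that $\vartheta^{H,g}$ is an isomorphism of $H$-Mackey functors. Your splitting-plus-cyclicity argument for general short exact sequences is a useful refinement: at that step the paper only invokes the direct-sum additivity of Proposition~\ref{prop:twisted-additivity}, or \cite{AGHKK}.
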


\begin{proof}
For each subgroup \(H\le C_n\), let
\(
\tr_{\mathrm{Dennis}}^{g,H}\colon
K_0(\Perf_{(\underline R_H,g)})
\to
\underline{\HH}_0^g(\underline R_H)(H/H)
\)
denote the homomorphism induced by
\(
[(\underline P,\gamma_{g})]
\mapsto
(\tr_{\mathrm{HS}}^{g,H})_{H/H}
(\gamma_{g}).
\)
This is well-defined on \(K_0\) by the exact-sequence additivity of
the \(g\)-twisted Hattori--Stallings trace
(Proposition~\ref{prop:twisted-additivity}).

By Definition~\ref{def:twisted-equivariant-Dennis}, the orbitwise map is precisely
\(
\bigl(\tr_{\mathrm{Dennis}}^g\bigr)_{C_n/H} = \vartheta_H^g\circ\tr_{\mathrm{Dennis}}^{g,H}.
\)
Proposition~\ref{prop:twisted-HS-Mackey-operations}, together with the restriction, transfer, and conjugation compatibility of the canonical identifications \(\vartheta_H^g\), shows that these maps commute with all Mackey structure maps. Hence they assemble to the asserted morphism of \(C_n\)-Mackey functors.
\end{proof}

The following result identifies the \(g\)-twisted equivariant Dennis trace
with the \(g\)-twisted bicategorical trace of the identity \(2\)-cell on
representatives arising from the chosen \(C_n\)-twist structure. Thus it
provides a shadow-theoretic interpretation, at degree \(0\), of the
twisted trace map of \cite{AGHKK}.

\begin{cor}
    \label{corollary:twDennis-equals-twTrace-levelwise}
Let \(H\le C_n\), and let
\(
(\underline P,\gamma_{g})
\in
\Perf_{(\underline R_H,g)}
\)
be a \(g\)-twisted perfect \(\underline R_H\)-module such that
\(
\gamma_{g}\colon
\underline P\xrightarrow{\cong}{}^g\underline P
\)
is the isomorphism corresponding, under
Proposition~\ref{prop:gM-vs-gammaM}, to the \(g\)-component of a chosen
\(C_n\)-twist structure on \(\underline P\).  Then the following equality holds
in \(\underline{\HH}^{\,g}_0(\underline R)(C_n/H)\):
\[
\bigl(\tr_{\mathrm{Dennis}}^{\,g}\bigr)_{C_n/H}
\bigl([(\underline P,\gamma_{g})]\bigr)
=
\vartheta_H^{\,g}
\left(
\bigl(\tr^{g,H}(\id_{\underline P})\bigr)_{H/H}([1])
\right).
\]
Here \(\tr^{g,H}(\id_{\underline P})\) denotes the \(g\)-twisted
bicategorical trace computed in the bicategory of bimodules over
\(H\)-Green functors.
\end{cor}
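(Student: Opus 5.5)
Both sides are computed at the level \(H/H\) and then transported by the same isomorphism \(\vartheta_H^g\). So it suffices to prove, in \(\underline{\HH}^{\,g}_0(\underline R_H)(H/H)\), the identity
\[
\bigl(\tr_{\mathrm{HS}}^{g,H}\bigr)_{H/H}(\gamma_g)
=
\bigl(\tr^{g,H}(\id_{\underline P})\bigr)_{H/H}([1]).
\]
By Definition~\ref{def:twisted-equivariant-Dennis}, the left-hand side of the corollary is exactly \(\vartheta_H^g\) applied to the left side of this display. Once the display is proved, we apply \(\vartheta_H^g\) to conclude.

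To prove the display, regard \(\underline P\) as a \(1\)-cell \(\underline R_H\to\underline A\) in \(\mathbf{Bimod}/\mathbf{Green}^{H}\). Here \(\underline A\) is the Burnside Green functor with trivial twisting data, and the right \(\underline A\)-action on \(\underline P\) is the unit action. The chosen \(C_n\)-twist structure on \(\underline P\), restricted along \(\alpha^H\) as in Remark~\ref{rem:restricted-twisting-data}, makes \((\underline P,\gamma)\) a \(1\)-cell in the bicategory of twists. Since \(\underline P\) is finitely generated projective, it is left dualizable, and Example~\ref{ex:dualizable_green_functor} together with Proposition~\ref{prop:Gtw-dualizable-recognition} lift the dual pair to the bicategory of twists. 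Hence \(\tr^{g,H}(\id_{\underline P})\) is defined.

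Next we apply Theorem~\ref{thm:tw-eq-bicat-vs-HS} over \(H\) with \(\underline S=\underline A\) and \(f=\id_{\underline P}\). We evaluate at \(X=H/H\) on the unit \(s=1\in\underline A(H/H)\), whose image under \(q^{(g)}\) is \([1]\). As in the remark preceding Corollary~\ref{cor:twHS-recovery-Z}, the trivial twisting data give \(\underline{\HH}^g_0(\underline A)\cong\underline A\), so this identification is legitimate. The theorem then gives
\[
\bigl(\tr^{g,H}(\id_{\underline P})\bigr)_{H/H}([1])
=
\bigl(\tr^{\,g}_{\mathrm{HS},\gamma}\bigr)_{H/H}(\varrho_{H/H,1}).
\]

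It remains to identify \(\varrho_{H/H,1}\). By \eqref{eq:rhoXs-def}, with \(\widehat 1\colon\underline A_{H/H}\cong\underline A\to\underline A\) the identity, the map \(\varrho_{H/H,1}\) is the composite of the unit isomorphisms. Under the identification \(\underline P_{H/H}\cong\underline P\), it is therefore \(\id_{\underline P}\). By definition, \(\tr^{\,g}_{\mathrm{HS},\gamma}=\tr^{g}_{\mathrm{HS}}\circ(\gamma_g)_*\), so the right-hand side equals \((\tr^{g,H}_{\mathrm{HS}})_{H/H}(\gamma_g\circ\id_{\underline P})=(\tr^{g,H}_{\mathrm{HS}})_{H/H}(\gamma_g)\), which is the display.

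The main obstacle is bookkeeping rather than a new argument, in two places. First, we must check that the \(\gamma_g\) in the statement, an isomorphism \(\underline P\to{}^g\underline P\) of left modules, agrees with the bi-twist \(\gamma_g\) used in Theorem~\ref{thm:tw-eq-bicat-vs-HS} after the canonical identification \(\underline P^g\cong\underline P\). This holds because the right twist over \(\underline A\) is trivial, and the hypothesis is precisely that \(\gamma_g\) comes from the chosen twist structure via Proposition~\ref{prop:gM-vs-gammaM}. Second, we must justify that Theorem~\ref{thm:tw-eq-bicat-vs-HS}, stated for \(C_n\)-Green functors with their canonical twisting, holds verbatim for \(H\)-Green functors with the restricted twisting data \(\alpha^H_g\). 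Its proof uses only the duality, tensor--Hom and coequalizer formalism, none of which depends on \(g\) lying in \(H\), so the argument carries over unchanged.
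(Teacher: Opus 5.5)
Your proposal is correct and follows the paper's proof. The paper simply applies Theorem~\ref{thm:tw-eq-bicat-vs-HS} in \(\mathbf{Mack}_H\) to \(f=\id_{\underline P}\) and composes with \(\vartheta_H^g\); your added bookkeeping (target \(0\)-cell \(\underline A\) with trivial twisting, evaluation at \(s=1\in\underline A(H/H)\), \(\varrho_{H/H,1}=\id_{\underline P}\), and \(\tr^{g}_{\mathrm{HS},\gamma}(\id_{\underline P})=\tr^{g}_{\mathrm{HS}}(\gamma_g)\)) just makes explicit the steps the paper leaves implicit.
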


\begin{proof}
This follows by applying
Theorem~\ref{thm:tw-eq-bicat-vs-HS} in
\(\mathbf{Mack}_H\) to \(f=\id_{\underline P}\), and then composing
with \(\vartheta_H^g\).
\end{proof}

\subsection{Reduced twisted Dennis traces and Teichm\"uller maps}
In this subsection, we extend the construction of the preceding
subsection to define a reduced degree-zero twisted equivariant Dennis
trace and show that it recovers the \(m=1\)
Teichm\"uller maps of \cite[Theorem~6.13]{BGHL}.

For an ordinary commutative ring \(A\) and \(a\in A\), let
\(m_a\colon A\to A\), \(x\mapsto ax\), be the \(A\)-linear
endomorphism given by multiplication by \(a\). For every \(d\geq 1\),
one has
\(m_a^d=m_{a^d}\) and
\(\operatorname{tr}_{\mathrm{HS}}(m_a^d)=a^d\).
Thus the Hattori--Stallings traces of the iterates of \(m_a\) recover
the ghost components of the Teichm\"uller element
\([a]\in{\mathbb W}(A)\) in the big Witt ring.
This observation has a twisted equivariant analogue.

Recall that a \(C_n\)-Tambara functor is a commutative
\(C_n\)-Green functor equipped with multiplicative norm maps satisfying
the Tambara compatibility and distributivity relations
\cite{Tambara}. Let \(\underline R\) be a \(C_n\)-Tambara functor,
and fix the generator \(g=e^{2\pi i/n}\) of \(C_n\). For
\(H\leq C_n\), write
\(N_H^{C_n}\colon\underline R(C_n/H)\to
\underline R(C_n/C_n)\)
for the internal Tambara norm.

Specializing \cite[Definition~6.2]{BGHL} to \(m=1\) gives
\(
\underline{\mathbb W}_{C_n}(\underline R)
=
\underline{\HH}^{C_n}_{C_n}(\underline R)_0.
\)
For the chosen generator \(g\), the right-hand side is canonically
identified with \(\underline{\HH}_0^g(\underline R)\).
For each \(H\leq C_n\), let
\(
\tau_H\colon
\underline R(C_n/H)
\longrightarrow
\underline{\mathbb W}_{C_n}(\underline R)(C_n/C_n)
\)
denote the Teichm\"uller map constructed in
\cite[Theorem~6.13]{BGHL} from the external norm of
\cite[Proposition~6.12]{BGHL}. Composing the external norm with the
Tambara structure map gives the internal norm \(N_H^{C_n}\).
Consequently, under the preceding identification,
\begin{equation}\label{eq:BGHL-Teichmuller}
\tau_H(r)
=
\bigl[N_H^{C_n}(r)\bigr].
\end{equation}
Here the brackets denote the image under the component at
\(C_n/C_n\) of the quotient map \(q^{(g)}\) from
\eqref{eq:qM-g}, specialized to \(\underline M=\underline R\).

For \(K\leq C_n\) and \(x\in\underline R(C_n/K)=\underline R_K(K/K)\), let \(f_x^K\colon\underline R_K\to{}^g\underline R_K\) be the left $\underline R_K$-module map corresponding to \(x\) under the canonical isomorphism
\begin{equation*}
\underline{\Hom}_{\underline R_K}(\underline R_K,{}^g\underline R_K)(K/K)
\cong
{}^g\underline R_K(K/K)
\cong
\underline R(C_n/K).
\end{equation*}
Thus \((f_x^K)_{K/K}(1)=x\), where \(1\) is the multiplicative unit of \(\underline R_K(K/K)\). More generally, if \(L\leq K\) and \(a\in\underline R_K(K/L)\), then \((f_x^K)_{K/L}(a) = \alpha_g^K(a)\res_L^K(x)\).  In particular, for
\(r\in\underline R(C_n/H)\), the element \(N_H^{C_n}(r)\) determines
the \(\underline R\)-module map
\(
f_{N_H^{C_n}(r)}^{C_n}\colon
\underline R\to{}^g\underline R.
\)

The map \(f_x^K\) need not be invertible and hence does not in general define an object of \(\Perf_{(\underline R_K,g)}\). We thus extend the trace to arbitrary maps into the twist. For $H\leq C_n$, let $\Perf(\underline R_H)$ denote the exact category of finitely generated projective left $\underline R_H$-modules. Define $\End^g(\underline R_H)$ to be the exact category whose objects are pairs $(\underline P,f)$, where $\underline P\in\Perf(\underline R_H)$ and $f\colon\underline P\to{}^g\underline P$. Morphisms $u\colon(\underline P,f)\to(\underline Q,h)$ in this category are $\underline R_H$-module maps satisfying ${}^g u\circ f=h\circ u$. A sequence in $\End^g(\underline R_H)$ is exact if its underlying sequence in $\Perf(\underline R_H)$ is exact. 

Under the canonical identification ${}^g\underline R_H \square_{\underline R_H}\underline P \cong {}^g\underline P$, the category $\End^g(\underline R_H)$ is precisely the degree-zero Green-functor analogue of the parametrized endomorphism category $\End(\underline R_H,{}^g\underline R_H)$ from Section~5.2. The zero-endomorphism functor \(z_H\colon \Perf(\underline R_H)\to\End^g(\underline R_H)\), given by \(\underline P\mapsto(\underline P,0)\), is split by the forgetful functor. We define the associated reduced Grothendieck group as the cokernel:
\begin{equation*}
\widetilde K_0\bigl(\End^g(\underline R_H)\bigr)
:=
\coker\!\left(
K_0\bigl(\Perf(\underline R_H)\bigr)
\xrightarrow{\,K_0(z_H)\,}
K_0\bigl(\End^g(\underline R_H)\bigr)
\right).
\end{equation*}
Orbitwise, we set \(\widetilde{\underline K}_0(\End^g(\underline R))(C_n/H) := \widetilde K_0(\End^g(\underline R_H))\).

\begin{prop}\label{prop:reduced-twisted-Dennis-Mackey}
The groups
\(
\widetilde{\underline K}_0
(\End^g(\underline R))(C_n/H)
\),
for \(H\leq C_n\), assemble to a \(C_n\)-Mackey functor, and the maps
\begin{equation*}
\begin{aligned}
\bigl(\widetilde{\tr}_{\mathrm{Dennis}}^g\bigr)_{C_n/H}\colon
\widetilde{\underline K}_0
(\End^g(\underline R))(C_n/H)
&\longrightarrow
\underline{\HH}_0^g(\underline R)(C_n/H),\quad 
[(\underline P,f)]
&\longmapsto
\vartheta_H^g
\left(
(\tr_{\mathrm{HS}}^{g,H})_{H/H}(f)
\right)
\end{aligned}
\end{equation*}
form a morphism of \(C_n\)-Mackey functors.
\end{prop}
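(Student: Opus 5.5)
The argument follows the proofs of Proposition~\ref{prop:twisted-K0-Mackey}, Lemma~\ref{lem:twisted-functor-extends}, Proposition~\ref{prop:twisted-HS-Mackey-operations} and Theorem~\ref{thm:twisted-equivariant-Dennis}, with $\Perf_{(\underline R_H,g)}$ replaced by $\End^g(\underline R_H)$ and invertibility dropped. None of those arguments used invertibility of $\gamma_g$ except to land in $\Perf_{(-,g)}$.

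\textbf{Step 1: the Mackey functor $K_0(\End^g(\underline R_H))$.} First, $\operatorname{Res}^H_K$, $\operatorname{Ind}^H_K$ and $c_a^*$ define exact functors on the categories $\End^g(-)$ by the same formulas as before:
\[
(\underline P,f)\mapsto \bigl(F\underline P,\ \chi_F(\underline P)\circ F(f)\bigr).
\]
Here $\chi_F$ is built from $\Phi_{\operatorname{Res}}$, $\Phi_{\operatorname{Ind}}$ and $\Phi_{c_a}$. The proof of Lemma~\ref{lem:twisted-functor-extends} only used naturality of these identifications and the fact that the change-of-groups functors preserve finitely generated projective modules, so it applies verbatim. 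The lifting of the canonical Mackey isomorphisms in the proof of Proposition~\ref{prop:twisted-K0-Mackey} is likewise a naturality square applied to the morphism $f$, and this works for arbitrary $f$. Hence $H\mapsto K_0(\End^g(\underline R_H))$ is a $C_n$-Mackey functor, and so is $H\mapsto K_0(\Perf(\underline R_H))$.

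\textbf{Step 2: passing to the cokernel.} The zero-endomorphism functors $z_H$ commute with restriction, induction and conjugation, since each of these functors sends $0$ to $0$. So $K_0(z)$ is a morphism of Mackey functors. Cokernels of Mackey functors are computed orbitwise, so $\widetilde{\underline K}_0(\End^g(\underline R))$ is a Mackey functor.

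\textbf{Step 3: the trace is well defined.} Next, the orbitwise map $[(\underline P,f)]\mapsto (\tr^{g,H}_{\mathrm{HS}})_{H/H}(f)$ must be well defined on $K_0(\End^g(\underline R_H))$. It respects isomorphisms: if $u\colon(\underline P,f)\cong(\underline Q,h)$, then $h={}^gu\circ f\circ u^{-1}$, and the Mackey-level twisted cyclicity (Proposition~\ref{prop:eq-algebraic-sliding}, evaluated at $H/H$, where $\mathrm{tw}_g$ becomes $u\mapsto{}^gu$) gives $\tr(h)=\tr(f)$.

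For additivity on exact sequences $0\to(\underline P',f')\to(\underline P,f)\to(\underline P'',f'')\to0$, I would use that the sequence splits because $\underline P''$ is projective. Choosing a splitting $\underline P\cong\underline P'\oplus\underline P''$, the map $f$ becomes upper triangular with diagonal entries $f'$ and $f''$. The trace of a block matrix is the sum of the traces of its diagonal blocks, by the Mackey analogue of Proposition~\ref{prop:twisted-additivity} (compatibility of $\delta$ and $\pi^{(g)}$ with biproducts); the off-diagonal block contributes nothing. This is the main technical point, since Proposition~\ref{prop:twisted-additivity} as stated only covers block-diagonal maps. I would prove it by writing $f=\sum_{i,j} {}^g\iota_i f_{ij}\pi_j$ and applying twisted cyclicity to each term, so that the term for $(i,j)$ with $i\ne j$ becomes $\tr(f_{ij}\circ \pi_j\iota_i)=\tr(0)=0$.

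Finally, $\tr(0)=0$, so the trace kills the image of $K_0(z_H)$ and descends to $\widetilde K_0$.

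\textbf{Step 4: compatibility with the Mackey structure.} The formulas of Proposition~\ref{prop:twisted-HS-Mackey-operations} must hold for arbitrary $f$ in place of $\gamma_g$. The restriction and conjugation cases used only strong monoidality and compatibility with the twist, so they carry over directly. The transfer case reduces, via a splitting $\underline Q\xrightarrow{i}\underline F\xrightarrow{p}\underline Q$ and the map $\widetilde f={}^gi\circ f\circ p$, to the finite-free Lemma~\ref{lem:twisted-HS-trace-ind-free}, which is already stated for arbitrary $f$. Combining these with the compatibility of the $\vartheta^g_H$ with restriction, transfer and conjugation, exactly as in the proof of Theorem~\ref{thm:twisted-equivariant-Dennis}, the orbitwise maps assemble to a morphism of $C_n$-Mackey functors.
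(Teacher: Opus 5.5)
Your proposal is correct and follows the paper's proof essentially step for step. You extend the change-of-groups functors to $\End^g$ via $(\underline P,f)\mapsto(F\underline P,\Phi_F\circ F(f))$, observe that $K_0(z_H)$ is a morphism of Mackey functors and pass to cokernels, descend the trace using additivity and $\tr^{g,H}_{\mathrm{HS}}(0)=0$, and rerun Proposition~\ref{prop:twisted-HS-Mackey-operations} with non-invertible $f$.

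Your extra argument in Step~3 fills in a detail the paper glosses over. After splitting, an exact sequence in $\End^g(\underline R_H)$ only makes $f$ upper-triangular, not block-diagonal, and you show the off-diagonal block contributes zero by twisted cyclicity. The paper instead simply cites Proposition~\ref{prop:twisted-additivity}, which as stated covers only direct sums, so your version strengthens the argument rather than departing from it.
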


\begin{proof}
For each restriction, induction, or conjugation functor \(F\), let
\(
\Phi_F\colon F({}^g\underline P)\xrightarrow{\cong}{}^gF(\underline P)
\)
be the  twist isomorphism from
Lemma~\ref{lem:twisted-functor-extends}. This defines an exact functor on parametrized endomorphisms via \(F(\underline P,f) = \bigl(F(\underline P),\Phi_F\circ F(f)\bigr)\). The proof of
Proposition~\ref{prop:twisted-K0-Mackey} applies without requiring
\(f\) to be invertible and gives a \(C_n\)-Mackey functor
\(
H\mapsto K_0(\End^g(\underline R_H)).
\)
The maps \(K_0(z_H)\) commute with restriction, induction, and
conjugation, and hence form a morphism of \(C_n\)-Mackey functors.
Their cokernels therefore form the asserted reduced
\(C_n\)-Mackey functor.

By the exact-sequence additivity of the twisted
Hattori--Stallings trace
(Proposition~\ref{prop:twisted-additivity}), the orbitwise traces
induce homomorphisms
\[
K_0\bigl(\End^g(\underline R_H)\bigr)
\longrightarrow
\underline{\HH}_0^g(\underline R_H)(H/H).
\]
They factor through the reduced groups because
\(
\tr_{\mathrm{HS}}^{g,H}(0)=0.
\) 
The proof of
Proposition~\ref{prop:twisted-HS-Mackey-operations} applies without change
to arbitrary maps \(f\colon\underline P\to{}^g\underline P\).
Together with the compatibility of \(\vartheta_H^g\), these formulas
show that the orbitwise maps form a morphism of Mackey functors.
\end{proof}

\begin{prop}\label{prop:teichmuller-as-twisted-Dennis}
Let \(H\leq C_n\) and
\(r\in\underline R(C_n/H)\). The reduced \(g\)-twisted Dennis trace
of the norm-induced map
\(
f_{N_H^{C_n}(r)}^{C_n}\colon
\underline R\to{}^g\underline R
\)
is the Teichm\"uller class \(\tau_H(r)\):
\(
\bigl(\widetilde{\tr}_{\mathrm{Dennis}}^g\bigr)_{C_n/C_n}
\left[
\left(
\underline R,
f_{N_H^{C_n}(r)}^{C_n}
\right)
\right]
=
\tau_H(r).
\)
\end{prop}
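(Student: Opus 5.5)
We compute the trace directly from the definitions, working at the orbit \(C_n/C_n\) with \(H'=C_n\). For \(H'=C_n\) we have \(\underline R_{C_n}=\underline R\), and \(\vartheta_{C_n}^g\) is the identity identification \(\underline{\HH}_0^g(\underline R)(C_n/C_n)=\underline{\HH}_0^g(\underline R)(C_n/C_n)\). So by Proposition~\ref{prop:reduced-twisted-Dennis-Mackey} the left-hand side is the image in the reduced group of
\[
(\tr_{\mathrm{HS}}^{g,C_n})_{C_n/C_n}\bigl(f_x^{C_n}\bigr),\qquad x:=N_H^{C_n}(r)\in\underline R(C_n/C_n).
\]
By \eqref{eq:BGHL-Teichmuller}, it therefore suffices to show that this twisted Hattori--Stallings trace equals \([x]=q^{(g)}_{C_n/C_n}(x)\).

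\emph{Step 1: dual and coevaluation of the free rank-one module.} Take \(\underline P=\underline R=\underline R\square\underline A_{C_n/C_n}\). Its left dual is \({}^*\underline R=\underline{\Hom}_{\underline R}(\underline R,\underline R)\cong\underline R\), with evaluation given by multiplication. Under this identification,
\[
\delta_{\underline R,{}^g\underline R}\colon{}^*\underline R\square_{\underline R}{}^g\underline R\cong\underline R\square_{\underline R}{}^g\underline R\cong{}^g\underline R\;\longrightarrow\;\underline{\Hom}_{\underline R}(\underline R,{}^g\underline R)\cong{}^g\underline R
\]
is the identity. At \(C_n/C_n\) it sends the class \(1\otimes x\) to the map \(f^{C_n}_x\), because \((f_x^{C_n})_{C_n/C_n}(1)=x\). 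Hence \((\delta^{-1})_{C_n/C_n}(f_x^{C_n})=[1\otimes x]\).

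\emph{Step 2: apply \(\pi^{(g)}\).} By diagram \eqref{diag:twisted-pi-definition}, \(\pi^{(g)}\) on a class \(b(\xi\otimes y)\) is \(q\bigl(\bar\epsilon^{(g)}(\xi\otimes y)\bigr)\). With \(\xi=1\), the twisted evaluation \eqref{eq:PgP-to-gR} sends \(1\otimes x\) to \(1\cdot x=x\in{}^g\underline R(C_n/C_n)\), after the symmetry and unit isomorphisms. Therefore \(\pi^{(g)}_{C_n/C_n}([1\otimes x])=q_{C_n/C_n}(x)=[x]\). Combining with Step 1 gives \(\tr^{g,C_n}_{\mathrm{HS}}(f_x^{C_n})=[N_H^{C_n}(r)]=\tau_H(r)\).

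\emph{Main obstacle.} The real content lies in the element-level bookkeeping of Steps 1--2. One must check that the composite of canonical identifications \({}^*\underline R\cong\underline R\) and \(\underline R\square_{\underline R}{}^g\underline R\cong{}^g\underline R\) matches the Yoneda identification used to define \(f_x^K\). One must also check that the symmetry \(\mathfrak s\) and the unitors in \(\bar\epsilon^{(g)}\) introduce no extra \(\alpha_g\). I would verify both by checking on the generator \(1\in\underline R(C_n/C_n)\), i.e.\ via \(\underline A_{C_n/C_n}=\underline A\) being the unit, where every identification is a unitor. For the second check, the twist acts only on the left action of \({}^g\underline R\), and the element \(x\) sits in the coefficient factor, which is never acted on. The reduced quotient plays no role, since we are evaluating a single class.
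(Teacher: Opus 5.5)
Your proposal is correct and follows the same route as the paper. You identify \(\delta^{-1}(f_x^{C_n})\) with \(x\) under \({}^*\underline R\square_{\underline R}{}^g\underline R\cong{}^g\underline R\), note that for the free rank-one module \(\pi^{(g)}\) is just the twisted Hochschild quotient, and conclude via \(\vartheta^g_{C_n}=\mathrm{id}\) and \eqref{eq:BGHL-Teichmuller}. One small wording slip: the left-hand side is the value of the reduced trace on the reduced class, namely \(\vartheta^g_{C_n}\) applied to the Hattori--Stallings trace of \(f_x^{C_n}\); it is not an element of the reduced group.
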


\begin{proof}
For \(K\leq C_n\) and
\(x\in\underline R(C_n/K)\), consider the map
\(f_x^K\colon\underline R_K\to{}^g\underline R_K\).
Under the identification
\(
\bigl(
{}^*\underline R_K
\square_{\underline R_K}
{}^g\underline R_K
\bigr)(K/K)
\cong
{}^g\underline R_K(K/K),
\) one has
\(
\bigl(
\delta_{\underline R_K,{}^g\underline R_K}^{-1}
\bigr)_{K/K}(f_x^K)
=
x.
\)
For the module \(\underline P=\underline R_K\), the map
\(\pi^{(g)}\) in the definition of the twisted
Hattori--Stallings trace is the twisted Hochschild quotient map.
Consequently,
\(
\bigl(\tr_{\mathrm{HS}}^{g,K}\bigr)_{K/K}(f_x^K)
=
[x].
\)
Taking \(K=C_n\) and \(x=N_H^{C_n}(r)\), and using
\(\vartheta_{C_n}^g=\id\), gives
\[
\bigl(\widetilde{\tr}_{\mathrm{Dennis}}^g\bigr)_{C_n/C_n}
\left[
\left(
\underline R,
f_{N_H^{C_n}(r)}^{C_n}
\right)
\right]
=
[N_H^{C_n}(r)].
\]
By \eqref{eq:BGHL-Teichmuller}, the right-hand side is
\(\tau_H(r)\).
\end{proof}

\section{Properties of the Twisted Bicategorical Trace}

In this section, we establish properties of the \(g\)-twisted
bicategorical trace, including twisted tightening, cyclicity, additivity,
and multiplicativity. These results generalize the corresponding
properties of the twisted Hattori--Stallings trace.

Throughout this section, we fix a group \(G\) and an element \(g\in G\).
Let \((\mathcal B,\sh{-})\) be a shadowed bicategory equipped with
\(G\)-twisting data, and let
\(
\mathcal B^{G\text{-tw}}
\)
be the associated bicategory of \(G\)-twists
(Definition~\ref{def:bicat_of_G_twists}). We work in
\(\mathcal B^{G\text{-tw}}\), equipped with the induced \(g\)-twisted
shadow
$
{}^g\!\sh{-}$
of Definition~\ref{def:g-twisted-shadow}.

\begin{prop}[Twisted tightening]\label{prop:tw-tightening}
Let \(M\in \mathcal B^{G\text{-tw}}(R,S)\) be a left dualizable
\(1\)-cell, with chosen left dual denoted by
\({}^*M\in \mathcal B^{G\text{-tw}}(S,R)\). Let
\(
Q,Q'\in \mathcal B^{G\text{-tw}}(S,S),
P,P'\in \mathcal B^{G\text{-tw}}(R,R)
\)
be \(1\)-cells, and let
\(
f\colon M\odot Q \Rightarrow P\odot M,
h\colon Q'\Rightarrow Q,
k\colon P\Rightarrow P'
\)
be \(2\)-cells in \(\mathcal B^{G\text{-tw}}\). Then the \(g\)-twisted
trace satisfies
\[
{}^g\sh{k}\circ \tr^g(f)\circ {}^g\sh{h}
=
\tr^g\!\left(
(k\odot \id_M)\circ f\circ(\id_M\odot h)
\right).
\]
\end{prop}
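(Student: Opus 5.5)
The plan is to unwind Definition~\ref{def:twisted-bicat-trace} for the \(2\)-cell \(f'\coloneqq(k\odot\id_M)\circ f\circ(\id_M\odot h)\colon M\odot Q'\Rightarrow P'\odot M\) and push \(h\) and \(k\) outward, using only functoriality of \({}^g\sh{-}\), functoriality of horizontal composition in \(\mathcal B^{G\text{-tw}}\), and naturality of the twisted cyclicity \({}^g\theta\). Since \({}^g\sh{-}_R\) is a functor on \(\mathcal B^{G\text{-tw}}(R,R)\), we have
\[
{}^g\sh{\id_N\odot f'}={}^g\sh{\id_N\odot k\odot\id_M}\circ{}^g\sh{\id_N\odot f}\circ{}^g\sh{\id_N\odot\id_M\odot h},
\]
writing \(N={}^*M\) and suppressing associators. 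This splits the trace composite of \(f'\) into three pieces.

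First we move \(h\) to the far right. By the interchange law, \((\id_N\odot\id_M\odot h)\circ(\bar\eta\odot\id_{Q'})=(\bar\eta\odot\id_Q)\circ(\id_{U_S}\odot h)\). The unitor \({}^g\sh{Q'}\cong{}^g\sh{U_S\odot Q'}\) is natural in \(Q'\), since it is \({}^g\sh{\mathfrak l^{-1}}\) and \(\mathfrak l\) is a \(2\)-cell in \(\mathcal B^{G\text{-tw}}\) by Proposition~\ref{prop:bicat-of-G-twists}. Together these identify the first part of the composite with the first part of \(\tr^g(f)\) precomposed with \({}^g\sh{h}\).

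Next we move \(k\) to the far left. Here the key input is naturality of \({}^g\theta_{N,P\odot M}\) in its second variable with respect to \(k\odot\id_M\), which carries \({}^g\sh{\id_N\odot k\odot\id_M}\) across the cyclic map to \({}^g\sh{\id_M\odot\id_N\odot k}\). Interchange then gives \((\bar\epsilon\odot\id_{P'})\circ(\id_M\odot\id_N\odot k)=(\id_{U_R}\odot k)\circ(\bar\epsilon\odot\id_P)\), and naturality of the final unitor produces \({}^g\sh{k}\) on the left.

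The main obstacle is justifying naturality of \({}^g\theta\) as defined in \eqref{def:twisted-cyclicity}. That definition is the composite of an ordinary \(\theta\) (natural), the canonical \(\iota\) (natural, being built from associators), and \(g_N\odot\id_M\). Here we need naturality in the \(M\)-slot, which is immediate because \(g_N\) does not involve \(M\). The issue is more delicate when the \(2\)-cell acts on the first variable, where naturality of \(g_{(-)}\) is needed. For that we use Lemma~\ref{lem:gM-coherence}(1): it applies exactly because all \(2\)-cells in sight (\(h\), \(k\), \(f\), and the duality maps lifted via Proposition~\ref{prop:Gtw-dualizable-recognition}) are morphisms in \(\mathcal B^{G\text{-tw}}\), satisfying \eqref{eq:Gtw-2cell-compat}. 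With this naturality (already asserted in Proposition~\ref{prop:g_twisted_axioms}) in hand, the remaining steps are routine diagram chasing, and assembling the three pieces gives the stated identity.
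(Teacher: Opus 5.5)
Your proposal is correct and is essentially the paper's proof, which likewise expands $\tr^g$ and uses functoriality of ${}^g\sh{-}$, interchange with $\bar\eta$ and $\bar\epsilon$, naturality of the unitors, and naturality of ${}^g\theta$. One labeling slip: the cyclic map in the trace is ${}^g\theta_{{}^*M\odot P,\,M}$, so $k$ acts in the untransported first slot, where naturality is immediate (as your ``$M$-slot'' remark in fact concludes), not in the second slot of ${}^g\theta_{{}^*M,\,P\odot M}$; correspondingly, the case that needs Lemma~\ref{lem:gM-coherence}(1) is the second variable, not the first.
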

\begin{proof}
Expand the definition of \(\tr^g(f)\).  Functoriality of
\({}^g\!\sh{-}\), together with naturality of the coevaluation, evaluation,
and twisted cyclicity isomorphism, gives
\begin{align*}
{}^g\sh{k}\circ \tr^g(f)\circ{}^g\sh{h}
&={}^{g}\sh{\bar\epsilon\odot\id_{P'}}
  \circ{}^g\theta
  \circ{}^g\sh{
     \id_{{}^*M}\odot
     \bigl((k\odot\id_M)\circ f\circ(\id_M\odot h)\bigr)}
  \circ{}^g\sh{\bar\eta\odot\id_{Q'}}.
\end{align*}
This is the defining composite for the trace on the right-hand side.
\end{proof}

To prove the twisted sliding property, we first recall the mate correspondence for 2-cells in a bicategory of $G$-twists, which arises from the evaluation and coevaluation maps of dualizable 1-cells.

\begin{lemma}
\label{lem:equivariant_mate_correspondence}
Let
\(M\in \mathcal B^{G\text{-tw}}(R,S)\) be left dualizable, with chosen
left dual
\(
{}^*M\in \mathcal B^{G\text{-tw}}(S,R).\)
For any \(1\)-cells
\(
Q\in \mathcal B^{G\text{-tw}}(S,T),
P\in \mathcal B^{G\text{-tw}}(R,U),
N\in \mathcal B^{G\text{-tw}}(U,T),
\)
there is a natural bijection
\[
\operatorname{Hom}_{\mathcal B^{G\text{-tw}}(R,T)}(M\odot Q,\;P\odot N)
\;\xrightarrow{\;\cong\;}\;
\operatorname{Hom}_{\mathcal B^{G\text{-tw}}(S,T)}
  (Q,\;{}^*M\odot P\odot N).
\]
It sends a \(2\)-cell \(f\colon M\odot Q \Rightarrow P\odot N\) to its
left mate
\(
f^\sharp
:=
(\id_{{}^*M}\odot f)\circ(\bar\eta\odot\id_Q),
\)
and the inverse sends a \(2\)-cell
\(a\colon Q\Rightarrow{}^*M\odot P\odot N\) to
\(
(\bar\epsilon\odot\id_{P\odot N})\circ(\id_M\odot a).
\)
\end{lemma}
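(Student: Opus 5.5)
The plan is to reduce everything to the standard mate correspondence for a dual pair in an arbitrary bicategory, applied inside \(\mathcal B^{G\text{-tw}}\). By Proposition~\ref{prop:bicat-of-G-twists}, \(\mathcal B^{G\text{-tw}}\) is a bicategory. The hypothesis says that \((M,{}^*M)\) is a dual pair there, with coevaluation \(\bar\eta\colon U_S\Rightarrow {}^*M\odot M\) and evaluation \(\bar\epsilon\colon M\odot{}^*M\Rightarrow U_R\). Both are \(2\)-cells of \(\mathcal B^{G\text{-tw}}\), and they satisfy the triangle identities; when \(M\) is given in \(\mathcal B\), Proposition~\ref{prop:Gtw-dualizable-recognition} supplies this structure. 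The claimed bijection is then an instance of the general adjunction \(M\odot(-)\dashv {}^*M\odot(-)\) between hom-categories, with \(P\odot N\) in the role of the target.

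The first step is to check that both assignments land in the correct hom-sets of \(\mathcal B^{G\text{-tw}}\), not merely of \(\mathcal B\). Given \(f\colon M\odot Q\Rightarrow P\odot N\) compatible with the composite twisting constraints, the composite \(f^\sharp=(\id_{{}^*M}\odot f)\circ(\bar\eta\odot\id_Q)\), with associators suppressed, is built from the following pieces:
\begin{itemize}
\item horizontal composites of \(2\)-cells of \(\mathcal B^{G\text{-tw}}\), which are again \(2\)-cells by the argument in the proof of Proposition~\ref{prop:bicat-of-G-twists};
\item vertical composites, which preserve the compatibility square \eqref{eq:Gtw-2cell-compat};
\item associators and unitors, which are \(2\)-cells of \(\mathcal B^{G\text{-tw}}\) as shown there.
\end{itemize}
Hence \(f^\sharp\) is a morphism \(Q\Rightarrow {}^*M\odot P\odot N\) in \(\mathcal B^{G\text{-tw}}(S,T)\). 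The same reasoning applies to \((\bar\epsilon\odot\id_{P\odot N})\circ(\id_M\odot a)\).

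The second step is to show that the two assignments are mutually inverse. Start from \(f\) and expand \((\bar\epsilon\odot\id)\circ(\id_M\odot f^\sharp)\). By the interchange law, \(\id_M\odot(\id_{{}^*M}\odot f)\) can be moved past \(\bar\epsilon\odot\id\), which gives
\[
f\circ\bigl((\bar\epsilon\odot\id_M)\circ(\id_M\odot\bar\eta)\odot\id_Q\bigr).
\]
The triangle identity for the left dual then collapses this to \(f\). The symmetric computation uses the other triangle identity and shows that the composite starting from \(a\) returns \(a\). All of these manipulations take place in the underlying bicategory \(\mathcal B\). Since a \(2\)-cell of \(\mathcal B^{G\text{-tw}}\) is determined by its underlying \(2\)-cell, the resulting equalities hold in \(\mathcal B^{G\text{-tw}}\).

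The third step is naturality in \(Q\), \(P\) and \(N\). It follows from functoriality of \(\odot\) and the interchange law: pre-composition with \(\id_M\odot h\) corresponds to pre-composition with \(h\), and post-composition with \(k\odot\id_N\) or \(\id_P\odot k'\) corresponds to post-composition with \(\id_{{}^*M}\odot(-)\).

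I expect no serious obstacle. The only care needed is the bookkeeping in the first step: one must confirm that the composite twisting constraint on \({}^*M\odot P\odot N\) is the one for which \(\bar\eta\odot\id_Q\) and \(\id_{{}^*M}\odot f\) are compatible. If this is unclear, I would unfold \eqref{eq:tau_def} once and use naturality of the associator. That is exactly the argument already given for horizontal composites in Proposition~\ref{prop:bicat-of-G-twists}.
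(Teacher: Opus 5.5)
Your proposal is correct and follows essentially the same route as the paper: the paper's proof likewise gets mutual inverseness from the triangle identities for $(M,{}^*M)$ and naturality from functoriality of horizontal composition. Your first step is harmless extra bookkeeping, since $\bar\eta$, $\bar\epsilon$, $f$ and $a$ are already $2$-cells of the bicategory $\mathcal B^{G\text{-tw}}$, so all the composites lie there automatically.
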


\begin{proof}
The two assignments are inverse by the triangle identities for the dual pair
\((M,{}^*M)\), and naturality follows from functoriality of horizontal
composition.
\end{proof}

\begin{prop}[Twisted sliding]
\label{prop:twisted_sliding_general}
Let $M\in\mathcal B^{G\text{-tw}}(R,S)$ and
$N\in\mathcal B^{G\text{-tw}}(U,T)$ be left dualizable $1$-cells,
with chosen left duals ${}^*M$ and ${}^*N$.
Let
\(
Q\in\mathcal B^{G\text{-tw}}(S,T),
P\in\mathcal B^{G\text{-tw}}(R,U),
K\in\mathcal B^{G\text{-tw}}(T,S),\) and
\(L\in\mathcal B^{G\text{-tw}}(U,R)
\) be 1-cells, and let
\(
f:M\odot Q\Rightarrow P\odot N,
h:N\odot K\Rightarrow L\odot M
\)
be $2$-cells.
Suppressing associativity constraints, define
\[
F := (\id_L\odot f)\circ (h\odot \id_Q)
:\;
N\odot (K\odot Q)\Rightarrow (L\odot P)\odot N,
\]
and
\[
 H := (\id_P\odot h)\circ (f\odot \id_K)
:\;
M\odot (Q\odot K)\Rightarrow (P\odot L)\odot M.
\]
Then the square
\[
\begin{tikzcd}[row sep=1.5em, column sep=3em]
{}^g\sh{K \odot Q}_T 
  \arrow[r, "\operatorname{tr}_{\mathcal{B},N}^g(F)"] 
  \arrow[d, "{}^g\theta_{K,Q}"'] 
& {}^g\sh{L \odot P}_U
  \arrow[d, "{}^g\theta_{L,P}"] 
\\
{}^g\sh{Q \odot K}_S 
  \arrow[r, "\operatorname{tr}_{\mathcal{B},M}^g( H)"'] 
& {}^g\sh{P \odot L}_R
\end{tikzcd}
\]
commutes.
\end{prop}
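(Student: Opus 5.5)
The plan is to reduce the twisted statement to the ordinary sliding property of Ponto--Shulman in the underlying shadowed bicategory \(\mathcal B\), applied to left-twisted data. Both horizontal composites in the square are built from ordinary shadow maps of \(\mathcal B\): the coevaluations and evaluations of \((M,{}^*M)\) and \((N,{}^*N)\); the \(2\)-cells \(f\), \(h\), \(F\), \(H\); and the twisted cyclicity \({}^g\theta\). By \eqref{def:twisted-cyclicity}, each \({}^g\theta_{X,Y}\) is the ordinary \(\theta_{{}^gX,Y}\) followed by \(\sh{\iota}\) and the transport \(\sh{g_Y\odot\id_X}\). So I first expand every map in the square in this way. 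This gives a diagram in \(\mathcal T\) whose arrows are ordinary shadow maps of \(\mathcal B\), interleaved with instances of \(\iota\) and transport maps \(g_{(-)}\).

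Next I relocate the transport maps. Every \(2\)-cell involved (\(f\), \(h\), \(\bar\eta\), \(\bar\epsilon\), and their composites) is a \(2\)-cell of \(\mathcal B^{G\text{-tw}}\). The duality cells qualify by Proposition~\ref{prop:Gtw-dualizable-recognition}. Hence, by Lemma~\ref{lem:gM-coherence}(1), each such cell commutes with the transport maps, and Lemma~\ref{lem:gM-coherence}(2) splits transports on composites. Using these facts together with naturality of \(\theta\) and \(\iota\), I push all transport maps to the end of each route, in the style of the proof of Proposition~\ref{prop:g_twisted_axioms}.

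After this reordering, each route becomes an ordinary composite in \(\mathcal B\), followed by a single transport. On the top-right route the ordinary part is \(\operatorname{tr}_{\mathcal B,N}\) of \({}^gF\), regarded as a \(2\)-cell \({}^gN\odot K\odot Q\Rightarrow {}^gL\odot P\odot N\) after moving the twist. It is followed by \(\theta\) and then the transport \(g_P\). On the left-bottom route the ordinary part is \(\theta\) followed by \(\operatorname{tr}_{\mathcal B,M}\) of the corresponding twisted \(H\), with the transport \(g_K\) occurring in between.

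I then apply the untwisted sliding property (\cite[Prop.~7.2]{PONSH}) in \(\mathcal B\). The \(2\)-cells to feed it are \(\tilde h:={}^gh\) composed with \(g_K\) where needed, and \(\tilde f:=f\). The untwisted result identifies the ordinary parts of the two routes. The remaining transport maps must then be matched. They are \(g_K\) and \(g_P\) on one side, and \(g_{L\odot P}\) restricted appropriately on the other, and Lemma~\ref{lem:gM-coherence}(2) matches them, again as in the twisted hexagon argument.

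I expect the main obstacle to be the bookkeeping in the pushing step. The transport has to cross \(h\), which changes the base object from \(U\) to \(T\) and from \(M\) to \(N\). So I must check carefully that the transports \(g_K\) (created by \({}^g\theta_{K,Q}\)) and \(g_L\) (created inside \({}^g\theta_{L,P}\)) are exchanged by \(h\) through its compatibility square \eqref{eq:Gtw-2cell-compat} applied to the composite \(N\odot K\) versus \(L\odot M\). My first attempt will be to record this compatibility as the identity \({}^g h\circ g_{N\odot K}=g_{L\odot M}\circ h^{g^{-1}}\) and expand both sides using Lemma~\ref{lem:gM-coherence}(2). The transports of \(M\) and \(N\) should then cancel against the coevaluation and evaluation compatibilities.
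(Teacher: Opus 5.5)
The gap is in the transport-matching step. Reducing to the untwisted sliding can be made to work, but you have misidentified which transports occur, and the exchange you plan (via $h$) would not close the argument.

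By \eqref{def:twisted-cyclicity}, \({}^g\theta_{X,Y}\) transports its \emph{second} argument. So the vertical maps contribute \(g_Q\) (from \({}^g\theta_{K,Q}\)) and \(g_P\) (from \({}^g\theta_{L,P}\)). The cyclicity maps inside the traces, \({}^g\theta_{{}^*N\odot L\odot P,\,N}\) and \({}^g\theta_{{}^*M\odot P\odot L,\,M}\), contribute \(g_N\) and \(g_M\). Neither \(g_K\) nor \(g_L\) ever appears, so exchanging them through the compatibility square of \(h\) addresses nothing. The \(2\)-cell that exchanges the transports actually present is \(f\), via \({}^gf\circ g_{M\odot Q}=g_{P\odot N}\circ f^{g^{-1}}\); the compatibility of \(h\) is never needed.

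Your picture of each route as an ordinary composite followed by a single transport also fails. By Lemma~\ref{lem:gM-coherence}(1),(2), \(\bar\epsilon_N\) commutes only with the composite transport \(g_{N\odot{}^*N}\), not with \(g_N\) alone. So \(g_N\) can be neither pushed to the end nor cancelled; it stays inside the trace. Relatedly, \({}^gF\) on \({}^gN\odot K\odot Q\) does not have the form \(N\odot(-)\Rightarrow(-)\odot N\) needed for a trace over \(N\) until you precompose it with \(g_N\).

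The missing ingredient is the following lemma. For a left dualizable \(X\) and \(\phi\colon X\odot Q\Rightarrow P\odot X\),
\[
\operatorname{tr}^g(\phi)=\operatorname{tr}_{\mathcal B}\bigl((\id\odot\phi)\circ(g_X\odot\id_Q)\bigr),
\]
an ordinary trace of a \(2\)-cell \(X\odot{}^gQ\Rightarrow{}^gP\odot X\). To prove it, proceed in three steps.
\begin{enumerate}
\item Write \(\id\odot\bar\eta\) as \((g_{{}^*X}\odot\id)\circ(\id\odot g_X)\circ(\bar\eta\odot\id)\), using the compatibility of \(\bar\eta\) (Proposition~\ref{prop:Gtw-dualizable-recognition}) and Lemma~\ref{lem:gM-coherence}(2),(3).
\item Move \(g_{{}^*X}\) past \(\phi\) by interchange and past \(\theta\) by naturality.
\item Absorb \((g_X\odot\id)\circ(\id\odot g_{{}^*X})=g_{X\odot{}^*X}\) into \(\bar\epsilon\).
\end{enumerate}
Now apply untwisted sliding to \(f\) and \(h':=(\id\odot h)\circ(g_N\odot\id_K)\colon N\odot{}^gK\Rightarrow{}^gL\odot M\). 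Its composite \((\id\odot f)\circ(h'\odot\id_Q)\) is exactly \(\widetilde F:=(\id\odot F)\circ(g_N\odot\id_{K\odot Q})\), so \(\operatorname{tr}^g(F)=\operatorname{tr}_{\mathcal B,N}(\widetilde F)\). Likewise \(\operatorname{tr}^g(H)=\operatorname{tr}_{\mathcal B,M}(\widetilde H)\) with \(\widetilde H:=(\id\odot H)\circ(g_M\odot\id_{Q\odot K})\). Suppressing \(\iota\), one has \({}^g\theta_{L,P}=\sh{g_P\odot\id_L}\circ\theta_{{}^gL,P}\) and \({}^g\theta_{K,Q}=\sh{g_Q\odot\id_K}\circ\theta_{{}^gK,Q}\). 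Ordinary tightening then reduces the claim to
\[
(g_P\odot\id)\circ(\id_P\odot h')\circ(f\odot\id)=\widetilde H\circ(\id_M\odot g_Q\odot\id_K).
\]
This follows from interchange, the factorizations \(g_{P\odot N}=(g_P\odot\id)\circ(\id\odot g_N)\) and \(g_{M\odot Q}=(g_M\odot\id)\circ(\id\odot g_Q)\), and the compatibility of \(f\).

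For comparison, the paper never unpacks \({}^g\theta\). It rewrites \(f\) and \(h\) through their mates and uses only naturality of \({}^g\theta\) and the twisted hexagon, rerunning the Ponto--Shulman argument inside the twisted shadow axioms.
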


\begin{proof} 
To prove commutativity, we expand both paths and reduce them to an identical sequence of 2-cells. Throughout the proof, we suppress associativity constraints.

Let $M$ and $N$ be left dualizable 1-cells  with chosen left duals ${}^*M$ and ${}^*N$. By Lemma~\ref{lem:equivariant_mate_correspondence}, the 2-cells $f$ and $h$ uniquely correspond to their left mates:
\begin{align*}
f^\sharp := (\id_{{}^*M} \odot f) \circ (\bar{\eta}_M \odot \id_Q) \;\colon\; Q \Rightarrow {}^*M \odot P \odot N, \quad
h^\sharp := (\id_{{}^*N} \odot h) \circ (\bar{\eta}_N \odot \id_K) \;\colon\; K \Rightarrow {}^*N \odot L \odot M
\end{align*}
and can be recovered via the substitution relations:
\begin{equation}\label{eq:mate_substitutions}
\begin{aligned}
f &= (\bar{\epsilon}_M \odot \id_{P \odot N}) \circ (\id_M \odot f^\sharp), \quad
\quad h = (\bar{\epsilon}_N \odot \id_{L \odot M}) \circ (\id_N \odot h^\sharp).
\end{aligned}
\end{equation}

Let $P_1$ denote the composite ${}^g\theta_{L,P} \circ \operatorname{tr}_{\mathcal{B},N}^g(F)$. Expanding the trace of $F = (\id_L \odot f) \circ (h \odot \id_Q)$ via Definition~\ref{def:twisted-bicat-trace} yields:
\[
P_1 = {}^g\theta_{L,P} \circ {}^g\sh{\bar{\epsilon}_N \odot \id_{L \odot P}} \circ {}^g\theta_{{}^*N \odot L \odot P, N} \circ {}^g\sh{\id_{{}^*N} \odot F} \circ {}^g\sh{\bar{\eta}_N \odot \id_{K \odot Q}}.
\]
Substitute $F$, and recognize that the sequence inserting $N \odot {}^*N$ and applying $h$ is exactly the mate $h^\sharp$. After substituting $f$ via \eqref{eq:mate_substitutions}, we obtain:
\[
P_1 = {}^g\theta_{L,P} \circ {}^g\sh{\bar{\epsilon}_N \odot \id_{L \odot P}} \circ {}^g\theta_{{}^*N \odot L \odot P, N} \circ {}^g\sh{\id_{{}^*N \odot L} \odot \bar{\epsilon}_M \odot \id_{P \odot N}} \circ {}^g\sh{h^\sharp \odot f^\sharp}.
\]
By the naturality of ${}^g\theta$, we slide both evaluations to the very end. To cycle $N$ to the front \emph{before} evaluating $M \odot {}^*M$, we apply ${}^g\theta_{{}^*N \odot L \odot M \odot {}^*M \odot P, N}$. To cycle $P$ to the front \emph{before} evaluating $N \odot {}^*N$, we apply ${}^g\theta_{N \odot {}^*N \odot L \odot M \odot {}^*M, P}$. This gives:
\[
P_1 = {}^g\sh{\id_P \odot \bar{\epsilon}_N \odot \id_L \odot \bar{\epsilon}_M} \circ {}^g\theta_{N \odot {}^*N \odot L \odot M \odot {}^*M, P} \circ {}^g\theta_{{}^*N \odot L \odot M \odot {}^*M \odot P, N} \circ {}^g\sh{h^\sharp \odot f^\sharp}.
\]
Applying the twisted hexagon axiom (Proposition~\ref{prop:g_twisted_axioms}) to the two adjacent cyclic permutations, they combine into a single shift of the block $P \odot N$:
\begin{equation}\label{eq:path1_abstract}
P_1 = {}^g\sh{\id_P \odot \bar{\epsilon}_N \odot \id_L \odot \bar{\epsilon}_M} \circ {}^g\theta_{{}^*N \odot L \odot M \odot {}^*M, P \odot N} \circ {}^g\sh{h^\sharp \odot f^\sharp}.
\end{equation}

Let $P_2$ denote the composite $\operatorname{tr}_{\mathcal{B},M}^g(H) \circ {}^g\theta_{K,Q}$. Expanding the trace of $H = (\id_P \odot h) \circ (f \odot \id_K)$ and identifying the mates gives:
\[
P_2 = {}^g\sh{\bar{\epsilon}_M \odot \id_{P \odot L}} \circ {}^g\theta_{{}^*M \odot P \odot L, M} \circ {}^g\sh{\id_{{}^*M \odot P} \odot \bar{\epsilon}_N \odot \id_{L \odot M}} \circ {}^g\sh{f^\sharp \odot h^\sharp} \circ {}^g\theta_{K,Q}.
\]
Pushing $f^\sharp \odot h^\sharp$ rightward through ${}^g\theta_{K,Q}$ via naturality swaps their order and updates the cyclic operator to ${}^g\theta_{{}^*N \odot L \odot M, {}^*M \odot P \odot N}$. Sliding the evaluations to the end requires cycling $M$ to the front \emph{before} evaluating $N \odot {}^*N$, yielding:
\[
P_2 = {}^g\sh{\bar{\epsilon}_M \odot \id_P \odot \bar{\epsilon}_N \odot \id_L} \circ {}^g\theta_{{}^*M \odot P \odot N \odot {}^*N \odot L, M} \circ {}^g\theta_{{}^*N \odot L \odot M, {}^*M \odot P \odot N} \circ {}^g\sh{h^\sharp \odot f^\sharp}.
\]
Applying the Twisted Hexagon axiom to these two permutations combines them into a single shift of the block $M \odot {}^*M \odot P \odot N$:
\begin{equation}\label{eq:path2_abstract}
P_2 = {}^g\sh{\bar{\epsilon}_M \odot \id_P \odot \bar{\epsilon}_N \odot \id_L} \circ {}^g\theta_{{}^*N \odot L, M \odot {}^*M \odot P \odot N} \circ {}^g\sh{h^\sharp \odot f^\sharp}.
\end{equation}

We show that \eqref{eq:path1_abstract} and \eqref{eq:path2_abstract} are equivalent. By the naturality of ${}^g\theta$ with respect to the evaluation map $\bar{\epsilon}_M$, sliding the evaluation across the cyclic operator yields the same expression for both paths:
\begin{align*}
P_1 &= {}^g\sh{\id_P \odot \bar{\epsilon}_N \odot \id_L} \circ {}^g\theta_{{}^*N \odot L, P \odot N} \circ {}^g\sh{\id_{{}^*N \odot L} \odot \bar{\epsilon}_M \odot \id_{P \odot N}} \circ {}^g\sh{h^\sharp \odot f^\sharp} \\
P_2 &= {}^g\sh{\id_P \odot \bar{\epsilon}_N \odot \id_L} \circ {}^g\theta_{{}^*N \odot L, P \odot N} \circ {}^g\sh{\id_{{}^*N \odot L} \odot \bar{\epsilon}_M \odot \id_{P \odot N}} \circ {}^g\sh{h^\sharp \odot f^\sharp}
\end{align*}
Therefore, $P_1 = P_2$, and the square commutes.
\end{proof}

\begin{cor}[Twisted cyclicity]\label{cor:twisted_cyclicity}
Let \(M,N\in\mathcal B^{G\text{-tw}}(R,S)\) be left dualizable
\(1\)-cells.  For \(2\)-cells
\(f\colon M\Rightarrow N\) and \(h\colon N\Rightarrow M\), one has
\(
\tr^g(f\circ h)=\tr^g(h\circ f).
\)
\end{cor}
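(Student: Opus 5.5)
The plan is to deduce the corollary from the twisted sliding property, Proposition~\ref{prop:twisted_sliding_general}, by specializing all auxiliary $1$-cells to units. Here $M,N\colon R\to S$, and the traces $\tr^g(f\circ h)$ and $\tr^g(h\circ f)$ are taken for endomorphisms $N\Rightarrow N$ and $M\Rightarrow M$. Via the unitors, these are $2$-cells $N\odot U_S\Rightarrow U_R\odot N$ and $M\odot U_S\Rightarrow U_R\odot M$, so both traces are maps ${}^g\sh{U_S}_S\to{}^g\sh{U_R}_R$.

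In Proposition~\ref{prop:twisted_sliding_general} I will set $T=S$ and $U=R$, and take $Q=K=U_S$ and $P=L=U_R$. The roles are as follows: $f\colon M\odot U_S\Rightarrow U_R\odot N$ is our $f$ with unitors inserted, and $h\colon N\odot U_S\Rightarrow U_R\odot M$ is our $h$ with unitors inserted. Suppressing the unitors, the composites of the proposition become
\[
F=(\id_{U_R}\odot f)\circ(h\odot\id_{U_S})\;\leftrightarrow\; f\circ h\colon N\Rightarrow N
\]
and
\[
H=(\id_{U_R}\odot h)\circ(f\odot\id_{U_S})\;\leftrightarrow\; h\circ f\colon M\Rightarrow M.
\]
To justify these identifications, I will use naturality of $\mathfrak l,\mathfrak r$ together with the interchange law, which gives that $(\id_{U}\odot f)\circ(h\odot \id_U)$ corresponds to $f\circ h$. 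The sliding square then reads
\[
{}^g\theta_{U_R,U_R}\circ\tr^g_N(f\circ h)=\tr^g_M(h\circ f)\circ{}^g\theta_{U_S,U_S},
\]
with both traces running from ${}^g\sh{U_S\odot U_S}$ to ${}^g\sh{U_R\odot U_R}$.

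It then remains to show that ${}^g\theta_{U,U}$ agrees with the identity after identifying $U\odot U\cong U$. I will use the twisted triangle axiom of Proposition~\ref{prop:g_twisted_axioms} with the $1$-cell taken to be $U$. This gives ${}^g\sh{\mathfrak l}\circ{}^g\theta_{U,U}={}^g\sh{\mathfrak r}$, and since $\mathfrak l_U=\mathfrak r_U$ by standard bicategorical coherence, ${}^g\theta_{U,U}$ becomes the identity under this identification.

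Finally, I need the traces defined on ${}^g\sh{U\odot U}$ to match the traces on ${}^g\sh{U}$ under the unitor isomorphisms. I will derive this from twisted tightening, Proposition~\ref{prop:tw-tightening}, with $h$ and $k$ taken to be unitors, or equivalently by noting that the unitors are absorbed into the first and last isomorphisms of Definition~\ref{def:twisted-bicat-trace}. With these identifications in place, the square yields $\tr^g(f\circ h)=\tr^g(h\circ f)$.

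The main obstacle is the unitor bookkeeping. The key check is that the unit isomorphisms $\mathfrak l,\mathfrak r$ are $2$-cells in $\mathcal B^{G\text{-tw}}$ (Proposition~\ref{prop:bicat-of-G-twists}), so that tightening applies to them. Once that holds, the argument is purely formal.
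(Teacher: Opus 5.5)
Your proposal is correct and follows the paper's proof: the paper also specializes Proposition~\ref{prop:twisted_sliding_general} to \(T=S\), \(U=R\), \(Q=K=U_S\), \(P=L=U_R\) and observes that the vertical maps become the canonical unit isomorphisms. Your additional bookkeeping spells out what the paper leaves implicit: \({}^g\theta_{U,U}=\id\) via the twisted triangle axiom and \(\mathfrak l_U=\mathfrak r_U\), and the unitors are absorbed through twisted tightening. That bookkeeping is valid, since the unitors are \(2\)-cells in \(\mathcal B^{G\text{-tw}}\).
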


\begin{proof}
Apply Proposition~\ref{prop:twisted_sliding_general} with
\(T=S\), \(U=R\), \(Q=K=U_S\), and \(P=L=U_R\), and suppress the
unit constraints.  The two vertical twisted cyclicity maps are then the
canonical unit isomorphisms.
\end{proof}

\begin{prop}[Twisted additivity]\label{prop:tw-additivity}
Assume that each hom-category \(\mathcal B^{G\text{-tw}}(A,B)\) is
additive, that horizontal composition distributes over biproducts, and that
the target \(\mathcal T\) of \({}^g\!\sh{-}\) is additive and the twisted
shadow preserves biproducts.  If
\(X,Y\in\mathcal B^{G\text{-tw}}(A,B)\) are left dualizable and
\(f_X\colon X\Rightarrow X\), \(f_Y\colon Y\Rightarrow Y\), then
\(X\oplus Y\) is left dualizable and
\(
\tr^g(f_X\oplus f_Y)=\tr^g(f_X)+\tr^g(f_Y).
\)
\end{prop}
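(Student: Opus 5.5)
I would follow the standard proof of additivity for bicategorical traces, adapted to the $g$-twisted setting. The argument has four steps.

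\emph{Step 1: dualizability of $X\oplus Y$.} Let ${}^*X$ and ${}^*Y$ be chosen left duals, with structure maps $(\bar\eta_X,\bar\epsilon_X)$ and $(\bar\eta_Y,\bar\epsilon_Y)$. These are $2$-cells in $\mathcal B^{G\text{-tw}}$. I take ${}^*(X\oplus Y):={}^*X\oplus{}^*Y$. Since horizontal composition distributes over biproducts,
\[
{}^*(X\oplus Y)\odot(X\oplus Y)\cong \bigoplus_{A,B\in\{X,Y\}}{}^*A\odot B .
\]
I define the coevaluation as the sum of $\bar\eta_X$ and $\bar\eta_Y$ followed by inclusion into the diagonal summands. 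I define the evaluation $(X\oplus Y)\odot({}^*X\oplus{}^*Y)\Rightarrow U$ as $\bar\epsilon_X\circ\pi+\bar\epsilon_Y\circ\pi'$, where $\pi,\pi'$ are the projections onto the diagonal summands $X\odot{}^*X$ and $Y\odot{}^*Y$. The triangle identities then reduce to those for $X$ and $Y$, because the off-diagonal components vanish. These maps are $2$-cells in $\mathcal B^{G\text{-tw}}$, since hom-categories there are additive and the compatibility squares \eqref{eq:Gtw-2cell-compat} are additive in the $2$-cell.

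\emph{Step 2: splitting the identity.} Let $i_X,i_Y$ and $p_X,p_Y$ be the biproduct inclusions and projections. Then $\id_{X\oplus Y}=i_Xp_X+i_Yp_Y$, so
\[
f_X\oplus f_Y=i_Xf_Xp_X+i_Yf_Yp_Y .
\]

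\emph{Step 3: additivity of the trace in the $2$-cell.} The trace $\tr^g$ is additive in its argument. Every map in the composite of Definition~\ref{def:twisted-bicat-trace} is either fixed or obtained by applying ${}^g\sh{-}$ to $\id_N\odot(-)$. Horizontal composition is additive in each variable, because it distributes over biproducts. The twisted shadow is additive on morphisms, because it preserves biproducts. Hence
\[
\tr^g(f_X\oplus f_Y)=\tr^g(i_Xf_Xp_X)+\tr^g(i_Yf_Yp_Y).
\]

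\emph{Step 4: removing the inclusions and projections.} I apply the twisted cyclicity of Corollary~\ref{cor:twisted_cyclicity}, with the factors $f_Xp_X\colon X\oplus Y\Rightarrow X$ and $i_X\colon X\Rightarrow X\oplus Y$. This gives $\tr^g(i_Xf_Xp_X)=\tr^g(f_Xp_Xi_X)=\tr^g(f_X)$, and the same argument applies to $Y$.

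\emph{Expected obstacle.} The main point to check is that Corollary~\ref{cor:twisted_cyclicity} applies when the two $1$-cells are different, here $X\oplus Y$ and $X$, each with its own chosen dual. Its statement allows distinct left dualizable $M,N$, so this should be fine. One should still confirm that the trace computed with the dual ${}^*X\oplus{}^*Y$ agrees with the trace computed with any other chosen dual. This dual-independence follows from the standard argument via the canonical comparison isomorphism between two left duals, together with naturality of ${}^g\theta$.
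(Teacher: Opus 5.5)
Your argument is correct, but it takes a different route from the paper.

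\emph{The paper's route.} The paper works inside a single trace composite. It substitutes the explicit duality data $\bar\eta_{X\oplus Y}=(j_X\odot i_X)\circ\bar\eta_X+(j_Y\odot i_Y)\circ\bar\eta_Y$ and $\bar\epsilon_{X\oplus Y}=\bar\epsilon_X\circ(p_X\odot q_X)+\bar\epsilon_Y\circ(p_Y\odot q_Y)$, together with $f_X\oplus f_Y=i_Xf_Xp_X+i_Yf_Yp_Y$, and expands. Naturality of ${}^g\theta$ then moves each inclusion next to a projection. Every mixed term contains $p_Xi_Y$ or $q_Xj_Y$ and vanishes, and the diagonal terms are $\tr^g(f_X)$ and $\tr^g(f_Y)$.

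\emph{Your route.} You split the problem into two formal facts: linearity of $\tr^g$ in the $2$-cell for a fixed dual pair, and the twisted cyclicity of Corollary~\ref{cor:twisted_cyclicity}, which strips off $i_X$ and $p_X$.

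\emph{Comparison.} Your argument is more modular and mirrors the classical identity $\tr(ifp)=\tr(fpi)$. Corollary~\ref{cor:twisted_cyclicity} holds for arbitrary chosen left duals of $M$ and $N$, and linearity holds for any fixed dual pair. So your Steps~3 and~4 never use the specific form of the dual of $X\oplus Y$, and the dual-independence concern you raise is unnecessary; Step~1 is needed only to show that $X\oplus Y$ is left dualizable. The cost is that you import the twisted sliding property (Proposition~\ref{prop:twisted_sliding_general}), whose proof relies on the twisted hexagon axiom. The paper's direct computation needs only naturality of ${}^g\theta$ and the biproduct orthogonality relations, at the price of writing out the duality data on $X\oplus Y$ explicitly.
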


\begin{proof}
The left dual of \(X\oplus Y\) is \({}^*X\oplus{}^*Y\).  If
\(i_X,p_X\) and \(i_Y,p_Y\) are the biproduct injections and projections,
and \(j_X,q_X\) and \(j_Y,q_Y\) denote the corresponding maps for the
dual, then
\[
\bar\eta_{X\oplus Y}
=(j_X\odot i_X)\circ\bar\eta_X
 +(j_Y\odot i_Y)\circ\bar\eta_Y,
\qquad
\bar\epsilon_{X\oplus Y}
=\bar\epsilon_X\circ(p_X\odot q_X)
 +\bar\epsilon_Y\circ(p_Y\odot q_Y).
\]
Substitute these expressions and
\(f_X\oplus f_Y=i_Xf_Xp_X+i_Yf_Yp_Y\) into the definition of the
twisted trace.  By naturality of \({}^g\theta\), every mixed term contains
an orthogonal composite such as \(p_Xi_Y\) or \(q_Xj_Y\), and hence
vanishes.  The two diagonal terms reduce, using
\(p_Xi_X=q_Xj_X=\id\) and the analogous identities for \(Y\), to
\(\tr^g(f_X)\) and \(\tr^g(f_Y)\), respectively.
\end{proof}

\begin{prop}[Twisted multiplicativity]\label{prop:tw-multiplicativity}
Let
\(X\in\mathcal B^{G\text{-tw}}(A,B)\) and
\(Y\in\mathcal B^{G\text{-tw}}(B,C)\) be left dualizable, and let
\(f_X\colon X\Rightarrow X\) and \(f_Y\colon Y\Rightarrow Y\).  Then
\(X\odot Y\) is left dualizable and
\[
\tr^g(f_X\odot f_Y)
=\tr^g(f_X)\circ\tr^g(f_Y)
\colon{}^g\!\sh{U_C}\longrightarrow{}^g\!\sh{U_A}.
\]
\end{prop}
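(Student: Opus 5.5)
The plan is to mirror the untwisted multiplicativity argument of Ponto--Shulman, with every cyclicity step replaced by ${}^g\theta$. Coherence of ${}^g\theta$ comes from the twisted hexagon and triangle of Proposition~\ref{prop:g_twisted_axioms}.

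First, dualizability. By Proposition~\ref{prop:Gtw-dualizable-recognition} it suffices to work in $\mathcal B$, but it is cleaner to argue directly in $\mathcal B^{G\text{-tw}}$. The left dual of $X\odot Y$ is ${}^*Y\odot{}^*X$. Its coevaluation is
\[
\bar\eta_{X\odot Y}=(\id_{{}^*Y}\odot\bar\eta_X\odot\id_Y)\circ\bar\eta_Y ,
\]
and its evaluation is
\[
\bar\epsilon_{X\odot Y}=\bar\epsilon_X\circ(\id_X\odot\bar\epsilon_Y\odot\id_{{}^*X}).
\]
These are composites of $2$-cells of $\mathcal B^{G\text{-tw}}$, since $\mathcal B^{G\text{-tw}}$ is a bicategory by Proposition~\ref{prop:bicat-of-G-twists}. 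The triangle identities follow from those for $X$ and $Y$ by the usual string-diagram argument.

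Next, expand $\tr^g(f_X\odot f_Y)$ using Definition~\ref{def:twisted-bicat-trace} with $Q=U_C$ and $P=U_A$, suppressing unitors. The composite is
\[
{}^g\sh{\bar\epsilon_{X\odot Y}}\circ{}^g\theta_{X\odot Y,\,{}^*Y\odot{}^*X}\circ{}^g\sh{\id\odot f_X\odot f_Y}\circ{}^g\sh{\bar\eta_{X\odot Y}} .
\]
Substitute the formulas above. Use naturality of ${}^g\theta$ to move the inner evaluation $\bar\epsilon_Y$ before the rotation and the inner coevaluation $\bar\eta_X$ after it. The aim is to reach a composite of the form
\[
{}^g\sh{\bar\epsilon_X}\circ{}^g\theta_{X,{}^*X}\circ{}^g\sh{f_X}\circ{}^g\sh{\bar\eta_X}\circ\bigl({}^g\sh{\bar\epsilon_Y}\circ{}^g\theta_{Y,{}^*Y}\circ{}^g\sh{f_Y}\circ{}^g\sh{\bar\eta_Y}\bigr),
\]
with the factors applied from right to left, the $Y$-factor first. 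Reading this as $\tr^g(f_X)\circ\tr^g(f_Y)$ relies on the identification ${}^g\sh{U_B\odot U_B}\cong{}^g\sh{U_B}$.

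The key manipulation is splitting the single rotation ${}^g\theta_{X\odot Y,\,{}^*Y\odot{}^*X}$ of the block $X\odot Y$ into two rotations, first $Y$ and then $X$. This is done with the twisted hexagon from Proposition~\ref{prop:g_twisted_axioms}, applied as in the proof of Proposition~\ref{prop:twisted_sliding_general}. Between the two rotations one inserts $\bar\epsilon_Y$ via naturality, so that the middle portion becomes exactly the trace of $f_Y$ landing in ${}^g\sh{U_B\odot(\text{rest})}$. The unit factors $U_B$ that appear are then removed with the twisted triangle, whose identification ${}^g\theta$ on a unit rotation with the unitors uses Lemma~\ref{lem:gM-coherence}(3).

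I expect the main obstacle to be this bookkeeping: getting the twisted hexagon to apply in the right orientation. In the twisted setting one vertical arrow is reversed (Remark~\ref{rem:BPW-comparison}), so rotating $X$ past the $Y$-block must be checked to produce transport maps $g_{(-)}$ that compose correctly. If a direct manipulation gets tangled, the fallback is to unfold ${}^g\theta$ via \eqref{def:twisted-cyclicity} into ordinary $\theta$ composed with $\iota$ and $g_{(-)}$. Then invoke the ordinary multiplicativity of the untwisted trace on the underlying bicategory, with the twist cell $\Theta_A(g^{-1})$ carried along as a coefficient. Finally, use Lemma~\ref{lem:gM-coherence}(2), which says $g_{X\odot Y}$ factors through $g_Y$ and $g_X$, to match the transport maps on the two sides.
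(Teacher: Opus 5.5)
Your proposal is correct and is essentially the paper's proof. It uses the same dual data on ${}^*Y\odot{}^*X$, and the twisted hexagon with $M={}^*Y\odot{}^*X$, $N=X$, $P=Y$ to split the rotation of $X\odot Y$ into the $Y$-rotation followed by the $X$-rotation; every rotation there moves the last block to the front, so your orientation worry does not arise. It then uses naturality of ${}^g\theta$ in $\mathcal B^{G\text{-tw}}$ to slide $\bar\epsilon_Y$ and $(\id_{{}^*X}\odot f_X)\circ\bar\eta_X$ across the two rotations.

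Neither the twisted triangle nor your fallback is needed, because unit constraints are absorbed by naturality of ${}^g\theta$ with respect to the unitors. Your subscripts run opposite to the paper's convention ${}^g\theta_{M,N}\colon{}^g\sh{M\odot N}\to{}^g\sh{N\odot M}$, so the main rotation should read ${}^g\theta_{{}^*Y\odot{}^*X,\,X\odot Y}$. This matters here, since ${}^g\theta_{N,M}$ is not inverse to ${}^g\theta_{M,N}$.
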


\begin{proof}
Choose left duals \({}^*X\) and \({}^*Y\).  Then \(X\odot Y\)
has left dual \({}^*Y\odot{}^*X\), with
\[
\bar\eta_{X\odot Y}
=
(\id_{{}^*Y}\odot\bar\eta_X\odot\id_Y)
\circ\bar\eta_Y
\quad \text{and} \quad
\bar\epsilon_{X\odot Y}
=
\bar\epsilon_X\circ
(\id_X\odot\bar\epsilon_Y\odot\id_{{}^*X}).
\]
Suppressing associativity and unit constraints, put
\(
a_X
:=
(\id_{{}^*X}\odot f_X)\circ\bar\eta_X
\colon
U_B\Rightarrow{}^*X\odot X.
\)
After substituting the above duality maps into the definition of the
twisted trace, we obtain
\begin{align*}
\tr^g(f_X\odot f_Y)
={}&
{}^g\!\sh{\bar\epsilon_X}
\circ
{}^g\!\sh{\id_X\odot\bar\epsilon_Y\odot\id_{{}^*X}}
\circ
{}^g\theta_{{}^*Y\odot{}^*X,\,X\odot Y}
\circ
{}^g\!\sh{\id_{{}^*Y}\odot a_X\odot f_Y}
\circ
{}^g\!\sh{\bar\eta_Y}.
\end{align*}

The twisted hexagon axiom, applied to
\(
M={}^*Y\odot{}^*X, N=X, P=Y,
\)
gives
\[
{}^g\theta_{{}^*Y\odot{}^*X,\,X\odot Y}
=
{}^g\theta_{Y\odot{}^*Y\odot{}^*X,\,X}
\circ
{}^g\theta_{{}^*Y\odot{}^*X\odot X,\,Y}.
\]
Naturality of \({}^g\theta\) gives
\begin{align*}
&{}^g\theta_{{}^*Y\odot{}^*X\odot X,\,Y}
\circ
{}^g\!\sh{\id_{{}^*Y}\odot a_X\odot f_Y}
  =
{}^g\!\sh{\id_{Y\odot{}^*Y}\odot a_X}
\circ
{}^g\theta_{{}^*Y,Y}
\circ
{}^g\!\sh{\id_{{}^*Y}\odot f_Y},
\end{align*}
while naturality with respect to \(\bar\epsilon_Y\) gives
\[
{}^g\!\sh{\id_X\odot\bar\epsilon_Y\odot\id_{{}^*X}}
\circ
{}^g\theta_{Y\odot{}^*Y\odot{}^*X,\,X}
=
{}^g\theta_{{}^*X,X}
\circ
{}^g\!\sh{\bar\epsilon_Y\odot
\id_{{}^*X\odot X}}.
\]
Substituting these two identities and using functoriality of the
twisted shadow, we obtain
\begin{align*}
\tr^g(f_X\odot f_Y)
={}&
{}^g\!\sh{\bar\epsilon_X}
\circ
{}^g\theta_{{}^*X,X}
\circ
{}^g\!\sh{a_X}
\circ
{}^g\!\sh{\bar\epsilon_Y}
\circ
{}^g\theta_{{}^*Y,Y}
\circ
{}^g\!\sh{\id_{{}^*Y}\odot f_Y}
\circ
{}^g\!\sh{\bar\eta_Y}.
\end{align*}
Finally,
\(
{}^g\!\sh{a_X}
=
{}^g\!\sh{\id_{{}^*X}\odot f_X}
\circ
{}^g\!\sh{\bar\eta_X}.
\)
Therefore the preceding composite is exactly
\(
\tr^g(f_X)\circ\tr^g(f_Y).
\)
\end{proof}
\appendix

\section{Dualizable Bimodules over Green Functors}
\label{sec:appendix}

In this appendix we identify the left dualizable \(1\)-cells in the
bicategory of Green functors and bimodules.  Throughout, \(G\) is a finite
group, \(\mathbf{Mack}_G\) is equipped with the box product, and
\(\underline R\) and \(\underline S\) are not necessarily commutative
Green functors.

\begin{prop}\label{prop:bicategorical-tensor-Hom}\cite{lewis1981theory}
Let \(\underline M\) be an
\((\underline R,\underline S)\)-bimodule.  For
\(\underline L\in{}_{\underline S}\mathbf{Mod}\) and
\(\underline P\in{}_{\underline R}\mathbf{Mod}\), there is a natural
isomorphism
\begin{equation}\label{eq:boxS-HomR-adjunction}
\underline{\Hom}_{\underline R}
 \bigl(\underline M\square_{\underline S}\underline L,\underline P\bigr)
\cong
\underline{\Hom}_{\underline S}
 \bigl(\underline L,
       \underline{\Hom}_{\underline R}(\underline M,\underline P)\bigr).
\end{equation}
Thus
\(
\underline M\square_{\underline S}(-)
\;:\;
{}_{\underline S}\mathbf{Mod}
\rightleftarrows
{}_{\underline R}\mathbf{Mod}
\;:\;
\underline{\Hom}_{\underline R}(\underline M,-)
\)
is an adjoint pair.
\end{prop}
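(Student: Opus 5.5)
We reduce the statement to the universal property of the relative box product, in the same way as the classical tensor--Hom adjunction for bimodules over rings, but carried out internally in \((\mathbf{Mack}_G,\square,\underline A,\underline{\Hom})\).

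\emph{Step 1.} Start from the closed structure of \(\mathbf{Mack}_G\), namely the natural isomorphism
\[
\underline{\Hom}(\underline X\square\underline Y,\underline Z)\cong\underline{\Hom}\bigl(\underline Y,\underline{\Hom}(\underline X,\underline Z)\bigr),
\]
internal to Mackey functors. Using the symmetry of \(\square\), this gives
\(\underline{\Hom}(\underline M\square\underline L,\underline P)\cong\underline{\Hom}(\underline L,\underline{\Hom}(\underline M,\underline P))\).

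\emph{Step 2.} Impose the module conditions on both sides. By Definition~\ref{def:relative-box-equalizer}, \(\underline M\square_{\underline S}\underline L\) is a coequalizer of \(\underline M\square\underline S\square\underline L\rightrightarrows\underline M\square\underline L\). The contravariant functor \(\underline{\Hom}(-,\underline P)\) sends colimits to limits, so \(\underline{\Hom}(\underline M\square_{\underline S}\underline L,\underline P)\) is the equalizer of the two induced maps into \(\underline{\Hom}(\underline M\square\underline S\square\underline L,\underline P)\). Intersecting further with the equalizer that imposes left \(\underline R\)-linearity yields \(\underline{\Hom}_{\underline R}(\underline M\square_{\underline S}\underline L,\underline P)\) as a joint limit.

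On the other side, the \(\underline S\)-action on \(\underline{\Hom}_{\underline R}(\underline M,\underline P)\) is adjoint to the right \(\underline S\)-action on \(\underline M\). The object \(\underline{\Hom}_{\underline S}(\underline L,\underline{\Hom}_{\underline R}(\underline M,\underline P))\) is then the analogous joint limit inside \(\underline{\Hom}(\underline L,\underline{\Hom}(\underline M,\underline P))\).

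\emph{Step 3.} Check that the isomorphism of Step 1 carries each pair of parallel arrows to the corresponding pair. For the \(\underline S\)-balancing condition this is naturality of the adjunction in the \(\underline X\)-variable, applied to \(\rho\colon\underline M\square\underline S\to\underline M\) versus \(\lambda_{\underline L}\). For the \(\underline R\)-linearity condition it is naturality together with associativity of the adjunction isomorphism with respect to \(\underline R\square(-)\). Since the two limit diagrams correspond, the limits are isomorphic. Naturality in \(\underline L\) and \(\underline P\) is inherited from Step 1.

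\emph{Step 4.} Evaluating at \(G/G\), and using \(\underline{\Hom}(\underline U,\underline V)(G/G)\cong\mathbf{Mack}_G(\underline U,\underline V)\), gives the hom-set adjunction \(\underline M\square_{\underline S}(-)\dashv\underline{\Hom}_{\underline R}(\underline M,-)\).

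The main obstacle is Step 3. One has to verify that the internal adjunction isomorphism intertwines the internal \(\underline S\)-action maps, which are defined through Bouc-style pairings and evaluation, rather than merely the actions on elements. I would handle this by testing against representables \(\underline A_T\) via Yoneda. This reduces the internal statement to the hom-set level \(\mathbf{Mack}_G(\underline A_T\square-,-)\), where it becomes the ordinary universal property of the coequalizer.
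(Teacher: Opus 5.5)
Your proposal is correct. The paper itself gives no argument for this statement: it quotes it from Lewis's unpublished manuscript \cite{lewis1981theory} and then uses it as a black box, in the appendix and in the Yoneda identifications of Section~5. So your sketch is a self-contained derivation rather than a reconstruction of the paper's proof.

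Your route uses only the closed symmetric monoidal structure of $(\mathbf{Mack}_G,\square,\underline A,\underline{\Hom})$ and the (co)equalizer descriptions of $\square_{\underline S}$ and $\underline{\Hom}_{\underline R}$. It therefore applies verbatim to bimodules in any closed symmetric monoidal category with those (co)limits. The citation buys brevity, at the cost of resting on an unpublished source.

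The cleanest ordering is to make your fallback the main line:
\begin{enumerate}
\item Prove the hom-set bijection ${}_{\underline R}\mathbf{Mod}(\underline M\square_{\underline S}\underline L',\underline P)\cong{}_{\underline S}\mathbf{Mod}(\underline L',\underline{\Hom}_{\underline R}(\underline M,\underline P))$, natural in $\underline L'$, directly from the universal properties of the coequalizer and the equalizer.
\item Apply it to $\underline L'=\underline A_T\square\underline L$. Use $\underline{\Hom}_{\underline R}(\underline U,\underline V)(T)\cong{}_{\underline R}\mathbf{Mod}(\underline A_T\square\underline U,\underline V)$, where the $\underline R$-action on $\underline A_T\square\underline U$ is defined through the symmetry. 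Also use $\underline A_T\square(\underline M\square_{\underline S}\underline L)\cong\underline M\square_{\underline S}(\underline A_T\square\underline L)$, which holds because $\underline A_T\square-$ preserves coequalizers.
\item Deduce naturality in $T$ from naturality in $\underline L'$.
\end{enumerate}
With this ordering, the enriched coherence check in your Step~3 becomes unnecessary.

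If you keep Step~3 as written, make two small corrections.
\begin{itemize}
\item The $\lambda_{\underline L}$-half of the balancing condition is naturality in the $\underline Y$-variable, not $\underline X$. The $\rho$-half needs naturality in $\underline X$ together with the associativity isomorphism $\underline{\Hom}(\underline L,\underline{\Hom}(\underline M\square\underline S,\underline P))\cong\underline{\Hom}(\underline S\square\underline L,\underline{\Hom}(\underline M,\underline P))$.
\item To compare the two $\underline R$-linearity equalizers inside a common target, note that $\underline{\Hom}(\underline R\square(\underline M\square_{\underline S}\underline L),\underline P)\to\underline{\Hom}(\underline R\square\underline M\square\underline L,\underline P)$ is a monomorphism. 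This holds because $\underline R\square-$ preserves the defining coequalizer.
\end{itemize}
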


The category \({}_{\underline R}\mathbf{Mod}\) is abelian: kernels and
cokernels are created by the forgetful functor to
\(\mathbf{Mack}_G\).  Filtered colimits are likewise computed on the
underlying Mackey functors.

For a finite $G$-set $X$ and a left $\underline R$-module $\underline N$, let $\underline R_X:=\underline R\square\underline A_X$ and $\underline N_X:=\underline N\square\underline A_X.$
Since \(\underline A_X\) is canonically self-dual in
\(\mathbf{Mack}_G\), Proposition~\ref{prop:bicategorical-tensor-Hom}
gives natural isomorphisms
\begin{equation}\label{eq:Hom-RX-N}
\underline{\Hom}_{\underline R}(\underline R_X,\underline N)
\cong
\underline{\Hom}(\underline A_X,\underline N)
\cong
\underline N_X.
\end{equation}
Moreover, since $\underline R_X\square_{\underline R}\underline M \cong \underline M \square \underline A_X,$ we have
\begin{equation}\label{eq:Mod-RX-N}
{}_{\underline R}\mathbf{Mod}(\underline R_X,\underline N)
\cong
\underline N(X).
\end{equation}
Thus \(\underline R_X\) represents evaluation at \(X\) on
\({}_{\underline R}\mathbf{Mod}\). 
This representability observation leads to the following elementary criterion for finitely generated projective modules.

\begin{lemma}\label{lem:fgp-Hom-criterion}
A left \(\underline R\)-module \(\underline M\) is finitely generated
projective if and only if the ordinary functor
\(
{}_{\underline R}\mathbf{Mod}(\underline M,-)
\colon
{}_{\underline R}\mathbf{Mod}\longrightarrow\mathbf{Ab}
\)
is exact and preserves filtered colimits.
\end{lemma}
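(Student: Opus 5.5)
We prove the two implications separately, using the representability isomorphism \eqref{eq:Mod-RX-N}, which identifies \({}_{\underline R}\mathbf{Mod}(\underline R_X,-)\) with evaluation at \(X\).

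For the forward direction, suppose \(\underline M\) is a direct summand of some \(\underline R_X\). Evaluation at \(X\) is exact and commutes with filtered colimits, because kernels, cokernels and filtered colimits in \({}_{\underline R}\mathbf{Mod}\) are computed on underlying Mackey functors, and hence objectwise. By \eqref{eq:Mod-RX-N}, \({}_{\underline R}\mathbf{Mod}(\underline R_X,-)\) is therefore exact and preserves filtered colimits. Now write \(\underline R_X\cong\underline M\oplus\underline M'\). Then \({}_{\underline R}\mathbf{Mod}(\underline M,-)\) is a natural retract of \({}_{\underline R}\mathbf{Mod}(\underline R_X,-)\). Both exactness and preservation of filtered colimits pass to natural retracts, since the relevant comparison maps are retracts of isomorphisms. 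This gives the forward implication.

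For the converse, assume \({}_{\underline R}\mathbf{Mod}(\underline M,-)\) is exact and preserves filtered colimits. We first show \(\underline M\) is finitely generated. The objects \(\underline R_X\), as \(X\) ranges over finite \(G\)-sets, form a set of generators, since by \eqref{eq:Mod-RX-N} they detect the value of a module at every orbit. Hence \(\underline M\) is the filtered union of its finitely generated submodules \(\underline M_\alpha\). Each \(\underline M_\alpha\) is the image of some \(\underline R_{X_\alpha}\to\underline M\), and this union is a filtered colimit of subobjects whose colimit is \(\underline M\). Applying \({}_{\underline R}\mathbf{Mod}(\underline M,-)\) and using preservation of filtered colimits, we have \(\operatorname{colim}_\alpha\,{}_{\underline R}\mathbf{Mod}(\underline M,\underline M_\alpha)\cong{}_{\underline R}\mathbf{Mod}(\underline M,\underline M)\). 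So \(\id_{\underline M}\) factors through some \(\underline M_\alpha\hookrightarrow\underline M\), which forces \(\underline M_\alpha=\underline M\). Thus there is a surjection \(p\colon\underline R_X\twoheadrightarrow\underline M\). Exactness of \({}_{\underline R}\mathbf{Mod}(\underline M,-)\) then lets us lift \(\id_{\underline M}\) along \(p\), producing a section \(s\) with \(ps=\id\). Hence \(\underline M\) is a direct summand of \(\underline R_X\), i.e. finitely generated projective in the sense of Definition~\ref{def:finiteness-R-modules}.

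The main point requiring care is the finite-generation step. We must justify that every module is the filtered colimit (directed union) of the images of maps from finite free modules. The plan is to index over pairs consisting of a finite \(G\)-set \(X\) and an element of \(\underline M(X)\). Disjoint unions \(X\sqcup Y\) give upper bounds, since \(\underline R_{X\sqcup Y}\cong\underline R_X\oplus\underline R_Y\), so the indexing family is directed. The union of these images is all of \(\underline M\) because every element of \(\underline M(X)\) lies in the image of its classifying map from \(\underline R_X\). Because filtered colimits are computed objectwise in abelian groups, where directed unions of subobjects are colimits, the identification of \(\underline M\) with this colimit is justified.
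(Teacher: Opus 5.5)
Your proof is correct and follows essentially the same route as the paper. The forward direction uses that \({}_{\underline R}\mathbf{Mod}(\underline M,-)\) is a retract of evaluation at \(X\); the converse writes \(\underline M\) as the filtered union of its finitely generated submodules, factors \(\id_{\underline M}\) through one of them, and then lifts the identity along a surjection from \(\underline R_X\). Your explicit check that the images of maps \(\underline R_X\to\underline M\) form a directed family exhausting \(\underline M\) supplies a detail the paper leaves implicit.
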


\begin{proof}
Suppose first that \(\underline M\) is finitely generated projective.
Then \(\underline M\) is a retract of \(\underline R_X\) for some
finite \(G\)-set \(X\).  By \eqref{eq:Mod-RX-N},
\(
{}_{\underline R}\mathbf{Mod}(\underline R_X,\underline N)
\cong
\underline N(X).
\)
Since exact sequences and filtered colimits of
\(\underline R\)-modules are computed on the underlying Mackey
functors, evaluation at \(X\) is exact and preserves filtered
colimits.  The same is therefore true for
\({}_{\underline R}\mathbf{Mod}(\underline M,-)\), since it is a
retract of
\({}_{\underline R}\mathbf{Mod}(\underline R_X,-)\).

Conversely, suppose that
\({}_{\underline R}\mathbf{Mod}(\underline M,-)\) is exact and
preserves filtered colimits.  Exactness implies that
\(\underline M\) is projective.  Write \(\underline M\) as the
filtered colimit of its finitely generated submodules:
\(
\underline M=\varinjlim_i\underline M_i.
\)
Preservation of this colimit gives
\(
{}_{\underline R}\mathbf{Mod}(\underline M,\underline M)
\cong
\varinjlim_i
{}_{\underline R}\mathbf{Mod}(\underline M,\underline M_i).
\)
Hence \(\id_{\underline M}\) factors through some inclusion
\(\underline M_i\hookrightarrow\underline M\).  This inclusion is
therefore both a monomorphism and a split epimorphism, and hence an
isomorphism.  Thus \(\underline M\) is finitely generated.  Being
both finitely generated and projective, it is a direct summand of a
finite free \(\underline R\)-module.
\end{proof}

The next lemma provides the tensor--Hom isomorphism required to construct the duality adjunction in our main theorem.

\begin{lemma}\label{lem:hom-tensor-green}
Let \(\underline M\) be an
\((\underline R,\underline S)\)-bimodule that is finitely generated
projective as a left \(\underline R\)-module, and set
\(
{}^*\underline M
:=
\underline{\Hom}_{\underline R}(\underline M,\underline R),
\)
with its induced \((\underline S,\underline R)\)-bimodule structure.
Then, for every left \(\underline R\)-module \(\underline N\), the
canonical  map
\(
\tau_{\underline N}\colon
{}^*\underline M\square_{\underline R}\underline N
\longrightarrow
\underline{\Hom}_{\underline R}(\underline M,\underline N)
\)
is an isomorphism of left \(\underline S\)-modules.
\end{lemma}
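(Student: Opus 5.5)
We reduce to the finite free case by a retract argument, and then handle the finite free case through the representability isomorphisms \eqref{eq:Hom-RX-N} and \eqref{eq:Mod-RX-N}. Recall that \(\tau_{\underline N}\) is the map adjoint, under Proposition~\ref{prop:bicategorical-tensor-Hom}, to the composite
\[
\underline M\square_{\underline S}\bigl({}^*\underline M\square_{\underline R}\underline N\bigr)\cong(\underline M\square_{\underline S}{}^*\underline M)\square_{\underline R}\underline N\to\underline R\square_{\underline R}\underline N\cong\underline N,
\]
where the middle arrow is induced by evaluation. The first thing to settle is that \(\tau\) is natural in both variables. It is natural in \(\underline N\) by construction. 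It is also natural in \(\underline M\), where \(\underline M\) is regarded only as a left \(\underline R\)-module: a map \(\underline M'\to\underline M\) acts contravariantly on both \({}^*(-)\) and \(\underline{\Hom}_{\underline R}(-,\underline N)\). Since left \(\underline S\)-linearity of \(\tau\) follows formally from the adjunction, bijectivity is the only real content, and bijectivity can be checked after forgetting the \(\underline S\)-structure.

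\emph{Step 1 (free case).} Take \(\underline M=\underline R_X\). Then \({}^*\underline R_X\cong\underline{\Hom}_{\underline R}(\underline R_X,\underline R)\cong\underline R_X\), regarded as a right module, by \eqref{eq:Hom-RX-N}; concretely this is \(\underline A_X\square\underline R\). Consequently
\[
{}^*\underline R_X\square_{\underline R}\underline N\cong\underline A_X\square\underline N\cong\underline N_X.
\]
The target is likewise \(\underline{\Hom}_{\underline R}(\underline R_X,\underline N)\cong\underline N_X\) by \eqref{eq:Hom-RX-N}. It remains to check that \(\tau_{\underline N}\) is the composite of these two identifications. That is a diagram chase using the self-duality of \(\underline A_X\) (Lemma~\ref{lemma:selfdual}): under the identifications, \(\tau\) becomes the canonical map \(\underline A_X\square\underline N\to\underline{\Hom}(\underline A_X,\underline N)\) coming from the dual pair \((\underline A_X,\underline A_X)\), and that map is an isomorphism for any dualizable object. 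Finite direct sums reduce everything to this one representable, although in fact \(\underline R_X\) already handles an arbitrary finite \(X\).

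\emph{Step 2 (retracts).} For general \(\underline M\), choose a retraction \(\underline M\xrightarrow{i}\underline R_X\xrightarrow{p}\underline M\) of left \(\underline R\)-modules. Naturality in the first variable makes \(\tau^{\underline M}_{\underline N}\) a retract of the isomorphism \(\tau^{\underline R_X}_{\underline N}\) in the arrow category. A retract of an isomorphism is an isomorphism, so \(\tau^{\underline M}_{\underline N}\) is invertible.

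The main obstacle is Step 1: showing that the abstractly defined \(\tau\) actually agrees with the composite of the representability isomorphisms. In practice this means keeping the left and right actions, and the position of the symmetry isomorphism, straight in the box product. The first thing to try is to check the agreement on the generator \(\id\in{}_{\underline R}\mathbf{Mod}(\underline R_X,\underline R_X)\) using the Yoneda description \eqref{eq:Mod-RX-N}. Since both sides are natural in \(\underline N\), agreement on this universal element forces agreement everywhere.
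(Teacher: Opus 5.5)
Your proposal is correct and follows the same route as the paper. In the finite free case \(\underline R_X\), both sides are identified with \(\underline N_X\) via the self-duality of \(\underline A_X\). The general case then follows because, after forgetting the \(\underline S\)-action, naturality in the first variable makes \(\tau_{\underline N}\) for \(\underline M\) a retract of the isomorphism for \(\underline R_X\). Your write-up is somewhat more careful than the paper's: you flag that \(\tau\) must actually be checked to agree with the composite of the representability isomorphisms, which the paper simply asserts.
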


\begin{proof}
First suppose that \(\underline M=\underline R_X\) is finite free.
Using the self-duality of \(\underline A_X\), both sides identify
naturally with \(\underline N_X\):
\(
\underline{\Hom}_{\underline R}(\underline R_X,\underline R)
 \square_{\underline R}\underline N
\cong
\underline R_X\square_{\underline R}\underline N
\cong
\underline N_X
\cong
\underline{\Hom}_{\underline R}(\underline R_X,\underline N).
\)
Under these identifications, \(\tau_{\underline N}\) is the identity.

For general \(\underline M\), choose \(\underline R\)-linear maps
\(
\underline M\xrightarrow{i}\underline R_X
\xrightarrow{p}\underline M,
p \circ i=\id_{\underline M}.
\)
The tensor--Hom comparison is natural in its first variable.  Hence, after
forgetting the \(\underline S\)-action, \(\tau_{\underline N}\) is a
retract of the comparison map for \(\underline R_X\), and is therefore an
isomorphism in \(\mathbf{Mack}_G\).  Since \(\tau_{\underline N}\) is
\(\underline S\)-linear, it is an isomorphism of
\(\underline S\)-modules.
\end{proof}

\begin{prop}\label{prop:dualizable-bimodule}
Let \(\underline R\) and \(\underline S\) be Green functors.  In the
bicategory whose objects are Green functors, whose \(1\)-cells are
bimodules, and whose composition is the relative box product, an
\((\underline R,\underline S)\)-bimodule
\(\underline M\)
is left dualizable if and only if it is finitely generated projective as a
left \(\underline R\)-module.  In this case, a left dual is
\(
{}^*\underline M
=
\underline{\Hom}_{\underline R}(\underline M,\underline R),
\)
viewed as an \((\underline S,\underline R)\)-bimodule.
\end{prop}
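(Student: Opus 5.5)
The proof has two directions, and both reduce to facts already recorded in the appendix: the representability isomorphism \eqref{eq:Mod-RX-N}, the criterion of Lemma~\ref{lem:fgp-Hom-criterion}, and the tensor--Hom comparison of Lemma~\ref{lem:hom-tensor-green}. Throughout, left dualizability of \(\underline M\colon\underline R\to\underline S\) means that there are \(N\colon\underline S\to\underline R\), a coevaluation \(\bar\eta\colon \underline S\Rightarrow N\square_{\underline R}\underline M\) and an evaluation \(\bar\epsilon\colon \underline M\square_{\underline S}N\Rightarrow\underline R\) satisfying the triangle identities.

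\emph{Dualizable implies finitely generated projective.} Suppose \(\underline M\) has such a left dual. The standard mate argument shows that the triangle identities make \(N\square_{\underline R}(-)\colon {}_{\underline R}\mathbf{Mod}\to{}_{\underline S}\mathbf{Mod}\) right adjoint to \(\underline M\square_{\underline S}(-)\). By Proposition~\ref{prop:bicategorical-tensor-Hom}, \(\underline{\Hom}_{\underline R}(\underline M,-)\) is also a right adjoint to the same functor. Uniqueness of adjoints therefore gives a natural isomorphism \(\underline{\Hom}_{\underline R}(\underline M,-)\cong N\square_{\underline R}(-)\). The right-hand side is right exact and commutes with filtered colimits, because relative box products are coequalizers of box products. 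The left-hand side is left exact. Hence \(\underline{\Hom}_{\underline R}(\underline M,-)\) is exact and preserves filtered colimits as a functor to \(\mathbf{Mack}_G\) or to \({}_{\underline S}\mathbf{Mod}\). Evaluating at \(G/G\) gives \({}_{\underline R}\mathbf{Mod}(\underline M,-)\), and evaluation is exact and commutes with filtered colimits. Lemma~\ref{lem:fgp-Hom-criterion} then shows that \(\underline M\) is finitely generated projective.

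\emph{Finitely generated projective implies dualizable.} Suppose \(\underline M\) is finitely generated projective as a left \(\underline R\)-module, and set \({}^*\underline M=\underline{\Hom}_{\underline R}(\underline M,\underline R)\).
\begin{enumerate}
\item Take \(\bar\epsilon\colon \underline M\square_{\underline S}{}^*\underline M\to\underline R\) to be the evaluation. It is the adjunct, under Proposition~\ref{prop:bicategorical-tensor-Hom}, of the identity on \({}^*\underline M\).
\item Define \(\bar\eta\) as the composite \(\underline S\to\underline{\End}_{\underline R}(\underline M)\cong {}^*\underline M\square_{\underline R}\underline M\). The first map is the adjunct \(\varrho\) of the right \(\underline S\)-action. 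The second is the inverse of \(\tau_{\underline M}\), which is an isomorphism of \(\underline S\)-modules by Lemma~\ref{lem:hom-tensor-green}.
\item Check that \(\bar\eta\) is a map of \(\underline S\)-bimodules. This follows from the \(\underline S\)-linearity of \(\tau\) together with the right \(\underline S\)-structure on \(\underline{\End}_{\underline R}(\underline M)\) coming from \(\underline M\).
\item Verify the two triangle identities. I will first reduce to the finite free case \(\underline M=\underline R_X\). Both triangle composites are natural with respect to retractions \(\underline M\to\underline R_X\to\underline M\) of left \(\underline R\)-modules, so it suffices to treat \(\underline R_X\). There the identities follow from the self-duality of \(\underline A_X\) (Lemma~\ref{lem:selfdual}), since all the maps are induced by the Burnside spans \(\Delta\).
\end{enumerate}

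I expect the triangle identities in step (4) to be the main obstacle, because they require care with the \(\underline S\)-actions. The alternative I would try first avoids element-chasing altogether. The map \(\tau\) exhibits \({}^*\underline M\square_{\underline R}(-)\) as naturally isomorphic to the right adjoint of \(\underline M\square_{\underline S}(-)\). An adjunction between these functors of the form \(N\square(-)\), which is compatible with the bimodule structures, then yields unit and counit bimodule maps. These are precisely \(\bar\eta\) and \(\bar\epsilon\), and the triangle identities for the dual pair are the triangle identities of that adjunction.
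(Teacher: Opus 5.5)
Your proposal is correct and follows the paper's proof. The forward direction is the same argument: uniqueness of right adjoints gives \({}^*\underline M\square_{\underline R}(-)\cong\underline{\Hom}_{\underline R}(\underline M,-)\), so this functor is exact and preserves filtered colimits, and Lemma~\ref{lem:fgp-Hom-criterion} applies after evaluating at \(G/G\). Your ``alternative'' for the converse is exactly the paper's argument: transport the tensor--Hom adjunction along \(\tau\) from Lemma~\ref{lem:hom-tensor-green} and take the unit and counit as \(\bar\eta\) and \(\bar\epsilon\).

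If you pursue the explicit route of step (4) instead, one claim needs repair. The triangle composites are not natural in the retraction \(\underline M\to\underline R_X\to\underline M\) as written, because that retraction is only left \(\underline R\)-linear and \(\underline R_X\) carries no right \(\underline S\)-action. First use the left \(\underline S\)-linearity of \(\bar\eta\) to rewrite both triangle composites without \(\square_{\underline S}\); only then does the reduction to the finite free case make sense.
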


\begin{proof}
Suppose first that \(\underline M\) is left dualizable, with left dual
\({}^*\underline M\).  The duality adjunction and
Proposition~\ref{prop:bicategorical-tensor-Hom} identify the two right
adjoints of
\(\underline M\square_{\underline S}(-)\), giving a natural isomorphism
\(
{}^*\underline M\square_{\underline R}(-)
\cong
\underline{\Hom}_{\underline R}(\underline M,-).
\)
The functor on the left preserves all colimits.  The functor on the right
is left exact, while the functor on the left is right exact; hence their
common value is exact and preserves filtered colimits.  Evaluating the
internal Hom at \(G/G\), we conclude that
\({}_{\underline R}\mathbf{Mod}(\underline M,-)\) is exact and preserves
filtered colimits.  By Lemma~\ref{lem:fgp-Hom-criterion},
$\underline M$ is finitely generated projective as a left
$\underline R$-module.

Conversely, suppose that \(\underline M\) is finitely generated projective
as a left \(\underline R\)-module and set
\(
{}^*\underline M
=
\underline{\Hom}_{\underline R}(\underline M,\underline R).
\)
By Lemma~\ref{lem:hom-tensor-green}, the tensor--Hom comparison gives a
natural isomorphism
\(
{}^*\underline M\square_{\underline R}(-)
\cong
\underline{\Hom}_{\underline R}(\underline M,-).
\)
Consequently, the tensor--Hom adjunction identifies
\(\underline M\square_{\underline S}(-)\) as a left adjoint of
\({}^*\underline M\square_{\underline R}(-)\).  Its unit and counit are
maps
\(
\bar\eta\colon
\underline S\to 
{}^*\underline M\square_{\underline R}\underline M,
\bar\epsilon\colon
\underline M\square_{\underline S}{}^*\underline M
\to
\underline R,
\)
and the adjunction triangle identities are precisely the duality triangle
identities.  Thus \(\underline M\) is left dualizable with left dual
\({}^*\underline M\).
\end{proof}

\bibliography{trace}{}

@article {PONSH,
    AUTHOR = {Ponto, Kate and Shulman, Michael},
     TITLE = {Shadows and traces in bicategories},
   JOURNAL = {J. Homotopy Relat. Struct.},
  FJOURNAL = {Journal of Homotopy and Related Structures},
    VOLUME = {8},
      YEAR = {2013},
    NUMBER = {2},
     PAGES = {151--200},
      ISSN = {2193-8407,1512-2891},
   MRCLASS = {18D05 (55M20)},
  MRNUMBER = {3095324},
MRREVIEWER = {Richard\ John\ Steiner},
       DOI = {10.1007/s40062-012-0017-0},
       URL = {https://doi.org/10.1007/s40062-012-0017-0},
}

@article {Tambara,
    AUTHOR = {Tambara, Daisuke},
     TITLE = {On multiplicative transfer},
   JOURNAL = {Comm. Algebra},
  FJOURNAL = {Communications in Algebra},
    VOLUME = {21},
      YEAR = {1993},
    NUMBER = {4},
     PAGES = {1393--1420},
      ISSN = {0092-7872},
   MRCLASS = {19A22 (20J06 55N91)},
  MRNUMBER = {1209937},
       DOI = {10.1080/00927879308824627},
}

@article{thevenaz1995structure,
  title={The structure of Mackey functors},
  author={Th{\'e}venaz, Jacques and Webb, Peter},
  journal={Transactions of the American Mathematical Society},
  volume={347},
  number={6},
  pages={1865--1961},
  year={1995}
}

@article{BPW,
  title={Quantum link homology via trace functor I},
  author={Beliakova, Anna and Putyra, Krzysztof K and Wehrli, Stephan M},
  journal={Inventiones mathematicae},
  volume={215},
  number={2},
  pages={383--492},
  year={2019},
  publisher={Springer}
}

@article {HA,
    AUTHOR = {Hattori, Akira},
     TITLE = {Rank element of a projective module},
   JOURNAL = {Nagoya Math. J.},
  FJOURNAL = {Nagoya Mathematical Journal},
    VOLUME = {25},
      YEAR = {1965},
     PAGES = {113--120},
      ISSN = {0027-7630,2152-6842},
   MRCLASS = {16.40 (18.00)},
  MRNUMBER = {175950},
MRREVIEWER = {I.\ Reiner},
       URL = {http://projecteuclid.org/euclid.nmj/1118801428},
}

@article{NikolausScholze,
  author    = {Thomas Nikolaus and Peter Scholze},
  title     = {On topological cyclic homology},
  journal   = {Annals of Mathematics},
  volume    = {187},
  number    = {2},
  year      = {2018},
  pages     = {361--502},
  doi       = {10.4007/annals.2018.187.2.4}
}

@phdthesis{Hemo,
  author       = {Hemo, Tamir},
  title        = {On the Categorical Approach to the {F}robenius Trace},
  school       = {California Institute of Technology},
  year         = {2023},
  month        = {May},
  note         = {},
  url          = {https://resolver.caltech.edu/CaltechTHESIS:05292023-123456789},
}

@article {PON1,
    AUTHOR = {Ponto, Kate},
     TITLE = {Fixed point theory and trace for bicategories},
   JOURNAL = {Ast\'erisque},
  FJOURNAL = {Ast\'erisque},
    NUMBER = {333},
      YEAR = {2010},
     PAGES = {xii+102},
      ISSN = {0303-1179,2492-5926},
      ISBN = {978-2-85629-293-8},
   MRCLASS = {55M20 (16D90 18D05)},
  MRNUMBER = {2741967},
MRREVIEWER = {R.\ H.\ Street},
}

@article {LEWMAN,
    AUTHOR = {Lewis, Jr., L. Gaunce and Mandell, Michael A.},
     TITLE = {Equivariant universal coefficient and {K}\"unneth spectral
              sequences},
   JOURNAL = {Proc. London Math. Soc. (3)},
  FJOURNAL = {Proceedings of the London Mathematical Society. Third Series},
    VOLUME = {92},
      YEAR = {2006},
    NUMBER = {2},
     PAGES = {505--544},
      ISSN = {0024-6115,1460-244X},
   MRCLASS = {55N91 (55P43 55U20 55U25)},
  MRNUMBER = {2205726},
MRREVIEWER = {R.\ E.\ Stong},
       DOI = {10.1112/S0024611505015492},
       URL = {https://doi.org/10.1112/S0024611505015492},
}

@article {AGHKK,
    AUTHOR = {Adamyk, Katharine and Gerhardt, Teena and Hess, Kathryn and
              Klang, Inbar and Kong, Hana Jia},
     TITLE = {A shadow perspective on equivariant {H}ochschild homologies},
   JOURNAL = {Int. Math. Res. Not. IMRN},
  FJOURNAL = {International Mathematics Research Notices. IMRN},
      YEAR = {2023},
    NUMBER = {18},
     PAGES = {15299--15357},
      ISSN = {1073-7928,1687-0247},
   MRCLASS = {18N40 (18F25 55P43)},
  MRNUMBER = {4644963},
MRREVIEWER = {Steffen\ Sagave},
       DOI = {10.1093/imrn/rnac250},
       URL = {https://doi.org/10.1093/imrn/rnac250},
}

@book {weibel:homological,
    AUTHOR = {Weibel, Charles A.},
     TITLE = {An introduction to homological algebra},
    SERIES = {Cambridge Studies in Advanced Mathematics},
    VOLUME = {38},
 PUBLISHER = {Cambridge University Press, Cambridge},
      YEAR = {1994},
     PAGES = {xiv+450},
      ISBN = {0-521-43500-5; 0-521-55987-1},
   MRCLASS = {18-01 (16-01 17-01 20-01 55Uxx)},
  MRNUMBER = {1269324},
MRREVIEWER = {Kenneth A. Brown},
       DOI = {10.1017/CBO9781139644136},
       URL = {https://doi-org.ezproxy.uky.edu/10.1017/CBO9781139644136},
}

@article{MMSS,
  author    = {Michael A. Mandell and J. Peter May and Stefan Schwede and Brooke Shipley},
  title     = {Model Categories of Diagram Spectra},
  journal   = {Proc. London Math. Soc. (3)},
  volume    = {82},
  number    = {2},
  year      = {2001},
  pages     = {441--512},
  doi       = {10.1112/S0024611501012692}
}

@article {BGHL,
    AUTHOR = {Blumberg, Andrew J. and Gerhardt, Teena and Hill, Michael A.
              and Lawson, Tyler},
     TITLE = {The {W}itt vectors for {G}reen functors},
   JOURNAL = {J. Algebra},
  FJOURNAL = {Journal of Algebra},
    VOLUME = {537},
      YEAR = {2019},
     PAGES = {197--244},
      ISSN = {0021-8693,1090-266X},
   MRCLASS = {55P42 (19D55 55N91)},
  MRNUMBER = {3990042},
MRREVIEWER = {Andrew\ J.\ Baker},
       DOI = {10.1016/j.jalgebra.2019.07.014},
       URL = {https://doi.org/10.1016/j.jalgebra.2019.07.014},
}

@article{CCM,
  title={A linearization map for genuine equivariant algebraic {K}-theory},
  author={Calle, Maxine  and Chan, David and Mejia, Andres },
  journal={Preprint arXiv:2309.08025},
  year={2023},
}

@article{boucproj,
  title={The {B}urnside dimension of projective Mackey functors (Algebraic Combinatorics)},
  author={Bouc, Serge},
  journal={RIMS K{\^o}ky{\^u}roku},
  volume={1440},
  pages={107--120},
  year={2005},
  publisher={Kyoto University, Research Institute for Mathematical Sciences}
}

@article {HHR,
    AUTHOR = {Hill, M. A. and Hopkins, M. J. and Ravenel, D. C.},
     TITLE = {On the nonexistence of elements of {K}ervaire invariant one},
   JOURNAL = {Ann. of Math. (2)},
  FJOURNAL = {Annals of Mathematics. Second Series},
    VOLUME = {184},
      YEAR = {2016},
    NUMBER = {1},
     PAGES = {1--262},
      ISSN = {0003-486X,1939-8980},
   MRCLASS = {55P91 (55N22 55P42 55Q45 55T15 55U35 57R15)},
  MRNUMBER = {3505179},
MRREVIEWER = {Paul\ G.\ Goerss},
       DOI = {10.4007/annals.2016.184.1.1},
       URL = {https://doi.org/10.4007/annals.2016.184.1.1},
}

@article {CP,
    AUTHOR = {Campbell, Jonathan A. and Ponto, Kate},
     TITLE = {Topological {H}ochschild homology and higher characteristics},
   JOURNAL = {Algebr. Geom. Topol.},
  FJOURNAL = {Algebraic \& Geometric Topology},
    VOLUME = {19},
      YEAR = {2019},
    NUMBER = {2},
     PAGES = {965--1017},
      ISSN = {1472-2747,1472-2739},
   MRCLASS = {16D90 (18D05 19D55 55M20 55R12)},
  MRNUMBER = {3924181},
MRREVIEWER = {Andrew\ J.\ Baker},
       DOI = {10.2140/agt.2019.19.965},
       URL = {https://doi.org/10.2140/agt.2019.19.965},
}

@article {PSSY,
    AUTHOR = {Ponto, Kate and Shulman, Michael},
     TITLE = {Traces in symmetric monoidal categories},
   JOURNAL = {Expo. Math.},
  FJOURNAL = {Expositiones Mathematicae},
    VOLUME = {32},
      YEAR = {2014},
    NUMBER = {3},
     PAGES = {248--273},
      ISSN = {0723-0869,1878-0792},
   MRCLASS = {18D10 (55M20)},
  MRNUMBER = {3253568},
MRREVIEWER = {Miguel\ Angel\ Garc\'ia-Mu\~noz},
       DOI = {10.1016/j.exmath.2013.12.003},
       URL = {https://doi.org/10.1016/j.exmath.2013.12.003},
}

@article {PSL,
    AUTHOR = {Ponto, Kate and Shulman, Michael},
     TITLE = {The linearity of traces in monoidal categories and
              bicategories},
   JOURNAL = {Theory Appl. Categ.},
  FJOURNAL = {Theory and Applications of Categories},
    VOLUME = {31},
      YEAR = {2016},
     PAGES = {Paper No. 23, 594--689},
      ISSN = {1201-561X},
   MRCLASS = {18D05 (18D10 18D20)},
  MRNUMBER = {3518981},
MRREVIEWER = {Ang\'elica\ M.\ Osorno},
}

@article {MM,
    AUTHOR = {Malkiewich, Cary and Merling, Mona},
     TITLE = {Equivariant {$A$}-theory},
   JOURNAL = {Doc. Math.},
  FJOURNAL = {Documenta Mathematica},
    VOLUME = {24},
      YEAR = {2019},
     PAGES = {815--855},
      ISSN = {1431-0635,1431-0643},
   MRCLASS = {19D10 (18D50 19C99 55P91)},
  MRNUMBER = {3982285},
MRREVIEWER = {No\'e\ B\'arcenas Torres},
}

@article {ABGHLM,
    AUTHOR = {Angeltveit, Vigleik and Blumberg, Andrew J. and Gerhardt,
              Teena and Hill, Michael A. and Lawson, Tyler and Mandell,
              Michael A.},
     TITLE = {Topological cyclic homology via the norm},
   JOURNAL = {Doc. Math.},
  FJOURNAL = {Documenta Mathematica},
    VOLUME = {23},
      YEAR = {2018},
     PAGES = {2101--2163},
      ISSN = {1431-0635,1431-0643},
   MRCLASS = {55P91 (16E40 19D55 55P43)},
  MRNUMBER = {3933034},
MRREVIEWER = {Bj\o rn\ Ian\ Dundas},
}

@book{may2004parametrized,
    AUTHOR = {May, J. P. and Sigurdsson, J.},
     TITLE = {Parametrized homotopy theory},
    SERIES = {Mathematical Surveys and Monographs},
    VOLUME = {132},
 PUBLISHER = {American Mathematical Society, Providence, RI},
      YEAR = {2006},
     PAGES = {x+441},
      ISBN = {978-0-8218-3922-5; 0-8218-3922-5},
   MRCLASS = {55P42 (19L99 55N20 55N22 55P91)},
  MRNUMBER = {2271789},
MRREVIEWER = {A.\ A.\ Ranicki},
       DOI = {https://doi.org/10.1090/surv/132},
       URL = {https://doi.org/10.1090/surv/132},
}

@unpublished{MAL,
  author  = {Cary Malkiewich},
  title   = {Parametrized Spectra, a Low-Tech Approach},
  note    = {arXiv:1906.04773 [math.AT]},
  year    = {2019},
  url     = {https://doi.org/10.48550/arXiv.1906.04773}
}

@article {BHM,
    AUTHOR = {B\"okstedt, M. and Hsiang, W. C. and Madsen, I.},
     TITLE = {The cyclotomic trace and algebraic {$K$}-theory of spaces},
   JOURNAL = {Invent. Math.},
  FJOURNAL = {Inventiones Mathematicae},
    VOLUME = {111},
      YEAR = {1993},
    NUMBER = {3},
     PAGES = {465--539},
      ISSN = {0020-9910,1432-1297},
   MRCLASS = {55P42 (19D55 19L99 55P60)},
  MRNUMBER = {1202133},
MRREVIEWER = {Roland\ Schw\"anzl},
       DOI = {10.1007/BF01231296},
       URL = {https://doi.org/10.1007/BF01231296},
}

@article{CLMPI,
  author    = {Jonathan A. Campbell and John A. Lind and Cary Malkiewich and Kate Ponto and Inna Zakharevich},
  title     = {The {K}-theory of endomorphisms, the trace, and zeta functions},
  journal   = {La Matematica},
  year      = {2025},
  volume    = {4},
  number    = {2},
  pages     = {214--292},
  doi       = {10.1007/s44007-025-00154-0},
  url       = {https://doi.org/10.1007/s44007-025-00154-0}
}

@incollection {DoldPuppe,
    AUTHOR = {Dold, A. and Puppe, D.},
     TITLE = {Duality, trace and transfer},
      NOTE = {Topology (Moscow, 1979)},
   JOURNAL = {Trudy Mat. Inst. Steklov.},
  FJOURNAL = {Akademiya Nauk SSSR. Trudy Matematicheskogo Instituta imeni V.
              A. Steklova},
    VOLUME = {154},
      YEAR = {1983},
     PAGES = {81--97},
      ISSN = {0371-9685},
   MRCLASS = {55P25},
  MRNUMBER = {733829},
}

@article{lewis1981theory,
  title={The theory of {Green} functors},
  author={Lewis Jr, L Gaunce},
  journal={Unpublished manuscript},
  year={1981}
}

@book{bouc1997,
  author = {Bouc, Serge},
  title = {Green Functors and G-sets},
  volume = {1671},
  series = {Lecture Notes in Mathematics},
  year = {1997},
  publisher = {Springer},
  address = {Berlin, Heidelberg}
}

@article{Merling2017,
  author  = {Merling, Mona},
  title   = {Equivariant algebraic {$K$}-theory of {$G$}-rings},
  journal = {Mathematische Zeitschrift},
  volume  = {285},
  number  = {3-4},
  pages   = {1205--1248},
  year    = {2017},
  doi     = {10.1007/s00209-016-1745-3}
}

@article{dress1971notes,
  title={Notes on the theory of representations of finite groups},
  author={Dress, Andreas},
  journal={(No Title)},
  year={1971}
}

@article{green1971,
  title={Axiomatic representation theory for finite groups},
  author={Green, James Alexander},
  journal={Journal of pure and applied algebra},
  volume={1},
  number={1},
  pages={41--77},
  year={1971},
  publisher={Elsevier}
}

@article{ventura2005homological,
  title={Homological algebra for the representation Green functor for abelian groups},
  author={Ventura, Joana},
  journal={Transactions of the American Mathematical Society},
  volume={357},
  number={6},
  pages={2253--2289},
  year={2005}
}

@book{EKMM97,
  title={Rings, modules, and algebras in stable homotopy theory},
  author={Elmendorf, Anthony D. and Kriz, Igor and Mandell, Michael A. and May, J. Peter},
  volume={47},
  year={1997},
  publisher={American Mathematical Society},
  series={Mathematical Surveys and Monographs},
  address={Providence, RI},
  isbn={978-0-8218-0638-8}
}

@article{CGK25,
  title={Trace methods for equivariant algebraic K-theory},
  author={Chan, David and Gerhardt, Teena and Klang, Inbar},
  journal={arXiv preprint arXiv:2505.11327},
  year={2025}
}

@book{loday2013cyclic,
  title={Cyclic homology},
  author={Loday, Jean-Louis},
  volume={301},
  year={2013},
  publisher={Springer Science \& Business Media}
}

@article{maclane1985coherence,
  title={Coherence for bicategories and indexed categories},
  author={MacLane, Saunders and Par{\'e}, Robert},
  journal={Journal of Pure and Applied Algebra},
  volume={37},
  pages={59--80},
  year={1985},
  publisher={North-Holland}
}
\bibliographystyle{amsalpha2}

\end{document}